\newtheorem{thm}{Theorem}[section]
\newtheorem{prop}[thm]{Proposition}
\newtheorem{ass}[thm]{Assumption}
\newtheorem{lem}[thm]{Lemma}
\newtheorem{defn}[thm]{Definition}
\newtheorem{rem}[thm]{Remark}
\newcommand{\argmax}{\operatorname*{arg\,max}}
\newcommand{\bmu}{\boldsymbol \mu}
\newcommand{\bnu}{\boldsymbol \nu}
\newcommand{\bone}{\boldsymbol 1}
\newcommand{\brho}{\boldsymbol \rho} 
\newcommand{\brhomix}{\boldsymbol \rho_{\operatorname{mix}}}
\newcommand{\brhosep}{\boldsymbol \rho_{\operatorname{sep}}} 
\newcommand{\rhosep}{\rho_{\operatorname{sep}}} 
\newcommand{\rhomix}{\rho_{\operatorname{mix}}} 
\newcommand{\bsigma}{\boldsymbol \sigma}
\newcommand{\bW}{\mathbf W}
\newcommand{\bw}{\mathbf w}
\newcommand{\bzero}{\boldsymbol 0}
\renewcommand{\div}{\operatorname{div}}
\newcommand{\indicatornoacc}[1]{ \mathbbm{1}_{ #1 } }
\newcommand{\N}{\mathbb N}
\newcommand{\R}{\mathbb R}
\newcommand{\supp}{\operatorname{supp}}
\newcommand{\Veff}{V_{\operatorname{eff}}}
\newcommand{\Veffce}{(V^{**})_{\operatorname{eff}}}
\newcommand{\Vreg}{V_{\operatorname{reg}}}
\newcommand{\Vsing}{V_{\operatorname{sing}}} 
\newcommand{\Weff}{W_{\operatorname{eff}}}
\newcommand{\weakto}{ \rightharpoonup }
\newcommand{\xto}[1]{\xrightarrow{#1}}
\def\XXint#1#2#3{{\setbox0=\hbox{$#1{#2#3}{\int}$}
     \vcenter{\hbox{$#2#3$}}\kern-.5\wd0}}
\begin{document}

\title{Many-particle limits and non-convergence of dislocation wall pile-ups} %  modelled as a two-species interacting particle system

\author{Patrick van Meurs}

%\tableofcontents

\begin{abstract}
The starting point of our analysis is a class of one-dimensional interacting particle systems with two species. The particles are confined to an interval and exert a nonlocal, repelling force on each other, resulting in a nontrivial equilibrium configuration. This class of particle systems covers the setting of pile-ups of dislocation walls, which is an idealised setup for studying the microscopic origin of several dislocation density models in the literature. Such density models are used to construct constitutive relations in plasticity models.

Our aim is to pass to the many-particle limit. The main challenge is the combination of the nonlocal nature of the interactions, the singularity of the interaction potential between particles of the same type, the non-convexity of the the interaction potential between particles of the opposite type, and the interplay between the length-scale of the domain with the length-scale $\ell_n$ of the decay of the interaction potential. Our main results are the $\Gamma$-convergence of the energy of the particle positions, the evolutionary convergence of the related gradient flows for $\ell_n$ sufficiently large, and the non-convergence of the gradient flows for $\ell_n$ sufficiently small.
\end{abstract}

\maketitle

\noindent \textbf{Keywords}: {Particle system, Discrete-to-continuum asymptotics, $\Gamma$-convergence, Gradient flows.} \\
\textbf{MSC}: {
  74Q05, % Homogenization in equilibrium problems 
  35A15, % PDE: variational methods
  74G10 % Analytic approximation of solutions (perturbation methods, asymptotic methods, series, etc.)
}
%82B21 % continuum models 
%82D35: application to metals
%35A15: PDE: variational methods

\section{Introduction}
\label{sec:intro}

\newcommand{\Enu}[1]{\tilde E^{#1}}
\newcommand{\Enitau}{\tilde E^{(\operatorname{int})}}
\newcommand{\En}[1]{ E_n^{#1}}
\newcommand{\Enita}{ E_n^{(\operatorname{int})}}
\newcommand{\Enmix}{ \hat E_n^{+-}}
\newcommand{\E}[1]{ E^{#1}}
\newcommand{\Eita}{ E^{(\operatorname{int})}}
\newcommand{\Egita}{ \hat E^{(\operatorname{int})}}
\newcommand{\eita}{ \psi }
\newcommand{\eenita}{ \eita_{\varepsilon \alpha_n} }
\newcommand{\elita}{ \eita_\ell }
\newcommand{\emita}{ \eita_m }
\newcommand{\einfita}{ \eita_\infty }
\newcommand{\emkita}{ \eita_{m_k} }
\newcommand{\Glimf}{ \operatorname*{\Gamma-lim\,inf} }

Plasticity of metals is facilitated by many \emph{dislocations} (i.e., line defects in the crystallographic lattice) interacting on a small length-scale. Since it is undesirable and computationally heavy to model plasticity by keeping track of individual dislocations, there is a large community which develops models for the dislocation density; see, e.g., \cite{GromaBalogh99, 
GromaCsikorZaiser03, 
HochrainerZaiserGumbsch07,
KooimanHuetterGeers15}
for several such models. However, due to the complexity of the dislocation interactions, these models lack a mathematically precise connection with the dislocation interactions on the microscale. In this paper we seek such a connection for a family of simplified models for interacting dislocations, parametrised by two parameters. We both prove such connections for certain scaling regimes of the parameters (Theorem \ref{thm:main} and Theorem \ref{t:dyncs}), and identify \emph{non-convergence} in other scaling regimes (Proposition \ref{pp:sep} and Proposition \ref{pp:Esep}).

The simplified model for the interacting dislocations which we consider in this paper is a one-dimensional interacting particle system with two species (see \S \ref{ss:PS}). Interacting particle systems with multiple species are of rapidly increasing interest; see, e.g., 
\cite{ChapmanXiangZhu15, 
DiFrancescoFagioli13, 
BerendsenBurgerPietschmann16ArXiv, 
EversFetecauKolokolnikov16ArXiv} 
and the references therein for applications to dislocation networks, cellular aggregation, granular media, pedestrian movement, opinion formation and predator-pray models. A common challenge in these particle systems is the passage to the many-particle limit. The complexity of this limit passage lies in the high sensitivity of the particle system on the type of interactions between particles of the same species and the type of interactions between particles of different species. Our aim is therefore to impose minimal assumptions on the interaction potential.

\subsection{The particle system}
\label{ss:PS} 

The starting point of our analysis is a more general version of the one-dimensional particle system posed in \cite{DoggePeerlingsGeers15b}. In \S \ref{s:appl} we describe how the system in \cite{DoggePeerlingsGeers15b} models interacting dislocations. Here, the state of the particle system is characterised by a one-dimensional chain of $n^+ \in \N$ positive particles and $n^- \in \N$ negative particles with positions
\begin{equation} \label{for:defn:Omeganpm}
  x^{n,\pm} 
  := \big(x_1^\pm, \ldots, x_{n^\pm}^\pm \big)^T
  \in \Omega_n^\pm 
  := \big\{ y \in \R^{n^\pm} : 0 \leq y_1 < \ldots < y_{n^\pm} \leq 1 \big\},
\end{equation}
where $n := n^+ + n^-$ is the total number of particles. Figure \ref{fg:PS} illustrates an example.

\begin{figure}[h]
\centering
\begin{tikzpicture}[scale=0.7, >= latex]
	\draw (0,-0.3) node [anchor = north west] {$x_1^+$}; 
	\draw (2,-0.3) node [below] {$x_2^+$};  
	\draw (5,-0.3) node [below] {$x_3^+$};  
	\draw[gray] (6,0.3) node [above] {$x_1^-$};
	\draw[gray] (9,0.3) node [above] {$x_2^-$};
	\draw (13,-0.3) node [below] {$x_4^+$};
	\draw[gray] (16,0.3) node [above] {$x_3^-$};
	\draw[very thick] (0,-1.2) node [below] {$0$} -- (0,1.2);
	\draw (0,0) -- (18,0);
	\draw[very thick] (18,-1.2) node [below] {$1$} -- (18,1.2);
	\fill[pattern = north east lines] (-1, -1.2) rectangle (0, 1.2);
	\fill[pattern = north east lines] (18, -1.2) rectangle (19, 1.2);	
	     
    \foreach \x in {0,2,5,13}
      {
      \fill (\x, 0) circle (0.25);
      }
    \foreach \x in {6,9,16}
      {
      \fill[gray] (\x, 0) circle (0.25);
      }
\end{tikzpicture}
\caption{Example of $x^{n,+}$ (black) and $x^{n,-}$ (gray) for $n^+ = 4$ and $n^- = 3$.}\label{fg:PS}
\end{figure}
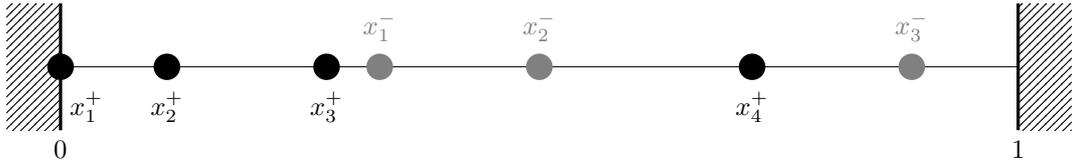

We consider the energy $E_n : \Omega_n^+ \times \Omega_n^- \to \R$ given by
\begin{align} \notag
    E_n (x^{n,+}, x^{n,-})
	&= \frac1{n^2} \sum_{i=1}^{n^+} \sum_{j = 1}^{i-1} \alpha_n V \big( \alpha_n (x_i^+ - x_j^+) \big)
        + \frac1{n^2} \sum_{i=1}^{n^-} \sum_{j = 1}^{i-1} \alpha_n V \big( \alpha_n (x_i^- - x_j^-) \big) \\\label{for:defn:En}
        &\quad+ \frac1{n^2} \sum_{i=1}^{n^+} \sum_{j = 1}^{n^-} \alpha_n W \big( \alpha_n (x_i^+ - x_j^-) \big)
        + \gamma_n^2 \frac1n \sum_{i = 1}^{n^+} x_i^+
        + \gamma_n^2 \frac1n \sum_{i = 1}^{n^-} (1 - x_i^-).
\end{align}
The parameter $\gamma_n \geq 0$ regulates the strength of the affine external potential (which models a constant applied force on the particles), which favours the positive particles to cluster at the left barrier at $x = 0$ and the negative particles to cluster at the right barrier at $x = 1$. The parameter $\alpha_n = \ell_n^{-1} > 0$ is the inverse of the length-scale of the decay of the interactions between particles. The corresponding interaction potential for particles of the same type is given by $V : \R \to (-\infty, \infty]$, and $W : \R \to \R$ denotes the interaction potential for particles of opposite type. Figure \ref{fig:VW} illustrates prototypical examples for $V$ and $W$. Minimal assumption on $V$ and $W$ are that $W \in L^\infty (\R)$, and $V$ is bounded from below on $\R$, bounded from above on compact sets of $\R \setminus \{0\}$ while $V(r) \to \infty$ as $r \to 0$. The singularity of $V$ at $0$ and the boundedness properties of $V$ and $W$ prevent the particles from clustering, which therefore results in a nontrivial interplay with the external loading term. We leave the precise assumptions on $V$ and $W$ to Theorem \ref{thm:main} and Theorem \ref{t:dyncs}.

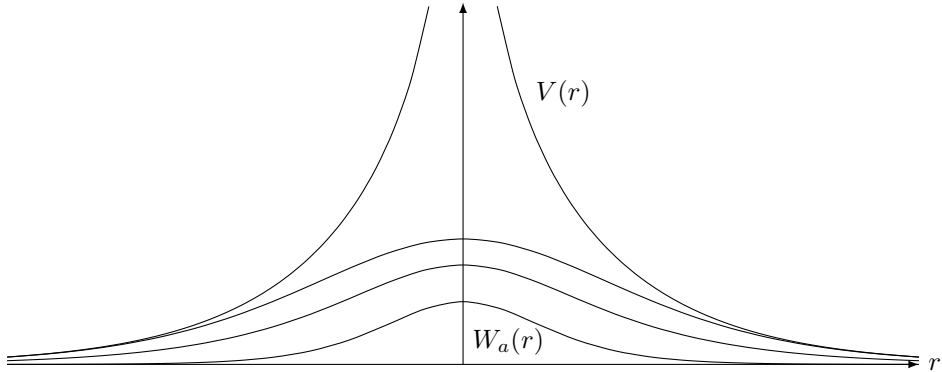
\begin{figure}[h]
\centering
\begin{tikzpicture}[scale=1.2, >= latex]    
\def \w {5}
        
\draw[->] (-\w,0) -- (\w,0) node[right] {$r$};
\draw[->] (0,0) -- (0,4);
\foreach \y in {90,120,180}
{
\draw[domain=-\w:\w, smooth] plot (\x,{ -\x * sinh(\x) / (cosh(\x) - cos(\y)) + ln(cosh(\x) - cos(\y)) + ln(2) });
}
\draw[domain=0.375:\w, smooth] plot (\x,{ \x * sinh(\x) / (cosh(\x) - 1) - ln(cosh(\x) - 1) - ln(2) });
\draw[domain=-\w:-0.375, smooth] plot (\x,{ \x * sinh(\x) / (cosh(\x) - 1) - ln(cosh(\x) - 1) - ln(2) });
%\draw[thick] (-0.2,1.3863) -- (0.2,1.3863) node[midway, anchor = south west] {$W_a(r)$}  node[midway, anchor = south east]{\footnotesize $-\log 4$};
%\draw[thick] (-0.2,0.6931) -- (0.2,0.6931) node[midway, anchor = south east]{\footnotesize $-\log 2$}; 
%\draw[thick] (-0.2,1.3170) -- (0.2,1.3170) node[midway, anchor = north east]{$-\log (2 - \sqrt 3)$};
\draw (1.1,3) node {$V(r)$};
\draw (0,0) node[anchor = south west] {$W_a(r)$};
\end{tikzpicture} \\
\caption{Plots of $V$ and three choices for $W$ (labelled $W_a$ for $a = 0, \tfrac12, 1$, from bottom to top) corresponding to dislocation walls (see \eqref{for:defn:V} and \eqref{for:defn:W} for the precise expressions).}
\label{fig:VW}
\end{figure}

\subsection{The parameters $\alpha_n$ and $\gamma_n$ in the single-species case}
\label{ss:al:ga}

Even in the single-specie case ($n^- = 0$), the asymptotic behaviour of the parameters $\alpha_n$ and $\gamma_n$ as $n \to \infty$ is crucial for the features of the many-particle limit. These many particle-limits were established first in \cite{GeersPeerlingsPeletierScardia13} on the half-infinite domain $[0, \infty)$ with $\gamma_n = 1$ to keep the particles confined. The corresponding energy reads
\begin{align} \notag
    \tilde E_n(x^n) = \frac1{n^2} \sum_{i=1}^{n} \sum_{j = 1}^{i-1} \alpha_n V \big( \alpha_n (x_i - x_j) \big) + \frac1n \sum_{i = 1}^{n} x_i.
\end{align}
Depending on the asymptotic behaviour of $\alpha_n$, five different limiting energies for the particle density were derived. Two of them are characterised by $\alpha_n \to \alpha > 0$ and $\tfrac1n \alpha_n \to \alpha > 0$ as $n \to \infty$, and the other three treat the case in which $\alpha_n$ is either asymptotically larger or smaller then any of these two limiting cases. 

Many particle systems in the literature fit to $\alpha_n \to \alpha >0$, which corresponds to a setting where the length-scale of $V$ is proportional to the length-scale of the support of the particle density. In particular, if the particles remain in a bounded set, then $\tilde E_n$ is independent of the tails of $V$. The many-particle limit of such systems is studied extensively; see, e.g., \cite{Schochet96, 
PetracheSerfaty14,
Duerinckx16,
Hauray09,
MoraPeletierScardia14ArXiv,
GarroniLeoniPonsiglione10,
CanizoPatacchini16ArXiv}
for a wide range of potentials, corrector estimates, gradient flows, and higher dimensional domains.

Another important class of particle systems corresponding to $\tfrac1n \alpha_n \to \alpha >0$ is given by atoms interacting by a Lennard-Jones potential, and is therefore studied in many different contexts \cite{Hudson13, HudsonOrtner14, BraidesGelli04, BlancLeBrisLions07, HallHudsonVanMeurs16ArXiv, ForcadelImbertMonneau09}. One characteristic of the related particle system is that the length-scale of $V$ is proportional to the length-scale of neighbouring particles, which typically scales as $\tfrac1n$ times the length-scale of the support of the particle density. As a result, $\tilde E_n$ is hardly sensitive to changes in the singularity of $V$ at zero, while the tail behaviour of $V$ shapes the limiting energy. Consequently, the proofs that establish the many-particle limit in this scaling regimes differ completely from the regime in which $\alpha_n \to \alpha >0$.

The limiting energy of the intermediate regime $1 \ll \alpha_n \ll n$ has the porous medium equation as its gradient flow (see \cite{Oelschlager90} for $V$ regular enough). The minimisers exhibits intricate boundary layers \cite{HallChapmanOckendon10, GarroniVanMeursPeletierScardia16}, and again the proof for the many-particle limit differs substantially. A more detailed discussion on all scaling regimes of $\alpha_n$ and their connection is given in \cite{GeersPeerlingsPeletierScardia13, ScardiaPeerlingsPeletierGeers14}.

In \cite{VanMeursMunteanPeletier14} the single-type scenario in \cite{GeersPeerlingsPeletierScardia13} is extended by considering finite domains. The corresponding energy is given by $E_n$ as in \eqref{for:defn:En} with $n^- = 0$. The main result in \cite{VanMeursMunteanPeletier14} is that the asymptotic behaviours of $\alpha_n$ and $\gamma_n$ can be treated independently in the many-particle limit, given a convenient rescaling of the particle positions (and $\tfrac1n \alpha_n$ bounded). Consequently, the asymptotic behaviour of $\alpha_n$ has the same effect on the interactions in the limit $n \to \infty$ as in \cite{GeersPeerlingsPeletierScardia13}. Moreover, the asymptotic behaviour of $\gamma_n$ determines whether the particles in the limit $n \to \infty$ are confined by either the finite domain ($\gamma_n \to 0$), the external force ($\gamma_n \to \infty$), or both effects ($\gamma_n \to \gamma > 0$).

\subsection{Connection to the literature on multiple-species models}

While there are many results in the literature on many-particle limits of $E_n$ as in \eqref{for:defn:En} for the single-type scenario (see \S \ref{ss:al:ga}), few results exist on many-particle limits of multiple specie models. In \cite{DiFrancescoFagioli16, Zinsl16} a stability result is proven for solutions of the continuum gradient flow related to $\alpha_n \to \alpha > 0$. Since these results require $V$ to be regular (in particular $V(0) < \infty$), the discrete gradient flow satisfies the same weak equation as the continuum gradient flow, and thus the stability result includes the many particle limit. Since we wish to include potentials $V$ that are singular at $0$, we need to seek other proof methods.

In \cite{ChapmanXiangZhu15} the upscaling of many dislocation dipoles (pairs of positive and negative dislocations) is studied in a dynamical setting with formal asymptotic techniques. While their setting relates to ours by setting $V = -\log$ and $W_n \to -V$, the unboundedness of $W_n(0)$ as $n \to \infty$ results in intricate effects (e.g., a fast time-scale describing dipole formation) which we do not address here. 

Therefore, in the proof of Theorem \ref{thm:main}, we rely on the methods developed in the literature of single-type scenarios. However, in the cases when $\alpha_n \to \infty$, we need to develop a new proof strategy, because the techniques in \cite{BraidesGelli04} and \cite{GeersPeerlingsPeletierScardia13} do not apply. This strategy seem flexible for extensions to higher dimensions, and is therefore one of the main contributions of this paper.

\subsection{Theorem \ref{thm:main}: $\Gamma$-convergence of $E_n$}

Similarly to \cite{VanMeursMunteanPeletier14}, we identify the vectors $x^{n, \pm}$ by their empirical distribution 
\begin{equation} \label{for:defn:munpm}
    \mu_n^\pm := \frac1n \sum_{j=1}^{n^\pm} \delta_{x_j^\pm},
\end{equation}
which have total mass $n^\pm / n \in [0,1]$. We say that $\mu_n^\pm$ converges in the \emph{narrow topology} to a measure $\mu^\pm$ if
\begin{equation} \label{for:defn:narrow:conv}
    \int_0^1 \varphi \, d \mu_n^\pm 
    \xto{ n \to \infty } \int_0^1 \varphi \, d\mu^\pm
    \quad \text{for all } \varphi \in C ([0,1]),
\end{equation}
where the integrals are taken over the closed interval $[0,1]$.

To state the main result (Theorem \ref{thm:main}) we extend $E_n$ to apply to measures by setting
\begin{equation*} %\label{for:ene:mun}
    E_n (\mu^+, \mu^-) = \left\{ \begin{aligned}
      &E_n (x^{n,+}, x^{n,-}), 
      &&\text{if } \mu^\pm = \frac1n \sum_{j=1}^{n^\pm} \delta_{x_j^\pm}, \\
      &\infty, 
      &&\text{otherwise.}
    \end{aligned} \right.
\end{equation*}

\begin{thm}[$\Gamma$-convergence] \label{thm:main}
Let $\gamma_n \to \gamma \in [0,\infty)$ and $\alpha_n > 0$ be such that either $\alpha_n \rightarrow \alpha$, $1 \ll \alpha_n \ll n$ or $\tfrac1n {\alpha_n} \rightarrow \alpha$as $n \to \infty$ for some $\alpha  \in (0, \infty)$. Depending on the scaling regime of $\alpha_n$, let $V$, $W$ and $\Eita$ be as in Table \ref{tab:Eita}. Then $E_n$  $\Gamma$-converges with respect to the narrow topology to
\begin{equation*}
    E (\mu^+, \mu^-) := \Eita (\mu^+, \mu^-) + \gamma^2 \int_0^1 x \, d\mu^+(x) + \gamma^2 \int_0^1 (1-x) \, d\mu^-(x).
\end{equation*}
\end{thm}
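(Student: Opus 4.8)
The plan is to prove the two halves of $\Gamma$-convergence separately — the $\liminf$ inequality along an arbitrary narrowly convergent sequence, and the existence of a recovery sequence — reducing the like-sign interactions to the single-species results recalled in \S\ref{ss:al:ga} and disposing of the mixed interaction and the external loading by softer continuity arguments. First I would remove the external terms: since $x\mapsto x$ and $x\mapsto 1-x$ lie in $C([0,1])$ and $\gamma_n\to\gamma$, the two loading terms in \eqref{for:defn:En} converge continuously along any narrowly convergent sequence to $\gamma^2\int_0^1 x\,d\mu^+ + \gamma^2\int_0^1(1-x)\,d\mu^-$, so it suffices to prove $\Gamma$-convergence of the interaction part $\Enita$ to $\Eita$. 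I would also record the basic a priori estimate: if $\sup_n E_n(\mu_n^+,\mu_n^-)<\infty$ then, using that $V$ is bounded below and $W\in L^\infty$, the masses $n^\pm/n$ stay bounded, a subsequence of $(\mu_n^+,\mu_n^-)$ converges narrowly ($[0,1]$ being compact), and the blow-up $V(r)\to\infty$ as $r\to0$ forces the limits $\mu^\pm$ to be non-atomic — in the regimes $\alpha_n\to\infty$ in fact absolutely continuous, with the relevant norm of the density controlled by $\liminf E_n$.

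Next I would split $\Enita = E_n^{++}+E_n^{--}+\Enmix$ into the two like-sign self-energies and the mixed energy $\Enmix=\iint\alpha_n W(\alpha_n(x-y))\,d\mu_n^+(x)\,d\mu_n^-(y)$, and treat $\Enmix$ on its own. Narrow convergence $\mu_n^\pm\to\mu^\pm$ on $[0,1]$ gives $\mu_n^+\otimes\mu_n^-\to\mu^+\otimes\mu^-$ narrowly on $[0,1]^2$; in the regime $\alpha_n\to\alpha$ the kernel $\alpha_n W(\alpha_n(x-y))$ converges uniformly on $[0,1]^2$ (here $W$ is continuous and bounded by Table~\ref{tab:Eita}), so $\Enmix$ converges continuously and both the $\liminf$ and $\limsup$ for this term are automatic. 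In the regimes $\alpha_n\to\infty$ the rescaled kernel either concentrates on the diagonal $\{x=y\}$ or disperses, and using $W\in L^\infty$ together with the decay of $W$ prescribed in Table~\ref{tab:Eita} one identifies $\lim\Enmix$ as the local term recorded there — vanishing when $\supp\mu^+\cap\supp\mu^-=\emptyset$, and otherwise controlled near the interface because the number of $O(\ell_n)$-close $+/-$ pairs is governed by the densities.

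For the like-sign terms, each $E_n^{\pm\pm}$ is the single-species finite-domain energy of \cite{VanMeursMunteanPeletier14} evaluated at $\mu_n^\pm$ (with $\gamma_n=0$); the restriction $j<i$ in \eqref{for:defn:En} is precisely what keeps the diagonal singularity out, so $E_n^{\pm\pm}=\tfrac12\iint_{\{x\neq y\}}\alpha_n V(\alpha_n(x-y))\,d\mu_n^\pm\,d\mu_n^\pm$. In the regime $\alpha_n\to\alpha$ the $\liminf$ inequality follows from lower semicontinuity of this off-diagonal double integral (portmanteau for the lsc, bounded-below integrand over the open set $\{x\neq y\}$, using that the non-atomic $\mu^\pm$ charges the diagonal with zero $\mu^\pm\otimes\mu^\pm$-mass), and a recovery sequence is obtained by placing the $n^\pm$ particles at the quantiles of a density-bounded, boundary-avoiding approximation of $\mu^\pm$, the large innermost-neighbour contribution being absorbed by the assumed growth rate of $V$ near $0$; a standard approximation and diagonalization argument then removes the restriction on $\mu^\pm$. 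In the regimes $\alpha_n\to\infty$ I would instead transcribe the corresponding single-species $\Gamma$-limits ($\int(\rho^\pm)^2$-type for $1\ll\alpha_n\ll n$, the density functional of Lennard-Jones type for $\tfrac1n\alpha_n\to\alpha$).

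The hard part — and the place where the techniques of \cite{BraidesGelli04} (nearest-neighbour) and \cite{GeersPeerlingsPeletierScardia13} (half-line, $\gamma_n\equiv 1$) do not transfer — will be precisely these regimes $\alpha_n\to\infty$ combined with singular $V$: the interaction range $\ell_n=\alpha_n^{-1}$ sits strictly between the domain scale $1$ and the particle scale $1/n$, so $E_n^{\pm\pm}$ is genuinely an integral of $\alpha_n V(\alpha_n\cdot)$ against $O(n/\alpha_n)$ neighbours, and one must simultaneously control the large innermost contribution and match the smooth bulk. I would handle this by a localization argument: partition $[0,1]$ into blocks on which $\rho^\pm$ is almost constant, on each block compare the discrete energy with that of an equispaced reference chain of the right density via the monotone ordering of the particles, estimate the discrepancy by the modulus of continuity of $\rho^\pm$ together with boundary corrections between blocks, and sum. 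This localized comparison is the new ingredient; because it uses the one-dimensional structure only through the ordering of particles, it is the step I would expect to carry over to higher dimensions. Combining this with Steps on $\Enmix$ and the like-sign terms, the approximation/diagonalization for recovery sequences, and the continuity of the external terms, yields the $\Gamma$-convergence of $E_n$ to $E$.
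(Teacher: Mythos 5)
Your plan for the regime $\alpha_n \to \alpha$ coincides with the paper's (mixed term as a continuous perturbation, like-sign terms reduced to the single-species result), but there is a genuine gap in both regimes with $\alpha_n \to \infty$, where you propose to treat the mixed energy ``on its own'' and to transcribe the single-species limits for the like-sign parts. In the regime $1 \ll \alpha_n \ll n$ this strategy cannot deliver the liminf inequality: along a general sequence $\mu_n^\pm \weakto \mu^\pm$ the value of the mixed term $\iint \alpha_n W(\alpha_n(x-y))\, d(\mu_n^+\otimes\mu_n^-)$ is \emph{not} determined by the limit densities — segregating the two species into blocks of size intermediate between $\alpha_n^{-1}$ and $1$ drives it far below the cross term while leaving the narrow limits unchanged — so ``identifying $\lim$ of the mixed term as the local term in Table \ref{tab:Eita}'' is not available, and summing separate liminfs of the three pieces gives a lower bound missing $2\bigl(\int_0^\infty W\bigr)\int_0^1\rho^+\rho^-$. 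The paper's proof of Theorem \ref{thm:Gconv:L3} works precisely because it refuses this decoupling: it splits $V_n=(V_n-W_n)+W_n$ inside the like-sign sums so that the $W_n$-part recombines into the self-interaction of the \emph{combined} measure $(\mu_n^++\mu_n^-)\otimes(\mu_n^++\mu_n^-)$, and then applies the single-species lower bound of Lemma \ref{lem:Gconv:single:L3} once with $V-W$ and once with $W$; this is exactly where the Fourier-positivity hypotheses of Assumption \ref{ass:VW:L3} enter, to rule out energy-lowering microstructure by guaranteeing that any loss in the mixed term is compensated in the like-sign terms. Your outline never invokes these hypotheses, and without them (or an equivalent positivity mechanism) the liminf inequality fails to follow from your block-comparison argument.

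In the regime $\tfrac1n\alpha_n\to\alpha$ the proposal misses the very form of the limit: the interaction part is not a pair of single-species functionals plus an explicitly identified mixed contribution, but $\int_0^1 \psi(\rho^+(x),\rho^-(x))\,dx$ with $\psi$ defined only implicitly as the limit of the two-species cell problems \eqref{for:defn:eita}; $\psi(\sigma^+,\sigma^-)$ encodes the optimal local interlacing of the two species and does not decouple into a $V$-part and a $W$-part. Your localized comparison with an equispaced reference chain on blocks where $\rho^\pm$ is nearly constant is in the spirit of the paper's cell decomposition, but without first establishing that the cell-problem limit exists, is continuous, and converges continuously in its parameters (Lemma \ref{lem:props:eita}) — which is the bulk of the paper's work in this regime, both for the lower bound via independent cells and for the recovery sequence built by concatenating cell minimisers — neither half of the $\Gamma$-convergence can be completed, and ``transcribing'' the single-species $\Gamma$-limit would in fact produce the wrong limit functional.
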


\begin{table}[b]
\centering 
\begin{tabular}{lll}
  \toprule
  % after \\: \hline or \cline{col1-col2} \cline{col3-col4} ...
  Regime & $V, W$ satisfy & $\Eita (\mu^+, \mu^-)$ 
  %& index 
  \\
  \midrule
  $\alpha_n \rightarrow \alpha$ 
  & Assumption \ref{ass:VW:L2}
  & $\begin{aligned}
    &\frac12 \iint_{[0,1]^2} \alpha V (\alpha (x-y)) \, d\big( \mu^+ \otimes \mu^+ + \mu^- \otimes \mu^- \big)(x,y) \\
       &+ \iint_{[0,1]^2} \alpha W \big( \alpha ( x - y ) \big) \, d( \mu^+ \otimes \mu^- )(x,y)
  \end{aligned}$ 
  %& 2
  \\ \midrule
  $1 \ll \alpha_n \ll n$ 
  & Assumption \ref{ass:VW:L3}
  & $\begin{aligned} 
     &\bigg( \int_0^\infty V \bigg) \int_0^1 \big( \rho^+ (x)^2 + \rho^- (x)^2 \big) \, dx \\
     &+ \bigg( \int_0^\infty W \bigg) \int_0^1 2 \rho^+ (x) \rho^- (x) \, dx
     \end{aligned}$ 
  %& 3
  \\ \midrule
  $\dfrac{\alpha_n}n \rightarrow \alpha$ 
  & Assumption \ref{ass:VW:L4}
  & $\displaystyle \int_0^1 \eita \big( \rho^+ (x), \rho^- (x) \big) \, dx,$
  %& 4
  \\
  \bottomrule
\end{tabular}
\caption{Expressions for $\Eita$, the interaction part of the limit energy in Theorem \ref{thm:main}. For $\alpha_n \gg 1$ the expressions are valid when $\mu^+$ and $\mu^-$ are absolutely continuous (see \eqref{for:defn:abs:cont}) with density $\rho^+, \rho^- \in L^2(0,1)$ respectively; otherwise $\Eita (\mu^+, \mu^-) = \infty$. The function $\eita \in C([0, \infty)^2)$ is implicitly determined as a limit of cell-problems \eqref{for:defn:eita}.}
\label{tab:Eita}
\end{table}

\subsection{Comments on Theorem \ref{thm:main}}

\emph{Compactness}. Compactness is for free since the space of non-negative Borel measures on $[0,1]$ with total variation bounded by $1$ is compact with respect to the narrow topology. 
%Since $[0,1]$ is compact, the narrow topology is metrized by the $p$-Wasserstein distance for any $1 \leq p < \infty$ \cite[Prop.~7.1.5]{AmbrosioGigliSavare08}, and thus the $\Gamma$-convergence statement of Theorem \ref{thm:main} also holds with respect to the $p$-Wasserstein distance.
\smallskip

\emph{Assumptions on $V$ and $W$}. The assumptions on $V$ and $W$ are such that the case of dislocation walls in \S \ref{s:appl} is covered for all scaling regimes of $\alpha_n$ as in Table \ref{tab:Eita}. Moreover, the assumptions on $V$ extend the setting in \cite{GeersPeerlingsPeletierScardia13}. In particular, we relax convexity of $V |_{(0,\infty)}$.
\smallskip

\emph{Proof strategy}. In all scaling regimes of $\alpha_n$, we pass to the limit $n \to \infty$ in the setting of measures \eqref{for:defn:munpm}. While this is a common approach when $\alpha_n \to \alpha > 0$, most literature on many-particle limits for $\tfrac1n \alpha_n \to \alpha > 0$ deals with the displacement function $u_n : [0,1] \to [0,1]$, which maps the reference lattice of equispaced points $\tfrac1n, \tfrac2n, \ldots, 1$ to $x_1, x_2, \ldots, x_n$. In this approach, the argument for the many-particle limit relies on the ordering of the particles by 
\begin{equation} \label{fd:un}
  u_n'(s) = n (x_{i+1} - x_i)
  \quad \text{with $i$ such that } x_i < s \leq x_{i+1}. 
\end{equation}
While we assume $x_n^+$ and $x_n^-$ to be ordered vectors, there is no natural ordering in the combined vector $x^n$, and it is therefore not clear how to write $x_i^+ - x_j^-$ conveniently in terms of $u_n^+$ and $u_n^-$. Moreover, while the expressions for $\Eita$ as in Table \ref{tab:Eita} describing the interactions between particles of the same type can be conveniently cast in terms of a displacement map $u^\pm$ \cite{GeersPeerlingsPeletierScardia13}, it is unclear how the terms describing the interactions between particles of opposite type can be written in terms of $u^+$ and $u^-$ (except for the case $\alpha_n \to \alpha > 0$). Nonetheless, our proof is much inspired by \cite{BraidesGelli04}, in which $E_n$ is approximated by a sum of cell problems.

As a consequence of using the setting of measures, where the ordering of the particles is not inherently used, a generalisation to higher dimensions is within reach. We keep the present setting one-dimensional to simplify the arguments and to cover the model of dislocations described in \S \ref{s:appl}.
\smallskip

\emph{Other scaling regimes}. Several scaling regimes of the parameters $\gamma_n$ and $\alpha_n$ are excluded in Theorem \ref{thm:main}. We comment on the meaning of these scaling regimes and the corresponding many-particle limit of $E_n$:
\begin{itemize}
  \item the scaling regime $\gamma_n \to \infty$ corresponds to a scenario in which the external forcing term is large enough to separate the positive particles from the negative particles into pile-ups at the barriers, whose length scale is asymptotically smaller than $L_n$. Consequently, the scaling introduced in \eqref{for:defn:Omeganpm} does not conserve information on the particle distribution in the limit $n \to \infty$. In \S \ref{s:gninf}, we introduce a different scaling (equivalent to the scaling in \cite{GeersPeerlingsPeletierScardia13}), and prove a $\Gamma$-convergence result on the corresponding energy (see Theorem \ref{thm:Gconv:gninf}). The $\Gamma$-limit $\tilde E$ decouples the dependence on $\mu^+$ and $\mu^-$, i.e. 
\begin{equation} \label{f:Gl:decou}
  \tilde E(\mu^+, \mu^-) = \tilde E^+(\mu^+) + \tilde E^-(\mu^-),  
\end{equation}  
where $\tilde E^\pm$ are equivalent to the energy studied in \cite{GeersPeerlingsPeletierScardia13} except for $\mu^\pm$ not having unit mass;
  
  \item the scaling regime $\alpha_n \to 0$ treats the case in which all particle interactions are given by the asymptotic behaviour of the singularity of $V$ at $0$. Consequently, any useful scaling of the energy depends on the type of singularity. For a logarithmic singularity of $V$, it is shown in \cite{VanMeursMunteanPeletier14} that the scaling
  \begin{equation*} %\label{f:En:resc:log}
    \frac{1}{ \alpha_n } E_n (x^{n,+}, x^{n,-}) + \frac{ (n^+)^2 + (n^-)^2 - n }{2 n^2} \log \alpha_n
  \end{equation*} % a.10.1
  results in a non-trivial $\Gamma$-limit whenever $n^- = 0$. It is easy to extend this result to the case of mixed particles under the assumption that $W$ is continuous at $0$. Indeed, since $|x_i^+ - x_j^-| \leq 1$, it holds that
  \begin{equation*}
    W \big( \alpha_n (x_i^+ - x_j^-) \big)  
    \xto{n \to \infty} W(0)
  \end{equation*}
  uniformly in $i$ and $j$. Hence, the interactions between positive and negative particles are a continuous perturbation to the energy, and their contribution converges to a constant in the $\Gamma$-limit. Thus, the $\Gamma$-limit effectively decouples as in \eqref{f:Gl:decou}.
  
Other types of singularities of $V$ need to be dealt with in a slightly different way. Consider for example $V(r) = r^{-s}$ with $0 < s < 1$. Then, the right scaling of the energy is $\alpha_n^{s-1} E_n (x^{n,+}, x^{n,-})$, and the contribution of the interactions between particles of opposite type vanishes in the limit $n \to \infty$. %Since a main feature of this paper is to consider a general class of interaction potentials $V$ and $W$, the sensitivity of the singularity of $V$ to the scaling of the energy and the $\Gamma$-limit motivates us to exclude more details;
  
  \item the scaling regime $\alpha_n \gg n$ describes the opposite effect of $\alpha_n \to 0$, i.e., all particle interactions are described by the tail-behaviour of $V$ instead. Again, any useful scaling of the energy depends on a detailed description of these tails. For example, $V$ and $W$ given by \eqref{for:defn:V} and \eqref{for:defn:W} have tails which decay exponentially fast. To preserve the effect of the interactions in $E_n$ in the limit $n \to \infty$, we need an exponentially large prefactor to the energy. For $n^- = 0$ this regime is treated in full detail in \cite{VanMeursMunteanPeletier14}. % the analysis of which requires special treatment. Since this parameter regime falls outside of the physical parameter regime of dislocation walls, and since we do not have any other physical application in mind which corresponds to this parameter regime, we omit its asymptotic analysis.
\end{itemize}

\subsection{Asymptotic behaviour of the gradient flows of $E_n$}

To treat the many-particle limit of the gradient flow dynamics, we introduce an alternative representation for the particle positions. Given $n^\pm$ and $x^{n,\pm}$, we collect the particle positions $x_i^\pm$ in an ordered vector $x^n \in [0,1]^n$, i.e., $0 \leq x_1 \leq \ldots \leq x_n \leq 1$, and keep track of their sign by a vector $b^n := (b_1, \ldots, b_n) \in \{-1,1\}^n$. There is an obvious isomorphism between $x^n, b^n$ and $x^{n,\pm}$, and thus we switch between both descriptions whenever convenient.

The gradient flow of $E_n$ is given by
\begin{equation} \label{fd:GFn}
  \left\{ \begin{aligned}
	\frac d{dt} x^n (t) &= - n \nabla E_n(x^n (t)),
	&& t > 0, \\
	x^n(0) &= x_\circ^n. &&
	\end{aligned} \right.
\end{equation}
where $x_\circ^{n, \pm} \in \Omega_n^\pm$ is a suitable initial condition. Given Theorem \ref{thm:main}, it is natural to investigate the possibility to pass to the limit $n \to \infty$ in \eqref{fd:GFn}. Such limit passage is obtained in \cite{VanMeursMuntean14} in the single-type case with $V |_{(0,\infty)}$ convex. However, since we consider non-convex $W$, we need to resort to different methods. To this aim, we discuss several evolutionary convergence techniques in the literature:
\begin{enumerate}
  \item if $E_n$ is $\lambda$-convex for an $n$-independent $\lambda$, then the theory in \cite[Chap.~4]{AmbrosioGigliSavare08} applies, and the evolutionary convergence method in \cite[Thm.~2.17]{DaneriSavareLN10} is within reach. We show in \S \ref{s:dyn} that, under mild conditions on $V$ and $W$, $E_n$ is $\lambda_n$-convex with $\mathcal O(-\lambda_n) = \alpha_n^3$. Then, in the scaling regime $\alpha_n \to \alpha > 0$, we show how \eqref{fd:GFn} fits to \cite[Thm.~2.17]{DaneriSavareLN10}. Theorem \ref{t:dyncs} states the evolutionary convergence result of \eqref{fd:GFn};
  \item the general framework by \cite{SandierSerfaty04} requires a characterisation of slopes or upper gradients of $E_n$ and $E$ with a related liminf-inequality. This characterisation is difficult to prove, except for the $\lambda$-convex case (see \cite{Ortner05}), which is considered above;
  \item if $x V'(x)$ is bounded and $\alpha_n \to \alpha > 0$, then Schochet's symmetrisation argument in \cite{Schochet96} can be used to pass to the limit in the weak form of \eqref{fd:GFn}. We apply this method at the end of \S \ref{s:dyn} to describe the limiting gradient flow by a PDE (see \eqref{f:GF});
%  \item the specialised methods \cite{Hauray09} and \cite{Duerinckx16} are developed for single-type particles in $d$ dimensions ($d \geq 1$) with $\alpha_n = 1$ and $V$ having a homogeneous singularity. We have not tried to modify these methods to establish evolutionary convergence of \eqref{fd:GFn};
  \item the scaling regimes $1 \ll \alpha_n \ll n$ and $\tfrac 1n \alpha_n \to \alpha > 0$ are treated in \cite{Oelschlager90} for the single-type case with strong regularity conditions on $V$, including $V (0) < \infty$. The proof method appears difficult to extend to unbounded $V$, let alone the extension to multiple species.
\end{enumerate}

Using the first method listed above, we prove evolutionary convergence of \eqref{fd:GFn} in Theorem \ref{t:dyncs} in the scaling regime $\alpha_n \to \alpha > 0$. The limiting gradient flow for $\alpha = 1$ is given by
\begin{equation} \label{f:GF}
\left\{ \begin{aligned}
    \frac{ \partial \mu^+}{ \partial t }
  &= \big( \mu^+ \big[ (V_\alpha * \mu^+)' + (W_\alpha * \mu_n^-)' + \gamma^2 \big] \big)'
  && \text{in } \mathcal D'((0,\infty) \times (0,1)), \\
  \frac{ \partial \mu^-}{ \partial t }
  &= \big( \mu^- \big[ (V_\alpha * \mu^-)' + (W_\alpha * \mu_n^+)' - \gamma^2 \big] \big)'
  && \text{in } \mathcal D'((0,\infty) \times (0,1)),
\end{aligned} \right.
\end{equation}
where $\mathcal D'$ denotes the space of distributions on the time-space product space, and $V_\alpha := \alpha V( \alpha \, \cdot )$ and $W_\alpha := \alpha W( \alpha \, \cdot )$. The coupled system of continuity equations in \eqref{f:GF} is similar to those studied in \cite{DiFrancescoFagioli16, Zinsl16} for regular $V$.
\smallskip

In the scaling regimes where $\alpha_n \to \infty$, the $\lambda$-convexity property of $E_n$ vanishes in the limit $n \to \infty$, and our proof method of Theorem \ref{t:dyncs} breaks down. To get insight in the solution of the gradient flow \eqref{fd:GFn} for large $n$, we extend in \S \ref{s:num} the numerical simulations of \cite{DoggePeerlingsGeers15b} to larger values of $n$ and different values of $\alpha_n$. We put $\gamma_n = 0$, and take as initial condition a \emph{fully separated} state, i.e.,
\begin{equation} \label{f:full:sep}
  0 = x_1^+ < \ldots < x_{n^+}^+ < x_1^- < \ldots < x_{n^-}^- = 1.
\end{equation}
This setup fits in the framework of dislocations to \emph{interlacing}, which was first considered by \cite{Head59}. The question is whether the particles remain fully separated during the gradient flow dynamics, and if not, to which extend they `interlace', i.e., the number of particle that swap position.

For $V$ and $W$ as in \eqref{for:defn:V} and \eqref{for:defn:W}, the simulations in \S \ref{ss:num} suggest that in the scaling regime $\tfrac1n \alpha_n \to \alpha > 0$ there is a critical value of $\alpha$ beyond which there exist local minima of $E_n$ for $n$ large enough which exhibit full separation \eqref{f:full:sep}. In \S \ref{ss:al:big} we prove the existence of such minimisers (Proposition \ref{pp:sep}). The idea behind the proof is that for increasing $\alpha_n$, the interaction forces between particles of the same type becomes smaller, whereas the force needed to push $x_{n^+}^+$ beyond $x_1^-$ becomes larger.

\begin{prop} \label{pp:sep} Let $E_n$ be as in \eqref{for:defn:En} with $\gamma_n = 0$, $\tfrac1n \alpha_n = \alpha > 0$, and $V$ and $W$ as in \eqref{for:defn:V} and \eqref{for:defn:W}. If $\alpha$ large enough, then for all $n^\pm \geq 2$ the energy $E_n$ admits a local minimiser which is fully separated (see \eqref{f:full:sep}).
\end{prop}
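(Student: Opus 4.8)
\emph{Strategy.} The plan is to obtain the fully separated local minimiser as a \emph{constrained} minimiser of $E_n$ over the closed set of fully separated configurations, and then to show that for $\alpha$ large this constrained minimiser does not sit on the separating constraint, hence is an interior local minimiser of $E_n$ on $\Omega_n^+\times\Omega_n^-$. First I would introduce
\[
  \mathcal S_n := \big\{ (x^{n,+},x^{n,-}) : 0 \le x_1^+\le\cdots\le x_{n^+}^+ \le x_1^-\le\cdots\le x_{n^-}^-\le 1 \big\},
\]
which is compact, and extend $E_n$ by $+\infty$ to configurations with coinciding same-type particles. Since $V$ is bounded below with $V(r)\to\infty$ as $r\to0$ and $W$ is continuous, $E_n$ is lower semicontinuous on $\mathcal S_n$, and it is finite at, e.g., the configuration $\tilde x$ with the $n^+$ positive particles equispaced in $[0,\tfrac13]$ and the $n^-$ negative ones equispaced in $[\tfrac23,1]$. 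Hence $E_n$ attains its minimum over $\mathcal S_n$ at some $\bar x\in\mathcal S_n$, and $E_n(\bar x)<\infty$ forces $\bar x$ to have no same-type collisions, so $\bar x\in\Omega_n^+\times\Omega_n^-$.

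\emph{Endpoints.} The only constraints of $\Omega_n^+\times\Omega_n^-$ that can be active at $\bar x$ are $x_1^+\ge0$ and $x_{n^-}^-\le1$, and in all remaining coordinates the stationarity conditions of the constrained problem hold. All particles other than $x_1^+$ lie to its right, so the force on $x_1^+$ is strictly leftward: same-type particles repel it leftward, and since $W$ attains its maximum at $0$ and decays, the negatives also push it leftward. Thus $\partial_{x_1^+}E_n(\bar x)>0$, which (together with $n^\pm\ge2$ and the properties of $V$, $W$) forces $\bar x_1^+=0$; symmetrically $\bar x_{n^-}^-=1$. Consequently, any constrained minimiser with $\bar x_{n^+}^+<\bar x_1^-$ is fully separated in the sense of \eqref{f:full:sep}.

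\emph{The separation is strict.} The crux is to rule out $\bar x_{n^+}^+=\bar x_1^-=:c$. With $\gamma_n=0$ and using that $V,W\ge 0$ (which I would read off the explicit dislocation-wall formulas \eqref{for:defn:V}--\eqref{for:defn:W}), every term in $E_n$ is non-negative, and the single $(i,j)=(n^+,1)$ term of the mixed sum equals $\tfrac1{n^2}\alpha_n W(0)=\tfrac{\alpha}{n}W(0)$; hence $E_n(\bar x)\ge\tfrac{\alpha}{n}W(0)$. On the other hand, for the trial configuration $\tilde x$ above all pairwise distances are bounded below by absolute constants, so, writing the prefactor as $\tfrac1{n^2}\alpha_n=\tfrac{\alpha}{n}$ and balancing it against the $\le n^2$ interacting pairs while using the (exponential) decay of $V$ and $W$ at infinity, one gets $E_n(\tilde x)\le n\alpha\big(V(\tfrac{\alpha}{3})+W(\tfrac{n\alpha}{3})\big)+n\alpha V(\tfrac{\alpha}{3})\to0$ as $\alpha\to\infty$. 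Since $\tilde x\in\mathcal S_n$, for $\alpha$ large we would get $E_n(\tilde x)<\tfrac{\alpha}{n}W(0)\le E_n(\bar x)$, contradicting minimality. Hence $\bar x_{n^+}^+<\bar x_1^-$, so a full neighbourhood of $\bar x$ in $\Omega_n^+\times\Omega_n^-$ lies in $\mathcal S_n$, $\bar x$ is a local minimiser of $E_n$, and by the previous step it is fully separated.

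\emph{Main obstacle.} The delicate point is the energy comparison in the third step: one must beat the fixed ``contact'' cost $\tfrac{\alpha}{n}W(0)$ by the spreading cost of a fully separated competitor. With the crude competitor $\tilde x$ this works, but only for $\alpha$ above a threshold that a priori depends on $n^\pm$ (roughly $\alpha\gtrsim\log n$), because the same-type energy of any spread-out configuration is $\Theta(\alpha e^{-c\alpha})$, not $o(1/n)$. To obtain a threshold uniform in $n^\pm$ one would instead perturb $\bar x$ \emph{locally}, sliding only $\bar x_{n^+}^+$ inward by $g\sim(n\alpha)^{-1}$: then the same-type energy changes by at most $O(\tfrac{\alpha}{n}e^{-c\alpha})$ while the contact pair's energy drops by $\Theta(\tfrac{\alpha}{n})$. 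Making this precise requires the extra estimate that at $\bar x$ the positive particles do not clump near $c$ (e.g.\ $c-\bar x_{n^+-1}^+\gtrsim 1/n$, which follows from the fact that the negatives push the positives away from $c$ and from the energy bound $E_n(\bar x)\le E_n(\tilde x)$ together with the singularity of $V$); this last point is where the real work lies.
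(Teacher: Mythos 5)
Your overall architecture (minimise $E_n$ over the closed set of separated configurations, then show the separating constraint is inactive so the constrained minimiser is a genuine local minimiser) is sound and is in the same spirit as the paper, which minimises over the convex set $\Omega_n^*$ with $x_1^+=0$, $x_{n^-}^-=1$ and gap $d_n^{+-}>r^*$ (with $r^*$ as in \eqref{fd:rast}), exploits strict convexity of $E_n$ there, and then shows the gap constraint cannot be active. However, your proposal has a genuine gap exactly at the point you flag yourself. Your only fully worked argument for strict separation is the global comparison $E_n(\bar x)\ge \tfrac\alpha n W(0)$ versus $E_n(\tilde x)$, and since any spread-out competitor has same-type energy of order $\alpha \Veff(c\alpha)\sim \alpha e^{-c\alpha}$ (not $o(1/n)$), this only yields a threshold $\alpha\gtrsim\log n$ and therefore does not prove the statement, which requires one $\alpha$ valid for all $n^\pm\ge 2$. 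The local sliding argument you sketch as a repair does give a uniform threshold, but it hinges on an $n$- and $\alpha$-uniform non-clumping estimate (neighbouring same-type particles near the interface separated by at least $c/n$), and your suggested derivation of it is too optimistic: from $E_n(\bar x)\le E_n(\tilde x)$ a pairwise bound only gives $V(\alpha_n d)\le \tfrac n\alpha E_n(\tilde x)$, which degenerates as $n\to\infty$ and yields no uniform $1/n$ spacing. This estimate is precisely the bulk of the paper's proof: it is obtained there (see \eqref{fp:dn}) by first showing one can insert an extra particle at a well-chosen midpoint at energy cost at most $\tfrac Cn\alpha\Veff(\alpha/C)$ (see \eqref{fp:z}, an adaptation of Step 2 of the proof of Lemma \ref{lem:props:eita}), then relocating a particle from any overly tight gap to that midpoint and invoking the sharp decay property \eqref{f:Vprop} of the specific wall potential $V$ to conclude $d\ge c/n$ uniformly in $n$ and $\alpha\ge\alpha_0$. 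Without this ingredient your proof is incomplete.

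A secondary difference worth noting: instead of excluding exact contact by an energy comparison, the paper keeps the gap bounded below by $r^*/\alpha_n$ from the outset (so $E_n$ is strictly convex on the admissible set, giving existence, uniqueness and the upgrade to local minimality for free) and rules out the active constraint $d_n^{+-}=r^*$ by a force balance, comparing $\sum_{k\ge1}|V'(\alpha c k)|$ (controlled via the uniform spacing bound) with $|W'(r^*)|$ as in \eqref{p:Fy:est}; the same uniform spacing estimate is what makes that comparison $\alpha$-large-but-$n$-independent. So your plan could be completed along the lines you indicate, but the step you defer as ``where the real work lies'' is not a routine consequence of the energy bound and the singularity of $V$; it requires the dedicated argument the paper supplies.
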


However, at the `continuum equivalent' of the local minimiser in Proposition \ref{pp:sep} for $n^+ = n^- = \tfrac n2$ given by
\begin{equation} \label{fd:brhosep}
  \rhosep^+(x) := \left\{ \begin{array}{ll}
    1
    & x < \tfrac12 \\
    0
    & x > \tfrac12,
  \end{array} \right.
  \quad
  \rhosep^-(x) := \left\{ \begin{array}{ll}
    0
    & x < \tfrac12 \\
    1
    & x > \tfrac12,
  \end{array} \right.
\end{equation}
the $\Gamma$-limit $E$ has infinite slope (Proposition \ref{pp:sep}), which would imply that the evolutionary limit of \eqref{fd:GFn} in the scaling regime $\tfrac1n \alpha_n \to \alpha$ with $\alpha$ large enough, if it exists, is \emph{not} given by the Wasserstein gradient flow of $E$. This reasoning relies on the conjecture that the local minimisers of Proposition \ref{pp:sep} converge to \eqref{fd:brhosep} as $n \to \infty$ with $n^+ = n^- = \tfrac n2$. This conjecture is based on the numerical results in \S \ref{ss:al:big} listed in Table \ref{tb:al2}.

Coming back to the criticality of some $\alpha  > 0$ in the scaling regime $\tfrac1n \alpha_n \to \alpha > 0$, the numerical results for $\alpha = \tfrac12$ in Table \ref{tb:al05} in \S \ref{ss:num} suggest that $x^{n,\pm} (t)$ converges in time to a `mixed state', which on the continuum scale reads as $\rhomix^+ (x) := \tfrac12 =: \rhomix^- (x)$. This observation was also made in \cite{DoggePeerlingsGeers15b} for large values of $n$ and for $\alpha_n = C \sqrt n$ with $C$ fixed. However, both findings are not very quantitative, and thus the asymptotic behaviour for large $n$ and $1 \ll \alpha_n \ll n$ remains illusive. 

\subsection{Discussion and conclusion}

Motivated by missing rigorous micro-to-macro connections for dislocation density models, we consider a class of interacting particle system (given by the energy $E_n$ \eqref{for:defn:En}) consisting of two species, which is also related to other applications \cite{DiFrancescoFagioli13, 
BerendsenBurgerPietschmann16ArXiv, 
EversFetecauKolokolnikov16ArXiv}. For the physically interesting scaling regimes of the parameters $\alpha_n$ and $\gamma_n$, we prove that $E_n$ $\Gamma$-converges to $E$ (Theorem \ref{thm:main}). Our proof method is novel and suited for extension to higher spatial dimensions. Regarding the gradient flows of $E_n$, we prove evolutionary convergence (Theorem \ref{t:dyncs}) for the scaling regime $\alpha_n \to \alpha > 0$ as $n \to \infty$ in which the nonlocality of the interactions is preserved in the limiting gradient flow \eqref{f:GF}. However, in the scaling regimes where the limiting energy becomes local (see Table \ref{tab:Eita}), the existence of evolutionary convergence is far from obvious, because the nonconvexity of the interactions may create local minima in $E_n$ (Proposition \ref{pp:sep}) which are seemingly not preserved by the limiting \emph{local} energy (see Proposition \ref{pp:Esep} and Table \ref{tb:al2} in \S \ref{ss:num}). 

For more complex multi-species interacting particle systems (for instance, in higher-spatial dimensions and more complex interactions), our findings therefore imply that it may be possible to prove a $\Gamma$-convergence result (and hence convergence of global minimisers), but that there may not be any evolutionary convergence result in the sense of Theorem \ref{t:dyncs}. This adds to the findings of \cite{ChapmanXiangZhu15} that for $n$-dependent $W_n$ which become singular in the limit, the limiting gradient flow (if it exists) is much more subtle than the (Wasserstein) gradient flow of the limiting energy $E$, and may not be expressed in terms of the dislocation densities alone (in addition, one may need internal variables accounting for microstructures such as dipoles). For the current dislocation density models mentioned in the introduction, which are stated in terms of the dislocation densities alone, this statement implies that it is not clear at all whether there exists a precise micro-to-macro connection between these models and the underlying dynamics of individual dislocations. This doubt on the existence of micro-to-macro connections leads to three kinds of future challenges on multi-species particle systems:
\begin{itemize}
  \item Under which geometric restrictions on the particle system does evolutionary convergence hold in the sense of Theorem \ref{t:dyncs}?
  \item What kind of microscopic particle configurations lead to a \emph{different} evolution of the macroscopic particle density than predicted by the continuum models in the literature?
  \item If evolutionary convergence in the sense of Theorem \ref{t:dyncs} does not hold for a certain interacting particle system, then can we develop an alternative, satisfactory mathematical statement for `evolutionary convergence'? 
\end{itemize}
In a forthcoming paper we give an answer to the first two questions for the celebrated two-species dislocation density model in \cite{GromaBalogh99}. The third question remains open.
\smallskip

The remainder of the paper is organised as follows. In \S \ref{s:appl} we show how $E_n$ captures the setting of dislocation walls, and how the parameters $\alpha_n$ and $\gamma_n$ can be computed from physical quantities. In \S \ref{s:not} we introduce the mathematical framework. In \S \ref{s:pf} we prove Theorem \ref{thm:main}. In \S \ref{s:gninf} we extend Theorem \ref{thm:main} to the case $\gamma_n \to \infty$. In \S \ref{s:dyn} we prove the evolutionary convergence result (Theorem \ref{t:dyncs}) for the gradient flow \eqref{fd:GFn} in the case $\alpha_n \to \alpha > 0$. In \S \ref{s:num} we rely on both analysis and numerical observations to give a convincing argument that the $n$-dependent gradient flows \eqref{fd:GFn} in the case $\tfrac1n \alpha_n \to \alpha$ for $\alpha$ large enough do not converge to the (Wasserstein) gradient flow of the $\Gamma$-limit $E$. In the appendices we perform those parts of the proofs in this paper that are computationally heavy without containing interesting novel insight.

\section{Application to dislocation walls}
\label{s:appl}

Figure \ref{fg:walls} shows the setting of dislocation walls in $[0, L] \times h \mathbb T$, where $L, h > 0$ and $\mathbb T$ is the one-dimensional torus. Dislocation walls are vertically periodic arrays of edge dislocations which are a distance $h$ apart. We consider both walls of `positive' edge dislocations and walls of `negative' edge dislocations, where the sign is related to the orientation of the dislocations. While in \cite{DoggePeerlingsGeers15b} the negative walls have vertically a phase shift of $\phi = \tfrac12 h$, we allow for any phase shift $\phi \in [\tfrac14 h, \tfrac34 h]$. For any such $\phi$, the force between walls of opposite sign is always repelling.

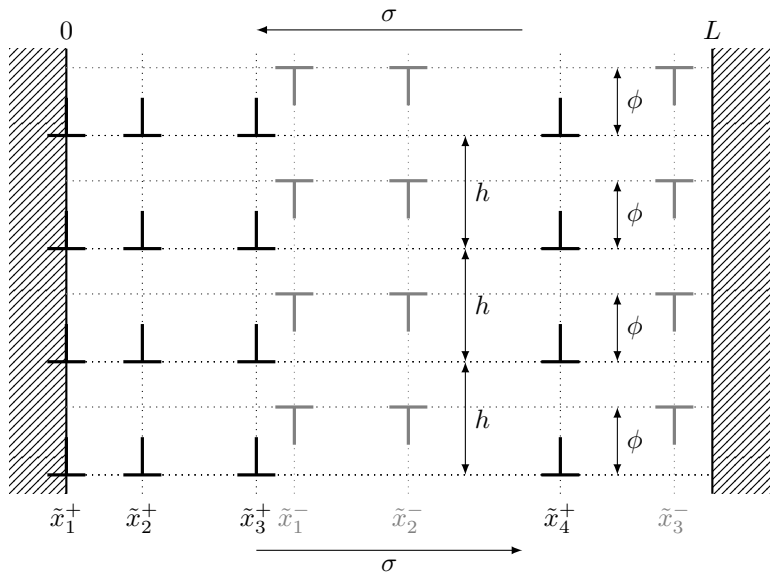
\begin{figure}[h]
\centering
\begin{tikzpicture}[scale=0.5, >= latex] % a.9.138
  \def \h {3}
  \def \dlc {0.5}
  \def \a {0.6*\h}
  \def \wall {1.5}
  \def \si {5}
  
  \def \xb {0}
  \def \xa {\xb - \wall}
  \def \xc {0}
  \def \xd {2}
  \def \xe {5}
  \def \xf {13}
  \def \xg {17}
  \def \xh {\xg + \wall}
  \def \xma {6}
  \def \xmb {9}
  \def \xmc {16}
  
  \def \yb {0}
  \def \ya {\yb - \dlc}
  \def \yc {\yb + \h}
  \def \yd {\yc + \h}
  \def \ye {\yd + \h}
  \def \yma {\yb + \a}
  \def \ymb {\yc + \a}
  \def \ymc {\yd + \a}
  \def \ymd {\ye + \a}
  \def \yf {\ymd + \dlc}

	\draw (\xc, \ya) node [below] {$\tilde x_1^+$}; 
	\draw (\xd, \ya) node [below] {$\tilde x_2^+$};  
	\draw (\xe, \ya) node [below] {$\tilde x_3^+$}; 
	\draw (\xf, \ya) node [below] {$\tilde x_4^+$};  
	\draw[gray] (\xma, \ya) node [below] {$\tilde x_1^-$};
	\draw[gray] (\xmb, \ya) node [below] {$\tilde x_2^-$};
	\draw[gray] (\xmc, \ya) node [below] {$\tilde x_3^-$};
	\draw[thick] (\xb, \ya) -- (\xb, \yf) node [above] {$0$};
	\draw[thick] (\xg, \ya) -- (\xg, \yf) node [above] {$L$};
	\fill[pattern = north east lines] (\xa, \ya) rectangle (\xb, \yf);
	\fill[pattern = north east lines] (\xg, \ya) rectangle (\xh, \yf);	
	   
	\draw[->] (\xb + \si, \ya - 1.5) -- (\xg - \si, \ya - 1.5) node[midway, below] {$\sigma$};
	\draw[<-] (\xb + \si, \yf + 0.5) -- (\xg - \si, \yf + 0.5) node[midway, above] {$\sigma$};
	\foreach \y in {\yb, \yc, \yd}
      {
      \draw[<->] (\xmb + 1.5, \y) -- (\xmb + 1.5, \y + \h) node[midway, right] {$h$};
      } 
    \foreach \y in {\yb, \yc, \yd, \ye}
      {
      \draw[<->] (\xf + 1.5, \y) -- (\xf + 1.5, \y + \a) node[midway, right] {$\phi$};
      }   
	     
    \foreach \x in {\xc, \xd,\xe,\xf}
      {
      \draw[dotted] (\x, \ya) -- (\x, \yf);
      \foreach \y in {\yb, \yc, \yd, \ye} 
        {
        \draw[dotted] (\xb, \y) -- (\xg, \y);
          \begin{scope}[shift={(\x, \y)}, scale=1]
             \draw[very thick] (-\dlc, 0) -- (\dlc, 0);
             \draw[very thick] (0,0) -- (0,2*\dlc);
          \end{scope} 
        }
      }
    \foreach \x in {\xma, \xmb, \xmc}
      {
      \draw[gray, dotted] (\x, \ya) -- (\x, \yf);
      \foreach \y in {\yma, \ymb, \ymc, \ymd}
        {
        \draw[gray, dotted] (\xb, \y) -- (\xg, \y);
          \begin{scope}[shift={(\x, \y)}, scale=1]
             \draw[gray, very thick] (-\dlc, 0) -- (\dlc, 0);
             \draw[gray, very thick] (0,0) -- (0,-2*\dlc);
          \end{scope} 
        }
      }
\end{tikzpicture}
\caption{Example of $n^+ = 4$ positive dislocation walls and $n^- = 3$ negative dislocation walls. The domain is vertically periodic with period $h$. Its dimensionless equivalent is illustrated in Figure \ref{fg:PS}.}\label{fg:walls}
\end{figure}

Taking the dislocation model of Volterra from 1907, the following energy accounts for all dislocation interactions, including the effect of a constant applied shear stress $\sigma$:
\begin{multline*} %\label{for:ene:dimful}
  	  \Enu{} (\tilde x^+, \tilde x^-) 
    := K \sum_{i=1}^{n^+} \sum_{j = 1}^{i-1} V \bigg( \pi \frac{ \tilde x_i^+ - \tilde x_j^+ }h \bigg)
        + K \sum_{i=1}^{n^-} \sum_{j = 1}^{i-1} V \bigg( \pi \frac{ \tilde x_i^- - \tilde x_j^- }h \bigg)\\
        + K \sum_{i=1}^{n^+} \sum_{j = 1}^{n^-} W_a \bigg( \pi \frac{ \tilde x_i^+ - \tilde x_j^- }h \bigg)
        + \sigma \sum_{i = 1}^{n^+} \tilde x_i^+
        + \sigma \sum_{i = 1}^{n^-} (L - \tilde x_i^-).
\end{multline*}
Here, $\tilde x^\pm := (\tilde x_1^\pm, \ldots, \tilde x_{n^\pm}^\pm) \in \R^{n^\pm}$, $K$ is a material constant, and the interaction potential $V$ is given by
\begin{equation} \label{for:defn:V}
V(r) := r \coth r - \log | 2 \sinh r |,
\qquad r\in \R{}.
\end{equation}
Figure \ref{fig:VW} illustrates $V$. This potential is a special case of the more general potential in \eqref{for:defn:W}, which was first derived in \cite[(19-75)]{HirthLothe82} (an alternative derivation can be found in \cite[Prop.~A.2.2]{VanMeurs15}). We note that, in the special case where only one type of dislocation walls is considered (i.e., $n^- = 0$ or $n^+ = 0$), $\Enu{}$ is the same energy as the one studied in \cite{VanMeursMunteanPeletier14}. The interaction potential $W_a$ describes the interaction between positive and negative walls. It is given by
\begin{equation} \label{for:defn:W}
W_a(r) := \frac12 \log \big( 2 ( \cosh (2r) + a ) \big) - \frac{r \sinh (2r)}{\cosh (2r) + a}, 
\qquad r\in \R{}, \ a \in [-1,1].
\end{equation}
Figure \ref{fig:VW} illustrates $W_a$. The parameter $a$ is related to the phase shift $\phi \in [0, h)$ by $a = - \cos 2 \pi \tfrac\phi h$. We note that $a \in [0,1]$ corresponds to a phase shift between $\tfrac14 h$ and $\tfrac34 h$, and that $W_{-1}(r) = -V(r)$, which is consistent with the fact that dislocations of opposite sign interact with opposite force.

\begin{prop}[Properties of $W_a$] \label{prop:props:Wa} 
It holds that
\begin{enumerate}[(i)]
  \item $W_a \in \mathcal S (\R)$ for all $-1 < a \leq 1$;
  \item $0 < W_0 \leq W_a < W_b \leq W_1 < V$ on $\R$ for all $0 \leq a < b \leq 1$;
  \item $V - W_a$ is strictly convex on $(0, \infty)$ for all $0 \leq a \leq 1$;
  \item $\widehat{W_a} > 0$ for all $0 \leq a \leq 1$.
\end{enumerate}
\end{prop}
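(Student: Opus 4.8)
The plan is to treat the four items by direct computation from the closed form \eqref{for:defn:W}, exploiting the substitution $t = \cosh(2r) \geq 1$ wherever possible to reduce things to elementary one-variable inequalities. For (i), I would observe that $\cosh(2r) + a \geq 1 + a > 0$ for all $r$ when $a > -1$, so the denominators never vanish; hence $W_a$ is real-analytic, and since $\cosh(2r) + a$ grows like $\tfrac12 e^{2|r|}$ while $\log(2(\cosh 2r + a)) \sim 2|r|$ and $r \sinh(2r)/(\cosh 2r + a) \sim r \cdot \operatorname{sgn}(r) \cdot \ldots$, one checks that $W_a(r) - |r|\tanh(2r) \cdot(\text{something bounded})$, and more carefully that all derivatives of $W_a$ decay exponentially. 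The cleanest route is to write $W_a(r) = \tfrac12\log(2(\cosh 2r + a)) - r\,\partial_r\big[\tfrac12\log(\cosh 2r + a)\big]$ and note that $\tfrac12\log(\cosh 2r + a) - |r| - \tfrac12\log\tfrac12$ and its derivatives are Schwartz-type corrections; each differentiation brings down a factor that is bounded by a polynomial times an exponentially decaying term. I would relegate the bookkeeping of this to an appendix, as the paper does for computational parts.

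For (ii), the outer bounds $W_a < V$ and (by symmetry of the chain $W_{-1} = -V < 0 < W_0$) follow from monotonicity in $a$. So the heart of (ii) is: $W_a$ is strictly increasing in $a$ on $(-1,1]$, and $W_0 > 0$. Monotonicity in $a$: differentiate \eqref{for:defn:W} with respect to $a$. One gets $\partial_a W_a(r) = \tfrac{1}{2(\cosh 2r + a)} + \tfrac{r\sinh 2r}{(\cosh 2r + a)^2} = \tfrac{(\cosh 2r + a) + 2r\sinh 2r}{2(\cosh 2r + a)^2}$, and since $\cosh 2r + a \geq \cosh 2r - 1 \geq 0$ and $r\sinh 2r \geq 0$ always, the numerator is $\geq 0$, with equality only at $r = 0$ when $a = -1$; so $\partial_a W_a > 0$ for $a > -1$, $r \neq 0$, giving the strict inequalities. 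For $W_0 > 0$: set $a = 0$, substitute $t = \cosh 2r \geq 1$, and reduce $W_0(r) > 0$ to $\tfrac12\log(2t) > \tfrac{r\sinh 2r}{t}$, i.e. $t\log(2t) > 2r\sinh 2r$; using $\sinh^2 2r = t^2 - 1$ and $r = \tfrac12\operatorname{arccosh} t$, this is a single-variable inequality in $t \geq 1$ that is straightforward to verify (both sides vanish to the right order at $t = 1$ and one compares derivatives, or uses $\log(2t)\geq \log 2 + \tfrac{t-1}{t}$-type estimates). Because $V = W_{-1}\cdot(-1)$ fits the same family, $W_a < V$ is just $\partial_a W_a > 0$ integrated from $a$ up past $1$ — but one must be careful that $V$ is $-W_{-1}$, not $W_1$; so I would instead prove $W_1 < V$ directly, again by the $t$-substitution, reducing it to an elementary inequality, and note $V(r)\to\infty$ as $r\to 0$ while $W_1(0)$ is finite handles a neighborhood of $0$.

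For (iii), strict convexity of $V - W_a$ on $(0,\infty)$: compute $(V - W_a)''$. It is cleaner to use the integral representations — $V$ and $W_a$ are (up to constants) of the form $r\coth r - \log|2\sinh r|$ and its $a$-analogue, whose second derivatives have known sign-definite Fourier-type expressions; indeed $V''(r) = \sum$ over a positive series, and $W_a$ likewise. Concretely, $\frac{d^2}{dr^2}\big[-\log(2\sinh r)\big] = \operatorname{csch}^2 r > 0$ and the $r\coth r$ term contributes $\frac{d^2}{dr^2}(r\coth r)$, which one checks is negative but dominated; the combination $(V - W_a)''$ should come out as a manifestly positive expression after the $t$-substitution. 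Alternatively, and this is probably the slickest: note $V - W_a = (V - W_1) + (W_1 - W_a)$, where $W_1 - W_a$ is an integral of $-\partial_a W_a$ hence needs its own convexity argument, so that decomposition may not help; I would just grind $(V-W_a)''>0$ via the common denominator $(\cosh 2r + a)^3$ and factor the numerator — expecting it to be a sum of terms each nonnegative for $r > 0$, $a \in [0,1]$. Finally, (iv), positivity of the Fourier transform $\widehat{W_a}$: since by (iii) $V - W_a$ is convex (and one knows $\widehat V$'s sign from \cite{GeersPeerlingsPeletierScardia13} or a direct residue computation — $V$ has $\widehat V(\xi) = \pi\xi/(e^{?}\ldots)$-type positive form), and $W_a \in \mathcal S(\R)$ is even, I would compute $\widehat{W_a}$ by contour integration / residues at the poles of $\cosh 2r + a$ in the strip, which lie at $r = \tfrac{i}{2}(\pi \pm \theta_a)$ with $\cos\theta_a = -a$; summing the residues of $W_a(r)e^{-i\xi r}$ over these poles (all on the imaginary axis) yields an expression of the form $\sum_k c_k e^{-\lambda_k|\xi|}$ with each $c_k > 0$ once the phase-shift structure is unwound — this is the mechanism by which $\widehat{W_a}>0$.

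The main obstacle I anticipate is item (iv): controlling the sign of $\widehat{W_a}$ requires either an explicit residue computation (delicate because $\cosh 2r + a = 0$ gives a pair of conjugate poles whose location depends on $a$, and the numerator $r\sinh 2r$ produces double poles, so the residues involve derivatives and it is not obvious the result is sign-definite) or a clever structural argument — e.g. recognizing $W_a$ as a convex combination or superposition of shifted copies of a single positive-definite kernel, perhaps via $\cosh 2r + a = \cosh 2r - \cos\theta_a$ and the known positive-definiteness of $r \mapsto (\cosh 2r - \cos\theta)^{-1}$-related kernels from the dislocation-wall literature. I would first attempt the latter, falling back on the residue calculation, and place whichever works in an appendix. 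Item (iii) is conceptually routine but the algebra is heavy; item (ii)'s $\partial_a W_a \geq 0$ identity is the one genuinely clean step and I would lead with it since it does most of the work for (ii) at once.
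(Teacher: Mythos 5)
For items (i)--(iii) your plan is essentially the paper's proof. Smoothness of $W_a$ from $\cosh(2r)+a\geq 1+a>0$ with the tail estimates deferred to an appendix is exactly what is done (the decay is delegated to a Taylor-expansion computation in the author's thesis); the identity $\partial_a W_a(r)=\tfrac{1}{2(\cosh 2r+a)}+\tfrac{r\sinh 2r}{(\cosh 2r+a)^2}>0$ is the paper's key step for (ii); and (iii) is indeed settled by grinding out $(V-W_a)''$ over the common denominator $(\cosh 2r+a)^3$. Two caveats. In (ii), the paper finishes $W_0>0$ and $W_1<V$ by the derivative comparison you list only as a fallback: $W_0'<0$ on $(0,\infty)$ and $W_1'(r)=-r/\cosh^2 r>-r/\sinh^2 r=V'(r)$, with both potentials tending to $0$ at infinity. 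Your substitute inequality $t\log(2t)>2r\sinh 2r$ is true but asymptotically tight as $r\to\infty$ (the difference is of order $re^{-2r}$), and it does not ``vanish at $t=1$'' (there the left side is $\log 2$, the right side $0$); in practice the monotonicity route is the one that closes it. In (iii), the hoped-for ``manifestly nonnegative'' numerator does not appear directly: after writing $\cosh 2r=2\sinh^2 r+1$ the expansion contains the factor $4ar\coth r-1$, which is negative near $r=0$ for $a<\tfrac14$, and one must transfer part of the $(2r\coth r-1)$-term to it (using $r\coth r\geq 1$) before every summand is nonnegative. That is a finishing detail, not a change of method.

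Item (iv) is where there is a genuine gap. A residue computation applied to $W_a(r)e^{-2\pi i r\omega}$ is not available as stated: because of the term $\tfrac12\log\big(2(\cosh 2r+a)\big)$, the complex extension of $W_a$ has logarithmic branch points at the zeros of $\cosh 2r+a$ on the imaginary axis, not poles, so ``summing the residues of $W_a$'' is not meaningful; one must first pass to a meromorphic surrogate. This is exactly what the paper does: it uses the identity $\widehat{W_a}(\omega)=\tfrac{1}{2\pi^2\omega}\big[\mathcal F\big(\tfrac{W_a'(r)}{2r}\big)\big]'(\omega)$ together with $\tfrac{W_a'(r)}{2r}=-\tfrac{a}{\cosh 2r+a}-\tfrac{1-a^2}{(\cosh 2r+a)^2}$, the tabulated transform $\mathcal F\big((\cosh 2r+a)^{-1}\big)(\omega)=\tfrac{\pi}{\sqrt{1-a^2}}\,\tfrac{\sinh(\pi\arccos(a)\,\omega)}{\sinh(\pi^2\omega)}$ from Erd\'elyi, and then a monotonicity argument: for $f(\omega)=\sinh(\alpha\omega)/\sinh(\beta\omega)$ with $\alpha<\beta$, $f$ is even, positive, integrable and decreasing on $(0,\infty)$ (since $\tanh x/x$ is decreasing), so both $-\tfrac1\omega f'$ and $-\tfrac1\omega(f\ast f)'$ are positive; the case $a=1$ then follows by the scaling relation with $W_0$. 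Note that even your ``superposition of positive-definite kernels'' variant would not conclude by positivity of coefficients alone: the squared denominator produces a convolution $f\ast f$ and the transform enters through $\tfrac1\omega\tfrac{d}{d\omega}$, so the sign ultimately rests on the monotonicity of $\sinh(\alpha\omega)/\sinh(\beta\omega)$, which your sketch does not supply. Since you yourself flag (iv) as unresolved and neither of your two proposed routes closes it as described, this item is the missing piece of the proposal.
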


We are interested in the many-particle limit of $\Enu{}$. To find a meaningful limit, it is essential to find a proper rescaling of the wall positions $\tilde x^\pm$ and of the energy $\Enu{}$ in terms of the physical parameters $K_n$, $\sigma_n$ and $h_n$. The precise dependence of these parameters on $n$ is a modelling choice, which we choose to keep general. We use the same scaling as in \cite{ScardiaPeerlingsPeletierGeers14} and \cite{VanMeursMunteanPeletier14}. Setting
\begin{equation*}
  \alpha_n := \frac{\pi L_n}{h_n},
  \quad \gamma_n := L_n \sqrt{ \frac{ \pi \sigma_n }{ n K_n h_n } }
\end{equation*}
as dimensionless parameters\footnote{The only difference with \cite{VanMeursMunteanPeletier14} is that we take $\alpha_n$ $n$ times larger}, and rescaling the particle positions and energy as
\begin{equation*} \notag
  x_i^\pm := \frac{ \tilde x_i^\pm }{ L_n }
  \quad \text{and} \quad
    E_n (x^{n,+}, x^{n,-})
    := \frac{\pi L_n}{ n^2 K_n h_n } \Enu{} (L_n x^{n,+}, L_n x^{n,-}),
\end{equation*}
we obtain that rescaled energy $E_n$ is given by \eqref{for:defn:En}.

Regarding dislocation dynamics, we rely on the simplest but widely used relation given by Orowan's linear drag law \cite[(3.3b)]{HullBacon01}. It states that $v = BF$, where $F$ is the horizontal component of the force acting on the dislocation, $v$ is the horizontal velocity of the dislocation, and $B$ is a constant drag coefficient. By the imposed vertical periodicity in Figure \ref{fg:walls}, the velocity of a dislocation wall is given by the velocity of each single dislocation in the wall. Hence, by absorbing $B$ in the time variable, we obtain \eqref{fd:GFn}.

\section{Notation and functional framework}
\label{s:not}

Here we list the symbols and notation which we use in the remainder of this paper: 
\newcommand{\specialcell}[2][c]{\begin{tabular}[#1]{@{}l@{}}#2\end{tabular}}
\begin{longtable}{lll}
$a \wedge b$, $a \vee b$ & $\min \{a, b\}$, $\max \{a, b\}$ & \\
$f_{\operatorname{eff}}$ & $f_{\operatorname{eff}} (x) := \sum_{k=1}^\infty f(kx)$ & \\
$\|f\|_q$ & $L^q$-norm of $f$ on the domain of $f$ & \\
$\widehat f$, $\mathcal{F}(f)$ & \specialcell[t]{Fourier transform of $f$; \\ $\mathcal{F}(f)(\omega) = \widehat f (\omega) := \int_\R f(x) e^{-2\pi ix\omega} \, dx$} & \\
$\mu \otimes \nu$ & product measure; $(\mu \otimes \nu) (A \times B) = \mu(A) \nu (B)$ & \\
$\mu_n \boxtimes \mu_n$ & product measure `without the diagonal' & \eqref{fd:boxt} \\
$\indicatornoacc A$ & $\indicatornoacc A (x)$ equals $1$ if $x \in A$ and $0$ if $x \notin A$ & \\
$\bmu$ & $\bmu := (\mu^+, \mu^-) \in M ([0,1])$ \\
$\mathcal M_+ ([0,1])$ & Space of finite, non-negative Borel measures on $[0,1]$ & \\
$\mathcal M ([0,1]; [0,\infty)^2)$ & $[0,\infty)^2$-valued finite Borel measures on $[0,1]$ & \\
$M ([0,1])$ & Domain of $E$; $M ([0,1]) \subset \mathcal M ([0,1]; [0,\infty)^2)$ & \eqref{fd:M} \\
$\N$ & $\{1,2,3,\ldots\}$ &  \\
$\mathcal P ([0,1])$ & \specialcell[t]{Space of probability measures; \\$\mathcal P ([0,1]) = \{ \mu \in \mathcal M_+ ([0,1]) : \mu ([0,1]) = 1 \}$} & \\
$W(\mu, \nu)$ & $2$-Wasserstein distance between $\mu, \nu \in \mathcal P ([0,1])$ & \cite{AmbrosioGigliSavare08} \\
$\bW(\bmu, \bnu)$ & Modified Wasserstein distance between $\bmu, \bnu \in M([0,1])$ & \eqref{fd:bW:genl} \\ 
\end{longtable} 

For $\mu_n^\pm$ as in \eqref{for:defn:munpm}, we set 
\begin{equation} \label{fd:boxt}
  \mu_n^\pm \boxtimes \mu_n^\pm
  := \frac1{n^2} \sum_{i=1}^{n^\pm} \sum_{ \substack{ j=1 \\ j \neq i } }^{n^\pm} \delta_{(x_i^\pm, x_j^\pm)}
  \in \mathcal M([0,1]^2)
\end{equation}
as the product measure `without the diagonal'. We recall from \cite[Lem.~1]{GeersPeerlingsPeletierScardia13} and \cite[\S 3.4]{Billingsley68} that
$\mu_n^\pm \weakto \mu^\pm$ implies
\begin{align} \notag
  \mu_n^\pm \boxtimes \mu_n^\pm 
  \weakto \mu^\pm \otimes \mu^\pm \quad
  &\text{as } n \to \infty, \text{ and} \\\label{f:ot:conv}
  \mu_n^+ \otimes \mu_n^- 
  \weakto \mu^+ \otimes \mu^- \quad
  &\text{as } n \to \infty.
\end{align}

The domain of the limit energy $E$ defined in Theorem \ref{thm:main} is given by
\begin{equation} \label{fd:M}
  M ([0,1]) 
  := \{ \bmu := (\mu^+, \mu^-) \in \mathcal M_+ ([0,1]) \times \mathcal M_+ ([0,1]) : \mu^+ + \mu^- \in \mathcal P ([0,1]) \},
\end{equation} 
In the special case when $\mu^\pm$ are \textit{absolutely continuous}, i.e.,
\begin{equation} \label{for:defn:abs:cont}
  \text{there exist } \rho^\pm \in L^1_+ (0,1) 
  \text{ such that } d\mu^\pm (x) = \rho^\pm (x) \, dx,
\end{equation}
we denote by $\brho := (\rho^+, \rho^-)$ their density. 
%We exploit this convention to express $E (\bmu)$ directly in terms of $\rho^+$ and $\rho^-$ whenever $E (\bmu)$ equals $\infty$ if $\mu^+$ or $\mu^-$ is not absolutely continuous; see Table \ref{tab:Eita}. This is consistent with the fact that, in the scaling regimes corresponding to $\alpha_n \gg 1$, $E (\bmu)$ has quadratic growth in $\| \brho \|_{L^2(0,1)}$.

We prove Theorem \ref{thm:main} separately for each of the three scaling regimes of $\alpha_n$ as outlined in Table \ref{tab:Eita}. We establish the corresponding $\Gamma$-convergence result by proving the following two inequalities for all $\bmu \in M ([0,1])$:
\begin{subequations}
\label{for:Gconv}
\begin{alignat}2
\label{for:Gconv:liminf}
&\text{for all } \bmu_n \weakto \bmu, & \liminf_{n\to\infty} E_n (\bmu_n) &\geq E (\bmu), \\
&\text{there exists } \bmu_n \weakto \bmu \text{ such that}\quad & \limsup_{n\to\infty} E_n (\bmu_n) &\leq E (\bmu),
\label{for:Gconv:limsup}
\end{alignat}
\end{subequations}
where $\bmu_n \weakto \bmu$ if and only if $\mu_n^+ \weakto \mu^+$ and $\mu_n^- \weakto \mu^-$. The expression for $E$ depends on the scaling regime of $\alpha_n$. Any sequence $(\bmu_n)$ satisfying \eqref{for:Gconv:limsup} is called a \emph{recovery sequence}. 

A basic property of $\Gamma$-convergence is that it is stable under \emph{continuously converging} perturbations. A sequence of functionals $(G_n)$ converges continuously to $G$ if
\begin{equation} \label{fd:ct:ptb}
  \text{for all } \bmu_n \weakto \bmu, \quad G_n (\bmu_n) \xto{n \to \infty} G (\bmu).
\end{equation}
Then, $\Gamma$-convergence of $E_n$ to $E$ being stable under continuously converging perturbations means that $(E_n + G_n)$ $\Gamma$-converges to $(E + G)$ for all $(G_n)$ converging continuously to $G$.

\section{Proof of Theorem \ref{thm:main}}
\label{s:pf}

We observe from \eqref{for:defn:narrow:conv} that the last two terms of $E_n$ in the right-hand side of \eqref{for:defn:En} converge continuously (see \eqref{fd:ct:ptb}), and thus it suffices to focus on the first three terms describing the interactions. Therefore, in this section, we set $\gamma_n = \gamma = 0$ without loss of generality.

Throughout this section, we use the following symmetry of $E_n$ between positive and negative particles
\begin{equation} \label{f:En:sym}
  E_n (x^{n,+}, x^{n,-}) = E_n (\bone^{n,-} - x^{n,-}, \bone^{n,+} - x^{n,+}),
\end{equation}
where $\bone^{n,\pm} := (1, \ldots, 1)^T \in \R^{n^\pm}$. Hence, any statement on positive particles implies a similar statement on the negative particles.

\subsection{The case $\alpha_n \to \alpha > 0$}
 
We note that $E_n$ can be rewritten as 
\begin{equation} \label{f:En:pm}
  E_n ( x^{n,+}, x^{n,-} )
  = E_n^+( x^{n,+} ) + E_n^-( x^{n,-} ) 
    + \frac1{n^2} \sum_{i=1}^{n^+} \sum_{ j = 1 }^{n^-} \alpha_n W \big( \alpha_n ( x_i^+ - x_j^- ) \big),
\end{equation}
where
\begin{equation*}
  E_n^\pm : \Omega_n^\pm \to [0, \infty),
  \qquad E_n^\pm(x^{n,\pm}) 
  := \frac1{n^2} \sum_{i=1}^{n^\pm} \sum_{ j = 1 }^{i-1} \alpha_n V \big( \alpha_n ( x_i^+ - x_j^+ ) \big),
     %+ \frac{ \gamma_n^2 }n \sum_{i=1}^{n^\pm} y_i,
\end{equation*}
are equivalent to the energy considered in \cite{VanMeursMunteanPeletier14} (except for the argument of $E_n^\pm$ being a vector of size $n^\pm$ instead of $n$).

%where $\bone := (1, \ldots, 1)^T \in \R^{n^-}$.

\begin{ass}[Properties of $V$ and $W$ in case $\alpha_n \to \alpha > 0$] \label{ass:VW:L2}
$V$ and $W$ satisfy
\begin{enumerate}[(i)]
  \item $V = \Vsing + \Vreg$, where $\Vreg \in C(\R)$ is even, and $\Vsing \in L_{\operatorname{loc}}^1 (\R)$ is even, non-negative and decreasing on $(0, \infty)$;
  \item  $W \in C(\R)$ is even.
\end{enumerate}
\end{ass}

\begin{lem}[$\Gamma$-convergence of $E_n^\pm$ \cite{GeersPeerlingsPeletierScardia13}] \label{lem:Gconv:single}
Let $\gamma_n \to \gamma \in [0,\infty)$, $\alpha_n \to \alpha \in (0, \infty)$, and let $V$ satisfy Assumption \ref{ass:VW:L2}. Then $E_n^\pm$ $\Gamma$-converges to 
  \begin{equation*}
    \mu^\pm \mapsto \frac12 \int_0^1 \int_0^1 \alpha V (\alpha (x-y)) \, d (\mu^\pm \otimes \mu^\pm) (x, y). %+ \gamma^2 \int_0^1 x \, d\mu^\pm (x).
  \end{equation*}
Moreover, a recovery sequence in \eqref{for:Gconv:limsup} can be constructed for any prescribed $n^\pm$ which satisfies $\tfrac1n n^\pm \to \mu^\pm ([0,1])$ as $n \to \infty$.
\end{lem}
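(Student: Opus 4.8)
The plan is to reduce the statement to the $\Gamma$-convergence result already established in \cite{GeersPeerlingsPeletierScardia13} (and its finite-domain variant in \cite{VanMeursMunteanPeletier14}) for the single-species energy on measures of unit mass, and then to account for the only new feature here, namely that $\mu^\pm$ need not have mass one but only mass $m := \mu^\pm([0,1]) \in [0,1]$, with a prescribed sequence $n^\pm$ satisfying $\tfrac1n n^\pm \to m$. First I would record that by the symmetry \eqref{f:En:sym} it suffices to treat the $+$-species, and that since $\gamma_n$ enters only through a continuously converging perturbation (as noted at the start of \S \ref{s:pf}), we may set $\gamma = 0$; the $\gamma$-term is then reinstated for free by stability of $\Gamma$-convergence under continuously converging perturbations \eqref{fd:ct:ptb}. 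Note also that the energy $E_n^\pm$ is written with $\tfrac1{n^2}$ in front rather than $\tfrac1{(n^\pm)^2}$, so the natural object is the subprobability measure $\mu_n^+ = \tfrac1n\sum_{j=1}^{n^+}\delta_{x_j^+}$ of mass $\tfrac{n^+}{n}\to m$, exactly matching the target $\mu^+$.

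The liminf inequality \eqref{for:Gconv:liminf} I would obtain by the standard lower-semicontinuity argument for the singular part: writing $V = \Vsing + \Vreg$ as in Assumption \ref{ass:VW:L2}, the $\Vreg$-contribution converges by narrow convergence of $\mu_n^+ \boxtimes \mu_n^+$ to $\mu^+\otimes\mu^+$ (recalled in \eqref{f:ot:conv}), while the $\Vsing$-contribution, being even, nonnegative and decreasing on $(0,\infty)$, is handled by truncating $\Vsing$ from above at level $M$ to get a bounded continuous even function $\Vsing\wedge M$, passing to the narrow limit, and then letting $M\to\infty$ by monotone convergence; the diagonal does not contribute in the limit because $\mu_n^+$ charges the diagonal of $[0,1]^2$ with mass $\tfrac{n^+}{n^2}\to 0$ under $\boxtimes$. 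This is precisely the argument in \cite[Lem.~1]{GeersPeerlingsPeletierScardia13}; the only change is that the limiting measure has mass $m$ instead of $1$, which does not affect any step.

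For the recovery sequence \eqref{for:Gconv:limsup} with $n^+$ prescribed, I would reduce to the case of absolutely continuous $\mu^+$ (for which one places the $n^+$ points $x_j^+$ at the quantiles $x_j^+ = (F^{-1})(\tfrac{j}{n^+})$ of the normalised density $\rho^+/m$, so that $\mu_n^+\weakto\mu^+$) and show $\limsup E_n^+(\mu_n^+)\le \tfrac12\iint \alpha V(\alpha(x-y))\,d(\mu^+\otimes\mu^+)$ by the block-construction / mollification argument of \cite{GeersPeerlingsPeletierScardia13, VanMeursMunteanPeletier14}: approximate $\rho^+$ by a step function, distribute particles uniformly within each block, use that $\alpha_n\to\alpha$ so $\alpha_n V(\alpha_n\,\cdot)$ is a uniformly controlled perturbation of $\alpha V(\alpha\,\cdot)$, and bound the self-interaction within a block of width $w$ and $k$ particles by $O(\tfrac1{n^2}\,\alpha k\log k)=O(\tfrac1n\log n)\to 0$ thanks to the at-most-logarithmic blow-up allowed by $\Vsing\in L^1_{\mathrm{loc}}$; then pass from step densities to $\rho^+$ by continuity of the limit functional in the narrow topology together with a diagonal argument, and finally remove the absolute-continuity assumption by a further density argument in the narrow topology (a general $\mu^+\in\mathcal M_+([0,1])$ of mass $m$ is approximated narrowly by absolutely continuous measures of the same mass $m$, along which the limit functional is upper semicontinuous by the same truncation argument). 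The main obstacle is the interplay between the prescribed (and possibly "wrong-parity" or slowly converging) $n^+$ and the block construction: one must make sure the per-block particle counts $k_i$ can be chosen as integers with $\sum_i k_i = n^+$ while keeping each $k_i$ comparable to $n^+ w_i$, so that the $O(\tfrac1n\log n)$ self-energy bound survives; this is a bookkeeping issue rather than a conceptual one, and is exactly where the freedom "for any prescribed $n^\pm$ with $\tfrac1n n^\pm\to m$" is used. Combining the two inequalities gives the claimed $\Gamma$-convergence, and reinstating the $\gamma$-term via \eqref{fd:ct:ptb} completes the proof.
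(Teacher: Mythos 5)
Your route is genuinely different from the paper's, and more laborious than it needs to be. The paper does not re-open the liminf/limsup machinery at all: it cites the unit-mass results (\cite{GeersPeerlingsPeletierScardia13}, \cite{VanMeursMunteanPeletier14}, and the extension to Assumption \ref{ass:VW:L2} in \cite[\S 3.6]{VanMeurs15}) and then reduces the general-mass case to the unit-mass case by the exact scaling identity
\begin{equation*}
  E_n^\pm(\mu_n^\pm) = \big(\sigma_n^\pm\big)^2\, E_{n^\pm}^\pm\big(\tilde\mu_n^\pm\big),
  \qquad \tilde\mu_n^\pm := \mu_n^\pm/\sigma_n^\pm \in \mathcal P([0,1]),\quad \sigma_n^\pm := \tfrac1n n^\pm,
\end{equation*}
so that narrow convergence of $\mu_n^\pm$ with $\sigma_n^\pm\to\sigma^\pm>0$ is equivalent to narrow convergence of the normalised probability measures, and the prefactor $(\sigma_n^\pm)^2\to(\sigma^\pm)^2$ produces precisely the mass-$\sigma^\pm$ limit functional. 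The degenerate case $\sigma^\pm=0$ is then handled separately: the liminf follows from $V\geq -C$ on the relevant compact set, giving $E_n^\pm(\mu_n^\pm)\geq -\tfrac C2\alpha(\sigma_n^\pm)^2\to0$, and the limsup by choosing configurations whose normalised energy $E_{n^\pm}^\pm(\tilde\mu_n^\pm)$ stays bounded. This reduction is exactly where the hypothesis ``$\tfrac1n n^\pm\to\mu^\pm([0,1])$ for prescribed $n^\pm$'' is used, with no block-counting bookkeeping at all.

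Within your own route there are concrete problems you would have to repair. First, Assumption \ref{ass:VW:L2} only requires $\Vsing\in L^1_{\operatorname{loc}}$, even, non-negative and decreasing on $(0,\infty)$; this does \emph{not} imply at most logarithmic blow-up (take $\Vsing(r)=|r|^{-1/2}$), so your bound $O(\tfrac1{n^2}\alpha k\log k)=O(\tfrac1n\log n)$ for the within-block self-energy is unjustified. Worse, even for logarithmic $V$ the within-block discrete energy does not vanish as $n\to\infty$ for a fixed block width $w$: with $k\sim nw\rho$ equispaced particles it is of order $\int_0^{\alpha w}\Vsing$ (times $\|\rho\|_\infty^2$), i.e.\ it is $O(1)$ in $n$ and is made small only by sending $w\to0$ \emph{after} $n\to\infty$; your diagonal argument must be organised accordingly. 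Second, your quantile construction normalises by the mass $m=\mu^\pm([0,1])$ and therefore breaks down when $m=0$, a case the statement includes and the paper treats explicitly. Third, the final density step as you state it relies on ``upper semicontinuity by the same truncation argument'', but truncating $\Vsing$ from below yields \emph{lower} semicontinuity of the limit functional; the inequality $\limsup_\varepsilon E(\mu_\varepsilon)\leq E(\mu)$ along your approximating sequence needs a genuine argument (mollification estimates, or monotonicity of $\Vsing$), not semicontinuity. None of these is fatal — the cited literature contains the needed constructions — but as written the limsup part of your proposal has gaps that the paper's rescaling argument avoids entirely, together with the symmetry \eqref{f:En:sym} and the continuous-perturbation treatment of the $\gamma_n$-term which you correctly identified.
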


\begin{proof}
\cite[Thm.~5]{GeersPeerlingsPeletierScardia13} states a similar $\Gamma$-convergence result for the setting of the half infinite domain $[0, \infty)$, for a smaller class of potentials $V$, and for $\sigma_n^\pm := \tfrac1n n^\pm = 1$. \cite[Thm.~1.1]{VanMeursMunteanPeletier14} extends this result to finite domains, and in \cite[\S 3.6]{VanMeurs15} this result is extended to $V$ satisfying Assumption \ref{ass:VW:L2}.

Next we extend to any preset $\sigma_n^\pm \to \sigma^\pm \in [0, 1]$. If $\sigma^\pm > 0$, then $\sigma_n^\pm > 0$ for all $n$ large enough. Thus, setting $\tilde \mu_n^\pm := \mu_n^\pm / \sigma_n^\pm \in \mathcal P ([0,1])$ we find from $\mu_n^\pm \weakto \mu^\pm$ that $\tilde \mu_n^\pm \weakto \mu^\pm/\sigma^\pm =: \tilde \mu^\pm$ as $n \to \infty$. Then, from the unit mass case we infer that
\begin{multline} \label{fp:En:mut}
  E_n^\pm (\mu_n^\pm) 
  = \Big( \frac{n^\pm}{n} \Big)^2 \frac1{(n^\pm)^2} \sum_{i=1}^{n^\pm} \sum_{ j = 1 }^{i-1} \alpha_n V \big( \alpha_n ( x_i^\pm - x_j^\pm ) \big) \\
  = \big( \sigma_n^\pm \big)^2 \frac12 \int_0^1 \int_0^1 \alpha_n V (\alpha_n (x-y)) \, d (\tilde \mu_n^\pm \boxtimes \tilde \mu_n^\pm) (x, y)
  = \big( \sigma_n^\pm \big)^2 E_{n^\pm}^\pm (\tilde \mu_n^\pm)
\end{multline}
$\Gamma$-converges to 
\begin{equation*}
  \sigma^2 \frac12 \int_0^1 \int_0^1 \alpha V (\alpha (x-y)) \, d (\tilde \mu^\pm \otimes \tilde \mu^\pm) (x, y)
  = \frac12 \int_0^1 \int_0^1 \alpha V (\alpha (x-y)) \, d (\mu^\pm \otimes \mu^\pm) (x, y).
\end{equation*}

If $\sigma^\pm = 0$, then the $\Gamma$-limit equals $0$. Indeed, since $V \geq -C$ on $[-\alpha - 1, \alpha + 1]$, the liminf inequality \eqref{for:Gconv:liminf} follows from
\begin{equation*}
  E_n^\pm (\mu_n^\pm) 
  = \frac12 \int_0^1 \int_0^1 \alpha V (\alpha (x-y)) \, d (\mu_n^\pm \otimes \mu_n^\pm) (x, y)
  \geq - \frac C2 \alpha \big( \sigma_n^\pm \big)^2 
  \xto{n \to \infty} 0.
\end{equation*}
For the limsup inequality, we choose $\mu_n^\pm$ such that $E_{n^\pm}^\pm (\tilde \mu_n^\pm)$ is bounded. Then, by \eqref{fp:En:mut}, we obtain $E_n^\pm (\mu_n^\pm) = ( \sigma_n^\pm )^2 E_{n^\pm}^\pm (\tilde \mu_n^\pm) \to 0$ as $n \to 0$.
\end{proof}

\begin{thm}[$\Gamma$-convergence of $E_n$ in case $\alpha_n \to \alpha > 0$] \label{thm:Gconv:L2}
Let $\alpha_n \to \alpha > 0$, and let $V$ and $W$ satisfy Assumption \ref{ass:VW:L2}. Then $E_n$ $\Gamma$-converges to 
  \begin{multline*}
    E (\mu^+, \mu^-) =
    \frac12 \iint_{[0,1]^2} \alpha V (\alpha (x-y)) \, d\big( \mu^+ \otimes \mu^+ + \mu^- \otimes \mu^- \big)(x,y) \\
       + \iint_{[0,1]^2} \alpha W \big( \alpha ( x - y ) \big) \, d( \mu^+ \otimes \mu^- )(x,y). % \\
%      &\quad + \gamma^2 \int_0^1 x \, d\mu^+ (x) + \gamma^2 \int_0^1 (1-x) \, d\mu^- (x).
  \end{multline*}
Moreover, the recovery sequence in \eqref{for:Gconv:limsup} can be constructed for any prescribed $n^\pm$ which satisfies $\tfrac1n n^\pm \to \mu^\pm ([0,1])$ as $n \to \infty$.
\end{thm}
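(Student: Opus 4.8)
The plan is to build on the decomposition \eqref{f:En:pm}. Writing $G_n(\bmu) := \iint_{[0,1]^2} \alpha_n W(\alpha_n (x-y))\, d(\mu^+ \otimes \mu^-)(x,y)$ as a functional on all of $M([0,1])$ (its value is finite for every $\bmu$, since only the restriction of $W$ to a fixed compact interval ever matters), we have $E_n = (E_n^+ + E_n^-) + G_n$ on $M([0,1])$, where $E_n^\pm$ is extended by $+\infty$ off empirical measures. Because $\bmu \in M([0,1])$ forces $\mu^+ + \mu^- \in \mathcal P([0,1])$, the sum $E_n^+(\mu^+) + E_n^-(\mu^-)$ is finite exactly when $\mu^\pm$ are both empirical with common denominator $n$ and $n^+ + n^- = n$, i.e.\ precisely when $E_n(\bmu)$ is finite, and then the two expressions agree. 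Since $\Gamma$-convergence is stable under continuously converging perturbations \eqref{fd:ct:ptb}, it suffices to prove: (a) $E_n^+ + E_n^-$ $\Gamma$-converges to the functional $F(\bmu) := \tfrac12 \iint_{[0,1]^2} \alpha V(\alpha(x-y))\, d(\mu^+\otimes\mu^+ + \mu^-\otimes\mu^-)(x,y)$, and (b) $G_n$ converges continuously to $G(\bmu) := \iint_{[0,1]^2} \alpha W(\alpha(x-y))\, d(\mu^+\otimes\mu^-)(x,y)$.

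For (a), I would argue as follows. Narrow convergence $\bmu_n \weakto \bmu$ is equivalent to $\mu_n^+ \weakto \mu^+$ and $\mu_n^- \weakto \mu^-$; each $E_n^\pm$ is bounded below uniformly in $n$ (from $V \geq \Vreg$ and $\Vreg$ bounded below on $[-\alpha-1,\alpha+1] \supseteq [-\alpha_n,\alpha_n]$ for $n$ large), so $\liminf (E_n^+(\mu_n^+)+E_n^-(\mu_n^-)) \geq \liminf E_n^+(\mu_n^+) + \liminf E_n^-(\mu_n^-)$, and the liminf inequality \eqref{for:Gconv:liminf} follows by applying the liminf part of Lemma \ref{lem:Gconv:single} to each summand. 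For the recovery sequence \eqref{for:Gconv:limsup} with a prescribed $n^\pm$ satisfying $\tfrac1n n^\pm \to \mu^\pm([0,1])$ — which is compatible with $n^+ + n^- = n$, the two limiting masses summing to $1$ — apply the recovery-sequence clause of Lemma \ref{lem:Gconv:single} separately to $\mu^+$ with $n^+$ particles and to $\mu^-$ with $n^-$ particles, and concatenate the two configurations into one $n$-particle state $\bmu_n \in M([0,1])$; then $\bmu_n \weakto \bmu$ and $E_n^+(\mu_n^+) + E_n^-(\mu_n^-) \to F(\bmu)$.

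For (b), let $\bmu_n \weakto \bmu$ in $M([0,1])$. Then $\mu_n^+ \otimes \mu_n^- \weakto \mu^+ \otimes \mu^-$ narrowly on $[0,1]^2$ (a standard consequence of narrow convergence of the factors, cf.\ \eqref{f:ot:conv}), with total masses bounded by $1$; and since $W \in C(\R)$ is even and $\alpha_n \to \alpha$, the integrands $(x,y)\mapsto \alpha_n W(\alpha_n(x-y))$ converge uniformly on $[0,1]^2$ to the fixed continuous function $(x,y)\mapsto \alpha W(\alpha(x-y))$, by uniform continuity of $W$ on the compact set $[-\alpha-1,\alpha+1]$. Splitting $G_n(\bmu_n) - G(\bmu)$ into one piece controlled by the uniform convergence of the integrands times the bounded mass and one piece controlled by narrow convergence of the product measures against the fixed limit integrand yields $G_n(\bmu_n) \to G(\bmu)$.

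Combining (a), (b) and the stability lemma proves that $E_n$ $\Gamma$-converges to $E = F + G$; the recovery sequence of (a) also works verbatim after adding the continuously converging perturbation $G_n$, which gives the final clause of the theorem. I expect the only genuinely delicate point to be the bookkeeping in the recovery-sequence part of (a): reconciling the two independent single-species recovery sequences with the hard constraint $n^+ + n^- = n$ and treating the degenerate cases $\mu^\pm([0,1]) = 0$ — but both are already covered by the concluding statement of Lemma \ref{lem:Gconv:single}. Step (b) is routine once one notices that only a fixed compact part of $W$ enters.
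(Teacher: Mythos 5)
Your proposal is correct and follows essentially the same route as the paper: decompose $E_n$ as in \eqref{f:En:pm} into $E_n^+ + E_n^-$ plus the cross-interaction term, show the latter is a continuously converging perturbation (uniform convergence of $(x,y)\mapsto\alpha_n W(\alpha_n(x-y))$ on $[0,1]^2$ together with $\mu_n^+\otimes\mu_n^-\weakto\mu^+\otimes\mu^-$ from \eqref{f:ot:conv}), and obtain the $\Gamma$-limit of the decoupled single-species terms from Lemma \ref{lem:Gconv:single}, using its prescribed-$n^\pm$ recovery clause to respect $n^++n^-=n$. The extra care you take in splitting the liminf (uniform lower bound on $E_n^\pm$) and in concatenating the two recovery sequences is exactly the bookkeeping the paper leaves implicit, so there is no gap.
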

%\PATRICK{define E2 before}

\begin{proof}
In terms of the measures $\mu_n^\pm$, \eqref{f:En:pm} reads
\begin{equation} \label{f:En:pm:mun}
  E_n ( \mu_n^+, \mu_n^- )
  = E_n^+( \mu_n^+ ) + E_n^-( \mu_n^- ) 
    + \iint_{[0,1]^2} \alpha_n W \big( \alpha_n ( x - y ) \big) \, d( \mu_n^+ \otimes \mu_n^- )(x,y).
\end{equation}
Firstly, since $W \in C(\R)$ and $|x-y| \leq 1$, the sequence of maps $(x,y) \mapsto \alpha_n W ( \alpha_n ( x - y ))$ converges uniformly on $[0,1]^2$ to $\alpha W ( \alpha ( x - y ))$. Secondly, for any $\mu_n^\pm \weakto \mu^\pm$, we have by \eqref{f:ot:conv} that $\mu_n^+ \otimes \mu_n^- \weakto \mu^+ \otimes \mu^-$. Together, these properties imply that the third term in the right-hand side of \eqref{f:En:pm:mun} converges to 
\begin{equation*}
  \iint_{[0,1]^2} \alpha W \big( \alpha ( x - y ) \big) \, d( \mu^+ \otimes \mu^- )(x,y),
\end{equation*}
and thus it is a continuous perturbation \eqref{fd:ct:ptb} to the other two terms in \eqref{f:En:pm:mun}. $\Gamma$-convergence of these two terms follows from Lemma \ref{lem:Gconv:single} and the observation that they decouple the dependence of $E_n$ on $\mu_n^+$ and $\mu_n^-$.
\end{proof}

\subsection{The case $1 \ll \alpha_n \ll n$}

\begin{ass}[Properties of $V$ and $W$ in case $1 \ll \alpha_n \ll n$] \label{ass:VW:L3}
$V$ and $W$ satisfy
\begin{enumerate}[(i)]
  \item $V \in L^1(\R)$ is even, and non-increasing on $(0, \infty)$; \label{ass:VW:L3:V}
  \item $W \in L^1(\R) \cap C(\R)$ is even, satisfies $\mathcal F W \geq 0$, and is non-increasing on $(0, \infty)$; \label{ass:VW:L3:W}
  \item $V - W \not\equiv 0$ can be approximated by $U^\delta \nearrow (V - W)$ pointwise a.e.~on $\R$ as $\delta \to 0$, where $\mathcal F U^\delta \geq 0$ and $U^\delta (0) < \infty$ for all $\delta > 0$. \label{ass:VW:L3:VW}
\end{enumerate}
\end{ass}

A typical example of a couple $(V, W)$ which satisfies Assumption \ref{ass:VW:L3} is given by $W$ as in \eqref{ass:VW:L3:W}, and $V \in L^1 (\R)$ even on $\R$ with $V'' \geq W'' \vee 0 $ on $(0, \infty)$. Then, a possible choice for $U^\delta$ is the convex envelope of 
\begin{equation*}
  x \mapsto \left\{ \begin{aligned}
    &V (x) - W (x)
    &&x > 0, \\
    &\delta^{-1}
    &&x = 0
  \end{aligned} \right.
\end{equation*}
with even extension from $x \in [0, \infty)$ to $x \in \R$. Proposition \ref{prop:props:Wa} implies that $V$ and $W_a$ as in \eqref{for:defn:V} and \eqref{for:defn:W} satisfy Assumption \ref{ass:VW:L3} for all $a \in [0,1]$.

We assume non-negativity of the Fourier transform in Assumption \ref{ass:VW:L3} to rule out the formation of microstructures in $x^{n, \pm}$ which could lower the energy. We sketch the argument on how non-negativity of the Fourier transform prevents such low-energy microstructures, and refer for the details to \cite{GeersPeerlingsPeletierScardia13} and \cite[\S 3.6]{VanMeurs15}. We first consider the single particle case $n^- = 0$, in which we set $W = 0$. The approximation from below by $U^\delta$ allows us to include the self-interactions by
\begin{equation*}
  \frac12 \iint \alpha_n V ( \alpha_n (x - y) ) \, d( \mu_n^+ \boxtimes \mu_n^+ ) (x, y) 
  \geq \frac12 \iint \alpha_n U^\delta ( \alpha_n (x - y) ) \, d( \mu_n^+ \otimes \mu_n^+ ) (x, y) - \frac{\alpha_n}{2 n} U^\delta (0).
\end{equation*}
The non-negativity of $\mathcal F U^\delta$ allows us to split the operation `convolution with $U^\delta$' as applying twice the convolution with $u^\delta$, i.e., $U^\delta = u^\delta * u^\delta$. Setting $U_n^\delta := \alpha_n U^\delta ( \alpha_n \, \cdot \, )$, we obtain
\begin{equation*}
  \frac12 \iint U_n^\delta (x - y) \, d( \mu_n^+ \otimes \mu_n^+ ) (x, y)
  = \frac12 \int_\R \big( u_n^\delta * \mu_n^+ \big)^2 (x) \, dx.
\end{equation*}
This approximation of $E_n$ from below by the square of the $L^2$-norm of $u_n^\delta * \mu_n^+$ is the key for deriving the following $\Gamma$-liminf estimate, and the author is unaware of any other technique which leads to the same lower bound. 

\begin{lem}[$\Gamma$-liminf inequality of $E_n^\pm$ {\cite[Thm.~7]{GeersPeerlingsPeletierScardia13}}] \label{lem:Gconv:single:L3}
Let $1 \ll \alpha_n \ll n$ and let $\mathsf V$ satisfy Assumption \ref{ass:VW:L3}.\eqref{ass:VW:L3:VW} (with $\mathsf V = V-W$). Then, for all $\mu_n^\pm \weakto \mu^\pm$ it holds that
  \begin{equation*}
    \liminf_{n\to \infty} \frac12 \int_0^1 \int_0^1 \alpha_n \mathsf V (\alpha_n (x-y)) \, d (\mu_n^\pm \boxtimes \mu_n^\pm) (x, y) 
    \geq \bigg( \int_0^\infty \mathsf V \bigg) \int_0^1 \rho^\pm (x)^2 \, dx,
  \end{equation*}
  where the right-hand side is defined as $\infty$ if $\mu^\pm$ is not absolutely continuous (cf.~\ref{for:defn:abs:cont}).
\end{lem}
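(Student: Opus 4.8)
The plan is to make rigorous the Fourier-splitting heuristic sketched just above the statement, in four moves. Fix $\delta>0$ and write $U := U^\delta$ and $c := \big(\int_\R U\big)^{1/2}$. In the setting at hand $\mathsf V\ge 0$ and $\mathsf V\in L^1(\R)$ (e.g.\ for $V,W_a$ by Proposition \ref{prop:props:Wa}), and one takes $U^\delta$ to be the non-negative even convex-envelope family described after Assumption \ref{ass:VW:L3}, so $0\le U\le\mathsf V$ and hence $U\in L^1(\R)\cap L^\infty(\R)$ with $\widehat U(0)=\int_\R U\ge 0$; thus $c$ is well defined. \emph{(i) Reduce to a positive-definite kernel on the full product.} Since $U\le\mathsf V$ on $\R\setminus\{0\}$ and $\mu_n^\pm\boxtimes\mu_n^\pm\ge 0$, the left-hand side dominates $\tfrac12\iint\alpha_n U(\alpha_n(x-y))\,d(\mu_n^\pm\boxtimes\mu_n^\pm)$, and restoring the diagonal costs $\tfrac{n^\pm}{2n^2}\alpha_n U(0)\le\tfrac{\alpha_n}{2n}U(0)\to 0$ because $\alpha_n\ll n$. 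Hence
\[
  \liminf_n \tfrac12\iint\alpha_n \mathsf V(\alpha_n(x-y))\,d(\mu_n^\pm\boxtimes\mu_n^\pm)\;\ge\;\liminf_n \tfrac12\iint\alpha_n U(\alpha_n(x-y))\,d(\mu_n^\pm\otimes\mu_n^\pm).
\]

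\emph{(ii) Write the quadratic form as an $L^2$-norm.} Because $\widehat U\ge 0$ and $U(0)<\infty$ (so $\widehat U\in L^1$), $U$ has a convolution square root $u:=\mathcal F^{-1}\big(\sqrt{\widehat U}\,\big)$, even, with $u*u=U$ and $\int_\R u=c$; setting $U_n:=\alpha_n U(\alpha_n\,\cdot)$ and $u_n:=\alpha_n u(\alpha_n\,\cdot)$ one checks $u_n*u_n=U_n$ and $\int_\R u_n=c$. Using evenness of $u_n$ and Fubini,
\[
  \tfrac12\iint U_n(x-y)\,d(\mu_n^\pm\otimes\mu_n^\pm)(x,y)\;=\;\tfrac12\,\|u_n*\mu_n^\pm\|_{L^2(\R)}^2 ,
\]
so it remains to prove $\liminf_n\|u_n*\mu_n^\pm\|_{L^2(\R)}^2\ge c^2\int_0^1(\rho^\pm)^2$, the right side read as $+\infty$ when $\mu^\pm$ is not absolutely continuous or $\rho^\pm\notin L^2$.

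\emph{(iii) Identify the weak limit.} If $\liminf_n\|u_n*\mu_n^\pm\|_2=\infty$ there is nothing to prove; otherwise pass to a subsequence realizing the liminf along which these norms stay bounded, and extract a further subsequence with $u_n*\mu_n^\pm\weakto f$ in $L^2(\R)$. For $\varphi\in C_c(\R)$, $\int\varphi\,(u_n*\mu_n^\pm)=\int(u_n*\varphi)\,d\mu_n^\pm$; since $u_n$ is an approximate identity of total mass $c$ (here $\alpha_n\to\infty$ and $u\in L^1$ are used), $u_n*\varphi\to c\varphi$ uniformly, and with $\mu_n^\pm\weakto\mu^\pm$ the right side tends to $c\int\varphi\,d\mu^\pm$. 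Hence $f\,dx=c\,\mu^\pm$: if $c>0$ this forces $\mu^\pm$ absolutely continuous with $\rho^\pm=f/c\in L^2$, and weak lower semicontinuity of $\|\cdot\|_{L^2(\R)}$ gives $\liminf_n\|u_n*\mu_n^\pm\|_2^2\ge\|f\|_2^2=c^2\int_0^1(\rho^\pm)^2$; if instead $\mu^\pm$ is not absolutely continuous or $\rho^\pm\notin L^2$, no such $f$ exists, so the liminf is in fact $+\infty$ and the bound is trivial (and if $c=0$, i.e.\ $\int_\R U=0$, the bound is $0$). Since $\tfrac12 c^2=\tfrac12\int_\R U=\int_0^\infty U$ by evenness, combining (i)--(iii) gives $\liminf_n\tfrac12\iint\alpha_n\mathsf V(\alpha_n(x-y))\,d(\mu_n^\pm\boxtimes\mu_n^\pm)\ge\big(\int_0^\infty U^\delta\big)\int_0^1(\rho^\pm)^2$. \emph{(iv)} Finally let $\delta\to 0$: since $U^\delta\nearrow\mathsf V$ a.e.\ and $0\le U^\delta\le\mathsf V\in L^1(\R)$, monotone convergence gives $\int_0^\infty U^\delta\to\int_0^\infty\mathsf V$, and as the left side is $\delta$-independent the claim follows.

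The step I expect to be the main obstacle is (iii), and within it the assertion that $u_n$ is a genuine approximate identity: this requires the convolution square root $u=\mathcal F^{-1}\big(\sqrt{\widehat{U^\delta}}\,\big)$ to lie in $L^1(\R)$, not merely $L^2(\R)$, which is a nontrivial integrability fact about $\sqrt{\widehat{U^\delta}}$. I would handle it by exploiting the concrete form of $U^\delta$ (the even convex envelope after Assumption \ref{ass:VW:L3}, so that $0\le U^\delta\le\mathsf V$ with $U^\delta$ convex and integrable), as in \cite{GeersPeerlingsPeletierScardia13}. A secondary technicality is that $u_n*\mu_n^\pm$ has non-vanishing tails outside $[0,1]$, so the weak-$L^2$ compactness and lower-semicontinuity steps must be carried out on all of $\R$, the mass outside any fixed neighbourhood of $[0,1]$ tending to $0$ as $n\to\infty$.
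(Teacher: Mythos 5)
Your argument is essentially the proof the paper has in mind: Lemma \ref{lem:Gconv:single:L3} is not proved in the paper but cited from \cite[Thm.~7]{GeersPeerlingsPeletierScardia13}, and the sketch preceding it (bounding from below by $U^\delta$, restoring the diagonal at cost $\tfrac{\alpha_n}{2n}U^\delta(0)\to 0$, rewriting the quadratic form as $\tfrac12\|u_n^\delta*\mu_n^\pm\|_{L^2(\R)}^2$, then weak $L^2$ lower semicontinuity and $\delta\to 0$) is exactly your steps (i)--(iv). The one point you flag as the main obstacle, namely $u=\mathcal F^{-1}\bigl(\sqrt{\widehat{U^\delta}}\,\bigr)\in L^1(\R)$, is not actually needed: to identify the weak limit it suffices to test against $\varphi\in C_c^\infty(\R)$ and observe that $\widehat{u_n*\varphi}=\widehat{u_n}\,\widehat{\varphi}$ with $\widehat{u_n}(\xi)=\sqrt{\smash[b]{\widehat{U^\delta}(\xi/\alpha_n)}}\to\sqrt{\smash[b]{\widehat{U^\delta}(0)}}=c$ pointwise and boundedly (as $U^\delta\in L^1$ makes $\widehat{U^\delta}$ bounded and continuous), so $u_n*\varphi\to c\varphi$ uniformly by dominated convergence on the Fourier side, which gives $f\,dx=c\,\mu^\pm$ without any $L^1$ information on $u$.
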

In the proof of Theorem \ref{thm:main}, we apply Lemma \ref{lem:Gconv:single:L3} twice; once with $\mathsf V = V-W$ and once with $\mathsf V = W$. 

In the case of mixed particles, a similar strategy for obtaining a sufficient lower bound results in an additional term given by
\begin{equation*}
  \frac12 \int_\R \big( w_n * \mu_n^+ \big) (x) \big( w_n * \mu_n^- \big) (x) \, dx,
\end{equation*}
where $w_n * w_n = W_n := \alpha_n W ( \alpha_n \, \cdot \, )$. This term is an $L^2$-inner-product rather than the square of an $L^2$-norm. We bound it from below by using the Cauchy-Schwartz inequality, which leaves us to bound $\| w_n * \mu_n^\pm \|_2^2$ by part of the energy $E_n^\pm (\mu_n^\pm)$. Assumption \ref{ass:VW:L3}.\eqref{ass:VW:L3:VW} is chosen to make this estimate work.

For the construction of a recovery sequence \eqref{for:Gconv:limsup}, we do not rely on the technique in \cite{GeersPeerlingsPeletierScardia13}. The main reason is that this technique relies on describing the particle positions $x_i^+$ in terms of the displacement $u_n$ \eqref{fd:un}, which is not suited in the case of multiple species.  Instead, we use the description in terms of $\bmu_n$, and construct the recovery sequence similarly as in \cite{MoraPeletierScardia14ArXiv}. We use the assumption that $V$ and $W$ are non-increasing on $(0, \infty)$ to have the monotonicity result that the energy $E_n$ does not decrease whenever we replace the argument of $V$ or $W$ by a number with smaller absolute value.

%%\begin{lem}[Reversing wall positions] Let $\gamma_n \to \gamma \in [0, \infty)$. Set $\tau_n : [0, 1 \vee \gamma_n] \to [0, 1 \vee \gamma_n]$ and $\tau : [0, 1 \vee \gamma] \to [0, 1 \vee \gamma]$ as $\tau_n (r) := 1 \vee \gamma_n - r$ and $\tau (r) := 1 \vee \gamma - r$. Then, for any sequence $(\mu_n)$ with elements in $\mathcal M_+ ([0, 1 \vee \gamma_n])$ and $\mu \in \mathcal M_+ ([0, 1 \vee \gamma])$, it holds that
%%\begin{equation*}
%%  \mu_n \weakto \mu 
%%  \quad \Longleftrightarrow \quad
%%  (\tau_n)_\# \mu_n \weakto (\tau_n)_\# \mu.
%%\end{equation*}
%%
%%\end{lem}
%%
%%\begin{proof}
%%Since $\tau_n \circ \tau_n = \operatorname{id} = \tau \circ \tau$, it suffices to show the implication from left to right only. By extending $\tau_n, \tau$ to $[0, 1 + 1 \vee \gamma]$, we conclude from the fact that $\tau_n \to \tau$ in $C_b([0, 1 + 1 \vee \gamma])$.
%%\end{proof}

\begin{thm}[$\Gamma$-convergence of $E_n$ in case $1 \ll \alpha_n \ll n$] \label{thm:Gconv:L3}
Let $1 \ll \alpha_n \ll n$, and let $V$ and $W$ satisfy Assumption \ref{ass:VW:L3}. Then $E_n$ $\Gamma$-converges to 
  \begin{align} \label{for:defn:E3}
    E (\mu^+, \mu^-) =
    \bigg( \int_0^\infty V \bigg) \int_0^1 \big( \rho^+ (x)^2 + \rho^- (x)^2 \big) \, dx 
    + \bigg( \int_0^\infty W \bigg) \int_0^1 2 \rho^+ (x) \rho^- (x) \, dx,
  \end{align}
  where the right-hand side is defined as $\infty$ if $\mu^\pm$ is not absolutely continuous (cf.~\ref{for:defn:abs:cont}).
\end{thm}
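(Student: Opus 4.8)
The plan is to establish the two inequalities \eqref{for:Gconv:liminf} and \eqref{for:Gconv:limsup} separately, exploiting the symmetry \eqref{f:En:sym} and the decomposition \eqref{f:En:pm:mun} into same-type and opposite-type interactions. Throughout I would set $\gamma_n = \gamma = 0$ as noted. For the $\Gamma$-liminf inequality, fix $\bmu_n \weakto \bmu$ with $\liminf_n E_n(\bmu_n) < \infty$ (otherwise nothing to prove), and pass to a subsequence attaining the liminf. The key algebraic trick is to split $V = (V - W) + W$, so that the interaction energy becomes
\begin{align*}
  E_n(\bmu_n)
  &= \tfrac12 \iint \alpha_n (V-W)(\alpha_n(x-y)) \, d(\mu_n^+ \boxtimes \mu_n^+ + \mu_n^- \boxtimes \mu_n^-) \\
  &\quad + \tfrac12 \iint \alpha_n W(\alpha_n(x-y)) \, d\big( (\mu_n^+ + \mu_n^-) \boxtimes (\mu_n^+ + \mu_n^-) \big),
\end{align*}
since combining the same-type $W$-terms with twice the cross $W$-term reconstitutes the $\boxtimes$-product of the sum. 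The first bracket is handled by applying Lemma \ref{lem:Gconv:single:L3} with $\mathsf V = V - W$ to each of $\mu_n^+$ and $\mu_n^-$; the second by applying it with $\mathsf V = W$ to the measure $\mu_n^+ + \mu_n^-$ (which is admissible because $\mu_n^+ + \mu_n^- \in \mathcal P([0,1])$ and $W$ satisfies Assumption \ref{ass:VW:L3}.\eqref{ass:VW:L3:W}, hence \eqref{ass:VW:L3:VW} with $U^\delta$ itself). Adding the three liminf bounds and using that absolute continuity of $\mu^+ + \mu^-$ forces absolute continuity of $\mu^\pm$, one reconstructs the right-hand side of \eqref{for:defn:E3} after expanding $(\rho^+ + \rho^-)^2 = (\rho^+)^2 + (\rho^-)^2 + 2\rho^+\rho^-$ and using $\int_0^\infty V = \int_0^\infty(V-W) + \int_0^\infty W$. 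If $\mu^+ + \mu^-$ fails to be absolutely continuous, the $W$-term already blows up (since $\int_0^\infty W > 0$ by $\mathcal F W \ge 0$, $W \not\equiv 0$), giving $E(\bmu) = \infty$; the $(V-W)$-term is bounded below since $V - W$ is bounded near $0$ by Assumption \ref{ass:VW:L3}.\eqref{ass:VW:L3:VW}. One subtlety: the diagonal corrections $-\tfrac{\alpha_n}{2n} U^\delta(0)$ introduced when passing from $\boxtimes$ to $\otimes$ vanish because $\alpha_n \ll n$; this is already inside Lemma \ref{lem:Gconv:single:L3}.

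For the $\Gamma$-limsup inequality, I would construct an explicit recovery sequence following the strategy announced in the text (as in \cite{MoraPeletierScardia14ArXiv}). First reduce to $\brho$ with $\rho^\pm$ bounded, piecewise constant, and bounded away from $0$ on their support by a density argument in the energy (using that $E$ is continuous along such approximations in $L^2$, and that the energy is finite only for $L^2$ densities). For such $\brho$, partition $[0,1]$ into small subintervals on which $\rho^+$ and $\rho^-$ are constant, and place $n^+$ positive and $n^-$ negative points as equispaced lattices within the corresponding regions with the prescribed local spacing $\approx 1/(n\rho^\pm)$; a minor rounding argument handles integrality of the particle counts per subinterval, consistent with $\tfrac1n n^\pm \to \mu^\pm([0,1])$. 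The point is that because $\alpha_n \to \infty$, the interaction $\alpha_n V(\alpha_n r)$ is concentrated at scale $1/\alpha_n \ll 1/n$ times the macroscopic scale near... actually at scale $1/\alpha_n$, which is \emph{large} compared to the particle spacing $1/n$ since $\alpha_n \ll n$; this means each particle interacts with $\sim \alpha_n^{-1} \cdot n = n/\alpha_n \to \infty$... let me restate: the number of neighbours within interaction range $1/\alpha_n$ of a given particle is $\sim (1/\alpha_n)/(1/(n\rho)) = n\rho/\alpha_n$, which $\to\infty$, so the discrete sum Riemann-approximates $n\rho \int_0^\infty \alpha_n V(\alpha_n r)\,dr \cdot (\text{something})$; carefully, $\frac1{n^2}\sum_{i,j}\alpha_n V(\alpha_n(x_i - x_j)) \approx \frac1{n^2} \sum_i \cdot 2 \sum_{k\ge 1} \alpha_n V(\alpha_n k/(n\rho)) \approx \frac1n \sum_i \frac{2}{n} \cdot n\rho \int_0^\infty \alpha_n V(\alpha_n s)\,ds / \alpha_n \cdot \alpha_n$... the cleanest bookkeeping, which I would carry out via the substitution turning the sum over $k$ into a Riemann sum for $\int_0^\infty V$, yields exactly $\big(\int_0^\infty V\big)\int_0^1 \rho^2$ in the limit. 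The cross term is handled analogously: in a subinterval where both $\rho^+,\rho^-$ are positive constants, the two interlaced lattices produce $\big(\int_0^\infty W\big) \cdot 2\rho^+\rho^-$ per unit length in the limit (the factor $2$ coming from $W$ being even and both signs of the argument contributing), and the boundary subintervals where only one species is present contribute nothing to the cross term. Upper-semicontinuity of the construction with respect to the density approximation then gives $\limsup_n E_n(\bmu_n^{(\text{approx})}) \le E(\bmu)$, and a diagonal extraction produces the desired recovery sequence.

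The main obstacle I anticipate is the limsup construction near points where $\rho^+$ or $\rho^-$ vanishes or is discontinuous, and more seriously in subintervals where both densities are positive: there one must control the interaction between two superimposed lattices whose offsets are essentially arbitrary, and show the cross-energy is insensitive to the relative phase in the limit — this is where the monotonicity of $W$ on $(0,\infty)$ (Assumption \ref{ass:VW:L3}.\eqref{ass:VW:L3:W}), noted in the text as the tool that "the energy does not decrease when replacing the argument by one with smaller absolute value," is used to sandwich the cross-energy between two phase-independent quantities that share the same limit $\big(\int_0^\infty W\big)2\rho^+\rho^-$. A secondary technical point is justifying the density reduction in the energy norm, i.e. that step functions bounded away from zero are $E$-dense among $L^2_+$ densities — straightforward since $E$ is (up to constants) a sum of squared $L^2$-norms and products thereof, hence continuous on $L^2$. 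Finally, the integrality/rounding of per-subinterval particle counts needs a short argument to ensure the constructed configurations lie in $\Omega_n^\pm$ and have the prescribed total masses; this contributes only lower-order errors.
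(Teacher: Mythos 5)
Your proposal follows essentially the same route as the paper: the liminf via the splitting $V_n=(V_n-W_n)+W_n$ and Lemma \ref{lem:Gconv:single:L3} applied with $\mathsf V=V-W$ to each $\mu_n^\pm$ and with $\mathsf V=W$ to $\mu_n^++\mu_n^-$ (the diagonal correction $\tfrac{\alpha_n}{2n}W(0)$ vanishing since $\alpha_n\ll n$), and the limsup via piecewise-constant densities, locally equidistant particle placement, and the monotonicity of $V$ and $W$ to bound the lattice sums by phase-independent Riemann sums. The only notable difference is that the paper inserts gaps of width $\varepsilon^2$ between the constancy intervals $I_k$ (Figure \ref{fig:partn:domain}) so that inter-cell interactions vanish cleanly, a point your sketch leaves implicit.
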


\begin{proof}
Setting $V_n := \alpha_n V ( \alpha_n \, \cdot \, )$ and $W_n := \alpha_n W ( \alpha_n \, \cdot \, )$, we prove the liminf-inequality \eqref{for:Gconv:liminf} by splitting the interaction energies of particles of the same type as
\begin{align*}
  E_n^\pm (\mu_n^\pm) 
  &= \frac12 \iint \big[ (V_n - W_n) + W_{n} \big] (x - y) \, d( \mu_n^\pm \boxtimes \mu_n^\pm ) (x, y) \\
  &= \frac12 \iint [ V_n - W_n ] (x - y) \, d( \mu_n^\pm \boxtimes \mu_n^\pm ) (x, y) \\
  &\qquad + \frac12 \iint W_n (x - y) \, d (\mu_n^\pm \otimes \mu_n^\pm) (x, y) - \frac{n^\pm}{2 n^2} W_n (0).
\end{align*}
Then, we rewrite
\begin{align*}
  E_n (\mu_n^+, \mu_n^-) 
  &= \frac12 \iint [ V_n - W_n ] (x - y) \, d( \mu_n^+ \boxtimes \mu_n^+ ) (x, y) \\
  &\quad + \frac12 \iint [ V_n - W_n ] (x - y) \, d( \mu_n^- \boxtimes \mu_n^- ) (x, y) \\
  &\quad + \frac12 \iint W_n (x - y) \, d \big( (\mu_n^+ + \mu_n^-) \otimes (\mu_n^+ + \mu_n^-) \big) (x, y)
  - \frac{\alpha_n}{2 n} W (0).
\end{align*}
Next we take $\liminf_{n \to \infty}$ on all four terms in the right-hand side separately. The $\liminf_{n \to \infty}$ of the first three terms are given by Lemma \ref{lem:Gconv:single:L3}, and since $\alpha_n \ll n$, the fourth term converges to $0$. \smallskip

We establish the limsup-inequality \eqref{for:Gconv:limsup} by constructing a recovery sequence for $\bmu$ in a dense subset of $M([0,1])$, which is similar to one used in \cite{MoraPeletierScardia14ArXiv}. To construct this subset, we divide the domain of the dislocation walls in closed intervals $I_k$ with $k = 1, \ldots, K$ as in Figure \ref{fig:partn:domain}, with size $\varepsilon > 0$ such that the intervals fit `nicely', i.e., $K \varepsilon (1 + \varepsilon) = 1$. 

\begin{figure}[h]
\centering
\begin{tikzpicture}[scale=1.8] % a.7.20
    \def \xi {1}
    \def \xii {0.5}
    \def \xiii {1}
    \def \xiv {\xii/2} % height
    \def \xv {\xi/2}
    \def \xvi {\xv}
    \def \xvii {\xv/3}
    \def \xviii {2*\xii/3}
    
	\foreach \x in {0, {\xi + \xii}, {3*\xi + 3*\xii +\xiii}}{
	  \begin{scope}[shift={(\x, 0)}, scale=1]
        \draw (0, 0) -- (\xi, 0); 
        \draw[<->] (0, \xvii) -- (\xi, \xvii) node[midway, above]{$\varepsilon$};
        \draw[gray] (-\xii/2, -\xviii) -- (\xi + \xii/2, -\xviii);
      \end{scope} 
	}  
	\foreach \x in {\xi, 2*\xi + \xii, 3*\xi + 2*\xii +\xiii}{
	  \begin{scope}[shift={(\x, 0)}, scale=1]
        \draw (0, 0) -- (\xii, 0); 
        \draw[dashed] (0, 0) -- (0, \xv);
        \draw[dashed] (\xii, 0) -- (\xii, \xv);
        \draw (0, 0) -- (0, -\xviii/2);
        \draw (\xii, 0) -- (\xii, -\xviii/2);
        %\draw (0, -\xiv) -- (0, \xiv);
        %\draw (\xii, -\xiv) -- (\xii, -\xiv);
        \draw[<->] (0, \xvii) -- (\xii, \xvii) node[midway, above]{$\varepsilon^2$};
        \draw[dotted,gray] (\xii/2,0) -- (\xii/2, -\xviii);
        \draw[gray] (\xii/2, -\xviii) -- (\xii/2, -\xv);
      \end{scope} 
	}   
	\foreach \x in {-\xii/2, 4*\xi + 3*\xii +\xiii}{
	  \begin{scope}[shift={(\x, 0)}, scale=1]
        \draw (0, 0) -- (\xii/2, 0); 
        \draw[<->] (0, \xvii) -- (\xii/2, \xvii) node[midway, above]{$\frac{\varepsilon^2}2$};
      \end{scope} 
	} 
	\draw[dashed] (0, 0) -- (0, \xv);
    \draw[dashed] (4*\xi + 3*\xii +\xiii, 0) -- (4*\xi + 3*\xii +\xiii, \xv); 
    \draw (0, 0) -- (0, -\xviii/2);
    \draw (4*\xi + 3*\xii +\xiii, 0) -- (4*\xi + 3*\xii +\xiii, -\xviii/2);
	\draw (2*\xi + 2*\xii, 0) -- (2.5*\xi + 2*\xii, 0);
	\draw[dotted] (2.5*\xi + 2*\xii, 0) -- (2.5*\xi + 2*\xii + \xiii, 0);
	\draw (2.5*\xi + 2*\xii + \xiii, 0) -- (3*\xi + 2*\xii + \xiii, 0);
	\draw[gray] (2*\xi + 1.5*\xii, -\xviii) -- (2.5*\xi + 2*\xii, -\xviii);
	\draw[dotted, gray] (2.5*\xi + 2*\xii, -\xviii) -- (2.5*\xi + 2*\xii + \xiii, -\xviii);
	\draw[gray] (2.5*\xi + 2*\xii + \xiii, -\xviii) -- (3*\xi + 2.5*\xii + \xiii, -\xviii);
	
	\begin{scope}[shift={(-\xii/2, 0)}, scale=1]
	  \draw[very thick] (0,-\xv) -- (0,\xv);
	  \fill[pattern = north east lines] (-\xvi, -\xv) rectangle (0, \xv);
    \end{scope}
	\begin{scope}[shift={(4*\xi + 3.5*\xii + \xiii, 0)}, scale=1, rotate=180]
      \draw[very thick] (0,-\xv) -- (0,\xv);
	  \fill[pattern = north east lines] (-\xvi, -\xv) rectangle (0, \xv);
    \end{scope} 
    
    \draw (0.5*\xi, 0) node[below] {$I_1$};
    \draw (1.5*\xi + \xii, 0) node[below] {$I_2$};
    \draw (3.5*\xi + 3*\xii + \xiii, 0) node[below] {$I_K$};
    \draw (0.5*\xi, -\xviii) node[below] {$J_1$};
    \draw (1.5*\xi + \xii, -\xviii) node[below] {$J_2$};
    \draw (3.5*\xi + 3*\xii + \xiii, -\xviii) node[below] {$J_K$};
    \draw (-\xii/2,-\xv) node[below] {$0$};
    \draw (4*\xi + 3.5*\xii + \xiii,-\xv) node[below] {$1$};
    
\end{tikzpicture} \hspace{10mm}
\caption{Location of the closed intervals $I_k$ of length $\varepsilon$ and intervals $J_k \supset I_k$ of length $\varepsilon (1 + \varepsilon)$.}
\label{fig:partn:domain}
\end{figure}
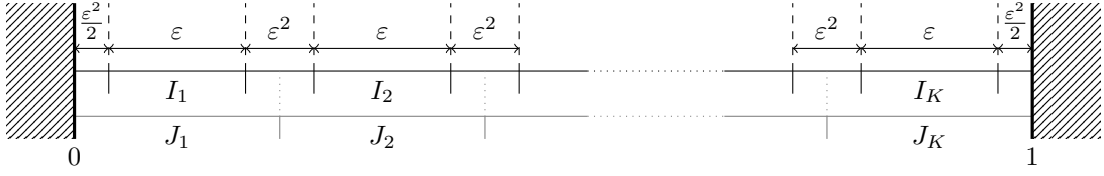

The dense subset consists of all densities $\rho^\pm$ which are piecewise constant on the intervals $I_k$ and $0$ elsewhere, viz.
\begin{equation} \label{for:defn:rhopm:limsupClass}
  \rho^\pm := \sum_{k=1}^K \sigma_k^\pm \indicatornoacc{I_k},
\end{equation}
where the constants $\sigma_k^\pm \geq 0$ satisfy $\varepsilon \sum_{k=1}^K (\sigma^+_k + \sigma^-_k) = 1$, and $K \in \N$. Since $E$ as in \eqref{for:defn:E3} is continuous in $L^2(0,1; \R^2)$, it is enough to show that this subset is dense in $L^2 (0,1; \R^2) \cap M([0,1])$ with respect to the $L^2$-norm. This is straightforward; it is clearly $L^2$-dense in $C ([0,1]; \R^2) \cap M([0,1])$, whose closure in the $L^2$-norm equals $L^2 (0,1; \R^2) \cap M([0,1])$.

It remains to construct $x^n$ for any $\rho^\pm$ as in \eqref{for:defn:rhopm:limsupClass}. For $n \in \N$, we set $\bsigma := \int_0^1 \brho$, note that $\sigma^+ + \sigma^- = 1$, and choose $n^\pm \in \N \cup \{0\}$ such that $| n^\pm - n \sigma^\pm | \leq \tfrac12$ and $n^+ + n^- = n$. We build the recovery sequence in the locally equidistant way: 
$$ \int_0^{x_i^\pm} \rho^\pm (x) \, dx := \frac {i- \tfrac12}n,
\quad \text{for } i = 1,\ldots,n^\pm. $$
We note that $x_i \in \cup_{k=1}^K I_k$ for all $i$, define
\begin{equation*}
  n_k^\pm 
  := \# \{i : x_i^\pm \in I_k\},
\end{equation*}
and relabel the particle positions in $I_k$ as $x_i^{k,\pm}$ for $i = 1,\ldots,n_k^\pm$. We note that 
\begin{equation*}
  \varepsilon \sigma_k^\pm
  \geq \int_{x_1^{k,\pm}}^{x_{n_k^\pm}^{k,\pm}} \rho^\pm (x) \, dx = \frac {n_k^\pm -1}n,
\end{equation*}
and thus $n_k^\pm \leq \varepsilon n \sigma_k^\pm + 1$.

Next we estimate the interaction energy. We start with the interactions between particles of the same type:
\begin{equation} \label{fp:En:exp:L3limp}
  E_n^\pm (x^{n,\pm}) 
  = \frac1{n^2} \sum_{k = 1}^K \sum_{i = 1}^{n_k^\pm}
      \Bigg[ \sum_{ j = 1 }^{i-1} V_n \big( x_i^{k,\pm} - x_j^{k,\pm} \big) + \sum_{j = 1}^{N_k} V_n \big( x_i^{k,\pm} - x_j^{\pm} \big) \Bigg],
\end{equation}
where $N_k := \sum_{\ell = 1}^{k-1} n_\ell^\pm$. For the first term in the right-hand side, we use that $x_i^{k,\pm} - x_j^{k,\pm} = (i-j) / (n \sigma_k^\pm)$, and estimate
\begin{multline*} 
  \frac1{n^2} \sum_{k = 1}^K \sum_{i = 1}^{n_k^\pm} \sum_{ j = 1 }^{i-1} V_n \big( x_i^{k,\pm} - x_j^{k,\pm} \big) 
  \leq \frac1{n^2} \sum_{k = 1}^K \sum_{i = 1}^{n_k^\pm} \sum_{ j = 1 }^\infty V_n \bigg( \frac j{n \sigma_k^\pm} \Big) \\
  = \frac1{n^2} \sum_{k = 1}^K n_k^\pm n \sigma_k^\pm \sum_{ j = 1 }^\infty \frac{\alpha_n}{n \sigma_k^\pm} V \bigg( \frac{ \alpha_n j}{n \sigma_k^\pm} \Big)
  \leq \frac1n \sum_{k = 1}^K (\varepsilon n \sigma_k^\pm + 1) \sigma_k^\pm \bigg( \int_0^\infty V \bigg).
\end{multline*}
Expanding the parenthesis, we obtain from $\sum_{k = 1}^K \sigma_k^\pm = \sigma^\pm / \varepsilon$ that the term related to `$+1$' is of the order of $\frac1n$, which vanishes in the limit $n \to \infty$. For the other term, we observe that
\begin{equation*}
  \bigg( \int_0^\infty V \bigg) \varepsilon \sum_{k = 1}^K (\sigma_k^\pm)^2
  = \bigg( \int_0^\infty V \bigg) \int_0^1 (\rho^\pm)^2,
\end{equation*}
which is independent of $n$. In conclusion, we obtain for the first term in \eqref{fp:En:exp:L3limp} that
\begin{equation*}
  \limsup_{n \to \infty} \frac1{n^2} \sum_{k = 1}^K \sum_{i = 1}^{n_k^\pm} \sum_{ j = 1 }^{i-1} V_n \big( x_i^{k,\pm} - x_j^{k,\pm} \big) 
  \leq \bigg( \int_0^\infty V \bigg) \int_0^1 (\rho^\pm)^2.
\end{equation*}
For the second term in the right-hand side of \eqref{fp:En:exp:L3limp}, we estimate
\begin{equation*}
  x_i^{k,\pm} - x_j^{\pm}
  = \big( x_i^{k,\pm} - x_{N_k}^\pm \big) + \big( x_{N_k}^\pm - x_j^{\pm} \big)
  \geq \varepsilon^2 + \frac{ N_k - j }{ n \| \rho^\pm \|_\infty }.
\end{equation*}
Since $V$ is non-increasing on $(0, \infty)$, we estimate
\begin{align*}
  &\frac1{n^2} \sum_{k = 1}^K \sum_{i = 1}^{n_k^\pm} \sum_{j = 1}^{N_k} V_n \big( x_i^{k,\pm} - x_j^{\pm} \big)
  \leq \frac1{n^2} \sum_{k = 1}^K \sum_{i = 1}^{n_k^\pm} \sum_{j = 0}^{N_k - 1} V_n \bigg( \varepsilon^2 + \frac{ j }{ n \| \rho^\pm \|_\infty } \bigg) \\
  &\leq \frac1{n^2} n^\pm \sum_{j = 0}^\infty n \| \rho^\pm \|_\infty \frac{ \alpha_n }{ n \| \rho^\pm \|_\infty } V \bigg( \alpha_n\varepsilon^2 + \frac{ \alpha_n j }{ n \| \rho^\pm \|_\infty } \bigg)
  \leq \frac{ \alpha_n }n V(\alpha_n \varepsilon^2) + \| \rho^\pm \|_\infty \bigg( \int_{\alpha_n\varepsilon^2}^\infty V \bigg).
\end{align*}
From $1 \ll \alpha_n \ll n$ we observe that the right-hand side converges to $0$ as $n \to \infty$. Reflecting back on \eqref{fp:En:exp:L3limp}, we obtain
\begin{equation*}
  \limsup_{n \to \infty } E_n^\pm (x^{n,\pm}) 
  \leq \bigg( \int_0^\infty V \bigg) \int_0^1 (\rho^\pm)^2.
\end{equation*}

It remains to estimate the interactions between particles of opposite type:
\begin{equation} \label{fp:Enpm:exp:L3limp}
  \frac1{n^2} \sum_{k = 1}^K \sum_{i = 1}^{n_k^+}
      \Bigg[ \sum_{ j = 1 }^{n_k^-} W_n \big( x_i^{k,+} - x_j^{k,-} \big) + \sum_{\substack{ \ell = 1 \\ \ell \neq k }}^K \sum_{ j = 1 }^{n_\ell^-} W_n \big( x_i^{k,+} - x_j^{\ell,-} \big) \Bigg].
\end{equation}
The second term accounts for all interactions between particles that are contained in different intervals $I_k$. Analogously to the case of particles of the same type, we can show that this term vanishes in the limit $n \to \infty$. We skip the details.

Regarding the first term in the right-hand side of \eqref{fp:Enpm:exp:L3limp}, we first estimate $| x_i^{k,+} - x_j^{k,-} |$ from below. For fixed $1 \leq k \leq K$ and $1 \leq i \leq n_k^+$, we set 
\begin{equation*}
  J := \max \{ j : x_j^{k,-} \leq x_i^{k,+} \} \vee 0.
\end{equation*}
Together with $x_{j + \ell}^{k,-} - x_j^{k,-} = \ell/(n \sigma_k^-)$, we obtain
\begin{equation*}
  | x_i^{k,+} - x_j^{k,-} |
  \leq \frac1{n \sigma_k^-} \left\{ \begin{aligned}
    & J - j
    &&\text{if } j \leq J, \\
    & j - (J + 1)
    &&\text{if } j \geq J + 1.
  \end{aligned} \right.
\end{equation*}
Then, since $W$ in non-increasing on $(0, \infty)$, we obtain, similarly to the case of particles of the same type,
\begin{align*}
  &\frac1{n^2} \sum_{k = 1}^K \sum_{i = 1}^{n_k^+} \sum_{ j = 1 }^{n_k^-} W_n \big( x_i^{k,+} - x_j^{k,-} \big)
  \leq \frac1{n^2} \sum_{k = 1}^K 2 n_k^+ \sum_{ j = 0 }^{\infty} W_n \bigg( \frac j{n \sigma_k^-} \bigg) \\
  &= \frac2{n^2} \sum_{k = 1}^K n_k^+ \bigg[ W_n(0) + \sum_{ j = 1 }^{\infty} n\sigma_k^- \frac{\alpha_n}{n \sigma_k^-} W \bigg( \frac{\alpha_n j}{n \sigma_k^-} \bigg) \bigg]
  \leq \frac2n \sum_{k = 1}^K (\varepsilon n \sigma_k^+ + 1 ) \bigg( \frac{\alpha_n}n W(0) + \sigma_k^- \int_0^\infty W \bigg) \\
  &= 2 \varepsilon \sum_{k = 1}^K \sigma_k^+ \sigma_k^- \bigg( \int_0^\infty W \bigg) + \mathcal O \Big( \frac{\alpha_n}n \Big)
  = \bigg( \int_0^\infty W \bigg) \int_0^1 2 \rho^+ \rho^- + \mathcal O \Big( \frac{\alpha_n}n \Big).
\end{align*}
Hence, the $\limsup_{n\to \infty}$ of \eqref{fp:Enpm:exp:L3limp} is bounded from above by $( \int_0^\infty W ) \int_0^1 2 \rho^+ \rho^-$, which completes the proof of \eqref{for:Gconv:limsup}.
\end{proof}

\subsection{The case $\tfrac1n \alpha_n \to \alpha > 0$}

We use the description of $x^{n,\pm}$ in terms of $x^n$ and $b^n$ as introduced above \eqref{fd:GFn}. Defining
\begin{equation*}
  V_{ij} := \left\{ \begin{aligned}
      &V
      &&\text{if } b_i b_j = 1, \\
      &W
      &&\text{if } b_i b_j = -1,
    \end{aligned} \right. 
\end{equation*}
the expression for the energy $E_n$ in \eqref{for:defn:En} can be written compactly as
  \begin{align*}
    E_n ( x^n ) 
    = \frac1{n^2} \sum_{i=1}^n \sum_{ j = 1  }^{i-1} \alpha_n V_{ij} \big( \alpha_n ( x_i - x_j ) \big).
  \end{align*}
We switch between these different descriptions whenever convenient.

\begin{ass}[Properties of $V$ and $W$ in case $\alpha_n / n \to \alpha > 0$] \label{ass:VW:L4}
$V$ and $W$ satisfy
\begin{enumerate}[(i)]
  \item $V : \R \setminus \{0\} \to [0, \infty)$ is even, lower semi-continuous on $\R$ with $V(0) = \infty$, non-increasing on $(0, \infty)$, and satisfies $\int_1^\infty V < \infty$; \label{ass:VW:L4:V}
  \item $W \in L^\infty (\R) \cap L^1(\R)$ is even, and non-increasing on $(0, \infty)$;
  \item $V \geq W$. \label{ass:VW:L4:VW}
\end{enumerate}
\end{ass}

\begin{rem}[Consequences of Assumption \ref{ass:VW:L4}] The monotonicity and integrability of $W$ implies that $W \geq 0$. We further note that $x \Veff (x)$ and $x \Weff (x)$ are Riemann lower-sums for $\int_0^\infty V$ and $\int_0^\infty W$ respectively. Hence, 
\begin{equation*}
  \Veff, \Weff \in L^\infty (\delta, \infty)
  \quad \text{for any } \delta > 0.
\end{equation*}
Furthermore, by the monotonicity of $V$, $V |_{(0,\infty)}$ has a pseudo-inverse $V^{-1} : (0, \infty) \to (0, \infty)$, which has finite integral on $(0,M)$ for any $M > 0$. We obtain
\begin{equation} \label{for:rem:Veff:L4}
  x \Veff (x)
  \leq x V (x) + \int_x^\infty V
  = \int_0^{V(x)} V^{-1}
  \xto{x \to \infty} 0.
\end{equation}
%We also remark that Assumption \ref{ass:VW:L4}.\eqref{ass:VW:L4:VW} can be relaxed. We choose to keep this assumption because it fits to the setting of dislocation walls (see Proposition \ref{prop:props:Wa}) and simplifies several steps in the proof of Theorem \ref{thm:main}. 
\end{rem}

In a similar spirit as in \cite{BraidesGelli04}, the $\Gamma$-limit of the interactions is determined implicitly through a cell energy density $\eita : [0,\infty)^2 \to [0, \infty)$. We define $\psi$ as
\begin{subequations} \label{for:defn:eita:and:emita}
\begin{align} \label{for:defn:eita}
   \eita (\sigma^+, \sigma^-) 
   &:= \lim_{m \to \infty} \emita (\sigma^+, \sigma^-), \\\notag
   %\begin{aligned}%[b][0.87 \textwidth]
        \emita (\sigma^+, \sigma^-)
   &:= 0 \vee \min \bigg\{ 
        \frac1m \sum_{i=1}^{\tilde n} \ \sum_{ j = 1 }^{i-1}
        \alpha V_{ij} \big( \alpha m [ y_i - y_j ] \big) 
        \ : \ \tilde n := \tilde n^+ + \tilde n^-, \ \tilde n^\pm := \lfloor \sigma^\pm m \rfloor \\\label{for:defn:emita}
   &\qquad \qquad \qquad 0 \leq y_1^+ \leq \ldots \leq y_{\tilde n^+}^+ \leq 1, \ 
      0 \leq y_1^- \leq \ldots \leq y_{\tilde n^-}^- \leq 1 \bigg\}, 
     %\end{aligned}
\end{align} 
\end{subequations}
where $m$ is allowed to be any positive real. Lemma \ref{lem:props:eita} guarantees that the limit in \eqref{for:defn:eita} exists, and provides further properties of $\emita$ and $\eita$ that are essential for our proof of Theorem \ref{thm:main}. The proof of Lemma \ref{lem:props:eita} relies on the $\Gamma$-liminf inequality of \cite[Thm.~8]{GeersPeerlingsPeletierScardia13} for particles of the same type with convex interaction potential:

\begin{lem}[Liminf inequality of $E_n^\pm$ {\cite[Thm.~8]{GeersPeerlingsPeletierScardia13}}] \label{lem:Gconv:single:L4}
Let $\tfrac1n \alpha_n \to 1$ and let $\mathsf V : \R \setminus \{0\} \to [0, \infty)$ be even on $\R$, convex on $(0, \infty)$, $\int_1^\infty \mathsf V < \infty$ and $\mathsf V(x) \to \infty$ if $x \to 0$. Then, for all $\mu_n^\pm \weakto \mu^\pm$ it holds that
  \begin{equation*}
    \liminf_{n\to \infty} E_n^\pm (\mu_n^\pm) 
    \geq \int_0^1 \mathsf V_{\operatorname{eff}} \bigg( \frac{1}{\rho^\pm (x)} \bigg) \rho^\pm (x) \, dx,
  \end{equation*} % see a.75.bot for the details
  where the right-hand side is defined as $\infty$ if $\mu^\pm$ is not absolutely continuous (cf.~\eqref{for:defn:abs:cont}).
\end{lem}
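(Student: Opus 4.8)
The statement to establish is Lemma \ref{lem:Gconv:single:L4}: for $\frac1n\alpha_n\to1$, an even, convex-on-$(0,\infty)$ potential $\mathsf V$ with $\int_1^\infty\mathsf V<\infty$ and $\mathsf V(x)\to\infty$ as $x\to0$, one has $\liminf_n E_n^\pm(\mu_n^\pm)\ge\int_0^1\mathsf V_{\mathrm{eff}}(1/\rho^\pm(x))\rho^\pm(x)\,dx$. Since this is literally \cite[Thm.~8]{GeersPeerlingsPeletierScardia13}, the work is to reduce our situation to the setting treated there: a single species, $\frac1n\alpha_n\to 1$ (not just bounded), and a probability measure (unit mass). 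I would first reduce to the single-species energy $E_n^\pm$ for the positive particles only — the symmetry \eqref{f:En:sym} gives the negative case — so $W$ plays no role and the argument is entirely about $E_n^+$ with interaction potential $\mathsf V$.

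The first real step is the \emph{mass normalization}, exactly as in the proof of Lemma \ref{lem:Gconv:single}. If $\mu_n^+\weakto\mu^+$ with $\mu^+([0,1])=\sigma^+$, set $\tilde\mu_n^+:=\mu_n^+/\sigma_n^+\in\mathcal P([0,1])$ (assuming $\sigma^+>0$; the case $\sigma^+=0$ makes the right-hand side $0$ and is trivial since $\mathsf V\ge0$), so that $\tilde\mu_n^+\weakto\mu^+/\sigma^+=:\tilde\mu^+$. As in \eqref{fp:En:mut}, $E_n^+(\mu_n^+)=(\sigma_n^+)^2 E_{n^+}^+(\tilde\mu_n^+)$. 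Here $n^+=\sigma_n^+ n(1+o(1))$, so the associated inverse length-scale seen by the $n^+$ normalized particles is $\alpha_n/n^+=(\alpha_n/n)(n/n^+)\to 1/\sigma^+$; rescaling $\mathsf V$ to $\mathsf V^{\sigma^+}:=\frac1{\sigma^+}\mathsf V(\tfrac{\cdot}{\sigma^+})$ one is then exactly in the hypotheses of \cite[Thm.~8]{GeersPeerlingsPeletierScardia13} applied to $\tilde\mu_n^+$, yielding
\[
  \liminf_{n\to\infty} E_{n^+}^+(\tilde\mu_n^+)
  \ge \int_0^1 (\mathsf V^{\sigma^+})_{\mathrm{eff}}\!\Big(\frac1{\tilde\rho^+(x)}\Big)\tilde\rho^+(x)\,dx,
\]
with the value $\infty$ if $\tilde\mu^+$ (equivalently $\mu^+$) is not absolutely continuous. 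Note the hypotheses of \cite[Thm.~8]{GeersPeerlingsPeletierScardia13} are met: convexity, evenness, the blow-up at $0$, and $\int_1^\infty\mathsf V^{\sigma^+}<\infty$ all pass to the rescaled potential.

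The final step is the \emph{algebraic bookkeeping of the scaling factors}: multiply through by $(\sigma_n^+)^2\to(\sigma^+)^2$, use $\tilde\rho^+=\rho^+/\sigma^+$, and check that
\[
  (\sigma^+)^2\int_0^1 (\mathsf V^{\sigma^+})_{\mathrm{eff}}\!\Big(\frac{\sigma^+}{\rho^+(x)}\Big)\frac{\rho^+(x)}{\sigma^+}\,dx
  = \int_0^1 \mathsf V_{\mathrm{eff}}\!\Big(\frac1{\rho^+(x)}\Big)\rho^+(x)\,dx,
\]
which follows from the identity $(\mathsf V^{\sigma})_{\mathrm{eff}}(\sigma t)=\sum_k\frac1\sigma\mathsf V(kt)=\frac1\sigma\mathsf V_{\mathrm{eff}}(t)$, i.e. $\sigma(\mathsf V^{\sigma})_{\mathrm{eff}}(\sigma t)=\mathsf V_{\mathrm{eff}}(t)$, and then $(\sigma)^2\cdot\frac1\sigma\mathsf V_{\mathrm{eff}}(t)\cdot\frac1\sigma=\ \mathsf V_{\mathrm{eff}}(t)$ after substituting $t=1/\rho^+$. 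The main obstacle is not conceptual but a matter of care: confirming that \cite[Thm.~8]{GeersPeerlingsPeletierScardia13} is stated (or trivially re-derived) for the finite interval $[0,1]$ rather than the half-line $[0,\infty)$ used in \cite{GeersPeerlingsPeletierScardia13}, and that the hypothesis $\frac1n\alpha_n\to1$ (as opposed to merely bounded) is what that theorem needs — this is the same extension already invoked for Lemma \ref{lem:Gconv:single} via \cite[\S3.6]{VanMeurs15} and \cite[Thm.~1.1]{VanMeursMunteanPeletier14}, so I would cite those and remark that the convex-$\mathsf V$ liminf inequality transfers verbatim, the only genuinely new ingredient relative to \cite{GeersPeerlingsPeletierScardia13} being the non-unit-mass rescaling handled above.
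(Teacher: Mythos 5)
Your proposal is correct and follows essentially the same route as the paper: the paper treats this lemma as the cited result \cite[Thm.~8]{GeersPeerlingsPeletierScardia13} (in its finite-domain form) combined with ``a simple scaling argument as in the proof of Lemma \ref{lem:Gconv:single}'' for non-unit mass, which is exactly your normalization $\tilde\mu_n^\pm=\mu_n^\pm/\sigma_n^\pm$ together with the rescaled potential $\mathsf V^{\sigma^\pm}$. Your explicit bookkeeping via $(\mathsf V^{\sigma})_{\operatorname{eff}}(\sigma t)=\tfrac1\sigma\mathsf V_{\operatorname{eff}}(t)$, and the trivial case $\sigma^\pm=0$, just spell out what the paper's remark leaves implicit.
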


While \cite{GeersPeerlingsPeletierScardia13} focuses on $\mu^\pm$ with mass $1$, a simple scaling argument as in the proof of Lemma \ref{lem:Gconv:single} implies that Lemma \ref{lem:Gconv:single:L4} also holds for any measure $\mu^\pm \in \mathcal M_+ ([0,1])$ and any approximating sequence $\mu_n^\pm$ with possibly different mass than $\mu^\pm$.

In Lemma \ref{lem:Gconv:single:L4} and Lemma \ref{lem:props:eita} we set $\alpha = 1$, for the simple reason that Assumption \ref{ass:VW:L4} is invariant under the rescaling $V_\alpha = \alpha V (\alpha \, \cdot \,)$ and $W_\alpha = \alpha W (\alpha \, \cdot \,)$.

\begin{lem} [Properties of $\emita$ and $\eita$] \label{lem:props:eita} 
Set $\alpha = 1$. For any $\bsigma = (\sigma^+, \sigma^-) \in [0,\infty)^2$, $\einfita (\bsigma) := \eita (\bsigma)$ defined by \eqref{for:defn:eita} is well-defined. Moreover, for any $m \in [0, \infty]$
\begin{enumerate}[(i)]
  \item \label{lem:props:eita:sym} $\emita (\sigma^+, \sigma^-) = \emita (\sigma^-, \sigma^+)$;
  \item \label{lem:props:eita:incr} $\sigma \mapsto \emita (\sigma, \sigma^-)$ is non-decreasing;
  \item \label{lem:props:eita:UB} $\emita (\bsigma) 
         \leq \emita(\sigma^+ + \sigma^-, 0)$;
  \item \label{lem:props:eita:LB} $\emita (\bsigma) 
         \geq \emita (\sigma^+,0) + \emita (0, \sigma^-)$;  
  \item \label{lem:props:eita:bds} $\sigma \Veff (1/\sigma) \geq \eita (\sigma, 0) \geq c (\sigma^2 - C)$ for some $C, c > 0$ independent of $\sigma$;
  \item \label{lem:props:eita:cty} $\eita \in C([0, \infty)^2)$;
  \item \label{lem:props:eita:cont:conv} $\emita$ converges continuously to $\eita$ as $m \to \infty$, i.e., for all $\sigma^\pm \geq 0$ and all $0 \leq \sigma_m^\pm \to \sigma^\pm$ it holds that $\emita (\sigma_m^+, \sigma_m^-) \to \eita (\sigma^+, \sigma^-)$.
\end{enumerate}

\end{lem}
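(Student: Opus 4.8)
I would prove the seven items roughly in the order \eqref{lem:props:eita:sym}--\eqref{lem:props:eita:LB}, then the bounds \eqref{lem:props:eita:bds}, then the existence of the limit in \eqref{for:defn:eita}, and finally \eqref{lem:props:eita:cont:conv} and \eqref{lem:props:eita:cty}. The first five items are combinatorial and rest only on the fact that, under Assumption \ref{ass:VW:L4}, every summand $V_{ij}(\cdot)$ is non-negative and $V\ge W$ everywhere. For \eqref{lem:props:eita:sym} I would note that the reflection $y\mapsto 1-y$ composed with interchanging the labels $+$ and $-$ is a bijection between the admissible configurations for $(\sigma^+,\sigma^-)$ and those for $(\sigma^-,\sigma^+)$ which preserves all pairwise distances and all products $b_ib_j$, hence the energy, because $V$ and $W$ are even. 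For \eqref{lem:props:eita:incr}: given $\sigma'<\sigma$, deleting $\lfloor\sigma m\rfloor-\lfloor\sigma' m\rfloor$ positive particles from an admissible $(\sigma,\sigma^-)$-configuration produces an admissible $(\sigma',\sigma^-)$-competitor of no larger energy (deletion can only lower the non-negative sum). For \eqref{lem:props:eita:UB}: start from an all-positive configuration with $\lfloor(\sigma^++\sigma^-)m\rfloor$ particles, delete the surplus, and relabel the last $\lfloor\sigma^- m\rfloor$ survivors as negative; each pair whose interaction turns from $V$ into $W$ only lowers the energy. For \eqref{lem:props:eita:LB}: in an admissible $(\sigma^+,\sigma^-)$-configuration split the double sum into the $(+,+)$-, $(-,-)$- and $(+,-)$-parts, discard the last ($W\ge 0$), and recognise the other two as admissible competitors for $\emita(\sigma^+,0)$ and $\emita(0,\sigma^-)$.

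\textbf{The bounds \eqref{lem:props:eita:bds}.} For the upper bound I would test with the equispaced configuration $y_i=(i-1)/\tilde n$, $\tilde n=\lfloor\sigma m\rfloor$, which yields
\[
  \emita(\sigma,0)\ \le\ \frac{\tilde n}{m}\sum_{k=1}^{\infty}V\Big(\frac{mk}{\tilde n}\Big)
  \ =\ \frac{\tilde n}{m}\,\Veff\Big(\frac{m}{\tilde n}\Big)
  \ \le\ \sigma\,\Veff\Big(\frac1\sigma\Big),
\]
using $\tilde n/m\le\sigma$, $m/\tilde n\ge1/\sigma$ and monotonicity of $\Veff$; letting $m\to\infty$ gives the claim for $\eita(\sigma,0)$. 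For the lower bound I would fix $r_0>0$ with $V(r_0)\ge1$ (possible since $V(0^+)=\infty$), partition $[0,1]$ into $N=\lceil m/r_0\rceil\le 2m/r_0$ cells of length $\le r_0/m$, note that two of the $\tilde n=\lfloor\sigma m\rfloor$ particles in a common cell contribute at least $V(r_0)/m\ge1/m$, and bound the number of such pairs from below by $\tilde n^2/(2N)-\tilde n/2$ via convexity of $p\mapsto\binom p2$; this gives $\emita(\sigma,0)\ge c\sigma^2-C$ with $m$-independent $c,C>0$, and together with $\emita\ge0$ yields \eqref{lem:props:eita:bds} in the limit.

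\textbf{Existence of the limit and items \eqref{lem:props:eita:cty}--\eqref{lem:props:eita:cont:conv}.} The heart of the matter is an almost-subadditivity of $\emita$ in $m$: for fixed $m_0$ and all large $m$,
\[
  \emita(\bsigma)\ \le\ \frac{m_0}{m}\,\eita_{m_0}(\bsigma)+\frac{m-m_0}{m}\,\eita_{m-m_0}(\bsigma)+\varepsilon_{m,m_0},
  \qquad \varepsilon_{m,m_0}\xto{m\to\infty}0 .
\]
To prove it I would rescale a near-optimal scale-$m_0$ configuration into $[0,m_0/m]$ and a near-optimal scale-$(m-m_0)$ one into $[m_0/m,1]$; the compression factors $m_0/m$ and $(m-m_0)/m$ are chosen exactly so that the within-copy interactions at the ambient scale $m$ coincide with those of the two original configurations, accounting for the first two terms. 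The error $\varepsilon_{m,m_0}$ is the total cross-interaction across $x=m_0/m$; to show it vanishes I would use that a near-minimiser is automatically \emph{spread out} — by Assumption \ref{ass:VW:L4} the monotonicity of $V$ and the blow-up $V(0^+)=\infty$ forbid too many particles in too short a subinterval — which lets me (i) thin the fixed-size scale-$m_0$ copy near its endpoints by deleting only $O(1)$ particles, leaving a gap of order $1/m$ between the copies so each cross-pair interaction stays finite, and (ii) bound the far cross-pairs using the spread-out estimate for the growing scale-$(m-m_0)$ copy (which cannot be much denser than equispaced) and the decays $rV(r)\to0$, $rW(r)\to0$ as $r\to\infty$ (cf.\ \eqref{for:rem:Veff:L4}); all in all only $O(1)$ pairs contribute $O(1/m)$ each. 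A Fekete-type contraction argument then forces $\limsup_m\emita(\bsigma)\le\liminf_m\emita(\bsigma)$, so $\eita(\bsigma)=\lim_m\emita(\bsigma)=\einfita(\bsigma)$ exists. Since the construction is uniform in $\bsigma$ on compact sets — using \eqref{lem:props:eita:UB}, \eqref{lem:props:eita:LB}, \eqref{lem:props:eita:bds} for equi-boundedness and \eqref{lem:props:eita:incr} for an $m$-independent modulus of continuity of $\emita$ in $\bsigma$ — it upgrades to $\limsup_m\emita(\bsigma_m)\le\eita(\bsigma)$ whenever $\bsigma_m\to\bsigma$. For the matching lower bound $\liminf_m\emita(\bsigma_m)\ge\eita(\bsigma)$ I would use that same $m$-independent modulus to reduce to $\bsigma_m\equiv\bsigma$, then run a localisation/blow-up argument: discarding the non-negative $(+,-)$-interactions and passing to a convex minorant of $V$, Lemma \ref{lem:Gconv:single:L4} applies on each interval of a fine partition of $[0,1]$, and convexity of $\eita$ (inherited from the cell-problem structure) together with Jensen and the superlinearity in \eqref{lem:props:eita:bds} closes the inequality. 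This gives \eqref{lem:props:eita:cont:conv}, and \eqref{lem:props:eita:cty} is then immediate.

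\textbf{Expected main obstacle.} I expect the control of $\varepsilon_{m,m_0}$ to be the crux: one must keep the two rescaled copies at exactly the right internal scale — which forbids separating them by a macroscopic buffer — while still making their mutual interaction negligible, which naively calls for a buffer. The way out is the two spread-out facts above (near-minimisers have few particles in short intervals, and are not much denser than equispaced at large scales), so that the unavoidable $O(1/m)$ gap already suffices; the matching $\liminf$ lower bound for \eqref{lem:props:eita:cont:conv} is a second delicate point. This is where the full strength of Assumption \ref{ass:VW:L4} — non-negativity, monotonicity, the singularity $V(0^+)=\infty$, and the tail bound $\int_1^\infty V<\infty$ — is used.
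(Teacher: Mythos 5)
Your items \eqref{lem:props:eita:sym}--\eqref{lem:props:eita:bds} are fine (the pigeonhole lower bound in \eqref{lem:props:eita:bds} is a legitimate, more elementary alternative to the paper's route via Lemma \ref{lem:Gconv:single:L4} and Jensen), but the core of the lemma --- existence of the limit and the continuous convergence \eqref{lem:props:eita:cont:conv} --- rests on two claims you do not prove, and at least one of them is not obtainable by the means you cite. First, in your two-block almost-subadditivity the error $\varepsilon_{m,m_0}$ is controlled by asserting that a near-minimiser of the growing scale-$(m-m_0)$ block is ``not much denser than equispaced'' near the junction, so that only $O(1)$ cross-pairs contribute $O(1/m)$ each. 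An energy bound does not give this: $k$ particles inside an ambient window of length $\delta/m$ only cost about $\tfrac1m\binom k2 V(\delta)$, which permits $k\sim\sqrt m$ particles there, and then the cross-interaction across your $O(1/m)$ gap is a priori $O(1)$, not $o(1)$. Even the paper's own spreading information (its sets $J_1^\pm$, $J_2^\pm$, $J_3^\pm$) only controls a fixed \emph{fraction} of the gaps and local energies, never a uniform local density, and the two-species non-convexity of $W$ makes a uniform density bound for minimisers far from routine. The paper's Step 4 avoids exactly this trap by inverting the construction: the scale-$m$ competitor is a concatenation of \emph{many copies of one fixed scale-$\ell$ minimiser}, each carrying a bounded number $n_\ell=O(\ell)$ of particles, separated by gaps of rescaled width $\ell\varepsilon/3\to\infty$; then the crude all-pairs bound $n_\ell^2 V(\ell\varepsilon)$ plus $rV(r)\to0$ and $\ell\Veff(\ell)\to0$ suffices, with no local density control at all. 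The room for the gaps is created not by thinning and re-inserting particles but by comparing $\emita(\bsigma)$ with $\emita((1-\varepsilon)\bsigma)$.

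Second, that comparison is exactly the ``$m$-independent modulus of continuity of $\emita$ in $\bsigma$'' which you invoke to pass from pointwise to continuous convergence, attributing it to monotonicity \eqref{lem:props:eita:incr}. Monotonicity gives only one-sided inequalities; the estimate $\emita((1+\delta)\bsigma)-\emita((1-\delta)\bsigma)\le C\delta$ uniformly in large $m$ is the genuinely hard part of the lemma, and in the paper it occupies Steps 2--3: extra particles are inserted at midpoints chosen through three simultaneous pigeonhole selections (gaps bounded below, local interaction sums bounded, few opposite-sign particles in the chosen interval). Your scheme needs this same ingredient twice (to fix the floor mismatch $\lfloor\sigma^\pm m_0\rfloor+\lfloor\sigma^\pm(m-m_0)\rfloor\le\lfloor\sigma^\pm m\rfloor$ and to get the uniform modulus), so it cannot be waved through. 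Finally, your proposed liminf argument for \eqref{lem:props:eita:cont:conv} via Lemma \ref{lem:Gconv:single:L4}, ``convexity of $\eita$'' and Jensen is both unnecessary and unsupported: convexity of $\eita$ is nowhere established, and discarding the $(+,-)$-interactions and convexifying $V$ produces a lower bound of the form $\sigma\Veffce(1/\sigma)$ rather than $\eita(\bsigma)$. Once the uniform-in-$m$ continuity estimate is available, continuous convergence follows directly from it together with pointwise convergence, which is how the paper concludes, and \eqref{lem:props:eita:cty} then follows since $\pm\emita$ both $\Gamma$-converge to $\pm\eita$.
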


\begin{proof}
We note that \eqref{lem:props:eita:sym}--\eqref{lem:props:eita:LB} are basic observations, relying only on $0 \leq W \leq V$ being even functions. 

As a preliminary step to proving that the limit in \eqref{for:defn:eita} exists, we show that
\begin{align} \label{fp:UBlim}
  \emita (\sigma, 0) 
  &\leq \sigma \Veff (1/\sigma)
  && \text{for all } m \geq \tfrac1\sigma, \\\label{fp:LBlim}
  \liminf_{m \to \infty} \emita (\sigma, 0) 
  &\geq c (\sigma^2 - C) 
  && \text{for some $C, c > 0$ independent of $\sigma$,}
\end{align}
which together imply \eqref{lem:props:eita:bds}. We establish \eqref{fp:UBlim} by bounding from above the minimisation problem in \eqref{for:defn:emita}, given by $\emita (\sigma, 0)$, by the equidistant configuration $z_i := (i-1)/(\tilde n - 1)$, where $\tilde n := \lfloor \sigma m \rfloor$. We obtain
\begin{equation*}
  \emita (\sigma, 0)
  \leq \frac1m \sum_{k=1}^{\tilde n} \sum_{j=1}^{k-1} V( m (z_{k+j} - z_j) )  
  \leq \frac{\tilde n - 1}m \sum_{k=1}^{\tilde n} V \Big( \frac m{\tilde n - 1} k \Big) 
  \leq \frac{\tilde n - 1}m \Veff \Big( \frac m{\tilde n - 1} \Big).
\end{equation*}  
Since $V$ is non-increasing on $(0,\infty)$, $\Veff$ is also non-increasing on $(0,\infty)$, and thus \eqref{fp:UBlim} follows by using $(\tilde n - 1)/m < \sigma$. To prove \eqref{fp:LBlim}, we set $V^{**}$ as the even extension of the convex envelope of $V$ on $(0,\infty)$. Applying Lemma \ref{lem:Gconv:single:L4} with $V^{**}$, we find
\begin{equation*}
  \liminf_{m \to \infty} \emita (\sigma, 0)
  \geq \inf \bigg\{ \int_0^1 \Veffce \bigg( \frac{1}{\rho (x)} \bigg) \rho(x) \, dx : \rho \in L^1_+ (0,1) \: \text{with} \: \int_0^1 \rho = \sigma \bigg\}.
\end{equation*}
Since $V^{**}$ is convex, $r \mapsto r \Veffce ( \tfrac{1}{r} )$ is convex, and thus it follows from Jensen's Inequality that
\begin{equation*}
  \liminf_{m \to \infty} \emita (\sigma, 0)
  \geq \sigma \Veffce \Big( \frac1\sigma \Big).
\end{equation*}
Using that $V$ is non-increasing, we find $\int_0^\infty V^{**} \geq \frac12 \int_0^\infty V > 0$. Since $\frac1\sigma \Veffce ( \frac1\sigma )$ is a Riemann lower-sum of $V^{**}$, it holds that $\lim_{\sigma \to \infty} \frac1\sigma \Veffce ( \frac1\sigma ) \geq \tfrac12 \int_0^\infty V \in (0, \infty]$, and thus there exists a $c > 0$ such that 
\begin{equation} \label{for:pf:lem:props:eita:1}
  \sigma \Veffce \Big( \frac1\sigma \Big) \geq c\sigma^2 
  \quad \text{for all $\sigma$ large enough.} 
\end{equation}
We conclude \eqref{fp:LBlim}.

The remainder of the proof of Lemma \ref{lem:props:eita} concerns \eqref{lem:props:eita:cont:conv}, which we prove in four steps. Step 1 treats the easiest case where $\bsigma = \bzero$. Steps 2 and 3 establish a continuity estimate on $\emita$, uniform in $m$ (see \eqref{for:pf:lem:props:eita:2} and \eqref{for:pf:lem:props:eita:11}). In Step 4 we prove pointwise convergence of $\emita$ to $\eita$ as $m \to \infty$. Steps 1--4 together imply \eqref{lem:props:eita:cont:conv}, and hence \eqref{lem:props:eita:sym}--\eqref{lem:props:eita:LB} also hold for $m = \infty$. \eqref{lem:props:eita:cty} is a corollory of \eqref{lem:props:eita:cont:conv}, because \eqref{lem:props:eita:cont:conv} implies that both $\emita$ and $-\emita$ $\Gamma$-converge to $\eita$, from which we infer that both $\eita$ and $-\eita$ are lower semi-continuous. 
\smallskip

\textit{Step 1: \eqref{lem:props:eita:cont:conv} for $\bsigma = \bzero$.} 
Let $\bsigma_m \to \bzero$ as $m \to \infty$, and let $\varepsilon > 0$ be arbitrary. We set $\sigma_m = \sigma_m^+ + \sigma_m^-$, and consider $m$ large enough such that $\sigma_m < \varepsilon$. Then \eqref{lem:props:eita:incr}, \eqref{lem:props:eita:UB} and \eqref{fp:UBlim} imply that
\begin{equation*}
  \limsup_{m \to \infty} \emita (\bsigma_m) 
  \leq \limsup_{m \to \infty} \emita (\sigma_m, 0)
  \leq \limsup_{m \to \infty} \emita (\varepsilon, 0)
  \leq \varepsilon \Veff ( \tfrac1{\varepsilon} )
\end{equation*}
for $m$ large enough. We conclude \eqref{lem:props:eita:cont:conv} in case $\bsigma = \bzero$ from the arbitrariness of $\varepsilon > 0$ and \eqref{for:rem:Veff:L4}.
 
\textit{Step 2: continuity estimate for $\emita$ at $\sigma^\pm > 0$.} 
In this step we prove the following estimate:
\begin{equation} \label{for:pf:lem:props:eita:2}
  \forall \, \sigma^\pm > 0 \
  \exists \, C > 0 \
  \forall \, \delta > 0 \
  \exists \, M > 0 \
  \forall \, m > M :
  \emita((1+\delta) \bsigma) - \emita((1-\delta) \bsigma) 
  \leq C \delta.
\end{equation}
We fix some notation by writing out \eqref{for:defn:emita} in detail: 
\begin{align} \label{for:pf:lem:props:eita:3}
   &\begin{multlined}[t][0.7 \textwidth] 
   \emita ( (1-\delta) \bsigma )
   = \min \bigg\{ 
        \frac1m \sum_{i > j}^{\tilde n} V_{ij} \big( m [ y_i - y_j ] \big) 
        \ : \ \tilde n := \tilde n^+ + \tilde n^-, \ \tilde n^\pm := \lfloor \sigma^\pm (1 - \delta) m \rfloor \\
      0 \leq y_1^+ \leq \ldots \leq y_{\tilde n^+}^+ \leq 1, \ 
      0 \leq y_1^- \leq \ldots \leq y_{\tilde n^-}^- \leq 1 \bigg\} \end{multlined} \\\label{for:pf:lem:props:eita:4}
   &\begin{multlined}[t][0.6 \textwidth]   
   \emita ( (1+\delta) \bsigma )
   = \min \bigg\{ 
        \frac1m \sum_{i > j}^{\tilde n_\delta} V_{ij} \big( m [ z_i - z_j ] \big) 
        \ : \ \tilde n_\delta := \tilde n_\delta^+ + \tilde n_\delta^-, \ \tilde n_\delta^\pm := \lfloor \sigma^\pm (1 + \delta) m \rfloor \\
      0 \leq z_1^+ \leq \ldots \leq z_{\tilde n_\delta^+}^+ \leq 1, \ 
      0 \leq z_1^- \leq \ldots \leq z_{\tilde n_\delta^-}^- \leq 1 \bigg\}. \end{multlined}
\end{align}
Since $\sigma^\pm > 0$, it holds that $\tilde n^\pm \to \infty$ as $m \to \infty$, and thus for all $m$ large enough we can assume $\tilde n^\pm$ to be large enough in the argument below. We also assume $\delta > 0$ to be small enough, independent of $m$. We fix such $m$, let $y$ be a minimiser of \eqref{for:pf:lem:props:eita:3} satisfying $y_1 \leq \ldots \leq y_{\tilde n}$, and set 
\begin{equation} \label{fp:dcI}
  d_i^\pm := y_{i+1}^\pm - y_i^\pm,
  \quad c_i^\pm := \tfrac12 \big( y_{i+1}^\pm + y_i^\pm \big),
  \quad I^\pm (i) := [y_i^\pm, y_{i+1}^\pm]
  \quad \text{for all } 1 \leq i \leq \tilde n^\pm - 1.
\end{equation}
We construct an admissible vector $z \in \R^{\tilde n_\delta}$ for the minimisation problem in \eqref{for:pf:lem:props:eita:4} by
\begin{equation} \label{for:pf:lem:props:eita:45}
  z_i 
  := \left\{ \begin{aligned}
    &y_i,
    &&1 \leq i \leq \tilde n, \\
    &c_{\ell_i},
    &&\tilde n + 1 \leq i \leq \tilde n_\delta,
  \end{aligned} \right.
\end{equation}
where $1 \leq \ell_i \leq \tilde n -1$ are carefully chosen indices to find sufficient estimates for the remainder terms $\Sigma_1$ and $\Sigma_2$ in the following estimate
\begin{multline} \label{for:pf:lem:props:eita:47}
  \emita((1+\delta) \bsigma) 
  \leq \frac1m \sum_{i > j}^{\tilde n_\delta} V_{ij} \big( m [ z_i - z_j ] \big) \\
  = \underbrace{ \frac1m \sum_{i > j}^{\tilde n} V_{ij} \big( m [ y_i - y_j ] \big) }_{ \emita((1-\delta) \bsigma) }
       + \underbrace{ \frac1m \sum_{ i = \tilde n + 1  }^{ \tilde n_\delta } \sum_{ j = \tilde n + 1 }^{ i-1 } V_{ij} \big( m [ c_{\ell_i} - c_{\ell_j} ] \big) }_{ \Sigma_1 }
       + \underbrace{ \frac1m \sum_{i = 1}^{\tilde n} \sum_{j = \tilde n + 1}^{ \tilde n_\delta } V_{i j} \big( m [ y_i - c_{\ell_j} ] \big) }_{ \Sigma_2 }.
\end{multline}

Next we construct the indices $\ell_i$ such that $\Sigma_1 + \Sigma_2 < C \delta$. To this aim, we put three conditions on $\ell_i$. For convenience, we introduce the indices $\ell_i^\pm$ by the same change of variables which transforms $y, b$ into $y^+, y^-$. We also introduce the index shift $\kappa_i \geq 1$, which characterises $y_{i + \kappa_i}$ as the next particle with the same sign as $y_i$.

The first condition on $\ell_i^\pm$ ensures $d^\pm_{\ell_i^\pm}$ to be large enough. Let $s$ be a permutation such that the interdistances $d_i^\pm$ satisfy $d_{s(1)}^\pm \leq \ldots \leq d_{s( \tilde n^\pm - 1 )}^\pm$. We set 
\begin{equation} \label{for:pf:lem:props:eita:48}
\tilde n_4^\pm := \big\lfloor \tfrac14 (\tilde n^\pm - 1) \big\rfloor, \quad
d_*^\pm := d_{s(\tilde n_4^\pm)}^\pm,  
\end{equation}
and estimate from below
\begin{multline} \label{for:pf:lem:props:eita:49}
  \emita((1-\delta) \bsigma)
  \geq \frac1m \sum_{i = 1}^{\tilde n_4^+} V \big( m d_{s(i)}^+ \big)
  \geq \frac{ \tilde n^+ - 4 }{ 4m } V \Big( m d_{s(\tilde n_4^+)}^+ \Big)  \\
  = \frac{ \lfloor \sigma^+ (1 - \delta) m \rfloor - 4 }{ 4m } V \big( m d_*^+ \big)
  \geq \frac{ \sigma^+ }{ 5 } V \big( m d_*^+ \big)
\end{multline}
and from above (relying on \eqref{lem:props:eita:incr}, \eqref{lem:props:eita:UB} and \eqref{fp:UBlim})
\begin{equation} \label{for:pf:lem:props:eita:5}
  \emita((1-\delta) \bsigma)
  \leq \emita(\bsigma)
  \leq \emita(\sigma, 0)
  \leq \sigma \Veff ( \tfrac1\sigma ),
\end{equation}
where $\sigma = \sigma^+ + \sigma^-$. We obtain that $V \big( m d_*^+ \big) \leq \tfrac{5\sigma}{\sigma^+} \Veff ( \tfrac1\sigma )$, and thus $d_*^+ \geq \frac cm$ for some constant $c > 0$ which is independent of $\delta$ and $m$. Since $d_{s(i)}^+$ is ordered in $i$, we finally obtain
\begin{equation} \label{for:pf:lem:props:eita:6}
  d_\ell^+ \geq d_*^+ \geq \tfrac cm
  \quad \text{for all } \ell \in J_1^+ := \{ s^{-1} (i) : \tilde n_4^+ \leq i \leq \tilde n^+ - 1 \}.
\end{equation}
An analogous argument for the negative particles yields
\begin{equation} \label{for:pf:lem:props:eita:7}
  d_\ell^- \geq d_*^- \geq \tfrac cm
  \quad \text{for all } \ell \in J_1^- := \{ s^{-1} (i) : \tilde n_4^- \leq i \leq \tilde n^- - 1 \}
\end{equation}
for some (possibly different) permutation $s$ and constant $c > 0$ which is independent of $\delta$ and $m$.

The second condition on the indices $\ell_i^\pm$ is that the following quantity, which is part of $\Sigma_2$, is bounded uniformly in $i$, $\delta$ and $m$:
\begin{equation*}
  \sum_{j = 1}^{\ell_i - 1} V_{\ell_i j} \big( m [ y_{\ell_i} - y_j ] \big)
  + \sum_{j = \ell_i + \kappa_{\ell_i} + 1}^{\tilde n} V_{(\ell_i + \kappa_{\ell_i})j} \big( m [ y_{\ell_i + \kappa_{\ell_i}} - y_j ] \big).
\end{equation*}
We establish the related index sets $J^\pm_2$ by a similar argument to the one leading to $J^\pm_1$. The main difference is the following bound from below, which follows simply by neglecting several interactions between particles:
\begin{equation*}
  \emita((1-\delta) \bsigma)
  \geq \frac1{2m} \sum_{i=1}^{\tilde n} \bigg[ \sum_{j = 1}^{i - 1} V_{i j} \big( m [ y_i - y_j ] \big) + \sum_{j = i + \kappa_i + 1}^{\tilde n} V_{(i + \kappa_i)j} \big( m [ y_{i + \kappa_i} - y_j ] \big) \bigg].
\end{equation*}
Then, by introducing permutations $s_\pm$ we can order the summands from high to low values (for the positive and negative particles separately), and estimate the highest $\frac14$-fraction of them by the constant given by the right-hand side of \eqref{for:pf:lem:props:eita:5} to conclude that
\begin{multline} \label{for:pf:lem:props:eita:8}
  \sum_{j = 1}^{\ell - 1} V_{\ell j} \big( m [ y_{\ell} - y_j ] \big)
  + \sum_{j = \ell + \kappa_\ell + 1}^{\tilde n} V_{(\ell + \kappa_\ell)j} \big( m [ y_{\ell + \kappa_\ell} - y_j ] \big) \leq C \\
  \text{for all } \ell \in J_2^\pm := \{ s_\pm^{-1} (i) : \tilde n_4^\pm \leq i \leq \tilde n^\pm \}
\end{multline}
for some constant $C > 0$ which is independent of $\delta$ and $m$.

The third condition on the indices $\ell_i^\pm$ is that the interval $I^\pm (\ell_i^\pm)$ (see \eqref{fp:dcI}) does not contain too many particles of the opposite sign. Let $N^- (i)$ be the number of negative particles in $I^+ (i)$ for $1 \leq i \leq \tilde n^+ - 1$, and $s_+$ be the permutation for which $N^- (s_+(1)) \geq \ldots \geq N^- (s_+(\tilde n^+ - 1))$. Then,
\begin{equation*}
  2 \tilde n^- 
  \geq \sum_{i=1}^{\tilde n^+ - 1} N^- (i)
  \geq \tilde n_4^+ N^- (s_+ (\tilde n_4^+)),
\end{equation*}
where the factor $2$ covers all negative particles located at any of the endpoint of $I^+(i)$, which are counted twice in the sum above. It follows that $N^- (s_+ (\tilde n_4^+)) \leq K^-$ for some $K^- \in \N$ independent of $\delta$ and $m$. An analogous argument for the positive particles yields $N^+ (s_- (\tilde n_4^-)) \leq K^+$ for some $K^+ \in \N$. We conclude that
\begin{equation} \label{for:pf:lem:props:eita:85}
  N^\mp (\ell) \leq K
  \quad \text{for all } \ell \in J_3^\pm := \{ s_\pm^{-1} (i) : \tilde n_4^\pm \leq i \leq \tilde n^\pm \}
\end{equation}
for some constant $K \in \N$ which is independent of $\delta$ and $m$.

We finally construct the set of indices
\begin{equation*}
  J^\pm := J_1^\pm \cap J_2^\pm \cap J_3^\pm.
\end{equation*}
Since $J_1^\pm$, $J_2^\pm$ and $J_3^\pm$ contain $\lfloor \tfrac34 \tilde n^\pm \rfloor$ or more indices, $J^\pm$ contains at least $\lfloor \tilde n^\pm / 5 \rfloor$ indices, which is enough to choose all the centre points $c_{\ell_i}$ in \eqref{for:pf:lem:props:eita:45} differently from each other. Moreover, we use the freedom in this choice to take $\ell_i$ increasing in $i$. As a consequence of \eqref{for:pf:lem:props:eita:6} and \eqref{for:pf:lem:props:eita:7}, we obtain
\begin{equation} \label{for:pf:lem:props:eita:9}
  \min \{ |c_k^\pm - c_\ell^\pm | : k, \ell \in J^\pm, \ k \neq \ell \} \geq \tfrac cm.
\end{equation}

Together with the related properties \eqref{for:pf:lem:props:eita:6}--\eqref{for:pf:lem:props:eita:9}, we estimate the sums $\Sigma_1$ and $\Sigma_2$ defined in \eqref{for:pf:lem:props:eita:47}. We expand
\begin{multline} \label{for:pf:lem:props:eita:10}
  \Sigma_1
  = \frac1m \sum_{k=1}^{ \tilde n_\delta^+ - \tilde n^+ - 1 } \sum_{ j = \tilde n^+ + 1 }^{ \tilde n_\delta^+ - k } V \Big( m \Big[ c_{\ell_{k+j}^+}^+ - c_{\ell_j^+}^+ \Big] \Big)
    + \frac1m \sum_{k=1}^{ \tilde n_\delta^- - \tilde n^- - 1 } \sum_{ j = \tilde n^- + 1 }^{ \tilde n_\delta^- - k } V \Big( m \Big[ c_{\ell_{k+j}^-}^- - c_{\ell_j^-}^- \Big] \Big) \\
    + \frac1m \sum_{ i = \tilde n^+ + 1 }^{ \tilde n_\delta^+ } \sum_{ j = \tilde n^- + 1 }^{ \tilde n_\delta^- } W \Big( m \Big[ c_{\ell_i^+}^+ - c_{\ell_j^-}^- \Big] \Big).
\end{multline}
Using \eqref{for:pf:lem:props:eita:9} and $V$ being decreasing on $(0,\infty)$, we estimate the first sum in the right-hand side by
\begin{align*}
  \frac1m \sum_{k=1}^{ \tilde n_\delta^+ - \tilde n^+ - 1 } \sum_{ j = \tilde n^+ + 1 }^{ \tilde n_\delta^+ - k } V \Big( m \Big[ c_{\ell_{k+j}^+}^+ - c_{\ell_j^+}^+ \Big] \Big)
  \leq \frac1{m} \sum_{k=1}^\infty \sum_{ j = \tilde n^+ + 1 }^{ \tilde n_\delta^+ } V \big( m [ k \tfrac cm ] \big)
  = \frac{\tilde n_\delta^+ - \tilde n^+}{m} \Veff(c)
  \leq C \delta.
\end{align*}
The same argument for the negative particles yields the same estimate. We estimate the third sum in the right-hand side of \eqref{for:pf:lem:props:eita:10} by
\begin{multline*}
  \frac1m \sum_{ i = \tilde n^+ + 1 }^{ \tilde n_\delta^+ } \sum_{ j = \tilde n^- + 1 }^{ \tilde n_\delta^- } W \Big( m \Big[ c_{\ell_i^+}^+ - c_{\ell_j^-}^- \Big] \Big)
  \leq \frac1m \sum_{ i = \tilde n^+ + 1 }^{ \tilde n_\delta^+ } \sum_{ k = 0 }^{ \infty } 2 W \big( m [ k \tfrac cm ] \big) \\
  = 2 \frac{\tilde n_\delta^+ - \tilde n^+}{m} \big( W(0) + \Weff(c) \big)
  \leq C \delta, 
\end{multline*}
and conclude that $\Sigma_1 < C \delta$ for a $\delta$- and $m$-independent constant $C$.

To estimate $\Sigma_2$, we recall that $z_j^\pm = c_{\ell_j^\pm}^\pm$ is the midpoint of the interval $I^\pm (\ell_j^\pm)$, and split the interactions of $y_i$ with $z_j$ for $y_i \notin I^\pm (\ell_j^\pm)$ and $y_i \in I^\pm (\ell_j^\pm)$. Then, we use \eqref{for:pf:lem:props:eita:8} to estimate the interactions with $y_i \notin I^\pm (\ell_j^\pm)$, and \eqref{for:pf:lem:props:eita:85} for those with $y_i \in I^\pm (\ell_j^\pm)$. This yields 
\begin{align*}
  \Sigma_2 
  &= \frac1m \sum_{j = \tilde n + 1}^{ \tilde n_\delta } \Bigg[ 
      \sum_{i : y_i \notin I^\pm (\ell_j^\pm) } V_{i \ell_j} \big( m [ y_i - c_{\ell_j} ] \big) 
      + \sum_{i : y_i \in I^\pm (\ell_j^\pm) } V_{i \ell_j} \big( m [ y_i - c_{\ell_j} ] \big) 
    \Bigg] \\
  &\leq \begin{multlined}[t][0.87 \textwidth]
  	  \frac1m \sum_{j = \tilde n + 1}^{ \tilde n_\delta } \Bigg[ 
      \sum_{i = 1}^{\ell_j - 1} V_{i \ell_j} \big( m [ y_i - y_{\ell_j} ] \big)
      + \sum_{i = \ell_i + \kappa_{\ell_i} + 1}^{\tilde n} V_{i (\ell_i + \kappa_{\ell_i})} \big( m [ y_i - y_{\ell_i + \kappa_{\ell_i}} ] \big) \\
      + \sum_{i : y_i \in I^\pm (\ell_j^\pm) } V_{i \ell_j} \big( m [ y_i - c_{\ell_j} ] \big) 
    \Bigg] \end{multlined} \\
  &\leq \frac1m \sum_{j = \tilde n + 1}^{ \tilde n_\delta } \big[ 
      C
      + V \big( m [ c_{\ell_j} - y_{\ell_j} ] \big)
      + V \big( m [ c_{\ell_j} - y_{\ell_i + \kappa_{\ell_i}} ] \big)
      + K W(0) \big] \\
  &\leq \frac{ \tilde n_\delta - \tilde n }m \big[ 
      C
      + 2 V \big( m [ \tfrac c{2m} ] \big)
      \big]
  \leq C \delta.
\end{align*}
This concludes the proof for $\Sigma_1 + \Sigma_2 \leq C \delta$, which by \eqref{for:pf:lem:props:eita:47} implies \eqref{for:pf:lem:props:eita:2}. 
\smallskip

\textit{Step 3: continuity estimate for $\emita$ at $\sigma^+ \wedge \sigma^- = 0$.}
We establish a similar estimate as \eqref{for:pf:lem:props:eita:2} in the case when $\sigma^+ \wedge \sigma^- = 0$. By (i) it is enough to prove continuity at the $\sigma^+$-axis, and by Step 1 we can further assume $\sigma := \sigma^+ > 0$. This motivates us to prove
\begin{equation} \label{for:pf:lem:props:eita:11}
  \forall \, \sigma > 0 \
  \exists \, C > 0 \
  \forall \, \delta > 0 \
  \exists \, M > 0 \
  \forall \, m > M :
  \emita((1+\delta) \sigma, \delta \sigma) - \emita((1-\delta) \sigma, 0) 
  \leq C \delta.
\end{equation}

Since \eqref{lem:props:eita:UB} implies that 
\begin{equation*}
  \emita((1+\delta) \sigma, \delta \sigma) - \emita((1-\delta) \sigma, 0)
  \leq \emita((1+2\delta) \sigma, 0) - \emita((1-\delta) \sigma, 0),
\end{equation*}
the argument in Step 2 (simplified to $n^- = 0$) yields \eqref{for:pf:lem:props:eita:11}.
\smallskip

\textit{Step 4: Pointwise convergence of $\emita$ to $\eita$.}
We prove that the point-wise limit of $\emita (\bsigma)$ exists as $m \to \infty$ for all $\bsigma \neq \bzero$. Since $\emita (\bsigma) \geq 0$, it is enough to show that
\begin{equation} \label{for:pf:lem:props:eita:12}
  \forall \, \sigma^\pm > 0 \  
  \exists \, C > 0 \  
  \forall \, \varepsilon > 0 \
  \exists \, L > 0 \
  \forall \, \ell \geq L \
  \exists \, M > 0 \
  \forall \, m \geq M :
  \emita (\bsigma) - \elita (\bsigma) < C \varepsilon.
\end{equation}
Indeed, it is easy to see that \eqref{for:pf:lem:props:eita:12} implies that the sequence $(\emita (\bsigma))_m$ is bounded in $m$ (set $\varepsilon = 1$, and choose $\ell = L$; then $\emita (\bsigma) \leq \elita (\bsigma) + C$), and that $(\emita (\bsigma))_m$ can have at most one accumulation point. Therefore, \eqref{for:pf:lem:props:eita:12} implies that $(\emita (\bsigma))_m$ is convergent.

To prove \eqref{for:pf:lem:props:eita:12}, we fix any $\bsigma \neq \bzero$, and take any $0 < \varepsilon < \tfrac 12$ small enough such that either \eqref{for:pf:lem:props:eita:2} or \eqref{for:pf:lem:props:eita:11} applies with $\delta = \varepsilon$. We choose $L$ such that
\begin{equation} \label{for:pf:lem:props:eita:125}
  \sigma L \geq \frac{12}\varepsilon,
  \quad \max_{\ell \geq L} \ell \Veff (\ell) < \varepsilon,
  \quad \max_{\ell \geq \varepsilon L} \ell V (\ell) < \varepsilon^2,
\end{equation}
where $\sigma = \sigma^+ + \sigma^-$. We take any $\ell \geq L$, set $n_\ell := \lfloor \sigma^+ \ell \rfloor + \lfloor \sigma^- \ell \rfloor$ and $y \in [0,1]^{n_\ell}$ as a minimiser of $\elita (\bsigma)$. We choose $M$ such that $M \geq 3 \ell / \varepsilon$ and such that for any $m \geq M$, it holds that
\begin{equation} \label{for:pf:lem:props:eita:13}
  \emita(\bsigma) - \emita( (1 - \varepsilon) \bsigma) 
  \leq C \varepsilon.
\end{equation}
The existence of such $M$ is guaranteed by \eqref{for:pf:lem:props:eita:2} or \eqref{for:pf:lem:props:eita:11}. We take any $m > M$, and observe from \eqref{for:pf:lem:props:eita:13} that \eqref{for:pf:lem:props:eita:12} holds if
\begin{equation} \label{for:pf:lem:props:eita:135}
   \emita ((1 - \varepsilon)\bsigma) - \elita (\bsigma) < C \varepsilon
\end{equation} 
for some $C$ which only depends on $\bsigma$. 

Next we construct an admissible vector $z$ for the minimisation problem given by $\emita ((1 - \varepsilon)\bsigma)$. Such vector should be orderd, have at least 
$$
  n_m := \lfloor (1 - \varepsilon) \sigma^+ m \rfloor + \lfloor (1 - \varepsilon) \sigma^- m \rfloor
$$
entries, $z_1 \geq 0$ and the last entry should be smaller than or equal to $1$. We construct such $z$ by concatenating $N := \lceil n_m/n_\ell \rceil$ scaled copies of the minimiser $y$ of $\elita (\bsigma)$, including a small gap between any consecutive copies;
\begin{equation*}
  z 
  := \frac \ell m \big( y, \,
     y + (1 + \tfrac\varepsilon3), \,
     y + 2(1 + \tfrac\varepsilon3),
     \ldots, \,
     y + (N-1)(1 + \tfrac\varepsilon3)
     \big).
\end{equation*}
To show that the final entry satisfies $z_{N n_\ell} \leq 1$, we first use $\frac{4}{\sigma \ell} \leq \tfrac\varepsilon3$ to estimate
\begin{equation} \label{for:pf:lem:props:eita:14}
  N - 1
  \leq \frac{n_m}{n_\ell}
  \leq \frac{ (1 - \varepsilon) \sigma m }{ \sigma \ell - 2 }
  \leq \frac m\ell \frac{1 - \varepsilon}{1 - \tfrac2{\sigma \ell} }
  \leq \frac m\ell (1 - \varepsilon) \Big(1 + \frac4{\sigma \ell} \Big)
  \leq \frac m\ell (1 - \varepsilon) (1 + \tfrac\varepsilon3 ).
\end{equation}
Then, we obtain by $\tfrac \ell m \leq \tfrac\varepsilon3$ that
\begin{equation*}
  z_{N n_\ell} 
  \leq \tfrac\ell m \big( 1 + (N-1)(1 + \tfrac\varepsilon3) \big)
  \leq \tfrac\varepsilon3 + (1 - \varepsilon) (1 + \tfrac\varepsilon3 )^2
  < 1.
\end{equation*}

We motivate our choice of $z$ as follows. Each scaled copy of $y$ has interaction energy $\frac1N \elita (\bsigma) (1 + \mathcal O (\varepsilon))$. The gaps between neighbouring  copies of $y$ allow us to estimate the interdistance (and hence the interaction energy) of any two particles within these copies. For any other pair of particles, we use the number of copies in between them to estimate their interaction energy. More precisely, we estimate
\begin{multline} \label{for:pf:lem:props:eita:15}
  \emita ((1 - 4 \varepsilon)\bsigma)
  \leq \frac1m \sum_{i > j}^{n_m} V_{ij} (m (z_i - z_j)) \\
  \leq N \frac{\ell}m \frac1{\ell} \sum_{i > j}^{n_\ell} V_{ij} (m \tfrac \ell m (y_i - y_j)) 
       + \frac1m \sum_{k = 1}^{N-1} \sum_{j=1}^{N-k} n_\ell^2 V \big( m \tfrac \ell m [ \varepsilon + (k-1) ] \big).
\end{multline}
We estimate both terms in the right-hand side of \eqref{for:pf:lem:props:eita:15} separately. Using \eqref{for:pf:lem:props:eita:14} and $\tfrac \ell m \leq \tfrac\varepsilon3$, we have that $N \frac{\ell}m < 1$, and thus
\begin{equation*}
  N \frac{\ell}m \frac1{\ell} \sum_{i > j}^{n_\ell} V_{ij} (\ell (y_i - y_j)) \leq \elita (\bsigma),
\end{equation*}
where we have used that $y$ is a minimiser of $\elita (\bsigma)$. Regarding the second term in the right-hand side of \eqref{for:pf:lem:props:eita:15}, the summand is independent of $j$, and thus we can estimate the sum over $j$ from above by multiplication with $N-1$. Then, estimating the constant in front of the summation over $k$ by
\begin{equation*}
  (N-1) n_\ell^2
  \leq n_m n_\ell
  \leq \big( (1 - \varepsilon) \sigma m \big) ( \sigma \ell ) 
  \leq \sigma^2 m \ell,
\end{equation*}
we estimate the second term in the right-hand side of \eqref{for:pf:lem:props:eita:15} by
\begin{multline*}
  \frac1m \sum_{k = 1}^{N-1} \sum_{j=1}^{N-k} n_\ell^2 V \big( m \tfrac \ell m [ \varepsilon + (k-1) ] \big)
   \leq \frac{N-1}m n_\ell^2 \bigg( V (\ell \varepsilon )
       + \sum_{k = 2}^{N-1} V (\ell (k-1) ) \bigg) \\
  \leq \sigma^2 \big( \tfrac1\varepsilon \ell \varepsilon V (\ell \varepsilon ) + \ell \Veff (\ell) \big),
\end{multline*}
which, by our choice of $L$ in \eqref{for:pf:lem:props:eita:125}, is bounded by $C \varepsilon$. Collecting our estimates on the right-hand side of \eqref{for:pf:lem:props:eita:15}, it follows directly that \eqref{for:pf:lem:props:eita:135} holds, which completes the proof of \eqref{for:pf:lem:props:eita:12}.
\end{proof}

Before proving $\Gamma$-convergence in Theorem \ref{thm:Gconv:L4}, we cite a standard property of Lebesgue points and introduce the dual bounded Lipschitz norm, which, on the interval $[0,1]$, is equivalent to the narrow topology.

\begin{lem} [Rudin, Thm.~7.10] \label{lem:Lebesque:points}
Let $d \geq 1$ and $f \in L^1(0,1; \R^d)$. Then, for any Lebesgue point $x \in (0,1)$ of $f$ and any sequences $(A_i)$ of Lebesgue measurable sets satisfying $A_i \subset [x - \tfrac1i, x + \tfrac1i]$ and $|A_i| \geq \tfrac c i$ for some $c > 0$ independent of $i$, it holds that
\begin{equation*}
  f (x) = \lim_{i \to \infty} \frac1{|A_i|} \int_{A_i} f(y) \, dy. 
\end{equation*}
\end{lem}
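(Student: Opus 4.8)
The plan is to reduce the statement to the defining property of a Lebesgue point; no machinery beyond that is needed, so one could even just cite \cite[Thm.~7.10]{} directly, but the short argument is worth recording. Recall that $x \in (0,1)$ being a Lebesgue point of $f \in L^1(0,1;\R^d)$ means
\[
  \lim_{r \to 0^+} \frac{1}{2r} \int_{x-r}^{x+r} |f(y) - f(x)| \, dy = 0,
\]
where $|\cdot|$ is the Euclidean norm on $\R^d$ (equivalently, $x$ is a Lebesgue point of each coordinate function of $f$). This convergence is the only input; everything else is a comparison of measures.

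First I would fix $i$ large enough that $[x-\tfrac1i, x+\tfrac1i] \subset (0,1)$ and write, using that $\tfrac{1}{|A_i|}\int_{A_i} f(x)\,dy = f(x)$ together with the triangle inequality for $\R^d$-valued integrals,
\[
  \left| \frac{1}{|A_i|} \int_{A_i} f(y)\,dy - f(x) \right|
  = \left| \frac{1}{|A_i|} \int_{A_i} \big( f(y) - f(x) \big)\,dy \right|
  \leq \frac{1}{|A_i|} \int_{A_i} |f(y) - f(x)|\,dy.
\]
Since the integrand is non-negative and $A_i \subset [x-\tfrac1i, x+\tfrac1i]$, I would enlarge the domain of integration and then invoke the hypothesis $|A_i| \geq c/i$:
\[
  \frac{1}{|A_i|} \int_{A_i} |f(y) - f(x)|\,dy
  \leq \frac{i}{c} \int_{x-1/i}^{x+1/i} |f(y) - f(x)|\,dy
  = \frac{2}{c} \cdot \frac{1}{2/i} \int_{x-1/i}^{x+1/i} |f(y) - f(x)|\,dy.
\]
Applying the Lebesgue point property with $r = 1/i \to 0$ shows the last expression tends to $0$, which yields the claim.

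I do not expect any real obstacle: the argument is a two-line estimate. The only minor points to be careful about are the vector-valued setting, handled either by the triangle inequality for $\R^d$-valued integrals as above or by passing to coordinates, and the fact that $A_i$ could stick out of $(0,1)$, which is irrelevant for $i$ large since $x$ is an interior point.
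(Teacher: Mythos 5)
Your argument is correct and is exactly the standard estimate behind the result the paper simply cites (Rudin, Thm.~7.10, on nicely shrinking sets): the paper gives no proof of its own, and your two-line reduction to the Lebesgue-point property, enlarging $A_i$ to $[x-\tfrac1i,x+\tfrac1i]$ and using $|A_i|\geq \tfrac ci$, is the standard proof of that cited theorem specialised to this one-dimensional setting.
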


We define the bounded Lipschitz norm for functions $\varphi : [0,1] \to \R$ by
\begin{equation*}
  \| \varphi \|_{\text{BL}} 
  := \| \varphi \|_\infty + \sup_{x,y \in [0,1]} \frac{ |\varphi(x) - \varphi(y)| }{|x - y|},
\end{equation*}
and the dual bounded Lipschitz norm on the space of signed measures as
\begin{equation*}
  \| \nu \|_{\text{BL}}^* := \sup_{\| \varphi \|_{\text{BL}} = 1} \int_0^1 \varphi \, d \nu.
\end{equation*} 

\begin{lem} [Special case of {\cite[Thm.~18]{Dudley66}}] \label{lem:equiv:narTop:BLastNorm}
Let $(\mu_n) \subset \mathcal M_+([0,1])$. Then
\begin{equation*}
  \mu_n \weakto \mu  
  \quad \Longleftrightarrow \quad  \| \mu_n - \mu \|_{\text{BL}}^* \to 0.
\end{equation*}
\end{lem}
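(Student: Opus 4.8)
The plan is to prove the two implications separately, relying only on compactness of $[0,1]$ and the sup-norm density of Lipschitz functions in $C([0,1])$ (e.g.\ via the Weierstrass approximation theorem, since polynomials on $[0,1]$ are Lipschitz). First I would record a preliminary observation used in both directions: the constant function $\varphi \equiv 1$ has $\|\varphi\|_{\text{BL}} = 1$, so testing against it shows that in either case the total masses converge, $\mu_n([0,1]) \to \mu([0,1])$, and hence $\mu_n([0,1]) + \mu([0,1]) \leq M$ for some $M > 0$ and all $n$.

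For the implication ``$\Leftarrow$'', I would fix $\varphi \in C([0,1])$ and $\varepsilon > 0$, choose a Lipschitz $\psi$ with $\|\varphi - \psi\|_\infty < \varepsilon$, and estimate
\begin{equation*}
  \Big| \int_0^1 \varphi \, d\mu_n - \int_0^1 \varphi \, d\mu \Big|
  \leq \|\varphi - \psi\|_\infty \big( \mu_n([0,1]) + \mu([0,1]) \big) + \|\psi\|_{\text{BL}} \, \|\mu_n - \mu\|_{\text{BL}}^*
  \leq M \varepsilon + \|\psi\|_{\text{BL}} \, \|\mu_n - \mu\|_{\text{BL}}^* .
\end{equation*}
Taking $\limsup_{n \to \infty}$ and then $\varepsilon \downarrow 0$ would give $\int_0^1 \varphi \, d\mu_n \to \int_0^1 \varphi \, d\mu$, i.e.\ $\mu_n \weakto \mu$.

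For the implication ``$\Rightarrow$'', I would argue by contradiction. If $\|\mu_n - \mu\|_{\text{BL}}^* \not\to 0$, then along a subsequence (not relabeled) $\|\mu_n - \mu\|_{\text{BL}}^* \geq 2\varepsilon$ for some $\varepsilon > 0$, so by definition of the dual norm one can pick $\varphi_n$ with $\|\varphi_n\|_{\text{BL}} = 1$ and $\int_0^1 \varphi_n \, d(\mu_n - \mu) > \varepsilon$. Since $\|\varphi_n\|_{\text{BL}} = 1$, the $\varphi_n$ are uniformly bounded by $1$ and $1$-Lipschitz, hence equicontinuous, and Arzel\`{a}--Ascoli yields a further subsequence converging uniformly to some $\varphi \in C([0,1])$. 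Splitting
\begin{equation*}
  \int_0^1 \varphi_n \, d(\mu_n - \mu) = \int_0^1 (\varphi_n - \varphi) \, d(\mu_n - \mu) + \int_0^1 \varphi \, d(\mu_n - \mu),
\end{equation*}
the first term on the right is bounded by $M \|\varphi_n - \varphi\|_\infty \to 0$ in absolute value, and the second tends to $0$ since $\mu_n \weakto \mu$; this contradicts the bound $> \varepsilon$.

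The main obstacle, such as it is, is the compactness/equicontinuity step in the second implication: it is precisely the compactness of $[0,1]$, entering through Arzel\`{a}--Ascoli applied to the uniformly bounded, uniformly Lipschitz ``worst-case'' test functions, that makes the dual bounded Lipschitz norm metrize the narrow topology here; on a non-compact space this direction would need a different (tightness-based) argument. Everything else is routine approximation together with the mass-convergence observation.
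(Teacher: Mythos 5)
Your proof is correct, and both implications are handled cleanly: the mass bound obtained from testing against $\varphi \equiv 1$ legitimately covers the non-probability case, the duality estimate $\bigl|\int \psi \, d(\mu_n - \mu)\bigr| \leq \|\psi\|_{\text{BL}} \, \|\mu_n - \mu\|_{\text{BL}}^*$ is used correctly, and the contradiction argument via near-maximizers (the sup need not be attained, but your choice with value $> \varepsilon$ is fine) together with Arzel\`a--Ascoli closes the second direction. The paper itself does not prove this lemma at all: it simply cites Theorem 18 of Dudley (1966), which establishes the equivalence for finite non-negative Baire measures on a general separable metric space. So your route is genuinely different in the sense that it replaces an appeal to that general result by a short, self-contained argument tailored to $[0,1]$: Weierstrass approximation for the ``$\Leftarrow$'' direction and compactness of the unit BL-ball (uniform boundedness plus $1$-Lipschitz equicontinuity) for ``$\Rightarrow$''. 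What Dudley's theorem buys is generality -- no compactness is needed there, only separability, so your closing remark that your Arzel\`a--Ascoli step is the compactness-dependent ingredient is accurate as a comment on your proof, though the equivalence itself survives on non-compact separable spaces for non-negative measures; what your argument buys is that the lemma becomes elementary and verifiable within the paper without consulting the reference.
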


\begin{thm}[$\Gamma$-convergence of $E_n$ in case $\tfrac1n \alpha_n \to \alpha > 0$] \label{thm:Gconv:L4}
Let $\tfrac1n \alpha_n \to \alpha > 0$, and let $V$ and $W$ satisfy Assumption \ref{ass:VW:L4}. Then $E_n$ $\Gamma$-converges to 
  \begin{align*}
    E (\bmu) =
    \int_0^1 \eita \big( \brho (x) \big) \, dx,
  \end{align*}
  where the right-hand side is defined as $\infty$ if $\bmu$ is not absolutely continuous (cf.~\eqref{for:defn:abs:cont}).
\end{thm}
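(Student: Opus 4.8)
The plan is to prove the two inequalities \eqref{for:Gconv:liminf} and \eqref{for:Gconv:limsup} separately, using the cell-energy density $\eita$ from \eqref{for:defn:eita} and exploiting the localization that is possible because the regime $\tfrac1n\alpha_n\to\alpha$ makes interactions effectively short-range at the scale of neighbouring particles. By the rescaling invariance noted before Lemma \ref{lem:props:eita}, I may assume $\alpha=1$ and $\alpha_n/n\to1$ throughout. For the \emph{liminf inequality}: given $\bmu_n\weakto\bmu$ with $\sup_n E_n(\bmu_n)<\infty$, I first show $\bmu$ is absolutely continuous. This follows by comparison with the single-type estimate: dropping the (non-negative, since $W\geq0$) cross terms, $E_n(\bmu_n)\geq E_n^+(\mu_n^+)+E_n^-(\mu_n^-)$, and each of these already forces $\mu^\pm$ to be absolutely continuous by Lemma \ref{lem:Gconv:single:L4} (applied with $V^{**}$ and the lower bound \eqref{for:pf:lem:props:eita:1}, $\sigma\Veffce(1/\sigma)\geq c\sigma^2$), because otherwise the liminf is $+\infty$. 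Next, having $\brho\in L^1$, I partition $[0,1]$ into $K$ consecutive intervals $I_k$ of equal length $1/K$ and localize: the energy restricted to particles lying in a common interval $I_k$ is, after rescaling $I_k$ to $[0,1]$ and matching the local particle count to $m_k\approx n/K$ with local masses $\approx K\int_{I_k}\rho^\pm$, exactly the minimization problem defining $\emita$; dropping the (non-negative) inter-interval interactions gives
\begin{equation*}
  E_n(\bmu_n)\ \geq\ \sum_{k=1}^K \frac1K\,\emitak\!\Big(\text{local masses in }I_k\Big)\,(1+o(1)).
\end{equation*}
Passing $n\to\infty$ (using continuous convergence $\emita\to\eita$, Lemma \ref{lem:props:eita}\eqref{lem:props:eita:cont:conv}), then $K\to\infty$, and invoking the Lebesgue-point lemma (Lemma \ref{lem:Lebesque:points}) to identify the local masses with $\brho(x)$ at a.e.\ $x$, plus Fatou/lower semicontinuity of $\eita$, yields $\liminf_n E_n(\bmu_n)\geq\int_0^1\eita(\brho(x))\,dx$.

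For the \emph{limsup inequality}, by density and continuity it suffices to construct a recovery sequence for $\brho$ piecewise constant on the intervals $I_k$, with constant value $\bsigma_k=(\sigma_k^+,\sigma_k^-)$, exactly as in the proof of Theorem \ref{thm:Gconv:L3}. On each $I_k$ I place, after rescaling, a near-optimal configuration for the cell problem $\emitak(\bsigma_k)$ with $m_k\approx n/K$, i.e.\ pick $y$ an approximate minimiser of \eqref{for:defn:emita}; the number of particles of each sign used is $\lfloor\sigma_k^\pm m_k\rfloor$, which sums correctly to $n^\pm+o(n)$. Then $E_n$ evaluated on this configuration splits as $\sum_k \frac1K\emitak(\bsigma_k)$ plus inter-interval interactions; the latter are controlled by a small gap left between consecutive blocks together with the monotonicity of $V,W$ and the integrability $\Veff,\Weff\in L^\infty(\delta,\infty)$ (exactly the estimates $\Sigma_1,\Sigma_2$-type bounds from Lemma \ref{lem:props:eita}, Step 2), giving an $O(1/K)$ error per block, hence $o(1)$ overall if $K$ is chosen slowly growing (or a diagonal argument over $K$). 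Taking $n\to\infty$ with continuous convergence $\emitak\to\eita$ gives $\limsup_n E_n\leq\sum_k\frac1K\eita(\bsigma_k)=\int_0^1\eita(\brho(x))\,dx$. The narrow convergence of the constructed $\bmu_n$ to $\bmu$ is immediate from the locally equidistant placement within each $I_k$.

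The main obstacle is the \emph{localization step in the liminf inequality}: one must argue that truncating the energy to intra-interval interactions costs nothing in the limit, and that the local minimization over an interval $I_k$ is genuinely bounded below by $\emitak$ evaluated at the \emph{correct} local masses. The first point is easy here because $V,W\geq0$, so dropping cross-interval terms only decreases the energy. The delicate point is the second: the particles in $I_k$ need not be distributed with the ``right'' local density, so one cannot directly read off $\emita$ at $K\int_{I_k}\rho^\pm$. The resolution is that $\emita$ is, by Lemma \ref{lem:props:eita}, non-decreasing in each argument and continuous, so one can bound below by $\emita$ at the \emph{actual} local counts $(n_k^+/n\cdot K, n_k^-/n\cdot K)$, and then, after taking $n\to\infty$ so that these counts converge to $K\int_{I_k}\rho^\pm$, use continuity; finally Lemma \ref{lem:Lebesque:points} and convexity/lower semicontinuity of $\eita$ (from Lemma \ref{lem:props:eita}\eqref{lem:props:eita:cty}, and the bound \eqref{lem:props:eita:bds} to ensure integrability and rule out concentration) upgrade the Riemann-type sum $\sum_k\frac1K\eita(\text{averages over }I_k)$ to $\int_0^1\eita(\brho)$ in the limit $K\to\infty$. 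One must also handle the boundary intervals and the mismatch between $\lfloor\cdot\rfloor$ and exact masses, but these contribute only $O(1/n)$ or $O(1/K)$ corrections.
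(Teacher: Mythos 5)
Your proposal follows essentially the same route as the paper's proof: absolute continuity of the limit via the single-species convex lower bound (Lemma \ref{lem:Gconv:single:L4} with $V^{**}$), a cell decomposition bounded below by $\emita$ at the actual per-cell masses combined with the continuous convergence of Lemma \ref{lem:props:eita}\eqref{lem:props:eita:cont:conv}, the Lebesgue-point argument for $K\to\infty$ (your use of Fatou in place of the paper's $M$-truncation plus dominated/monotone convergence is a harmless simplification since $\eita\geq0$), and a recovery sequence obtained by concatenating scaled cell minimisers with gaps, closed by a diagonal argument in $K$ and $n$. The one cosmetic slip is your final justification of narrow convergence: inside each cell the particles sit at a cell-problem minimiser, not at locally equidistant positions, so convergence to $\brho$ rests on per-cell mass matching together with the cell size tending to zero along the diagonal (the paper checks this via the dual bounded-Lipschitz norm), which your construction does provide.
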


\begin{proof}
Since Assumption \ref{ass:VW:L4} is invariant under the scaling $\alpha V(\alpha \, \cdot \, )$ and $\alpha W(\alpha \, \cdot \, )$ for any $\alpha > 0$, we set $\alpha = 1$ without loss of generality.

We first proof the liminf-inequality \eqref{for:Gconv:liminf}. For technical reasons, we assume that $(\alpha_n)$ is strictly increasing as a mapping of $\N$ to $(0, \infty)$, and leave the general case to the end of the proof of \eqref{for:Gconv:liminf}. We set $\alpha : (0, \infty) \to (0, \infty)$ as the linear interpolation between the coordinates $(0,0)$ and $(n, \alpha_n)_{n \in \N}$, note that the inverse $\alpha^{-1} : (0, \infty) \to (0, \infty)$ exists, and obtain
\begin{equation*}
  \alpha(x)/x \xto{x \to \infty} 1,
  \qquad \alpha^{-1}(x)/x \xto{x \to \infty} 1.
\end{equation*} 

Let $\bmu \in M([0,1])$ and $\bmu_n \weakto \bmu$ with corresponding particle positions $x^{n, \pm}$ such that $E_n (x^n)$ is bounded uniformly in $n$. First, we prove that this uniform bound on $E_n (x^n)$ implies regularity on $\mu^\pm$. Indeed, starting from
  \begin{equation*}
    C
    \geq \liminf_{n\to\infty} E_n (x^n)
    \geq \liminf_{n\to\infty} \big( E_n^{+}(x^{n,+}) + E_n^{-}(x^{n,-}) \big)
  \end{equation*}
  and using Lemma \ref{lem:Gconv:single:L4} and \eqref{for:pf:lem:props:eita:1} to estimate
  \begin{multline*}
    \liminf_{n\to\infty} E_n^\pm (x^{n,\pm})
    \geq \liminf_{n\to\infty} \frac1{n^2} \sum_{i=1}^{n^+} \sum_{ j = 1 }^{i-1} \alpha_n V^{**} \big( \alpha_n ( x_i^\pm - x_j^\pm ) \big) \\
    \geq \int_0^1 \Veffce \bigg( \frac{1}{\rho^\pm (x)} \bigg) \rho^\pm (x) \, dx
    \geq c ( \| \rho^\pm \|_2^2 - C ),
  \end{multline*}
  we conclude that $\mu^\pm$ is absolutely continuous with density $\rho^\pm \in L^2(0,1)$.
 
Next we estimate $E_n (x^n)$ from below by a sum of $K \in \N$ \emph{independent} cell problems. Given $K$, we consider the equidistant partition of $[0,1]$ as illustrated in Figure \ref{fig:partn:domain}, and interpret each interval $J_k$ as a cell. More precisely, we set $J_1 := [0, \tfrac1K]$ and $J_k := (\tfrac{k - 1}K, \tfrac{k}K]$ for all $2 \leq k \leq K$. For any $1 \leq k \leq K$, we further set
  \begin{equation*}
    n_k^\pm := n \mu_n^\pm (J_k) = \# \{ x_i^\pm \in J_k : 1 \leq i \leq n^\pm \},
    \qquad n_k := n_k^+ + n_k^-.
  \end{equation*}  
  Since $\mu_n^\pm \weakto \mu^\pm$ and $\rho^\pm \in L^1(0,1)$, it holds that
\begin{equation} \label{fp:sigmaknpm:conv}
  \sigma_k^{n,\pm} 
     := K \mu_n^\pm (J_k)
     \xto{n \to \infty} K \mu^\pm (J_k) 
     =: \sigma_k^{\pm},
     \quad \text{for all } 1 \leq k \leq K.
\end{equation}
By removing many long range interactions from the energy and exploiting the translation invariance of the interactions, we estimate 
  \begin{align*}
    E_n ( x^n )
    &\geq \sum_{k=1}^K \frac1{n^2} \sum_{ \substack{ x_i, x_j \in J_k \\ i > j } } \alpha_n V_{ij} \big( \alpha_n [ x_i - x_j ] \big) \\
    &\geq \sum_{k=1}^K \min \bigg\{ \frac{\alpha_n}{n^2} \sum_{i > j}^{n_k} V_{ij} \big( \alpha_n [ \tilde x_i - \tilde x_j ] \big) : \tilde x_i \in \overline{J_k} \ \ \forall \, 1 \leq i \leq n_k \bigg\} \\
    &= \frac{\alpha_n^2}{n^2} \frac1K \sum_{k=1}^K \min \bigg\{ \frac K{\alpha_n} \sum_{i > j}^{n_k} V_{ij} \Big( \frac{\alpha_n}K [ y_i - y_j ] \Big) : y_i \in [0,1] \ \ \forall \, 1 \leq i \leq n_k \bigg\},
    %&= \sum_{k=1}^K |I_k| \min \bigg\{ \frac1{ n |I_k| } \sum_{i,j=1}^{ \sigma_k^n n |I_k| } V_{ij} \big( n |I_k| [ y_i - y_j ] \big) : 0 \leq y_1 < \ldots < y_{ \sigma_k^n n |I_k| } \leq 1 \bigg\},
  \end{align*}
  where we define the value of the minimisation problem to be $0$ when $n_k = 0$.
  
  Next, we change variables to bound these minimisation problems from below in terms of the cell problem \eqref{for:defn:emita}. We fix $k$, and set
  \begin{equation*}
    m_n := \frac{\alpha_n}K. 
  \end{equation*}
  It remains to define $\tilde \sigma_k^{m, \pm}$ such that 
  \begin{equation} \label{fp:nk:est}
    n_k = m_n \tilde \sigma_k^{m_n, +} + m_n \tilde \sigma_k^{m_n, -}
  \end{equation}
  for all $n \in \N$. Motivated by
  $$
    n_k 
    = \frac nK \sigma_k^n 
    = m_n \frac1{m_n} \frac nK \sigma_k^{n,+} + m_n \frac1{m_n} \frac nK \sigma_k^{n,-}
  $$
  and recalling that $n = \alpha^{-1} (\alpha_n) = \alpha^{-1} (K m_n)$, we set
  \begin{equation*}
    \tilde \sigma_k^{m, \pm} := \frac{ \alpha^{-1} (K m) }{K m} \sigma_k^{\lfloor \alpha^{-1} (K m) \rfloor, \pm},
    \qquad \tilde \sigma_k^m := \tilde \sigma_k^{m, +} + \tilde \sigma_k^{m, -}
    \qquad \text{for all } m > 0.
  \end{equation*}
  By construction, \eqref{fp:nk:est} holds for $m = m_n$ and $\tilde \sigma_k^{m, \pm} \to \sigma_k^\pm$ as $m \to \infty$. We obtain
  \begin{align*} 
    &\liminf_{n \to \infty} \min \bigg\{ \frac K{\alpha_n} \sum_{i > j}^{n_k} V_{ij} \Big( \frac{\alpha_n}K [ y_i - y_j ] \Big) : y_i \in [0,1] \ \ \forall \, 1 \leq i \leq n_k  \bigg\} \\\notag
    &= \liminf_{n \to \infty} \min \bigg\{ \frac1{ m_n } \sum_{i > j}^{ m_n \tilde \sigma_k^{m_n} } V_{ij} \big( m_n [ y_i - y_j ] \big) : y_i \in [0,1] \ \ \forall \, 1 \leq i \leq m_n \tilde \sigma_k^{m_n}  \bigg\} \\
    &\begin{multlined}[b][0.87 \textwidth] \geq \liminf_{m \to \infty} \min \bigg\{ 
          \frac1{ m } \sum_{i > j}^{ \lfloor m \tilde \sigma_k^{m,+} \rfloor + \lfloor m \tilde \sigma_k^{m,-} \rfloor } V_{ij} \big( m [ y_i - y_j ] \big) : \\\notag
          0 \leq y_1^+ \leq \ldots \leq y_{ \lfloor m \tilde \sigma_k^{m,+} \rfloor }^+ \leq 1, \:
          0 \leq y_1^- \leq \ldots \leq y_{ \lfloor m \tilde \sigma_k^{m,-} \rfloor }^- \leq 1 \bigg\} \end{multlined} \\ %\label{for:enita:limf}
    &= \liminf_{m \to \infty} \emita( \tilde \bsigma_k^m  ).
  \end{align*}
Applying Lemma \ref{lem:props:eita}.\eqref{lem:props:eita:cont:conv}, we obtain
  \begin{equation} \label{for:pf:limf:step}
    \liminf_{n \to \infty} E_n ( \mu_n )
    \geq \frac1K \sum_{k=1}^K \eita ( \bsigma_k )
	\geq \frac1K \sum_{k=1}^K \big( M \wedge \eita ( \bsigma_k ) \big).
  \end{equation}
for any $M > 0$. 

Finally, we derive \eqref{for:Gconv:liminf} from \eqref{for:pf:limf:step} by first passing to the limit $K \to \infty$ and then $M \to \infty$. To this aim, we set
\begin{equation*}
    \rho_K^\pm (x) 
    := \sum_{k=1}^K \bigg( K \int_{J_k} \rho^\pm (y) \, dy \bigg) \indicatornoacc{ J_k } (x)
    = \sum_{k=1}^K \sigma_k^\pm \indicatornoacc{ J_k } (x)
  \end{equation*}  
and observe that 
\begin{equation} \label{fp:KM}
  \frac1K \sum_{k=1}^K \big( M \wedge \eita ( \bsigma_k ) \big)  
  = \int_0^1 \big[ M \wedge \eita \big( \brho_K(x) \big) \big] \, dx.
\end{equation}
First, by Lemma \ref{lem:Lebesque:points}, $\brho_K \to \brho$ pointwise a.e.~on $(0,1)$ as $K \to \infty$. Second, by Lemma \ref{lem:props:eita}.\eqref{lem:props:eita:bds},\eqref{lem:props:eita:cty}, it holds that $M \wedge \eita : [0,\infty)^2 \to \R$ is uniformly continuous. Together, these statements imply
\begin{equation*} 
  M \wedge \eita \big( \brho_K(x) \big)
  \xto{K \to \infty} M \wedge \eita ( \brho(x) )
  \quad \text{for a.e.~} x \in (0,1).
\end{equation*}
Hence, by the Dominated Convergence Theorem, we can pass to the limit $K \to \infty$ in \eqref{fp:KM} to obtain
\begin{equation*} %\label{for:pf:limp:eintM}
  \frac1K \sum_{k=1}^K \big( M \wedge \eita ( \bsigma_k ) \big)
  \xto{ K \to \infty } \int_0^1 \big( M \wedge \eita ( \brho(x) ) \big) \, dx.
\end{equation*}
Then, using the Monotone Convergence Theorem, we pass to the limit $M \to \infty$ to obtain
\begin{equation*}
  \int_0^1 \big( M \wedge \eita ( \brho(x) ) \big) \, dx
  \xto{ M \to \infty } \int_0^1 \eita ( \brho(x) )\, dx
  = E (\bmu ),
\end{equation*}
which completes the proof of the liminf-inequality \eqref{for:Gconv:liminf} under the assumption that $(\alpha_n)$ is increasing.

In the general case where $(\alpha_n)$ is not increasing, we consider any subsequence $(\alpha_{n_k})$, and extract another subsequence which is increasing. Such a subsequence always exists, because $\alpha_n \to \infty$ as $n \to \infty$. The arguments above apply also to this increasing subsequence, and since the limit in \eqref{fp:sigmaknpm:conv} and the lower bound in \eqref{for:pf:limf:step} do not depend on the choice of the subsequence, we conclude that \eqref{for:Gconv:liminf} holds for any $(\alpha_n)$ with $\tfrac1n \alpha_n \to 1$.

The second part of the proof establishes the limsup-inequality \eqref{for:Gconv:limsup}.
Let $\bmu \in M([0,1])$ such that $E( \bmu )$ is finite. Then, by Lemma \ref{lem:props:eita}.\eqref{lem:props:eita:bds} it follows that $\brho \in L^2(0,1)$. Next we show, by the usual density arguments, that it suffices to construct a recovery sequence in \eqref{for:Gconv:limsup} only for $\brho \in C([0,1]) \cap M([0,1])$. To prove this, we take $\brho \in L^2(0,1) \cap M([0,1])$ arbitrarily and construct $(\brho_\varepsilon) \subset C([0,1]) \cap M([0,1])$ such that $\brho_\varepsilon \weakto \brho$ as $\varepsilon \to 0$ and
\begin{equation} \label{for:pf:limp:Eeps}
  \limsup_{\varepsilon \to 0} E (\brho_\varepsilon)
  \leq E (\brho).
\end{equation}

We first assume that $\brho \in L^\infty ([0,1]) \cap M([0,1])$. We take any $(\brho_\varepsilon) \subset C([0,1]) \cap M([0,1])$ such that $\brho_\varepsilon \to \brho$ as $\varepsilon \to 0$ both in $L^2 (0,1)$ and pointwise a.e.~on $(0,1)$. To show that such a choice is possible, take for example $\tilde \rho_\varepsilon^\pm := \eta_\varepsilon * \rho^\pm \in C^\infty (\R)$, where $\eta_\varepsilon$ is the usual mollifier. Note that the non-negativity and unit mass condition are satisfied, but that $\supp \tilde \brho_\varepsilon$ may not be contained in $[0,1]$. This is easily fixed by setting 
\begin{equation*}
  \rho_\varepsilon^\pm
  := \tilde \rho_\varepsilon^\pm \big|_{(0,1)} + \int_{\R \setminus (0,1)} \tilde \rho_\varepsilon^\pm.
\end{equation*}
By construction, $(\brho_\varepsilon) \subset C([0,1]) \cap M([0,1])$ and $\brho_\varepsilon \to \brho$ in $L^2 (0,1)$ as $\varepsilon \to 0$. By extracting a subsequence, we then also have $\brho_\varepsilon \to \brho$ pointwise a.e.~on $(0,1)$. To check that \eqref{for:pf:limp:Eeps} is satisfied, we observe that $\| \brho_\varepsilon \|_\infty \leq \| \brho \|_\infty + \frac12$. Then, by Lemma \ref{lem:props:eita}.\eqref{lem:props:eita:bds},\eqref{lem:props:eita:cty}, $\eita$ is uniformly continuous on the levelset $\{ \eita \leq \| \brho \|_\infty + \frac12 \}$, and thus we obtain \eqref{for:pf:limp:Eeps} by applying the Dominated Convergence Theorem.

To complete the density argument, we take any $\brho \in L^2 ([0,1]) \cap M([0,1])$, and construct $(\brho_\varepsilon) \subset L^\infty ([0,1]) \cap M([0,1])$ which converges narrowly to $\brho$ and satisfies \eqref{for:pf:limp:Eeps}. Let $A^\pm := \{ \rho^\pm \leq 3 \}$ and $A = A^+ \cap A^-$. We note that $|A| \geq \frac13$, and set
\begin{equation*}
  \rho_\varepsilon^\pm
  := ( \rho^\pm \wedge \tfrac1\varepsilon ) \vee ( m_\varepsilon^\pm \indicatornoacc{A} ),
\end{equation*}
where we set $0 \leq m_\varepsilon^\pm \leq 3$ such that $\int_0^1 \rho_\varepsilon^\pm = \int_0^1 \rho^\pm$. We note that $m_\varepsilon^\pm \to 0$ as as $\varepsilon \to 0$, and hence $\brho_\varepsilon \to \brho$ both in $L^2 (0,1)$ and pointwise a.e.~on $(0,1)$. In particular, $\brho_\varepsilon$ is uniformly bounded on $A$, and $\brho_\varepsilon \nearrow \brho$ pointwise a.e.~on $A^c$. Hence, by using both the Dominated and Monotone Convergence Theorems, we obtain
\begin{equation*}
  E (\brho_\varepsilon) 
  = \int_{ A } \eita (\brho_\varepsilon) + \int_{ A^c } \eita (\brho_\varepsilon)
  \xto{ \varepsilon \to 0 } \int_{ A } \eita (\brho) + \int_{ A^c } \eita (\brho)
  = E (\brho).
\end{equation*}

To prove \eqref{for:Gconv:limsup}, it remains to construct a recovery sequence $(\bar x^n)$ for any $\brho \in C([0,1]) \cap M([0,1])$. We do this by a slight modification of the usual density argument. First, we approximate $\brho$ by $\brho^\ell$ similarly as in the proof of Theorem \ref{thm:Gconv:L3}, i.e., we set
\begin{equation} \label{for:pf:defn:Kell}
  K_\ell = 2^\ell,
  \quad \varepsilon_\ell 
  =  \sqrt{ \frac1{K_\ell} + \frac14 } - \frac12
  = \frac12 \big( \sqrt{ 2^{2-\ell} + 1 } - 1 \big)
  \quad \text{for } \ell = 1,2,\ldots,
\end{equation}
and take the intervals $I_k$ and $J_k$ of size $\varepsilon_\ell > 0$ and $\varepsilon_\ell (1 + \varepsilon_\ell) > 0$ respectively, as in Figure \ref{fig:partn:domain}. Then, as in \eqref{for:defn:rhopm:limsupClass} we set 
$$
  \brho^\ell 
  = \sum_{k=1}^{K_\ell} \bsigma_k \indicatornoacc{ I_k },
  \quad \text{where} \quad
  \bsigma_k 
  := \frac1{\varepsilon_\ell} \int_{J_k} \brho
  \quad \text{for } k = 1, \ldots, K_\ell.
$$ 
Since $\brho \in C([0,1])$, it holds that $\brho^\ell \to \brho$ in $L^2(0,1)$ as $\ell \to \infty$ and $\| \brho^\ell \|_\infty \leq (1 + \varepsilon_\ell) \| \brho \|_\infty$. Hence, along a subsequence $\ell_j$, $\brho^{\ell_j} \to \brho$ pointwise a.e.~on $(0,1)$. Then, by Lemma \ref{lem:props:eita}.\eqref{lem:props:eita:bds},\eqref{lem:props:eita:cty}, we have $\eita (\brho^{\ell_j}) \to \eita( \brho )$ pointwise a.e.~on $(0,1)$ as $j \to \infty$, and $\eita (\brho^\ell)$ uniformly bounded in $\ell$. By the Dominated Convergence Theorem, we conclude that
\begin{equation} \label{for:pf:limp:densy:ellj}
  E (\brho^{\ell_j})
  \xto{j \to \infty} E (\brho).
\end{equation}

Given $\ell \in \N$, we construct a `recovery sequence' $(x^n)$ for 
$
  \brho^\ell
$
by concatenating the minimisers of the cell problems \eqref{for:defn:emita} corresponding to each $I_k$. We prove that 
\begin{equation} \label{for:pf:limp:recseq}
  \limsup_{n \to \infty} E_n (x^n)
  \leq E (\brho^\ell),
\end{equation}
but do not require that $\bmu_n \weakto \brho^\ell$ as $n \to \infty$. Hence, $(x^n)$ may not be a recovery sequence for $\brho^\ell$. Instead, we construct $(x^n)$ such that in the joint limit $n \to \infty$ and $\ell \to \infty$, it holds that $(x^n)$ (which also depends on $\ell$) converges to $\brho$. We prove this at the end of the proof by a diagonal argument on \eqref{for:pf:limp:densy:ellj} and \eqref{for:pf:limp:recseq}.

Given $\ell \in \N$, we construct $x^n$ and show that it satisfies \eqref{for:pf:limp:recseq}. Since $\ell$ is fixed, we remove it from the notation whenever convenient. Given $n \in \N$, we set $\bsigma := \int_0^1 \brho^\ell$, note that $\sigma^+ + \sigma^- = 1$, and choose $n^\pm \in \N \cup \{0\}$ such that $| n^\pm - n \sigma^\pm | \leq \tfrac12$ and $n^+ + n^- = n$. Recalling that $\varepsilon > 0$ is such that $K \varepsilon (1 + \varepsilon) = 1$, we divide the positive and negative particles over the intervals $I_k$ by choosing $n_k^\pm \in \N \cup \{0\}$ such that $\sum_{k=1}^K n_k^\pm = n^\pm$ and $| n_k^\pm - \varepsilon n \sigma_k^\pm | \leq 1$. An example of such a choice is given in the proof of Theorem \ref{thm:Gconv:L3}. We further set the average density of the particles at $I_k$ as
\begin{equation*}
  \sigma_k^{n,\pm} := \frac1{\varepsilon \alpha_n} {n_k^\pm},
\end{equation*}
and observe that 
\begin{equation} \label{fp:si:est}
  \Big| \frac{\alpha_n}n \sigma_k^{n,\pm} - \sigma_k^\pm \Big| \leq \frac1{\varepsilon n}
  \quad \text{for } 1 \leq k \leq K.
\end{equation}
We set $y^k \in \R^{n_k}$ as a minimiser of $\eenita (\bsigma_k^n)$, and observe from \eqref{for:defn:emita} that
\begin{equation*}
  \eenita (\bsigma_k^n)
   = \frac1{\varepsilon \alpha_n} \sum_{i > j}^{ \varepsilon \alpha_n \sigma_k^n }
        V_{ij} \big( \varepsilon \alpha_n ( y_i - y_j ) \big)
   = \frac1{\varepsilon \alpha_n} \sum_{i > j}^{ n_k }
        V_{ij} \big( \varepsilon \alpha_n ( y_i - y_j ) \big).
\end{equation*}
Finally, we set the recovery sequence $x^n$ by scaling and translating the particle positions $y^k$ from the cell problem to $I_k$. More precisely, we set
\begin{gather} \label{for:pf:ell:recseq}
  \tilde x_i^k := \varepsilon y_i^k + \varepsilon (1 + \varepsilon) (k-1)
  \quad \text{for } i = 1,\ldots,n_k, 
  \quad \text{and} \quad 
  x^n := (\tilde x^1, \ldots, \tilde x^K).
\end{gather}

To prove \eqref{for:pf:limp:recseq}, we follow a similar argument as the one starting at \eqref{for:pf:lem:props:eita:15}. We expand
\begin{multline} \label{for:pf:Enita:expd}
  E_n (x^n)
  = \frac{\alpha_n}{n^2} \sum_{i > j}^{n} V_{ij} (\alpha_n (x_i - x_j)) \\
  = \sum_{k=1}^K \frac{\alpha_n}{n^2} \sum_{i > j}^{n_k} V_{ij} (\alpha_n (\tilde x_i^k - \tilde x_j^k)) 
       + 2 \sum_{l=1}^{K-1} \sum_{k=1}^{K - l} \frac{\alpha_n}{n^2} \sum_{i = 1}^{n_{k + l}} \sum_{j = 1}^{n_k} V_{ij} (\alpha_n (\tilde x_i^{k+l} - \tilde x_j^k)).
\end{multline}
By construction and Lemma \ref{lem:props:eita}.\eqref{lem:props:eita:cont:conv}, we pass to the limit $n \to \infty$ in the first term in the right-hand side of \eqref{for:pf:Enita:expd} by
\begin{multline*}
  \sum_{k=1}^K \frac{\alpha_n}{n^2} \sum_{i > j}^{n_k} V_{ij} (\alpha_n (\tilde x_i^k - \tilde x_j^k)) 
  = \frac{\alpha_n^2}{n^2} \sum_{k=1}^K \varepsilon \frac1{\varepsilon \alpha_n} \sum_{i > j}^{n_k} V_{ij} (\varepsilon \alpha_n (y_i^k - y_j^k)) 
  = \varepsilon\sum_{k=1}^K \eenita( \bsigma_k^n ) \\
  \xto{n \to \infty} \varepsilon \sum_{k=1}^K \eita( \bsigma_k )
  = \int_0^1 \eita (\brho^\ell)
  = E (\brho^\ell).
\end{multline*}
It remains to show that the second term in the right-hand side of \eqref{for:pf:Enita:expd} converges to $0$ as $n \to \infty$. Using that $\tilde x_i^{k+l} - \tilde x_j^k \geq \varepsilon^2 + (l - 1) \varepsilon$ and $V_{ij} \leq V$, we estimate
\begin{align} \notag
  &\sum_{l=1}^{K-1} \sum_{k=1}^{K - l} \frac{\alpha_n}{n^2} \sum_{i = 1}^{n_{k + l}} \sum_{j = 1}^{n_k} V_{ij} (\alpha_n (\tilde x_i^{k+l} - \tilde x_j^k))
  \leq \sum_{l=1}^{K-1} \sum_{k=1}^{K - l} \frac{n_{k + l} n_k}{n^2} \alpha_n V (\alpha_n \varepsilon^2 + \alpha_n (l - 1) \varepsilon) \\\notag
  &\leq \sum_{l=1}^{K-1} \sum_{k=1}^{K - l} \frac{(\varepsilon n \sigma_{k+l} + 2 )  (\varepsilon n \sigma_k + 2 ) }{n^2} \alpha_n V (\alpha_n \varepsilon^2 + \alpha_n (l - 1) \varepsilon) \\\label{for:pf:rest:Enita:small}
  &\leq K \Big[ \max_{1 \leq k \leq K} \sigma_k \Big]^2 \varepsilon^2 \big( 1 + \mathcal O (n^{-1}) \big) \big( \alpha_n V (\alpha_n \varepsilon^2) + \alpha_n \Veff (\alpha_n \varepsilon) \big).  
\end{align}
Since the constants $\varepsilon$, $K$ and $\max_k \sigma_k$ are fixed by the choice of $\brho^\ell$, it follows from \eqref{for:rem:Veff:L4} that \eqref{for:pf:rest:Enita:small} converges to $0$ as $n \to \infty$.

Finally, we complete the proof of \eqref{for:Gconv:limsup} for $\brho \in C([0,1]) \cap M([0,1])$ by constructing the sequence $\bar x^n$. For any $\ell \geq 1$, let $x_\ell^n \in \R^n$ be the sequence constructed in \eqref{for:pf:ell:recseq} for which \eqref{for:pf:limp:recseq} holds. Then, by a diagonal argument, we find from \eqref{for:pf:limp:densy:ellj} and \eqref{for:pf:limp:recseq} that 
\begin{equation*}
  \bar x^n := x_{\ell_{j_n}}^n 
  \quad \text{satisfies} \quad
  \limsup_{n \to \infty} E_n (\bar x^n)
  \leq E (\brho)
\end{equation*}
for any non-decreasing sequence $j_n \to \infty$ as $n \to \infty$ provided that $j_n$ is small enough with respect to $n$. We choose $j_n$ such that $K^n := K_{\ell_{j_n}}$ (defined in \eqref{for:pf:defn:Kell}, together with $\varepsilon^n := \varepsilon_{\ell_{j_n}}$) satisfies $\tfrac1n K^n \to 0$ as $n \to \infty$. For such choice, we prove that the recovery sequence satisfies $\bar \bmu_n \weakto \brho$. In the estimate below, we simplify notation by writing $I_k$, $J_k$, $\bsigma_k$, $n_k^\pm$ and $\bsigma_k^n$ for the objects corresponding to the construction of $x_\ell^n$ in \eqref{for:pf:ell:recseq} for $\ell = \ell_{j_n}$. By Lemma \ref{lem:equiv:narTop:BLastNorm}, the convergence of the recovery sequence follows from \eqref{fp:si:est} by
\begin{align*}
  \| \bar \mu_n^\pm - \rho^\pm \|_{\text{BL}}^*
  &= \sup_{\| \varphi \|_{\text{BL}} = 1} \int_0^1 \varphi \, d (\bar \mu_n^\pm - \rho^\pm )
  = \sup_{\| \varphi \|_{\text{BL}} = 1} \sum_{k=1}^{K^n} \bigg[ \int_{J_k} \varphi \, d \bar \mu_n^\pm - \int_{J_k} \varphi \, d \rho^\pm \bigg] \\
  &\leq \sup_{\| \varphi \|_{\text{BL}} = 1} \sum_{k=1}^{K^n} \bigg[ \Big( \sup_{J_k} \varphi \Big) \frac{n_k^\pm}{n} - \Big( \inf_{J_k} \varphi \Big) \varepsilon^n \sigma_k^\pm \bigg] \\
  &= \sup_{\| \varphi \|_{\text{BL}} = 1} \sum_{k=1}^{K^n} \bigg[ \Big( \sup_{J_k} \varphi \Big) \varepsilon^n \Big( \frac{ \alpha_n }n \sigma_k^{n,\pm} - \sigma_k^\pm \Big) + \Big( \sup_{J_k} \varphi - \inf_{J_k} \varphi \Big) \varepsilon^n \sigma_k^\pm \bigg] \\
  &\leq K^n \varepsilon^n \frac1{ \varepsilon^n n } + \sum_{k=1}^{K^n} |J_k| \varepsilon^n \sigma_k^\pm
  = \frac{K^n}{ n } + \varepsilon^n (1 + \varepsilon^n) \sigma^\pm
  \xto{n \to \infty} 0. \qedhere
\end{align*}
\end{proof}

\section{$\Gamma$-convergence of $E_n$ in the case $\gamma_n \to \infty$}
\label{s:gninf}

The parameter regime $\gamma_n \to \infty$ of the energy $E_n$ in \eqref{for:defn:En} describes a scenario where the external forcing is strong enough for the positive particles to cluster at the left barrier (and the negative particles to cluster at the right barrier) on a length-scale asymptotically smaller than $1$. In order to obtain a useful limit, it is therefore necessary to rescale $x^{n,\pm}$ to fit to this length-scale. It is shown in \cite{VanMeursMunteanPeletier14} that a sensible rescaling is given by $\hat x^{n,+} := \gamma_n x^{n,+}$. We rescale the negative particles similarly, and for convenience later on, we introduce simultaneously the affine variable transformation 
$$
\hat x^-_i := \gamma_n (1 - x^-_{n^- + 1 - i}),
\quad i = 1, \ldots, n^-.
$$ 
A result of this variable transformation is that $0 \leq \hat x_1^\pm < \ldots < \hat x_{n^\pm}^\pm \leq \gamma_n$. We rescale the energy as
\begin{align*}
    \hat E_n (\hat x^{n,+}, \hat x^{n,-})
    &:= \frac1{\gamma_n} E_n \Big( \frac{\hat x^{n,+}}{\gamma_n}, \, \frac1{\gamma_n} \big( 1 - \hat x^-_{n^- + 1 - i} \big)_{i=1}^{n^-} \Big) \\\notag
	&= \frac1{n^2} \sum_{i=1}^{n^+} \sum_{j = 1}^{i-1} \hat \alpha_n V \big( \hat \alpha_n (\hat x_i^+ - \hat x_j^+) \big)
        + \frac1{n^2} \sum_{i=1}^{n^-} \sum_{j = 1}^{i-1} \hat \alpha_n V \big( \hat \alpha_n (\hat x_i^- - \hat x_j^-) \big) \\%\label{for:defn:En:gninf}
        &\quad+ \frac1{n^2} \sum_{i=1}^{n^+} \sum_{j = 1}^{n^-} \hat \alpha_n W \big( \hat \alpha_n (\gamma_n - \hat x_i^+ - \hat x_j^-) \big)
        + \frac1n \sum_{i = 1}^{n^+} \hat x_i^+
        + \frac1n \sum_{i = 1}^{n^-} \hat x_i^-,
\end{align*}
where
\begin{equation*}
  \hat \alpha_n = \alpha_n / \gamma_n.
\end{equation*}
We observe that $\hat E_n$ consists of the components
\begin{align*}
  \hat E_n^\pm (\hat x^{n,\pm})
  &:= \frac1{n^2} \sum_{i=1}^{n^\pm} \sum_{j = 1}^{i-1} \hat \alpha_n V \big( \hat \alpha_n (\hat x_i^\pm - \hat x_j^\pm) \big) + \frac1n \sum_{i = 1}^{n^\pm} \hat x_i^\pm, \\
  \Enmix (\hat x^{n,+}, \hat x^{n,-})
  &:= \frac1{n^2} \sum_{i=1}^{n^+} \sum_{j = 1}^{n^-} \hat \alpha_n W \big( \hat \alpha_n (\gamma_n - \hat x_i^+ - \hat x_j^-) \big).
\end{align*} 
We note that $\hat E_n^+$ and $\hat E_n^-$ are the same energies when $n^+ = n^-$. Moreover, $\Gamma$-convergence of $\hat E_n^\pm$ was first proven in \cite{GeersPeerlingsPeletierScardia13} for all scaling regimes of $\hat \alpha_n$, and in \S \ref{s:pf} we extend this result by relaxing the assumptions on $V$; see Table \ref{tab:V:gninf}. The $\Gamma$-limit is given by 
\begin{equation} \label{fd:Epm:gninf}
  \hat E^\pm (\hat \mu^\pm) = \Egita (\hat \mu^\pm) + \int_0^\infty x \, d\hat \mu^\pm (x),
\end{equation}
where the expression of $\Egita$ depends on the asymptotic behaviour of $\hat \alpha_n$ (see Table \ref{tab:Eita:gninf}). Since boundedness of $\hat E_n^\pm$ forces the bulk of the particles to remain in a bounded interval, the second component of $\hat E_n$ (given by $\Enmix$) vanishes in the limit $n \to \infty$ given that $W$ satisfies Assumption \ref{ass:W:gninf}. Theorem \ref{thm:Gconv:gninf} makes this statement precise.

\begin{table}[t]
\centering 
\begin{tabular}{cl}
  \toprule
  % after \\: \hline or \cline{col1-col2} \cline{col3-col4} ...
  Regime & properties of $V$ \\
  \midrule
  $\hat \alpha_n \rightarrow \hat \alpha > 0$ 
  & \specialcell[c]{$V = \Vsing + \Vreg \in L^1(\R)$, where $\Vreg \in C_b (\R)$ is even, and \\ $\Vsing \in L^1 (\R)$ is even, non-negative and non-increasing on $(0, \infty)$;}
  \\ \midrule
  $1 \ll \hat \alpha_n \ll n$ 
  & $V$ satisfies Assumption \ref{ass:VW:L3} with $W = 0$;
  \\ \midrule
  $\dfrac{\hat \alpha_n}n \rightarrow \hat \alpha$ 
  & \specialcell[c]{$V : \R \setminus \{0\} \to [0, \infty)$ is even, and convex on $(0,\infty)$. \\ Moreover, $V(0) := \lim_{x \to 0} V(x) \in [0, \infty]$ and $\int_1^\infty V < \infty$.}
  \\
  \bottomrule
\end{tabular}
\caption{Properties of $V$ for which $\hat E_n^\pm$ is $\Gamma$-convergent.}
\label{tab:V:gninf}
\end{table}

\begin{table}[b]
\centering 
\begin{tabular}{cl}
  \toprule
  % after \\: \hline or \cline{col1-col2} \cline{col3-col4} ...
  Regime & $\Egita (\hat \mu)$ \\
  \midrule
  $\hat \alpha_n \rightarrow \hat \alpha$ 
  & $\displaystyle \frac12 \iint_{[0,\infty)^2} \hat \alpha V (\hat \alpha (x-y)) \, d( \hat \mu \otimes \hat \mu )(x,y)$ 
  \\ \midrule
  $1 \ll \hat \alpha_n \ll n$ 
  & $\displaystyle \bigg( \int_0^\infty V \bigg) \int_0^\infty \hat \rho (x)^2 \, dx$ 
  \\ \midrule
  $\dfrac{\hat \alpha_n}n \rightarrow \hat \alpha$ 
  & $\displaystyle \int_0^\infty \hat \alpha \Veff \bigg( \frac{\hat \alpha}{\hat \rho (x)} \bigg) \hat \rho (x) \, dx$
  \\
  \bottomrule
\end{tabular}
\caption{ Expressions for $\Egita$, the interaction part of the limit energy $\hat E^\pm$ defined in \eqref{fd:Epm:gninf}. In the regimes where $\hat \alpha_n \gg 1$, the expressions are valid when $\hat \mu$ is absolutely continuous (see \eqref{for:defn:abs:cont}) with density $\hat \rho \in L^1(0,1)$; otherwise $\Egita (\hat \mu) = \infty$. Note the resemblance with Table \ref{tab:Eita}.}
\label{tab:Eita:gninf}
\end{table}

\begin{ass}[Properties of $W$ in case $\gamma_n \to \infty$] \label{ass:W:gninf}
$W : \R \to [0, \infty]$ is even, and satisfies $W(x) \leq \tfrac Cx$ for all $x \geq 1$ and $C > 0$ independent of $x$.
\end{ass}

We adopt the same notation to rewrite $E_n$ in terms of the measures $(\hat \mu_n^+, \hat \mu_n^-) =: \hat \bmu_n$ given by \eqref{for:defn:munpm}.

\begin{thm}[$\Gamma$-convergence of $\hat E_n$ in case $\gamma_n \to \infty$] \label{thm:Gconv:gninf}
Let $\gamma_n \to \infty$, and let $W$ satisfy Assumption \ref{ass:W:gninf}. Let $(\hat \alpha_n)$ be as in any of the three scaling regimes outlined in Table \ref{tab:V:gninf}, and let $V$ satisfy the corresponding assumption. Then, any sequence $(\hat \bmu_n) \subset M([0,\infty))$ satisfying $\hat E_n (\hat \bmu_n) \leq C$ for some $n$-independent $C > 0$ is compact in the narrow topology. Moreover, $\hat E_n$ $\Gamma$-converges to $\hat E(\hat \bmu) = \hat E^+ (\hat \mu^+) + \hat E^- (\hat \mu^-)$, where $\hat E^\pm$ is given by \eqref{fd:Epm:gninf} and Table \ref{tab:Eita:gninf}.
\end{thm}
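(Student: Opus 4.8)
\textit{Strategy.} The plan is to reduce the two-species statement to the already-known single-species $\Gamma$-convergence of $\hat E_n^\pm$ (Table~\ref{tab:V:gninf}, Table~\ref{tab:Eita:gninf}), by showing that the mixed term $\Enmix$ is a vanishing, continuously converging perturbation and that compactness can be extracted from the one-species components. Since $\hat E_n(\hat\bmu) = \hat E_n^+(\hat\mu^+) + \hat E_n^-(\hat\mu^-) + \Enmix(\hat\mu^+,\hat\mu^-)$ and, by hypothesis, $W\ge 0$ so $\Enmix\ge 0$, any sequence with $\hat E_n(\hat\bmu_n)\le C$ automatically satisfies $\hat E_n^+(\hat\mu^+_n)\le C$ and $\hat E_n^-(\hat\mu^-_n)\le C$; the single-species compactness (which, as in \cite{VanMeursMunteanPeletier14, GeersPeerlingsPeletierScardia13}, follows because the affine term $\tfrac1n\sum \hat x_i^\pm$ together with the singular/integrable interaction keeps the bulk of the mass in a bounded interval and prevents escape to infinity) then gives narrow precompactness of $\hat\mu_n^\pm$, hence of $\hat\bmu_n$ in $M([0,\infty))$.

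\textit{Key steps, in order.} First, establish the compactness statement as above: from $\Enmix\ge 0$ deduce a uniform bound on each $\hat E_n^\pm$, then invoke the single-species compactness (valid in all three scaling regimes of $\hat\alpha_n$). Second, prove that $\Enmix$ converges continuously to $0$ in the sense of \eqref{fd:ct:ptb}: for any $\hat\bmu_n\weakto\hat\bmu$ with $\hat E_n(\hat\bmu_n)\le C$, one has
\begin{equation*}
  0 \le \Enmix(\hat x^{n,+},\hat x^{n,-})
  = \frac1{n^2}\sum_{i=1}^{n^+}\sum_{j=1}^{n^-} \hat\alpha_n W\big(\hat\alpha_n(\gamma_n - \hat x_i^+ - \hat x_j^-)\big),
\end{equation*}
and since $\gamma_n\to\infty$ while the energy bound confines most of the $\hat x_i^+$ and $\hat x_j^-$ to a fixed bounded set, the argument $\gamma_n - \hat x_i^+ - \hat x_j^-$ is large for the vast majority of pairs; using Assumption~\ref{ass:W:gninf}, $\hat\alpha_n W(\hat\alpha_n r)\le C/r$ for $\hat\alpha_n r\ge 1$, so the sum is controlled by $\tfrac1{n^2}\sum_{i,j} C/(\gamma_n - \hat x_i^+ - \hat x_j^-)$ plus a negligible contribution from the (few) particles near $\gamma_n$, and this tends to $0$. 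Care is needed to quantify "most particles are in a bounded set": one uses the energy bound on $\hat E_n^\pm$ to get, for each $\varepsilon>0$, a radius $R_\varepsilon$ such that $\hat\mu_n^\pm([R_\varepsilon,\infty))<\varepsilon$ uniformly in $n$, then splits the double sum accordingly. Third, apply stability of $\Gamma$-convergence under continuously converging perturbations (stated right after \eqref{fd:ct:ptb}): $\hat E_n^+(\hat\mu^+) + \hat E_n^-(\hat\mu^-)$ $\Gamma$-converges to $\hat E^+(\hat\mu^+)+\hat E^-(\hat\mu^-)$ because the two components decouple and each converges by the single-species result (extended to non-unit mass exactly as in Lemma~\ref{lem:Gconv:single}); adding the continuously converging $\Enmix\to 0$ preserves the $\Gamma$-limit, yielding the claim.

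\textit{Main obstacle.} The crux is the continuous convergence $\Enmix\to 0$, specifically making rigorous that the energy bound on $\hat E_n^\pm$ prevents a non-negligible fraction of particles from sitting near $x=\gamma_n$ where $\gamma_n - \hat x_i^+ - \hat x_j^-$ could be small (and $W$ large). The affine term $\tfrac1n\sum_i \hat x_i^\pm$ in $\hat E_n^\pm$ gives $\int_0^\infty x\,d\hat\mu_n^\pm(x)\le C$, hence a uniform first-moment bound, so by Markov's inequality $\hat\mu_n^\pm([R,\infty))\le C/R$; this is exactly the quantitative tightness needed. Once this is in hand, splitting the double sum into the "bulk--bulk" part (where $\gamma_n - \hat x_i^+ - \hat x_j^- \ge \gamma_n - 2R \to\infty$, so each term is $\le \tfrac1{n^2}\cdot C/(\gamma_n - 2R)$ and the total is $o(1)$) and the complementary part (of measure $\le 2\varepsilon$ in the product empirical measure, on which $\hat\alpha_n W(\hat\alpha_n\,\cdot\,)$ is bounded by its value at argument $\to 0$, namely $O(\hat\alpha_n)$; here one may additionally need $W$ bounded near $0$ or absorb the contribution using that the fraction of such pairs is $O(\varepsilon)$ while $\hat\alpha_n/n$ is bounded only in the regime $\hat\alpha_n/n\to\hat\alpha$, and is $o(1)$ otherwise) completes the estimate. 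A short case distinction on the three regimes of $\hat\alpha_n$ may be needed to handle this second piece, but in each case the bound goes to $0$ after first sending $n\to\infty$ and then $\varepsilon\to0$.
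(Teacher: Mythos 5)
Your reduction to the single-species results and your compactness argument (the affine term gives a uniform first-moment bound, hence tightness) are in line with the paper, but the central claim of your proof --- that $\Enmix$ converges continuously to $0$ in the sense of \eqref{fd:ct:ptb}, even along narrowly convergent sequences with bounded energy --- is false in the regimes where $\hat\alpha_n \to \infty$, and this is exactly the case you flag but leave unresolved. The obstruction is not a technicality: a \emph{vanishing} fraction of particles near the reflection locus $\hat x^+ + \hat x^- = \gamma_n$ suffices to keep $\Enmix$ bounded away from zero. For instance, in the regime $1 \ll \hat\alpha_n \ll n$ take $V$ bounded (allowed by Assumption \ref{ass:VW:L3}), $W$ with $W(0)>0$, $\gamma_n \ll \sqrt{\hat\alpha_n}$, and place $m_n \sim n/\sqrt{\hat\alpha_n}$ positive particles in a window of width $1/\hat\alpha_n$ around $\gamma_n - 1$ and $m_n$ negative particles in such a window around $1$: then all $m_n^2$ cross pairs have $\hat\alpha_n|\gamma_n - \hat x_i^+ - \hat x_j^-| \le 1$, so $\Enmix \gtrsim \hat\alpha_n m_n^2/n^2 \sim 1$, while the same-species interaction of each cluster is $\lesssim \hat\alpha_n m_n^2 V(0)/n^2$, the affine cost is $\sim m_n\gamma_n/n \to 0$, and only a fraction $m_n/n \to 0$ of the mass escapes, so the total energy stays bounded and $\hat\bmu_n$ still converges narrowly. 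Your proposed repair for the bad pairs (bounding $\hat\alpha_n W(\hat\alpha_n\,\cdot\,)$ by $O(\hat\alpha_n)$ on an $O(\varepsilon)$ fraction) gives $O(\varepsilon\hat\alpha_n)$, which does not vanish for fixed $\varepsilon$ when $\hat\alpha_n \to \infty$; moreover Assumption \ref{ass:W:gninf} allows $W$ to be unbounded (even $+\infty$) near $0$, so the fallback ``$W$ bounded near $0$'' is not available. Hence the stability-under-continuous-perturbations route cannot be carried through.

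The fix is to prove much less, which is what the paper does. For the liminf inequality you never need $\Enmix \to 0$: since $W \ge 0$ you may simply drop $\Enmix$ and apply the single-species liminf inequalities to $\hat E_n^\pm$ separately. The vanishing of $\Enmix$ is needed only along a recovery sequence, which you are free to choose: take single-species recovery sequences built for measures with bounded support (the general case being handled by a diagonal argument), so that $\hat x_{n^+}^+ + \hat x_{n^-}^- \le \tfrac12\gamma_n$; then the tail bound of Assumption \ref{ass:W:gninf} gives $\Enmix \le C/\gamma_n \to 0$ directly, with no splitting of the double sum and no case distinction on $\hat\alpha_n$. With that replacement your compactness and decoupling steps go through as intended.
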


\begin{proof}
Let $\hat E_n (\hat \bmu_n) \leq C$. Since $W \geq 0$ and $V$ is bounded from below, it holds that
\begin{equation} \label{fp:En:LB}
  \hat E_n (\hat \bmu_n) \geq \hat \alpha_n \frac{ (n^+)^2 + (n^-)^2 - n }{2 n^2} (\inf V) + \int_0^\infty x \, d(\hat \mu_n^+ + \hat \mu_n^-)(x).
\end{equation}
We note from Table \ref{tab:V:gninf} that the first term in the right-hand side of \eqref{fp:En:LB} is bounded from below.
Hence, \eqref{fp:En:LB} implies that the first moments of $\hat \mu_n^+$ and $\hat \mu_n^-$ are uniformly bounded; we conclude compactness of $\hat \bmu_n$.

Since $W \geq 0$, we obtain the liminf-inequality \eqref{for:Gconv:liminf} from the $\Gamma$-convergence of $\hat E_n^\pm$ by
\begin{equation*}
  \liminf_{n \to \infty} \hat E_n (\hat \bmu_n)
  \geq \liminf_{n \to \infty} \hat E_n^+ (\hat \mu_n^+) + \liminf_{n \to \infty} \hat E_n^- (\hat \mu_n^-)
  \geq \hat E^+ (\hat \mu^+) + \hat E^- (\hat \mu^-) 
  = \hat E (\hat \bmu). 
\end{equation*}
We prove the limsup-inequality \eqref{for:Gconv:limsup} by an analogous argument, relying on the claim that
\begin{equation} \label{fp:Enmix:to:0}
  \Enmix (\hat \bmu_n) \xto{n \to \infty} 0,
\end{equation}
where $\hat \bmu_n$ consists of any recovery sequence $\hat \mu_n^\pm$ related to the $\Gamma$-convergence of $\hat E_n^\pm$. These recovery sequences are constructed explicitly only for $\hat \mu^\pm$ smooth enough, including $\hat \mu^\pm$ having bounded support. The case for general $\hat \mu^\pm$ is treated by a diagonal argument, relying on upper semi-continuity of $\hat E^\pm$. Hence, for any $\hat \bmu \in M([0,\infty))$ with $\hat E(\hat \bmu)$ bounded, we choose the recovery sequences $(\hat x^{n,+})$ and $(\hat x^{n,-})$ such that
\begin{equation*}
  \hat x_{n^+}^+ + \hat x_{n^-}^- \leq \tfrac12 \gamma_n.
\end{equation*}
Then, the claim \eqref{fp:Enmix:to:0} follows from
\begin{multline*}
  \Enmix (\hat \bmu_n)
  = \frac1{n^2} \sum_{i=1}^{n^+} \sum_{j = 1}^{n^-} \hat \alpha_n W \big( \hat \alpha_n (\gamma_n - \hat x_i^+ - \hat x_j^-) \big) \\
  \leq \frac 1{n^2} \sum_{i=1}^{n^+} \sum_{j = 1}^{n^-} \hat \alpha_n \frac C{ \hat \alpha_n (\gamma_n - (\hat x_i^+ + \hat x_j^-)) }
  \leq \frac 1{n^2} \sum_{i=1}^{n^+} \sum_{j = 1}^{n^-} \frac{2 C}{ \gamma_n }
  \leq \frac C{ \gamma_n } 
  \xto{n \to \infty} 0. \qedhere
\end{multline*}
\end{proof}

\section{Evolutionary convergence of the gradient flow of $E_n$ in the case $\alpha_n \to \alpha > 0$}
\label{s:dyn}

The starting point in this section is the gradient flow of $E_n$ given by \eqref{fd:GFn} in the scaling regime $\alpha_n \to \alpha > 0$. The main result (Theorem \ref{t:dyncs}) of this section is an evolutionary convergence result of the gradient flows of $E_n$ to the gradient flow of the $\Gamma$-limit $E$ as $n \to \infty$. The proof strategy is to apply the setting of gradient flows with $\lambda$-convex energies in \cite[Chap.~4]{AmbrosioGigliSavare08} and \cite{DaneriSavareLN10} (\S \ref{ss:AGS:DS}) to the gradient flow of $E_n$ (\S \ref{ss:evo:conv}). 

\subsection{Preliminaries on gradient flows of $\lambda$-convex energies}
\label{ss:AGS:DS}

We summarise a simplified version of the results in \cite[Chap.~4]{AmbrosioGigliSavare08} and \cite{DaneriSavareLN10}. Let $(X,d)$ be a complete, separable, non-positively curved (see \eqref{fd:npcurved}), sequentially compact metric space. We call a curve $x : [0,1] \to X$ a (constant speed) \emph{geodesic} if
\begin{equation} \label{fd:geods}
  d(x(s), x(t)) = |t-s| d(x(0), x(1))
  \quad \text{for all } 0 \leq s \leq t \leq 1.
\end{equation}

We consider any $\phi : X \to \R \cup \{\infty\}$ with non-empty domain
\begin{equation*}
  D (\phi) := \{ x \in X : \phi(x) < \infty \},
\end{equation*}
and assume that $\phi$ is $\lambda$\emph{-convex} for some $\lambda \in \R$, i.e., every couple of points $x_0, x_1 \in D (\phi)$ can be connected by a geodesic $x_t$ along which
\begin{equation*}
  \phi(x_t) \leq (1-t) \phi(x_0) + t \phi(x_1) - \tfrac12 \lambda t (1-t) d(x_0, x_1)^2
  \quad \text{for all } 0 \leq t \leq 1.
\end{equation*}
In particular, $(X,d)$ being \emph{non-positively curved} means that 
\begin{equation} \label{fd:npcurved}
  x \mapsto \tfrac12 d(x,y)^2 
  \quad \text{is $1$-convex for any } y \in X.
\end{equation}

We say that $x:(0,\infty) \to X$ is an \emph{absolutely continuous curve} if there exists an $f \in L^1(0, \infty)$ such that
\begin{equation*}
  d(x(s), x(t)) \leq \int_s^t f(\tau) \, d\tau
  \quad \text{for all } 0 < s \leq t < \infty.
\end{equation*}
We denote by $AC(0,\infty; X)$ the space of absolutely continuous curves.

Given $x_\circ \in X$, we say that a curve $x \in AC_{\text{loc}} (0, \infty; X)$ is a \emph{solution} to the \emph{evolution variational inequality} if it satisfies
\begin{equation} \label{fd:EVI:X}
  \frac12 \frac d{dt} d (x(t), y)^2 + \frac\lambda2 d (x(t), y)^2 + \phi (x(t))
  \leq \phi (y),
  \quad \text{for a.e.~} t > 0 \text{, and all } y \in X,
\end{equation}
and $x(t) \to x_\circ$ as $t \to 0$.

\begin{thm}[Gradient flows {\cite[Thm.~4.0.4]{AmbrosioGigliSavare08}}] \label{t:AGS404}
Let $(X,d)$ be a complete, separable, non-positively curved, sequentially compact metric space. Let $\phi : X \to \R$ with $D(\phi) \neq \emptyset$ be $\lambda$-convex for some $\lambda \in \R$. Then for any $x_\circ \in \overline{ D(\phi) }$, the evolution variational inequality \eqref{fd:EVI:X} has a unique solution.
\end{thm}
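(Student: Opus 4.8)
The plan is to construct the solution by the implicit Euler (minimizing movement) scheme and to obtain uniqueness directly from the contractive structure encoded in \eqref{fd:EVI:X}. Fix a time step $\tau>0$ with $1+\lambda\tau>0$ and, for $x\in D(\phi)$, define the resolvent $J_\tau(x)$ as the minimizer of $y\mapsto\phi(y)+\tfrac1{2\tau}d(y,x)^2$ over $X$. This minimizer exists because $X$ is sequentially compact and the minimand is lower semicontinuous (using lower semicontinuity of $\phi$), and it is unique because $y\mapsto\phi(y)+\tfrac1{2\tau}d(y,x)^2$ is $(\lambda+\tfrac1\tau)$-convex, hence strictly convex, along geodesics. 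Starting from $x^0_\tau:=x_\circ\in D(\phi)$ set $x^n_\tau:=J_\tau(x^{n-1}_\tau)$ and let $\bar x_\tau(t):=x^n_\tau$ for $t\in((n-1)\tau,n\tau]$. The first step is to record the two a priori estimates that come out of the scheme: the dissipation inequality
\begin{equation*}
  \phi(x^n_\tau)+\tfrac1{2\tau}d(x^n_\tau,x^{n-1}_\tau)^2\le\phi(x^{n-1}_\tau),
\end{equation*}
which upon summation controls $\sum_n\tfrac1\tau d(x^n_\tau,x^{n-1}_\tau)^2$ by $\phi(x_\circ)-\inf_X\phi$ (finite by the lower bound on $\phi$); and the discrete evolution variational inequality
\begin{equation*}
  \tfrac1{2\tau}\big[d(x^n_\tau,y)^2-d(x^{n-1}_\tau,y)^2\big]+\tfrac\lambda2 d(x^n_\tau,y)^2+\phi(x^n_\tau)\le\phi(y)
  \qquad\text{for all }y\in X.
\end{equation*}

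The derivation of this discrete EVI is where the non-positive curvature hypothesis \eqref{fd:npcurved} is used: for the geodesic $t\mapsto y_t$ from $x^n_\tau$ to $y$ (both in $D(\phi)$; otherwise the inequality is trivial), the $1$-convexity of $z\mapsto\tfrac12 d(z,x^{n-1}_\tau)^2$ gives
\begin{equation*}
  \tfrac12 d(y_t,x^{n-1}_\tau)^2\le(1-t)\tfrac12 d(x^n_\tau,x^{n-1}_\tau)^2+t\,\tfrac12 d(y,x^{n-1}_\tau)^2-\tfrac12 t(1-t)d(x^n_\tau,y)^2,
\end{equation*}
while the $\lambda$-convexity of $\phi$ gives the matching upper bound for $\phi(y_t)$; inserting $y_t$ into the minimality of $x^n_\tau=J_\tau(x^{n-1}_\tau)$, dividing by $t$, letting $t\to0^+$, and discarding the nonnegative term $\tfrac1{2\tau}d(x^n_\tau,x^{n-1}_\tau)^2$ yields exactly the displayed inequality. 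Next I would pass to the limit $\tau\to0$. The standard route (as in \cite{AmbrosioGigliSavare08}) is a doubling-of-variables argument comparing two discrete solutions with steps $\tau$ and $\eta$: iterating a discrete Grönwall inequality built from the discrete EVI one obtains quantitative bounds on $d(x^n_\tau,x^m_\eta)$ showing that $\{\bar x_\tau\}$ is uniformly Cauchy on compact time intervals, hence converges to some $x\in AC_{\mathrm{loc}}(0,\infty;X)$ with $x(t)\to x_\circ$. Summing the discrete EVI over a time window, using lower semicontinuity of $\phi$ and $\bar x_\tau\to x$, gives the integrated EVI
\begin{equation*}
  \tfrac12 d(x(t),y)^2-\tfrac12 d(x(s),y)^2+\tfrac\lambda2\int_s^t d(x(r),y)^2\,dr+\int_s^t\phi(x(r))\,dr\le(t-s)\phi(y),
\end{equation*}
and a Lebesgue-point argument upgrades this to the differential form \eqref{fd:EVI:X} for a.e.\ $t>0$.

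Finally, for uniqueness and for the extension to $x_\circ\in\overline{D(\phi)}$, suppose $x$ and $\tilde x$ both solve \eqref{fd:EVI:X}; testing the inequality for $x$ with $y=\tilde x(t)$ and the one for $\tilde x$ with $y=x(t)$ and adding the results gives
\begin{equation*}
  \tfrac12\tfrac{d}{dt}d(x(t),\tilde x(t))^2\le-\lambda\,d(x(t),\tilde x(t))^2\qquad\text{for a.e.\ }t>0,
\end{equation*}
hence $d(x(t),\tilde x(t))^2\le e^{-2\lambda t}d(x(0),\tilde x(0))^2$; with the same initial datum this forces $x\equiv\tilde x$. The same contraction bound lets me pass from $x_\circ\in D(\phi)$ to $x_\circ\in\overline{D(\phi)}$: approximate $x_\circ$ by $x_\circ^k\in D(\phi)$, note that the corresponding solutions form a Cauchy family in $C([0,T];X)$ by the contraction estimate, and verify the limit still satisfies the integrated EVI (using lower semicontinuity of $\phi$ once more). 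The main obstacle I expect is the convergence of the scheme — concretely, the doubling-of-time-step/discrete-Grönwall bookkeeping that makes $\{\bar x_\tau\}$ Cauchy with an explicit rate — together with the careful exploitation of non-positive curvature in the discrete EVI; the uniqueness part is, by contrast, a short Grönwall argument once the EVI is available.
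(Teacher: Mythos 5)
The paper offers no proof of Theorem \ref{t:AGS404}: it is quoted (in simplified form) from \cite[Thm.~4.0.4]{AmbrosioGigliSavare08} and used as a black box in \S\ref{ss:evo:conv}, so the only meaningful comparison is with that reference. Your plan --- implicit Euler/minimizing movements, the discrete evolution variational inequality obtained by combining the $1$-convexity \eqref{fd:npcurved} of the squared distance with the $\lambda$-convexity of $\phi$, Cauchy estimates for the discrete trajectories by doubling the time step, and the Gr\"onwall contraction both for uniqueness and for extending from $x_\circ\in D(\phi)$ to $x_\circ\in\overline{D(\phi)}$ --- is precisely the argument carried out there, and it is correct in outline. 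If you were to write it out in full, note only that you implicitly use lower semicontinuity of $\phi$ (needed for existence of $J_\tau$ and for passing to the limit; it is part of the hypotheses in \cite{AmbrosioGigliSavare08} but was dropped from the paper's simplified statement), that the $\lambda$-convexity of $\phi$ and the $1$-convexity of $z\mapsto\tfrac12 d(z,x^{n-1}_\tau)^2$ must hold along the \emph{same} connecting geodesic (AGS's joint-convexity assumption, automatic here since non-positive curvature gives such geodesics), and that differentiating $t\mapsto d^2(x(t),\tilde x(t))$ when both arguments move requires the standard doubling-of-variables lemma before the Gr\"onwall step.
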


Part of the complete statement of \cite[Thm.~4.0.4]{AmbrosioGigliSavare08} characterises the solution to the evolution variational inequality as the limit of the solutions to the corresponding time-discretised minimising movement scheme as the time step converges to $0$. This is the motivation to call the solution to the evolution variational inequality a gradient flow.

Note that in the setting of the following theorem, $(X,d)$ need not be non-positively curved.

\begin{thm}[Stability of gradient flows {\cite[Thm.~2.17]{DaneriSavareLN10}}] \label{t:DS10}
Let $\lambda \in \R$ and $(X,d)$ be a complete, separable, sequentially compact metric space. Let $\phi_n : X \to \R \cup \{\infty\}$ be a sequence of $\lambda$-convex functionals, which $\Gamma$-converges (with respect to the metric $d$) to $\phi : X \to \R \cup \{\infty\}$, where $D(\phi) \neq \emptyset$. Let $x^n_\circ \in \overline{ D \phi_n }$ converge to $x_\circ \in \overline{ D \phi }$ as $n \to \infty$. If there exists a solution $x^n (t)$ to the evolution variational inequality \eqref{fd:EVI:X} with respect to $\phi_n$ with initial condition $x^n_\circ$, then there also exists a solution $x(t)$ to the evolution variational inequality with respect to $\phi$ with initial condition $x_\circ$. Moreover,
\begin{equation*}
  x^n (t) \xto{n \to \infty} x(t),
  \quad \text{and} \quad
  \phi_n( x^n (t) ) \xto{n \to \infty} \phi( x(t) )
  \quad \text{for all } t > 0,
\end{equation*}
locally uniformly on $(0, \infty)$.
\end{thm}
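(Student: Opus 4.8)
The plan is to treat Theorem~\ref{t:DS10} as a stability result for EVI gradient flows, proved by extracting a limit of the curves $x^n$ by compactness and then passing to the limit in the \emph{integrated} form of the evolution variational inequality, with $\Gamma$-convergence used to handle the energy terms. Since EVI solutions are unique — this follows directly from \eqref{fd:EVI:X} by a Gronwall argument and requires no curvature hypothesis — it will be enough to produce \emph{some} limit curve $x$ and to verify that it solves \eqref{fd:EVI:X} for $\phi$ with initial datum $x_\circ$; uniqueness then promotes convergence along a subsequence to convergence of the full sequence.

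First I would record the a priori estimates that a $\lambda$-convex EVI flow automatically satisfies. Fix $\bar y\in D(\phi)$ and, using the $\Gamma$-$\limsup$ and $\Gamma$-$\liminf$ inequalities, a sequence $\bar y_n\to\bar y$ with $\phi_n(\bar y_n)\to\phi(\bar y)$. Testing \eqref{fd:EVI:X} for $x^n$ with $y=\bar y_n$ and integrating yields: control of $d(x^n(t),\bar y_n)$; control of $\phi_n(x^n(t))$ for each $t>0$ in terms of $t$, $d(x^n_\circ,\bar y_n)$ and $\phi_n(\bar y_n)$ alone (the regularising effect of gradient flows); a bound on $\int_s^t|\dot x^n|^2\,d\tau$ for $[s,t]\subset(0,\infty)$; and a modulus $d(x^n(t),x^n_\circ)\le\omega(t)$ with $\omega(t)\to0$ as $t\to0$. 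Since $x^n_\circ\to x_\circ$ and $\phi_n(\bar y_n)\to\phi(\bar y)$, all of these are uniform in $n$ on compact subsets of $(0,\infty)$. Equi-$\tfrac12$-H\"older continuity of $(x^n)$ on such subsets, together with sequential compactness of $(X,d)$ and the boundedness of $\{x^n(t)\}$, yields via an Arzel\`a--Ascoli argument a subsequence with $x^n\to x$ locally uniformly on $(0,\infty)$ and $x\in AC_{\text{loc}}(0,\infty;X)$; letting $n\to\infty$ and then $t\to0$ in $d(x^n(t),x^n_\circ)\le\omega(t)$ gives $x(t)\to x_\circ$.

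Next I would pass to the limit in the integrated evolution variational inequality
\begin{equation*}
  \tfrac12 d(x^n(t),y)^2-\tfrac12 d(x^n(s),y)^2+\tfrac\lambda2\int_s^t d(x^n(\tau),y)^2\,d\tau+\int_s^t\phi_n(x^n(\tau))\,d\tau \;\le\; (t-s)\,\phi_n(y),
\end{equation*}
valid for all $y\in D(\phi_n)$ and all $0<s\le t$. For fixed $y\in D(\phi)$, choose a recovery sequence $y_n\to y$ with $\phi_n(y_n)\to\phi(y)$ and insert $y=y_n$. The distance terms converge by the local uniform convergence $x^n\to x$ and continuity of $d$; the right-hand side tends to $(t-s)\,\phi(y)$; and the $\Gamma$-$\liminf$ inequality gives $\phi(x(\tau))\le\liminf_n\phi_n(x^n(\tau))$ pointwise, whence Fatou yields $\int_s^t\phi(x(\tau))\,d\tau\le\liminf_n\int_s^t\phi_n(x^n(\tau))\,d\tau$. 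Therefore $x$ satisfies the integrated EVI for $\phi$, and differentiating in $t$ recovers \eqref{fd:EVI:X}. Pointwise energy convergence $\phi_n(x^n(t))\to\phi(x(t))$ for every $t>0$ then follows in the standard way: the $\liminf$ bound is lower semicontinuity (the $\Gamma$-$\liminf$ inequality), and the matching $\limsup$ bound comes from the energy-dissipation inequality for EVI flows combined with the estimates above, as in \cite[Chap.~4]{AmbrosioGigliSavare08}.

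The hard part will be making the a priori estimates \emph{uniform in $n$}, given that $x^n_\circ$ is only assumed to converge to $x_\circ$ rather than to be a recovery sequence, so that $\phi_n(x^n_\circ)$ may be unbounded. This is exactly where the regularising effect of $\lambda$-convex gradient flows enters: it trades control of the initial energy for control of $\phi_n(x^n(t))$ at positive times in terms of $d(x^n_\circ,\bar y_n)$ — bounded because $x^n_\circ\to x_\circ$ and $\bar y_n\to\bar y$ — and the convergent values $\phi_n(\bar y_n)$. A secondary technical point is that the equi-continuity of $(x^n)$ degenerates as $t\to0$, so the limit curve is constructed on $(0,\infty)$ and the initial condition is recovered separately through the modulus $\omega$.
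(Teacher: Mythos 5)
You are proving a statement that the paper itself does not prove: Theorem \ref{t:DS10} is quoted (in simplified form) from \cite[Thm.~2.17]{DaneriSavareLN10}, and the paper uses it purely as a black box in the proof of Theorem \ref{t:dyncs}, so there is no in-paper argument to compare against. Judged on its own terms, your sketch is a faithful reconstruction of the standard stability argument for EVI flows and contains the right ingredients: uniqueness/contraction by Gronwall directly from \eqref{fd:EVI:X} (correctly noting that no curvature is needed, which is exactly why the paper drops the NPC hypothesis here), a priori bounds and the regularising estimate derived from the EVI alone to compensate for the fact that $\phi_n(x^n_\circ)$ need not be bounded, Arzel\`a--Ascoli on compact subsets of $(0,\infty)$ using sequential compactness of $X$, passage to the limit in the integrated EVI with recovery sequences for the test point, and Fatou plus the $\Gamma$-liminf inequality for the energy integral. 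Two points deserve to be made explicit rather than implicit: first, Fatou and your distance estimates require a lower bound on $\phi_n$ that is uniform in $n$; this is not an assumption, but it does follow from sequential compactness together with the $\Gamma$-liminf inequality and properness of $\phi$ (if $\inf_X\phi_n\to-\infty$ along a subsequence, near-minimisers would converge to a point $z$ with $\phi(z)=-\infty$). Second, the pointwise (and locally uniform) convergence $\phi_n(x^n(t))\to\phi(x(t))$ needs the monotonicity of $t\mapsto\phi_n(x^n(t))$ along EVI flows together with a limsup estimate obtained by testing the integrated EVI on small intervals $[t,t+h]$; your appeal to \cite[Chap.~4]{AmbrosioGigliSavare08} covers this, but it is the one step of the argument that is more than routine bookkeeping. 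With those caveats spelled out, the plan would yield a complete proof along the same lines as the cited reference.
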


\subsection{Application to dislocation walls}
\label{ss:evo:conv}

With Theorem \ref{thm:Gconv:L2} established, the main task for applying Theorem \ref{t:DS10} for proving evolutionary convergence is to construct a suitable metric space $(X,d)$, and to find minimal properties for $V$ and $W$ for which $E_n$ is $\lambda$-convex for some $n$-independent $\lambda \in \R$. 

We start by making the state space of \eqref{fd:GFn} precise. We consider any
\begin{equation} \label{fd:Omn}
  x^n
  \in \Omega_n 
  := \{ 0 \leq x_1 \leq x_2 \leq \ldots \leq x_n \leq 1 \}
  \quad \text{and} \quad
  b^n \in \{-1,1\}^n,
\end{equation}
and switch to the equivalent description in terms of $x^{n, \pm} \in \Omega_n^\pm$ (defined in \eqref{for:defn:Omeganpm}) or the empirical measures $\mu_n^\pm$ or $\bmu_n$ whenever convenient. Note that $\Omega_n$ allows for particles of the same type to be at the same position, while \eqref{fd:GFn} is ill-defined at such states. However, we consider instead the evolution variational inequality of $E_n$, which allows for such $x^n$. Moreover, we prove in \eqref{fd:EVIn:x} that any solution to \eqref{fd:GFn} satisfies the evolution variational inequality of $E_n$.

A technical difficulty for choosing $(X,d)$ is that $X$ and $d$ are not allowed to depend on $n$ in Theorem \ref{t:DS10}. While the gradient flow \eqref{fd:GFn} conserves the mass of positive particles $n^+/n$, this value can vary for different values of $n$. We account for both effects by allowing the mass of positive particles to vary in $X$, and to include the confinement to fixed mass in the energy. 

With these considerations, we choose the space $X = M([0,1])$. Setting for $\bmu, \bnu \in M([0,1])$ the masses $\sigma^\pm := \mu^\pm ([0,1])$ and $\iota^\pm := \nu^\pm ([0,1])$, we equip $M([0,1])$ with the following adjusted Wasserstein distance:
\begin{equation} \label{fd:bW:genl}
  \bW^2 (\bmu, \bnu)
  := (\sigma^+ \wedge \iota^+) W^2 \Big( \frac{\mu^+}{\sigma^+}, \frac{\nu^+}{\iota^+} \Big) + |\sigma^+ - \iota^+|
     + (\sigma^- \wedge \iota^-) W^2 \Big( \frac{\mu^-}{\sigma^-}, \frac{\nu^-}{\iota^-} \Big) + |\sigma^- - \iota^-|,
\end{equation}
where $W$ denotes the $2$-Wasserstein distance in $\mathcal P([0,1])$, and $\sigma^+, \iota^+ \in (0,1)$. We motivate the prefactor of $|\sigma^\pm - \iota^\pm|$ by
\begin{equation*} %\label{f:bWmax:1}
  1 = \max_{\mu, \nu \in \mathcal P ([0,1])} W^2 (\mu, \nu).
\end{equation*}
Since $W$ is bounded, the case $\sigma^+ = 0$ (i.e.~$\mu^+ = 0$ and $\mu^- \in \mathcal P ([0,1])$) is easily dealt with by setting
\begin{equation} \label{fd:bW:genl:si0}
  \bW^2 (\bmu, \bnu)
  := \iota^+
     + \iota^- W^2 \Big( \mu^-, \frac{\nu^-}{\iota^-} \Big) + |1 - \iota^-|.
\end{equation}
By the symmetry in the expression of $\bW$, we treat the cases $\sigma^- = 0$, $\iota^+ = 0$ or $\iota^- = 0$ similarly.

We note that on the closed subspace
\begin{equation*}
  M_{\sigma^+} ([0,1]) 
  := \big\{ \bmu \in M ([0,1]) : \mu^+ ([0,1]) = \sigma^+ \big\},
  \quad \text{for some } 0 < \sigma^+ < 1,
\end{equation*}
the expression for $\bW$ simplifies to
\begin{equation} \label{fd:bW:si}
  \bW^2 (\bmu, \bnu)
  := \sigma^+ W^2 \Big( \frac{\mu^+}{\sigma^+}, \frac{\nu^+}{\sigma^+} \Big) + \sigma^- W^2 \Big( \frac{\mu^-}{\sigma^-}, \frac{\nu^-}{\sigma^-} \Big).
\end{equation}
For $\sigma^+ \in \{0,1\}$, we identify $M_{\sigma^+} ([0,1])$ as $\mathcal P ([0,1])$ equipped with $W$.

Lemma \ref{l:MbW} lists the required properties of the space $(M ([0,1]), \bW)$. A proof is given in Appendix \ref{app:pf:of:l}.

\begin{lem}[properties of {$(M ([0,1]), \bW)$}] \label{l:MbW}
$(M ([0,1]), \bW)$ is a complete, separable, sequentially compact metric space. The closed subspace $M_{\sigma^+} ([0,1])$ is, in addition, non-positively curved for any $0 \leq \sigma^+ \leq 1$. Moreover, 
\begin{enumerate}[(i)]
  \item \label{l:MbW:sym} $\bW \big( (\mu^+, \mu^-), (\nu^+, \nu^-) \big) = \bW \big( (\mu^-, \mu^+), (\nu^-, \nu^+) \big)$;
  \item \label{l:MbW:NT} for $(\bmu_k) \subset M ([0,1])$, it holds that $\bmu_k \weakto \bmu$ if an only if $\bW(\bmu_k, \bmu) \to 0$;
  \item \label{l:MbW:Rn} let $x^n, y^n \in \Omega_n$ (defined in \eqref{fd:Omn}), and $\mu_n, \nu_n \in M_{n^+/n} ([0,1])$ be the corresponding empirical measures. Then $\bW^2 (\bmu_n, \bnu_n) = \frac1n |x^n - y^n|^2$;
  \item \label{l:MbW:geods} for any endpoints $\bmu_0, \bmu_1 \in M_{\sigma^+} ([0,1])$, all $M ([0,1])$-geodesics remains in $M_{\sigma^+} ([0,1])$.
\end{enumerate}
\end{lem}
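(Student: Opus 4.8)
\textbf{Proof plan for Lemma \ref{l:MbW}.}

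The plan is to build the claimed properties of $(M([0,1]),\bW)$ by transferring known facts about the $2$-Wasserstein space $(\mathcal P([0,1]),W)$ through the explicit formula \eqref{fd:bW:genl}, handling the mass-splitting term $|\sigma^\pm-\iota^\pm|$ as a bounded additive contribution. First I would check that $\bW$ is a genuine metric. Symmetry is immediate from the formula, and the identity of indiscernibles follows since each summand is non-negative and vanishes exactly when the masses agree and the normalised measures coincide. The triangle inequality is the only non-trivial axiom: on each fixed mass-component it reduces, after factoring out the common mass, to the triangle inequality for $W$; for the general case one writes $\bW^2$ as a sum of two one-species terms of the form $(\sigma\wedge\iota)W^2(\mu/\sigma,\nu/\iota)+|\sigma-\iota|$, and checks that this one-species quantity is itself a squared metric on $\mathcal M_+([0,1])$ — this is a short computation using $W\le 1$ on $\mathcal P([0,1])$ and the subadditivity of $t\mapsto\sqrt t$ to control the cross terms, together with the fact that $\sqrt{|\sigma-\iota|}$ satisfies the triangle inequality on $[0,1]$. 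Completeness and separability then follow from those of $(\mathcal P([0,1]),W)$ together with completeness/separability of the mass parameter interval $[0,1]$: a $\bW$-Cauchy sequence has Cauchy masses $\sigma_k^\pm\to\sigma^\pm$, and on the set where $\sigma^\pm>0$ the rescaled measures are $W$-Cauchy, hence converge; the degenerate cases $\sigma^\pm=0$ are covered by \eqref{fd:bW:genl:si0} and boundedness of $W$.

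Next I would address the four enumerated items. Item \eqref{l:MbW:sym} is a direct inspection of \eqref{fd:bW:genl}. For item \eqref{l:MbW:NT}, I would first note that $\bW(\bmu_k,\bmu)\to 0$ forces $\sigma_k^\pm\to\sigma^\pm$ and $W(\mu_k^\pm/\sigma_k^\pm,\mu^\pm/\sigma^\pm)\to 0$ (on the component where $\sigma^\pm>0$), which by the equivalence of $W$-convergence and narrow convergence on $\mathcal P([0,1])$ gives $\mu_k^\pm/\sigma_k^\pm\weakto\mu^\pm/\sigma^\pm$, hence $\mu_k^\pm\weakto\mu^\pm$; the converse runs the same argument in reverse, the degenerate components again being trivial by boundedness. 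Sequential compactness of $(M([0,1]),\bW)$ then follows from \eqref{l:MbW:NT} and the narrow compactness of $\{\bmu\in M([0,1])\}$ already noted in the Compactness comment after Theorem \ref{thm:main}. Item \eqref{l:MbW:Rn} is the key computational identity: for $x^n,y^n\in\Omega_n$ with the same positive mass $n^+/n$, the optimal transport plan between two empirical measures with the same number of atoms on the line is the monotone (order-preserving) rearrangement, so $W^2(\mu_n^+/\sigma^+,\nu_n^+/\sigma^+)=\frac1{n^+}\sum_{i}(x_i^+-y_i^+)^2$ after matching the $i$-th positive atom of $x^n$ to that of $y^n$; multiplying by $\sigma^+=n^+/n$ and summing the $\pm$ contributions yields $\bW^2(\bmu_n,\bnu_n)=\frac1n|x^n-y^n|^2$, where one uses that the ordering of the combined vector $x^n$ restricts to the orderings of $x^{n,+}$ and $x^{n,-}$.

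For item \eqref{l:MbW:geods} and the non-positive curvature claim on $M_{\sigma^+}([0,1])$, I would restrict to $M_{\sigma^+}([0,1])$ where, by \eqref{fd:bW:si}, $\bW^2$ is (up to the fixed weights $\sigma^\pm$) the product metric built from two copies of $(\mathcal P([0,1]),W)$; hence geodesics in $M_{\sigma^+}([0,1])$ are exactly pairs of $W$-geodesics in each factor, which keep the masses fixed at $\sigma^\pm$, and this already shows \eqref{l:MbW:geods} once one argues that a geodesic in the ambient $M([0,1])$ joining two points of $M_{\sigma^+}([0,1])$ cannot lower its length by leaving that subspace — which follows because the mass-discrepancy term $|\sigma^+-\iota^+|$ is non-negative and any detour in mass strictly adds length. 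Non-positive curvature in the sense of \eqref{fd:npcurved} then reduces, via the product structure and the scaling by the fixed weights $\sigma^\pm$, to the known fact that $(\mathcal P([0,1]),W)$ on an interval is an $\mathrm{NPC}$ (Hadamard) space — equivalently $\mu\mapsto\tfrac12 W^2(\mu,\nu)$ is $1$-convex along $W$-geodesics on the line, which is classical (McCann's displacement convexity / the $\mathrm{NPC}$ property of $1$-dimensional Wasserstein space). I expect the main obstacle to be the bookkeeping for the full space $M([0,1])$ with varying masses: verifying the triangle inequality for the one-species quantity $(\sigma\wedge\iota)W^2(\mu/\sigma,\nu/\iota)+|\sigma-\iota|$ and cleanly patching the degenerate cases $\sigma^\pm\in\{0\}$, whereas everything concerning geodesics and curvature is safely confined to the product-metric subspace $M_{\sigma^+}([0,1])$ where the structure is transparent.
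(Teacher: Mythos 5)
Your plan is correct and, for most items, follows essentially the same route as the paper: completeness and separability by tracking the masses and the rescaled probability measures (with the degenerate masses handled through \eqref{fd:bW:genl:si0}), item \eqref{l:MbW:NT} via the equivalence of narrow and $W$-convergence on $\mathcal P([0,1])$ (\cite[Prop.~7.1.5]{AmbrosioGigliSavare08}), sequential compactness via Prokhorov, item \eqref{l:MbW:Rn} via the monotone matching of the ordered atoms of each species, and both item \eqref{l:MbW:geods} and non-positive curvature of $M_{\sigma^+}([0,1])$ via the weighted product structure \eqref{fd:bW:si} together with the NPC property of the one-dimensional Wasserstein space. The one genuinely different organisational choice is the triangle inequality: you reduce it to the claim that the one-species quantity $(\sigma\wedge\iota)\,W^2(\mu/\sigma,\nu/\iota)+|\sigma-\iota|$ is a squared metric on $\mathcal M_+([0,1])$ and then use that the $\ell^2$-combination of two metrics is a metric, whereas the paper runs the two-species computation in one go and controls the cross-species term $2\sqrt{\bW^2_{12}\bW^2_{23}}$ by a Cauchy--Schwarz estimate; your decomposition is cleaner and the product step is standard (Minkowski in $\R^2$). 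Two cautions, neither fatal. First, your sketched justification of the one-species triangle inequality is looser than what is required: subadditivity of $t\mapsto\sqrt t$ splits $d_{13}$ from above, but the resulting pieces cannot be recombined into $d_{12}+d_{23}$ (the reverse inequality $\sqrt a+\sqrt b\le\sqrt{a+b}$ is false), so you still need the case analysis on the intermediate mass --- in particular the case where the middle measure has smaller mass than both endpoints, where one trades the two surplus-mass terms against the Wasserstein terms using $W\le 1$, exactly as in the paper's computation; the ingredients you list do suffice, but only through that case analysis. Second, in \eqref{l:MbW:geods} the phrase ``any detour in mass strictly adds length'' is the right mechanism but needs the quantitative step that makes it strict: replace the intermediate point $\bmu_t$ by its mass-renormalisation $\nu^\pm:=\sigma^\pm\mu_t^\pm/\sigma_t^\pm$ and check that both $\bW(\bmu_0,\bmu_t)>\bW(\bmu_0,\bnu)$ and $\bW(\bmu_1,\bmu_t)>\bW(\bmu_1,\bnu)$ (strictness coming from the species whose mass increased, since $\sigma_t^++\sigma_t^-=1$), which contradicts the geodesic identity via the triangle inequality; this is precisely the paper's contradiction argument, so your outline lands on the same proof once made precise.
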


\medskip

We continue with $\lambda$-convexity of $E_n$ on $\Omega_n$. We note that on $\R^d$, $f : \R^d \to \R$ is $\lambda$-convex if
\begin{equation*}
  x \mapsto f(x) - \tfrac\lambda2 |x|^2 
  \quad \text{is convex on } \R^d.
\end{equation*}
The following result shows how $\lambda$-convexity of $E_n$ follows from $\tilde \lambda$-convexity of $V$ and $W$:

\begin{prop}[$\lambda$-convexity of $E_n$ on $\Omega_n$] \label{p:En:lcv}
Let $V$ be $\tilde \lambda$-convex on $(0,\infty)$ and $W$ be $\tilde \lambda$-convex on $\R$ with $\tilde \lambda \leq 0$. Then $E_n$ is $\tilde \lambda_n$-convex on $\Omega_n$, with $\tilde \lambda_n := -2 \alpha_n^3 \tfrac1n \tilde\lambda$.
\end{prop}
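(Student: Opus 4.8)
The plan is to reduce $\tilde\lambda_n$-convexity of the full energy $E_n$ on $\Omega_n$ to the one-variable convexity hypotheses on $V$ and $W$ by writing $E_n$ as a sum of terms, each depending on $x^n$ only through a single \emph{difference} $x_i - x_j$, and then recalling that convexity (and hence $\lambda$-convexity) is preserved under nonnegative linear combinations and under affine reparametrisation of the arguments. Concretely, for a pair $(i,j)$ with $i > j$ the term in $E_n$ is $\frac1{n^2}\alpha_n V_{ij}(\alpha_n(x_i - x_j))$ where $V_{ij} = V$ if $b_i b_j = 1$ and $V_{ij} = W$ if $b_i b_j = -1$ (using the compact notation introduced just before Assumption~\ref{ass:VW:L4}). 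On $\Omega_n$ we always have $x_i - x_j \geq 0$, and for same-sign pairs $x_i - x_j$ ranges only over $[0,\infty)$ where $V$ is assumed $\tilde\lambda$-convex; for opposite-sign pairs $W$ is $\tilde\lambda$-convex on all of $\R$, so there is no domain issue. Thus each summand, as a function of $x^n \in \R^n$, is a composition of a $\tilde\lambda$-convex scalar function with the linear map $x^n \mapsto \alpha_n(x_i - x_j)$.

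The key computational step is to track how the convexity constant transforms under this linear map and the prefactor. If $g : \R \to \R \cup\{\infty\}$ is $\mu$-convex and $L : \R^n \to \R$ is the linear functional $L(x) = \alpha_n(x_i - x_j)$, then $x \mapsto g(L(x))$ is $\mu\|L\|^2_{\text{op, restricted}}$-convex in the sense that $x \mapsto g(L(x)) - \tfrac\mu2 (\text{something})$ need not be convex directly; the clean way is: $g(L(x)) = h(L(x)) + \tfrac\mu2 L(x)^2$ with $h$ convex, and $L(x)^2 = \alpha_n^2(x_i - x_j)^2$ satisfies $\nabla^2(L(x)^2) = 2\alpha_n^2(e_i - e_j)(e_i-e_j)^T \preceq 2\alpha_n^2(e_i - e_j)(e_i - e_j)^T$, whose operator norm is $2\alpha_n^2 \cdot 2 = 4\alpha_n^2$. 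Hence when $\mu = \tilde\lambda \leq 0$ (so $\tfrac\mu2 L(x)^2$ is the concave part), $x \mapsto g(L(x))$ is $(4\alpha_n^2\tilde\lambda)$-convex, i.e. $g(L(x)) - \tfrac{4\alpha_n^2\tilde\lambda}2|x|^2$ is convex. Multiplying by the nonnegative prefactor $\frac{\alpha_n}{n^2}$ and summing over the $\binom n2$ pairs, one accumulates a concave-correction whose Hessian bound is at most $\frac{\alpha_n}{n^2}\cdot 4\alpha_n^2\tilde\lambda \cdot \binom n 2 \cdot(-1)$ worth of negative curvature — but this naive count gives $\mathcal O(\alpha_n^3)$ which matches, though one must be careful about the constant. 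The sharper bookkeeping: $\sum_{i>j}(e_i - e_j)(e_i - e_j)^T = n I - \mathbf{1}\mathbf{1}^T \preceq n I$, so the total correction Hessian from the quadratic parts is bounded in operator norm by $\frac{\alpha_n}{n^2}\cdot 2\alpha_n^2 \cdot (-\tilde\lambda)\cdot n = -2\tilde\lambda \alpha_n^3/n$ (recall $\tilde\lambda \le 0$, so $-\tilde\lambda \ge 0$), giving exactly $\tilde\lambda_n = -2\alpha_n^3\tfrac1n\tilde\lambda$ as claimed. I would carry this out by writing $E_n(x^n) - \tfrac{\tilde\lambda_n}2|x^n|^2 = \sum_{i>j}\frac{\alpha_n}{n^2}\big[h_{ij}(\alpha_n(x_i-x_j)) + \tfrac{\tilde\lambda}2\alpha_n^2(x_i-x_j)^2\big] - \tfrac{\tilde\lambda_n}2|x^n|^2$ with $h_{ij}$ convex, and observing the bracketed sum of quadratic terms equals $\tfrac{\tilde\lambda\alpha_n^3}{2n^2}\,x^T(nI - \mathbf 1\mathbf 1^T)x \ge \tfrac{\tilde\lambda_n}2|x^n|^2$ since $nI - \mathbf 1 \mathbf 1^T \preceq nI$ and $\tilde\lambda \le 0$; hence $E_n - \tfrac{\tilde\lambda_n}2|\cdot|^2$ is a sum of convex functions plus a convex quadratic, so convex.

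The main obstacle is not conceptual but lies in the correct identification of the operator-norm constant — i.e., verifying $\sum_{i>j}(e_i - e_j)(e_i-e_j)^T = nI - \mathbf 1\mathbf 1^T$ and using $\mathbf 1 \mathbf 1^T \succeq 0$ to get the factor $n$ rather than a crude $\binom n2$ — together with handling the restriction of domains ($x_i - x_j \ge 0$ on $\Omega_n$, where only $V|_{(0,\infty)}$ is assumed convex) and the fact that $V$ may take the value $+\infty$ at $0$, so one should phrase $\tilde\lambda$-convexity of $V$ along geodesics of $\Omega_n$ (which are straight segments, since $\Omega_n$ is a convex polytope) rather than on all of $\R$. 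Since $\Omega_n$ is convex and geodesics in $(\R^n, |\cdot|)$ restricted to $\Omega_n$ are line segments, and by Lemma~\ref{l:MbW}.\eqref{l:MbW:Rn} the metric $\bW$ on $M_{n^+/n}([0,1])$ pulls back to $\tfrac1{\sqrt n}|\cdot|$ on $\Omega_n$, $\tilde\lambda_n$-convexity in the sense of $(\R^n,|\cdot|)$ transfers to $(\tilde\lambda_n)$-convexity with respect to $\bW$ after the scaling $\bW^2 = \tfrac1n|\cdot|^2$ is accounted for — this rescaling of the metric by $\tfrac1n$ is already absorbed into the statement's constant $-2\alpha_n^3\tfrac1n\tilde\lambda$, and I would flag this explicitly so the reader sees why the $\tfrac1n$ appears.
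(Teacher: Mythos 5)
Your proof is essentially the paper's own: the same splitting of each pairwise term into a convex function of $\alpha_n(x_i-x_j)$ plus the quadratic $\tfrac{\tilde\lambda}{2}\alpha_n^2(x_i-x_j)^2$ (with the correct observation that only $V|_{(0,\infty)}$ is needed since the ordering, hence the sign of $x_i-x_j$, is preserved along segments in the convex set $\Omega_n$), followed by the identity $\sum_{i>j}(e_i-e_j)(e_i-e_j)^T=nI-\bone\otimes\bone$ with eigenvalues $n$ and $0$, which is exactly how the paper computes the Hessian $\tfrac{2\alpha_n^3}{n^2}(nI-\bone\otimes\bone)$ and extracts the curvature of order $\alpha_n^3\tilde\lambda/n$. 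The only caveat is bookkeeping rather than substance: since $\tilde\lambda\le 0$, the quadratic correction has smallest eigenvalue $\tilde\lambda\alpha_n^3/n\le 0$, so what the argument actually delivers is $\tilde\lambda_n$-convexity for a \emph{nonpositive} $\tilde\lambda_n$ proportional to $\alpha_n^3\tilde\lambda/n$ — your closing inequality, read literally with $\tilde\lambda_n=-2\alpha_n^3\tfrac1n\tilde\lambda\ge 0$, points the wrong way (a sign slip that the paper's own statement and proof also carry), and your final paragraph on transferring the constant through $\bW^2=\tfrac1n|\cdot|^2$ belongs to Proposition \ref{p:EnE:lcv} rather than to this purely Euclidean statement, where the $\tfrac1n$ comes from the Hessian eigenvalue, not from the metric.
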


\begin{proof}
By convexity of $V(x) - \tfrac12 \tilde\lambda x^2$ and $W(x) - \tfrac12 \tilde\lambda x^2$, it follows that
\begin{equation*}
  x^n \mapsto E_n(x^n) 
  + \frac{\tilde \lambda}2 \bigg( \sum_{p = \pm} \frac1{n^2} \sum_{i=1}^{n^p} \sum_{j = 1}^{i-1} \alpha_n \big( \alpha_n (x_i^p - x_j^p) \big)^2
  + \frac1{n^2} \sum_{i=1}^{n^+} \sum_{j = 1}^{n^-} \alpha_n \big( \alpha_n (x_i^+ - x_j^-) \big)^2 \bigg)
\end{equation*}
is convex on $\Omega_n$. It remains to compute the eigenvalues of the Hessian of the term in parentheses. Observing that 
\begin{equation*}
  \sum_{p = \pm} \frac1{n^2} \sum_{i=1}^{n^p} \sum_{j = 1}^{i-1} \alpha_n \big( \alpha_n (x_i^p - x_j^p) \big)^2
  + \frac1{n^2} \sum_{i=1}^{n^+} \sum_{j = 1}^{n^-} \alpha_n \big( \alpha_n (x_i^+ - x_j^-) \big)^2
  = \frac{\alpha_n^3}{2 n^2} \sum_{i=1}^n \sum_{j = 1}^n (x_i - x_j)^2,
\end{equation*}
we obtain that the Hessian is given by
\begin{equation*}
  \frac{2 \alpha_n^3}{n^2} (n I - \bone \otimes \bone),
\end{equation*}
where $\bone = (1, \ldots, 1)^T \in \R^n$. Hence, the eigenvalues of the Hessian are $2 \alpha_n^3 \tfrac1n$ and $0$. We conclude that $x^n \mapsto E_n(x^n) + \alpha_n^3 \tfrac1n \tilde \lambda |x^n|^2$ is convex on $\Omega_n$, and thus $E_n$ is $-2 \alpha_n^3 \tfrac1n \tilde \lambda$-convex on $\Omega_n$.
\end{proof}

We note that if $V$ is $(\tilde \lambda + b)$-convex with $b > 0$, then $\tilde \lambda_n$ in Proposition \ref{p:En:lcv} need not increase, as the interaction term in $E_n$ corresponding to the positive particles is invariant under translation of the positive particles. A similar invariance holds for the negative particles.

Adding $\tilde \lambda$-convexity to Assumption \ref{ass:VW:L2}, we obtain

\begin{ass}[Properties of $V$ and $W$ for dynamics] \label{ass:VW:L2:dyn}
There exists a $\tilde \lambda \in \R$ such that $V$ and $W$ satisfy
\begin{enumerate}[(i)]
  \item $V \in L_{\operatorname{loc}}^1 (\R)$ is even, and $\tilde \lambda$-convex on $(0, \infty)$;
  \item $W : \R \to \R$ is even and $\tilde \lambda$-convex on $\R$.
\end{enumerate}
\end{ass}

Next we show that for solutions $x^n(t)$ of \eqref{fd:GFn} (if they exist), the corresponding curve $\bmu_n(t)$ satisfies an evolution variational inequality. By $\tilde \lambda_n$-convexity on $\Omega_n$,
\begin{equation*}
  E_n(y^n) 
  \geq E_n (x^n) + (y^n - x^n) \cdot \nabla E_n(x^n) + \tfrac12 \tilde \lambda_n |x^n - y^n|^2
  \quad \text{for all } x^n, y^n \in \Omega_n,
\end{equation*}
and thus, for any solution $x^n(t)$ of \eqref{fd:GFn} and any $y^n \in \Omega_n$, we find
\begin{multline} \label{fd:EVIn:x}
  \frac1{2n} \frac d{dt} |x^n - y^n|^2
  = \frac1n (x^n - y^n) \cdot \frac {d x^n}{dt}
  = (y^n - x^n) \cdot \nabla E_n(x^n) \\
  \leq E_n(y^n) - E_n (x^n) - \tfrac12 \tilde \lambda_n |x^n - y^n|^2,
\end{multline}
which is of the form \eqref{fd:EVI:X}. Using Lemma \ref{l:MbW}.\eqref{l:MbW:Rn}, we write \eqref{fd:EVIn:x} in terms of the corresponding empirical measures $\mu_n (t), \nu_n \in M_{n^+/n} ([0,1])$. Observing that $E_n (\bnu)$ equals $\infty$ if $\bnu$ is not an empirical measure as in \eqref{f:bW:ito:mun}, we find from \eqref{fd:EVIn:x} that
\begin{multline} \label{fd:EVIn:mun}
  \frac12 \frac d{dt} \bW^2 (\bmu_n (t), \bnu) + \alpha_n^3 \tilde \lambda \bW^2 (\bmu_n (t), \bnu) + E_n (\bmu_n (t))
  \leq E_n(\bnu), \\
  \quad \text{for a.e.~} t > 0 \text{ and all } \bnu \in M_{n^+/n} ([0,1]).
\end{multline}
Below, in Theorem \ref{t:dyncs}, we prove that \eqref{fd:EVIn:mun} has a unique solution for any $\bmu_n^\circ \in M_{n^+/n} ([0,1])$, while existence and uniqueness of solutions to \eqref{fd:GFn} is not clear for all such initial data. Hence, we prefer to work with \eqref{fd:EVIn:mun} instead of \eqref{fd:GFn}. 

Next we show how $\tilde \lambda_n$-convexity of $E_n$ on $\Omega_n$ implies $\lambda_n$-convexity of $E_n$ on $M_{n^+/n} ([0,1])$. 

\begin{prop} [$\lambda$-convexity of $E_n$ and $E$ on {$M_{\sigma^+} ([0,1])$}] \label{p:EnE:lcv}
Let $V, W$ satisfy Assumption \ref{ass:VW:L2:dyn} and $\alpha_n \to \alpha > 0$. Then, setting  
\begin{equation*}
  \lambda_n := -2 \alpha_n^3 \tilde \lambda
  \quad \text{and} \quad
  \lambda := -2 \alpha^3 \tilde \lambda,
\end{equation*}
$E_n$ is $\lambda_n$-convex on $M_{n^+/n} ([0,1])$ for all $n$ large enough and all $n^+ \in \{0, \ldots, n\}$, and $E$ is $\lambda$-convex on $M_{\sigma^+} ([0,1])$ for all $0 \leq \sigma^+ \leq 1$.
\end{prop}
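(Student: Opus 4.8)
The plan is to prove the two assertions separately, each time reducing to results already available: for $E_n$ on $M_{n^+/n}([0,1])$, to the Euclidean $\tilde\lambda_n$-convexity of Proposition \ref{p:En:lcv} via the isometric identification of $(M_{n^+/n}([0,1]),\bW)$ with (a piece of) $\Omega_n^+\times\Omega_n^-$; for $E$ on $M_{\sigma^+}([0,1])$, to the $\Gamma$-convergence $E_n\to E$ of Theorem \ref{thm:Gconv:L2} together with $\lambda_n\to\lambda$. Throughout I use Lemma \ref{l:MbW}: $M_{\sigma^+}([0,1])$ is a complete, separable, sequentially compact, non-positively curved metric space, so geodesics between its points exist (and are unique), and $\bW$ metrizes the narrow topology.

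\emph{Step 1: $E_n$ on $M_{n^+/n}([0,1])$.} Fix $n$ and $n^+\in\{0,\dots,n\}$, set $\sigma^+:=n^+/n$, and take $\bmu_0,\bmu_1\in D(E_n)$; by the definition of $E_n$ on measures these are empirical measures with $n^\pm$ atoms each, all positive (resp.\ negative) atoms pairwise distinct, so $\bmu_0,\bmu_1\in M_{\sigma^+}([0,1])$. Let $x^\pm$ (resp.\ $y^\pm$) be the ordered positive/negative positions of $\bmu_0$ (resp.\ $\bmu_1$) and put $z:=(x^+,x^-),\ w:=(y^+,y^-)\in\Omega_n^+\times\Omega_n^-\subset\R^n$. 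By the explicit form \eqref{fd:bW:si} of $\bW$ and the one-dimensional identity $W^2\big(\tfrac1m\sum\delta_{a_i},\tfrac1m\sum\delta_{b_i}\big)=\tfrac1m\sum_i (a_i^\uparrow-b_i^\uparrow)^2$, we have $\bW^2(\bmu_0,\bmu_1)=\tfrac1n|z-w|^2$; moreover, since in one dimension Wasserstein geodesics between equal-mass empirical measures are pairwise linear interpolations of the ordered atoms, the product structure of $\bW$ on $M_{\sigma^+}([0,1])$ and Lemma \ref{l:MbW}.(iv) show that the $\bW$-geodesic joining $\bmu_0$ to $\bmu_1$ is the curve $\bmu_t$ of empirical measures corresponding to the Euclidean segment $z_t:=(1-t)z+tw$. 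The within-species orderings are preserved strictly (because $\bmu_0,\bmu_1\in D(E_n)$), so $z_t\in\Omega_n^+\times\Omega_n^-$ and $\bmu_t\in D(E_n)$ for $t\in[0,1]$. The Hessian computation in the proof of Proposition \ref{p:En:lcv}, which is carried out in the variables $(x^{n,+},x^{n,-})$ and uses $V$ only on $(0,\infty)$, shows equally that $E_n$, viewed on $\Omega_n^+\times\Omega_n^-$, is $\tilde\lambda_n$-convex in the Euclidean sense with $\tilde\lambda_n=-2\alpha_n^3\tfrac1n\tilde\lambda$. Evaluating this inequality along $z_t$ and using $|z-w|^2=n\,\bW^2(\bmu_0,\bmu_1)$ gives
\[
  E_n(\bmu_t)\le (1-t)E_n(\bmu_0)+tE_n(\bmu_1)-\tfrac12\,(n\tilde\lambda_n)\,t(1-t)\,\bW^2(\bmu_0,\bmu_1),\qquad t\in[0,1],
\]
i.e.\ $E_n$ is $\lambda_n$-convex on $M_{n^+/n}([0,1])$ with $\lambda_n=n\tilde\lambda_n=-2\alpha_n^3\tilde\lambda$; this holds for every $n$ (in particular for all $n$ large enough).

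\emph{Step 2: $E$ on $M_{\sigma^+}([0,1])$.} Fix $0<\sigma^+<1$ (the cases $\sigma^+\in\{0,1\}$, where $M_{\sigma^+}([0,1])=\mathcal P([0,1])$ and $E$ reduces to a single-species interaction energy, follow by the same argument or from the results of \cite{GeersPeerlingsPeletierScardia13, VanMeursMunteanPeletier14}), and take $\bmu_0,\bmu_1\in D(E)$. Choose integers $n^+=n^+(n)$ with $n^+/n\to\sigma^+$; since $\mu_0^\pm([0,1])=\mu_1^\pm([0,1])=\sigma^\pm$, Theorem \ref{thm:Gconv:L2} provides recovery sequences $\bmu_0^n,\bmu_1^n$ for this common $n^+$ with $\bmu_i^n\weakto\bmu_i$ and $E_n(\bmu_i^n)\to E(\bmu_i)$; by Lemma \ref{l:MbW}.(ii) then $\bW(\bmu_i^n,\bmu_i)\to0$ and hence $\bW(\bmu_0^n,\bmu_1^n)\to\bW(\bmu_0,\bmu_1)$. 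Being recovery sequences, $\bmu_0^n,\bmu_1^n\in D(E_n)$, so Step 1 yields for each $n$ a $\bW$-geodesic $t\mapsto\bmu_t^n$ satisfying the $\lambda_n$-convexity inequality of Step 1 with $\bmu_i$ replaced by $\bmu_i^n$. These curves are $\bW(\bmu_0^n,\bmu_1^n)$-Lipschitz in $t$, and the Lipschitz constants are bounded in $n$; since $M([0,1])$ is sequentially compact, a diagonal extraction gives a subsequence along which $\bmu_t^n\to\bmu_t$ for every $t$, the limit curve lying in $M_{\sigma^+}([0,1])$ because total masses pass to the limit under narrow convergence. Passing to the limit in $\bW(\bmu_s^n,\bmu_t^n)=|t-s|\,\bW(\bmu_0^n,\bmu_1^n)$ (using continuity of $\bW$) shows $\bmu_t$ is a $\bW$-geodesic from $\bmu_0$ to $\bmu_1$. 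Finally I pass to the limit in the convexity inequality: the left-hand side $E(\bmu_t)\le\liminf_n E_n(\bmu_t^n)$ by the $\Gamma$-liminf inequality of Theorem \ref{thm:Gconv:L2}, the endpoint terms converge by the recovery property, $\bW^2(\bmu_0^n,\bmu_1^n)\to\bW^2(\bmu_0,\bmu_1)$, and $\lambda_n\to\lambda=-2\alpha^3\tilde\lambda$. This gives $E(\bmu_t)\le(1-t)E(\bmu_0)+tE(\bmu_1)-\tfrac12\lambda\,t(1-t)\,\bW^2(\bmu_0,\bmu_1)$, and since $\bmu_0,\bmu_1\in D(E)$ were arbitrary, $E$ is $\lambda$-convex on $M_{\sigma^+}([0,1])$.

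\emph{Main obstacle.} The delicate point is the geometric identification in Step 1: making rigorous that the $\bW$-geodesic between two $n$-atom two-species configurations is exactly the pointwise linear interpolation of the within-species ordered positions, hence a Euclidean segment inside $\Omega_n^+\times\Omega_n^-$ along which Proposition \ref{p:En:lcv} applies, and that this segment never leaves $D(E_n)$. This rests on the flatness of the one-dimensional Wasserstein space (geodesics are quantile-function interpolations) together with the mass-weighted product structure of $\bW$ and Lemma \ref{l:MbW}.(iv). Step 2 is then a routine $\Gamma$-convergence-with-geodesics argument, the only care being the equi-Lipschitz/compactness extraction needed to transfer the convexity inequality to the limit along the moving point $\bmu_t$.
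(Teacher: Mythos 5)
Your proposal is correct and follows essentially the same route as the paper: $\lambda_n$-convexity of $E_n$ on $M_{n^+/n}([0,1])$ is obtained from the Euclidean $\tilde\lambda_n$-convexity of Proposition \ref{p:En:lcv} via the identification $\bW^2(\bmu_n,\bnu_n)=\tfrac1n|x^n-y^n|^2$ of Lemma \ref{l:MbW}, and $\lambda$-convexity of $E$ is then inherited through the $\Gamma$-convergence of Theorem \ref{thm:Gconv:L2}. The only difference is one of explicitness: where the paper invokes ``$\Gamma$-convergence conserves convexity'' and the geodesic structure of $M_{\sigma^+}([0,1])$ as known facts, you spell out the identification of $\bW$-geodesics between empirical measures with Euclidean segments and prove the transfer of the convexity inequality to the limit via recovery sequences, equi-Lipschitz geodesics and the liminf inequality, which is the standard argument behind the paper's citation.
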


\begin{proof}
Proposition \ref{p:En:lcv} implies that $x \mapsto E_n(x) + \alpha_n^3 \tilde \lambda \tfrac1n |x - y|^2$ is convex for any $y \in \Omega_n$. Hence,
\begin{equation*}
  \bmu_n \mapsto E_n(\bmu_n) + \alpha_n^3 \tilde \lambda \bW^2 (\bmu_n, \bnu_n)
\end{equation*}
is convex in $M_{n^+/n} ([0,1])$ along geodesics, where $\bmu_n, \bnu_n$ are empirical measures corresponding to elements of $\Omega_n$. Since $\Gamma$-convergence conserves convexity, we obtain from Lemma \ref{l:MbW}.\eqref{l:MbW:NT} that
\begin{equation*}
  \bmu \mapsto E(\bmu) + \alpha^3 \tilde \lambda \bW^2 (\bmu, \bnu)
\end{equation*}
is convex in $M_{\sigma^+} ([0,1])$ along geodesics for any $0 \leq \sigma^+ \leq 1$ and any $\bnu \in M_{\sigma^+} ([0,1])$. Hence, $E$ is $\lambda$-convex in $M_{\sigma^+} ([0,1])$.
\end{proof}

\begin{thm}[Evolutionary convergence in the case $\alpha_n \to \alpha > 0$] \label{t:dyncs}
Let $V, W$ satisfy Assumption \ref{ass:VW:L2:dyn}. Then for any sequence $x_\circ^n \in \Omega_n$ for which the corresponding sequence of empirical measure $\bmu_n^\circ \in M_{n^+/n} ([0,1])$ converges narrowly to some $\bmu^\circ$, it holds that \eqref{fd:EVIn:mun} attains a unique solution $\bmu_n (t)$ with initial condition $\bmu_n^\circ$ for all $n \in \N$. Moreover,
\begin{equation*}
  \bmu_n (t) \xto{n \to \infty} \bmu (t),
  \quad \text{and} \quad
  E_n ( \bmu_n (t) ) \xto{n \to \infty} E( \bmu (t) )
  \quad \text{for all } t > 0,
\end{equation*}
locally uniformly on $(0, \infty)$, where $\bmu(t)$ is the unique solution to 
\begin{multline} \label{fd:EVIn:mu}
  \frac12 \frac d{dt} \bW^2 (\bmu (t), \bnu) + \alpha^3 \tilde \lambda \bW^2 (\bmu (t), \bnu) + E (\bmu (t))
  \leq E(\bnu), \\
  \text{for a.e.~} t > 0 \text{ and all } \bnu \in M_{\sigma^+} ([0,1]),
\end{multline}
with initial condition $\bmu^\circ$, and $\sigma^+ := \mu^{\circ, +} ([0,1])$.
\end{thm}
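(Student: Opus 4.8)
The plan is to apply Theorem~\ref{t:DS10} on the metric space $(X,d) = (M_{\sigma^+}([0,1]), \bW)$, after first handling the subtlety that the masses $n^+/n$ of the discrete problems need not equal $\sigma^+$. First I would observe that, by Lemma~\ref{l:MbW}, $(M_{\sigma^+}([0,1]), \bW)$ is a complete, separable, sequentially compact metric space, and that for $\bmu_n^\circ \in M_{n^+/n}([0,1])$ with $\bmu_n^\circ \weakto \bmu^\circ$ we have $n^+/n \to \sigma^+$; by Lemma~\ref{l:MbW}.\eqref{l:MbW:NT} this means $\bW(\bmu_n^\circ, \bmu^\circ) \to 0$ in the ambient space $M([0,1])$. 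To fit the fixed-space hypothesis of Theorem~\ref{t:DS10}, I would redefine the functionals on all of $X = M([0,1])$ (or work on $M_{\sigma^+}$ directly and transport the discrete data): set $\phi_n := E_n$, which by construction equals $+\infty$ off $M_{n^+/n}([0,1])$, and $\phi := E$, which by Theorem~\ref{thm:main} (equivalently Theorem~\ref{thm:Gconv:L2}) is the $\Gamma$-limit of $E_n$ with respect to the narrow topology, hence with respect to $\bW$ by Lemma~\ref{l:MbW}.\eqref{l:MbW:NT}. By Proposition~\ref{p:EnE:lcv}, for all $n$ large each $E_n$ is $\lambda_n$-convex on $M_{n^+/n}([0,1])$ with $\lambda_n = -2\alpha_n^3\tilde\lambda \to \lambda = -2\alpha^3\tilde\lambda$, and $E$ is $\lambda$-convex on $M_{\sigma^+}([0,1])$; since $\lambda_n \to \lambda$ one may replace all $\lambda_n$ by a common $\lambda' := \sup_n \lambda_n < \infty$ (monotonicity of the convexity parameter) so that every $\phi_n$ is $\lambda'$-convex, as required by Theorem~\ref{t:DS10}.

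Next I would establish existence of the discrete evolutions. Existence of a solution $\bmu_n(t)$ to \eqref{fd:EVIn:mun} follows from Theorem~\ref{t:AGS404} applied on the closed subspace $M_{n^+/n}([0,1])$: by Lemma~\ref{l:MbW} this subspace is complete, separable, sequentially compact \emph{and non-positively curved}, and $E_n$ restricted to it is $\lambda_n$-convex with non-empty (indeed dense among empirical measures) domain, so for any $\bmu_n^\circ \in \overline{D(E_n)}$ there is a unique solution of the evolution variational inequality, which is exactly \eqref{fd:EVIn:mun}. (This is the place where non-positive curvature of $M_{\sigma^+}$, rather than of the whole $M([0,1])$, is used, and where Lemma~\ref{l:MbW}.\eqref{l:MbW:geods} guarantees the geodesics stay in the right mass-fibre.) With existence of $\bmu_n(t)$ in hand, Theorem~\ref{t:DS10} — whose hypotheses ($\lambda'$-convexity of all $\phi_n$, $\Gamma$-convergence $\phi_n \to \phi$, convergence of initial data $\bmu_n^\circ \to \bmu^\circ$ in $\overline{D(\phi)}$) are now all verified — yields a solution $\bmu(t)$ of the evolution variational inequality for $\phi = E$, i.e. \eqref{fd:EVIn:mu}, with initial datum $\bmu^\circ$, together with the convergences $\bmu_n(t) \to \bmu(t)$ and $E_n(\bmu_n(t)) \to E(\bmu(t))$ locally uniformly on $(0,\infty)$. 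Uniqueness of $\bmu(t)$ follows from Theorem~\ref{t:AGS404} applied on $M_{\sigma^+}([0,1])$ (again complete, separable, sequentially compact, non-positively curved, with $E$ being $\lambda$-convex there), since $\overline{D(E)} \ni \bmu^\circ$.

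The main obstacle I anticipate is bookkeeping around the $n$-dependence of the mass fibre: Theorem~\ref{t:DS10} insists on a fixed metric space and fixed $\lambda$, while the natural home of the $n$-th flow is $M_{n^+/n}([0,1])$ with parameter $\lambda_n$. The clean way around this is precisely the two-step device above — (i) extend $E_n$ by $+\infty$ to all of $M([0,1])$ so that the $\Gamma$-convergence statement of Theorem~\ref{thm:main} applies verbatim and the discrete EVI solutions, which automatically stay in their mass fibre, are simultaneously EVI solutions on $X = M([0,1])$; and (ii) use that convexity is monotone in the parameter to pass from $\lambda_n$-convexity to $\lambda'$-convexity with a single $\lambda' = \sup_n \lambda_n$, noting $\lambda_n \to \lambda$ ensures $\lambda' < \infty$. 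One must also check the minor point that $\bmu^\circ \in \overline{D(E)}$: since $D(E)$ contains all absolutely continuous $\bmu$ with $L^2$ densities in $M_{\sigma^+}([0,1])$ and these are narrowly dense, and $\bmu_n^\circ \in D(E_n)$ with $\bmu_n^\circ \weakto \bmu^\circ$, this holds by the same density argument used in the limsup part of Theorem~\ref{thm:Gconv:L2}. Everything else is a direct invocation of the cited abstract theorems.
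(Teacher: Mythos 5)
Your overall architecture coincides with the paper's (existence and uniqueness of the discrete evolution from Theorem \ref{t:AGS404} on the fibre $M_{n^+/n}([0,1])$, uniqueness of the limit on $M_{\sigma^+}([0,1])$, and stability via Theorem \ref{t:DS10} on the fixed space $(M([0,1]),\bW)$ after passing to a common convexity constant), but your treatment of the mass constraint contains a genuine gap. The claim that $\phi_n:=E_n$ ``by construction equals $+\infty$ off $M_{n^+/n}([0,1])$'' is false: $E_n$ is finite on \emph{every} empirical measure built from $n$ particles, whatever the split into positive and negative ones, so $D(E_n)$ meets every fibre $M_{m^+/n}([0,1])$, $m^+\in\{0,\ldots,n\}$. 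This breaks your verification of the hypotheses of Theorem \ref{t:DS10} in two places. First, for $\bmu_n(t)$ to be an EVI solution for $\phi_n=E_n$ on all of $M([0,1])$ one would need the inequality in \eqref{fd:EVIn:mun} also against comparison measures $\bnu$ with a different particle split; this is neither contained in \eqref{fd:EVIn:mun} nor derivable from the convexity argument on $\Omega_n$ (which keeps the sign vector $b^n$ fixed), and nothing in the discrete flow, which conserves $n^\pm$, supplies such a cross-fibre inequality. Likewise, $\lambda$-convexity of $E_n$ or of $E$ along $M([0,1])$-geodesics joining \emph{different} fibres is not provided by Proposition \ref{p:EnE:lcv} or Lemma \ref{l:MbW}. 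Second, even if you keep the mass-constrained discrete functionals, their $\Gamma$-limit is not $E$ but $E$ plus the indicator of $\{\mu^+([0,1])=\sigma^+\}$: any sequence with bounded constrained energy has positive mass $n^+/n\to\sigma^+$, so the $\Gamma$-limit is $+\infty$ off $M_{\sigma^+}([0,1])$, and establishing the limsup inequality on $M_{\sigma^+}([0,1])$ requires the ``prescribed $n^\pm$'' recovery sequence of Theorem \ref{thm:Gconv:L2}, a step absent from your argument. This is precisely why the paper works with $\phi_n:=E_n+\chi_{\{\mu^+([0,1])=n^+/n\}}$ and $\phi:=E+\chi_{\{\mu^+([0,1])=\sigma^+\}}$: with these choices the comparison against measures outside the fibre is vacuous, $\lambda$-convexity on $M([0,1])$ reduces via Lemma \ref{l:MbW}.\eqref{l:MbW:geods} to convexity within the fibre, and the stability theorem delivers exactly \eqref{fd:EVIn:mu} rather than an unconstrained EVI that the limit curve need not satisfy.

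A smaller slip: to obtain a single convexity parameter you must \emph{lower}, not raise, the constants. A $\lambda_n$-convex functional is $\lambda'$-convex for every $\lambda'\le\lambda_n$, so the common constant should be $\inf_n\lambda_n$ (or, as in the paper, $\lambda-1$ for all $n$ large, using $\lambda_n\to\lambda$); taking $\lambda':=\sup_n\lambda_n$ does not make every $\phi_n$ $\lambda'$-convex, and the same monotonicity (an EVI solution for $\lambda_n$ is one for any smaller constant) is what lets you keep the solutions. This is easily repaired; the constraint issue above is the part that needs a real fix.
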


\begin{proof}
We first prove existence and uniqueness of the solution to \eqref{fd:EVIn:mun} with initial condition $\bmu_n^\circ$ by showing that Theorem \ref{t:AGS404} applies. Lemma \ref{l:MbW} implies that the space $(M_{n^+/n} ([0,1]), \bW)$ satisfies the conditions of Theorem \ref{t:AGS404}, and Proposition \ref{p:EnE:lcv} guarantees the required $\lambda_n$-convexity of $E_n$. Since $D(E_n)$ is finite whenever all particles are at different positions, it holds that $\bmu_n^\circ \in \overline{ D(E_n) }$. Hence, Theorem \ref{t:AGS404} guarantees that \eqref{fd:EVIn:mun} attains a unique solution $\bmu_n(t)$ with initial condition $\bmu_n^\circ$.

Similarly, we prove existence and uniqueness of the solution to \eqref{fd:EVIn:mu} with initial condition $\bmu^\circ$. Again, Lemma \ref{l:MbW} implies that the space $(M_{\sigma^+} ([0,1]), \bW)$ satisfies the conditions of Theorem \ref{t:AGS404}, and Proposition \ref{p:EnE:lcv} guarantees the required $\lambda$-convexity of $E$. Since
\begin{equation*}
  D(E) \supset \{ \bmu \in M_{\sigma^+} ([0,1]) : \mu^\pm \in L^\infty(0,1) \}
\end{equation*}
it holds that $\overline{ D(E) } = M_{\sigma^+} ([0,1]) \ni \bmu^\circ$. Hence, Theorem \ref{t:AGS404} guarantees that \eqref{fd:EVIn:mu} attains a unique solution $\bmu(t)$ with initial condition $\bmu^\circ$.

Next we prepare for applying Theorem \ref{t:DS10}. First, we rewrite the evolution variational inequalities \eqref{fd:EVIn:mun} and \eqref{fd:EVIn:mu} in terms of the $n$-independent space $(M ([0,1]), \bW)$, which, by Lemma \ref{l:MbW}, satisfies the condition of Theorem \ref{t:DS10}. To this aim, we set
\begin{equation*}
  \phi_n (\bmu) := E_n(\bmu) + \chi_{\{ \mu^+ = n^+/n \}}
  \quad \text{and} \quad
  \phi (\bmu) := E(\bmu) + \chi_{\{ \mu^+ = \sigma^+ \}},
\end{equation*}
where the characteristic function is given by
\begin{equation*}
  \chi_A := \left\{ \begin{aligned}
    &0
    &&\text{if } A \text{ holds,} \\
    &\infty
    &&\text{otherwise}. 
  \end{aligned} \right.
\end{equation*}
It is obvious that $\bmu_n (t)$ satisfies
\begin{multline} \label{fp:EVIn:mun}
  \frac12 \frac d{dt} \bW^2 (\bmu_n (t), \bnu) + \alpha_n^3 \tilde \lambda \bW^2 (\bmu_n (t), \bnu) + \phi_n (\bmu_n (t))
  \leq \phi_n(\bnu), \\
  \text{for a.e.~} t > 0 \text{ and all } \bnu \in M ([0,1]).
\end{multline}
However, $(M([0,1]), \bW)$ may not satisfy the conditions of Theorem \ref{t:AGS404}, and thus we use a different argument to show that \eqref{fp:EVIn:mun} has a unique solution.
Let $\tilde \bmu_n \in AC_{\text{loc}} (0, \infty; M ([0,1]))$ satisfy \eqref{fp:EVIn:mun} with initial condition $\bmu_n^\circ$. Then $\phi_n (\tilde \bmu_n (t)) < \infty$ for any $t > 0$, and thus $\tilde \bmu_n (t) \in  M_{n^+/n} ([0,1])$ for a.e.~$t > 0$. Hence, $\tilde \bmu_n$ is a solution to \eqref{fd:EVIn:mun}. Since \eqref{fd:EVIn:mun} has a unique solution, $\tilde \bmu_n = \bmu_n$. An analogous argument show that $\bmu (t)$ satisfies
\begin{equation} \label{fp:EVI:mu}
  \frac12 \frac d{dt} \bW^2 (\bmu, \bnu) + \alpha^3 \tilde \lambda \bW^2 (\bmu, \bnu) + \phi (\bmu)
  \leq \phi(\bnu),
  \quad \text{for all } t > 0, \: \bnu \in M ([0,1]),
\end{equation}
and that \eqref{fp:EVI:mu} has no other solution in $AC_{\text{loc}} (0, \infty; M ([0,1]))$ with initial condition $\bmu^\circ$.

Second, we choose $\lambda -1$ as the convexity constant. Then, for all $n$ large enough, $\lambda_n \geq \lambda - 1$. Since the existence and uniqueness of solutions to the evolution variational inequality are invariant under lowering the value of $\lambda$, \eqref{fp:EVIn:mun} and \eqref{fp:EVI:mu} still have $\bmu_n$ and $\bmu$ respectively as their unique solutions when we replace $\lambda_n$ and $\lambda$ by $\lambda - 1$.

Third, by Lemma \ref{l:MbW}.\eqref{l:MbW:geods} and 
\begin{equation*}
  \overline{ D (\phi_n) } = M_{n^+/n} ([0,1])
  \quad \text{and} \quad
  \overline{ D (\phi) } = M_{\sigma^+} ([0,1]),
\end{equation*}
$(\lambda-1)$-convexity of $\phi_n$ and $\phi$ is implied by the $\lambda_n$- and $\lambda$-convexity of $E_n$ and $E$.

Fourth, we prove $\Gamma$-convergence of $\phi_n$ to $\phi$ in the narrow topology. To establish the liminf-inequality \eqref{for:Gconv:liminf}, it is enough to consider sequences $\bnu_n$ for which $\phi_n (\bnu_n)$ is uniformly bounded. Then, $\nu_n^+ ([0,1]) = \tfrac1n n^+ \to \sigma$ as $n \to \infty$, and thus
\begin{equation*}
  \liminf_{n \to \infty} \phi_n (\bnu_n)
  \geq \liminf_{n \to \infty} E_n (\bnu_n)
  \geq E (\bnu)
  = \phi (\bnu).
\end{equation*}
The limsup-inequality \eqref{for:Gconv:limsup} follows from Theorem \ref{thm:Gconv:L2} by taking a recovery sequence for $E_n$ which satisfies $\mu_n^+ ([0,1]) = \tfrac1n n^+$. Then
\begin{equation*}
  \limsup_{n \to \infty} \phi_n (\bnu_n)
  = \limsup_{n \to \infty} E_n (\bnu_n)
  \leq E (\bnu)
  \leq \phi (\bnu).
\end{equation*}

Taking all four conditions into account, Theorem \ref{t:DS10} applies to the solutions of \eqref{fp:EVIn:mun} and \eqref{fp:EVI:mu} with $\lambda_n$ and $\lambda$ replaced by $\lambda - 1$. Since these solutions are unique and given by $\bmu_n$ and $\bmu$, the prove of the convergence statements in Theorem \ref{t:dyncs} is complete.
\end{proof}

While Theorem \ref{t:dyncs} gives a unique characterisation of the limiting curve 
\[ \bmu \in AC_{\text{loc}} \big(0, \infty; M_{\sigma^+} ([0,1]) \big),\]
it does not provide us with an explicit PDE which $\bmu$ satisfies. Next, we characterise this PDE informally. Nonetheless, the derivation is rigorous for limited choices of $V$ and $W$, which include the setting of dislocation walls in \S \ref{s:appl}.

Let us set $\alpha_n = \alpha = 1$ for convenience. We rewrite \eqref{fd:GFn} as
\begin{equation*} %\label{fd:GFn:xi}
  \frac d{dt} x_i^\pm 
  = -(V' * \mu_n^\pm)(x_i^\pm) - (W' * \mu_n^\mp)(x_i^\pm) \mp \gamma_n^2,
  \quad i = 1, \ldots, n^\pm,
\end{equation*}
where we define $V'(0) := 0$. Given $\varphi^\pm \in C_c^\infty((0,\infty) \times (0,1))$, we compute from
\begin{equation*}
  0 = \frac1n \sum_{i=1}^{n^\pm} \int_0^T \frac d{dt} \varphi^\pm( t, x_i^\pm(t) ) \, dt
\end{equation*}
with Schochet's symmetrisation argument \cite{Schochet96} that $\bmu_n$ satisfies
\begin{multline} \label{f:wGFn}
  0 = \sum_{p = \pm} \bigg[ \int_0^\infty \int_0^1 \frac{ \partial \varphi^p }{ \partial t } \, d \mu_n^p dt - \int_0^\infty \bigg( 
        \iint_{[0,1]^2} V' (x-y) \frac{ (\varphi^p)'(x) - (\varphi^p)'(y) }2 \, d (\mu_n^p \otimes \mu_n^p )(x,y) \\
        + \iint_{[0,1]^2} W' (x-y) (\varphi^p)'(x) \, d (\mu_n^p \otimes \mu_n^{-p} )(x,y)
        + p \gamma_n^2 \int_0^1 (\varphi^p)'(x) \, d \mu_n^p (x) \bigg) \, dt \bigg],
\end{multline}
where $(\varphi^p)'$ denotes the spatial derivative. Assuming that $x V'(x)$ is bounded on $[-1,1]$ and continuous on $[-1,1] \setminus \{0\}$, and $W' \in C_b([-1,1])$, we can pass to the limit $n \to \infty$ in \eqref{f:wGFn} to obtain
\begin{multline} \label{f:wGF}
  0 = \sum_{p = \pm} \bigg[ \int_0^\infty \int_0^1 \frac{ \partial \varphi^p }{ \partial t } \, d \mu^p dt - \int_0^\infty \bigg( 
        \iint_{[0,1]^2} V' (x-y) \frac{ (\varphi^p)'(x) - (\varphi^p)'(y) }2 \, d (\mu^p \otimes \mu^p )(x,y) \\
        + \iint_{[0,1]^2} W' (x-y) (\varphi^p)'(x) \, d (\mu^p \otimes \mu^{-p} )(x,y)
        + p \gamma^2 \int_0^1 (\varphi^p)'(x) \, d \mu^p (x) \bigg) \, dt \bigg],
\end{multline}
which is commonly abbreviated by \eqref{f:GF}.

\section{Example of non-convergence in the case $\tfrac 1n \alpha_n \to \alpha$}
\label{s:num}

While $\Gamma$-convergence implies convergence of global minima of $E_n$ to a global minimum of $E$, it does not imply convergence of local minima of $E_n$ to a local minimum of $E$. In this section, we show that the setting of dislocation walls exhibits such an example where local minima do not converge to an extremal point of $E$. Moreover, this example is physically meaningful \cite{DoggePeerlingsGeers15b}, and adds to other known examples which show that dislocation networks cannot be fully characterised in terms of the dislocation density alone.

We start from the numerical case studies in \cite[Fig.~5,6,7]{DoggePeerlingsGeers15b}. It considers the gradient flow of $E_n$ given by \eqref{fd:GFn} with $V$ as in \eqref{for:defn:V} and $W_1$ as in \eqref{for:defn:W}. The parameters are $n^+ = n^-$, $\gamma_n = 0$ and $\alpha_n = C \sqrt n$ for some fixed $C > 0$. The initial state is fully separated \eqref{f:full:sep}.
The question in this case study is whether the long-time behaviour exhibits mixing. The conclusion from the numerical computations is that for small values of $n$, full separation is conserved in time, while for large values of $n$, mixing occurs (i.e., $\mathcal O (n^2)$ couples $(x_i^+, x_j^-)$ swap position). Mixing is also observed in \cite{DoggePeerlingsGeers15b} in their postulated $\bW$-gradient flow of $E$ given by 
\begin{equation} \label{f:GF:L3}
  \partial_t \rho^\pm 
  = \bigg[ \int_\R V \bigg] \big( \rho^\pm (\rho^\pm)' \big)' + \bigg[ \int_\R W \bigg] \big( \rho^\pm (\rho^\mp)' \big)'.
\end{equation}
We call \eqref{f:GF:L3} ``the $\bW$-gradient flow of $E$" because it is given by the formal formula (see \cite[(11.1.6)]{AmbrosioGigliSavare08}) given by
\begin{equation*}
  \partial_t \rho^\pm
  = \div \Big( \rho^\pm \nabla \frac{\delta E(\brho)}{\delta \rho^\pm} \Big),
\end{equation*}
where $\delta/\delta \rho^\pm$ denotes the $L^2$-gradient of $E$.

\subsection{Numerical observations}
\label{ss:num}

We extend the aforementioned case study in \cite{DoggePeerlingsGeers15b} by varying $\alpha_n$. We set $n^+ = n^-$, $\gamma_n = 0$, and take the equispaced initial condition
\begin{equation} \label{fd:ICnum}
  x_{\circ,i}^+ = \frac{i-1}n,
  \quad x_{\circ,i}^- = \frac12 + \frac in,
  \quad i = 1, \ldots, n^+,
\end{equation}
which is fully separated \eqref{f:full:sep}. Figure \ref{fg:al} shows the gradient flow trajectories for $n = 2^6$. These trajectories are computed with the `ode15s' solver \cite{ShampineReichelt97} in MATLAB, which is designed for stiff systems and has variable time steps. The variable time steps allow to compute the long-time behaviour of $x^n (t)$ (we take $T = 10^{10}$ as the end time) without significantly increasing the computation time.

For large values of $t$, we observe from Figure \ref{fg:al} that the case $\alpha_n = 2 n$ exhibits full separation. Moreover, the particles seem to spread out evenly, which corresponds to the continuum state $\brhosep = \big( \mathcal L_{(0, \, 1/2)} , \mathcal L_{(1/2, \, 1)} \big)$ as defined in \eqref{fd:brhosep}.

\begin{figure}[h]
\centering
\begin{tikzpicture}[scale=1.2]
\node (label) at (0,0){\includegraphics[height=4.8cm]{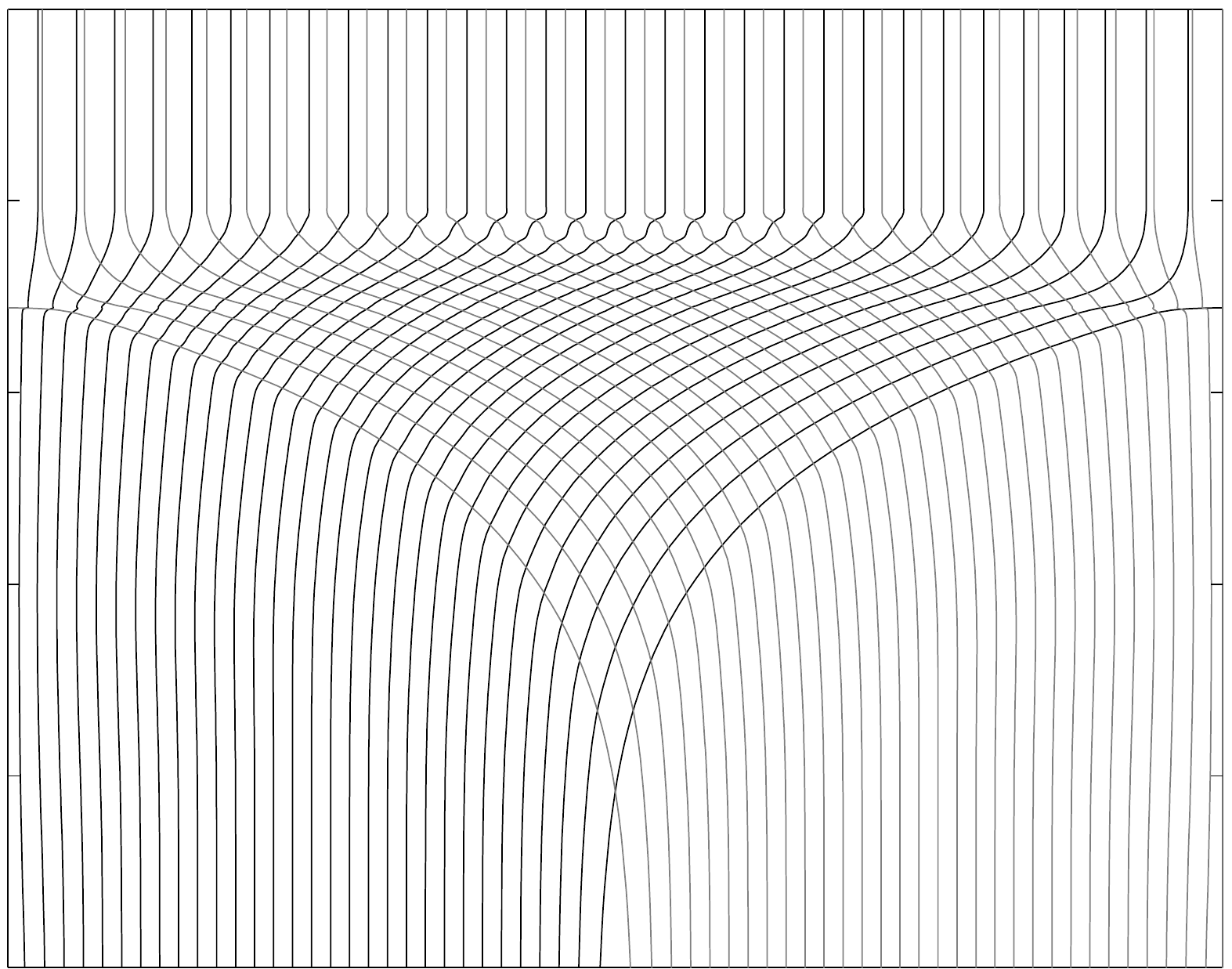}};
\draw (-2.5,-2) node[anchor = north] {$0$};
\draw (2.5,-2) node[anchor = north] {$1$};
\draw (-1.25,-2) node[anchor = north] {$x_i^+$};
\draw[gray] (1.25,-2) node[anchor = north] {$x_i^-$};
\draw (-2.5,-2) node[anchor = east] {$10^{-4}$};
%\draw (-2.5,-1.2) node[anchor = east] {$10^{-3}$};
\draw (-2.5,-0.4) node[anchor = east] {$10^{-2}$};
%\draw (-2.5, 0.4) node[anchor = east] {$10^{-1}$};
\draw (-2.5, 1.2) node[anchor = east] {$1$};
\draw (-2.5, 1.95) node[anchor = east] {$t$};
\draw (0, 2) node[anchor = south] {$\alpha_n = \tfrac12 n$};
\begin{scope}[shift={(6.5,0)},scale=1]
  \node (label) at (0,0){\includegraphics[height=4.8cm]{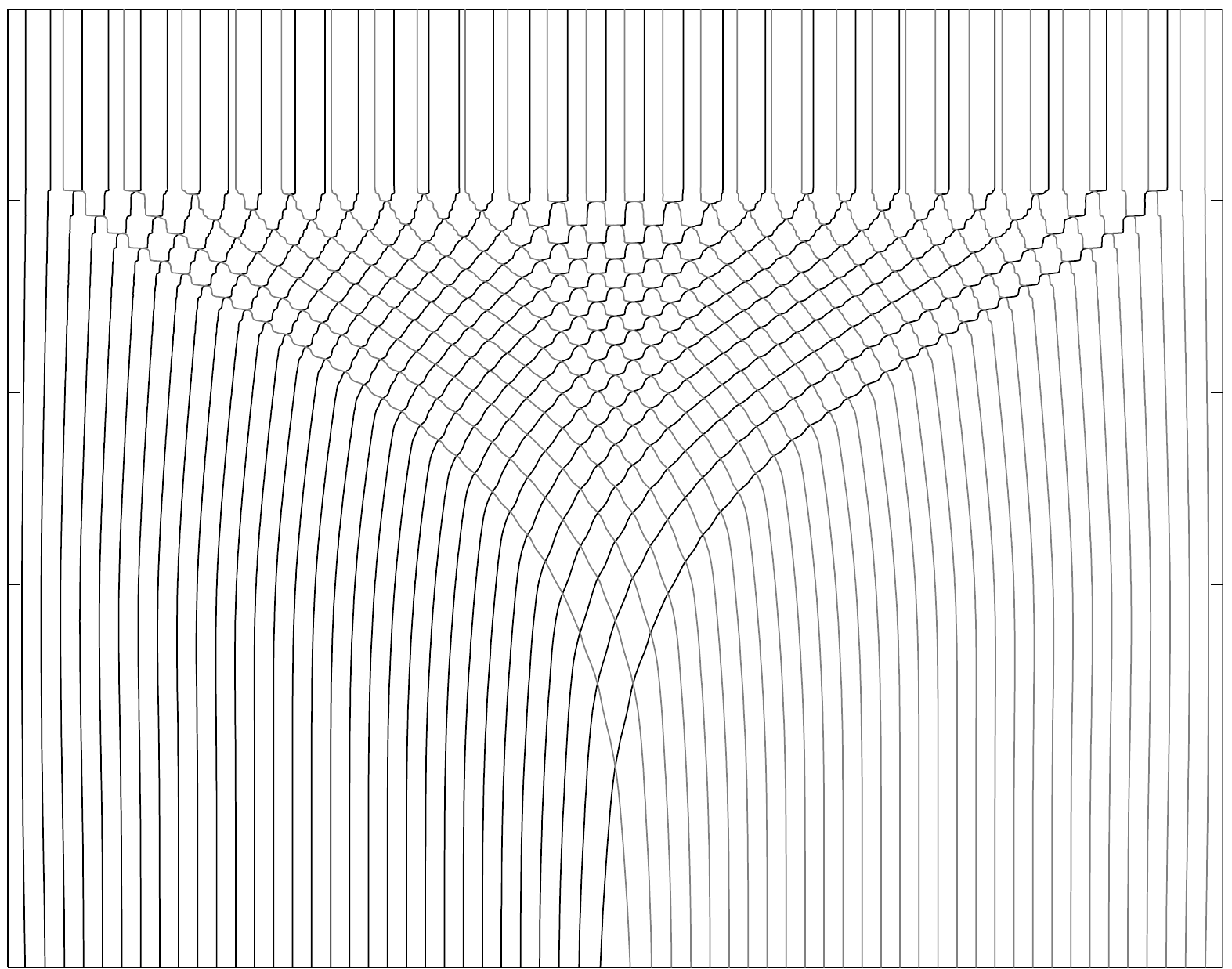}};
\draw (-2.5,-2) node[anchor = north] {$0$};
\draw (2.5,-2) node[anchor = north] {$1$};
\draw (-1.25,-2) node[anchor = north] {$x_i^+$};
\draw[gray] (1.25,-2) node[anchor = north] {$x_i^-$};
\draw (-2.5,-2) node[anchor = east] {$10^{-4}$};
%\draw (-2.5,-1.2) node[anchor = east] {$10^{-3}$};
\draw (-2.5,-0.4) node[anchor = east] {$10^{-2}$};
%\draw (-2.5, 0.4) node[anchor = east] {$10^{-1}$};
\draw (-2.5, 1.2) node[anchor = east] {$1$};
\draw (-2.5, 1.95) node[anchor = east] {$t$};
\draw (0, 2) node[anchor = south] {$\alpha_n = \tfrac7{10} n$};
\end{scope}
\begin{scope}[shift={(0,-5)},scale=1]
  \node (label) at (0,0){\includegraphics[height=4.8cm]{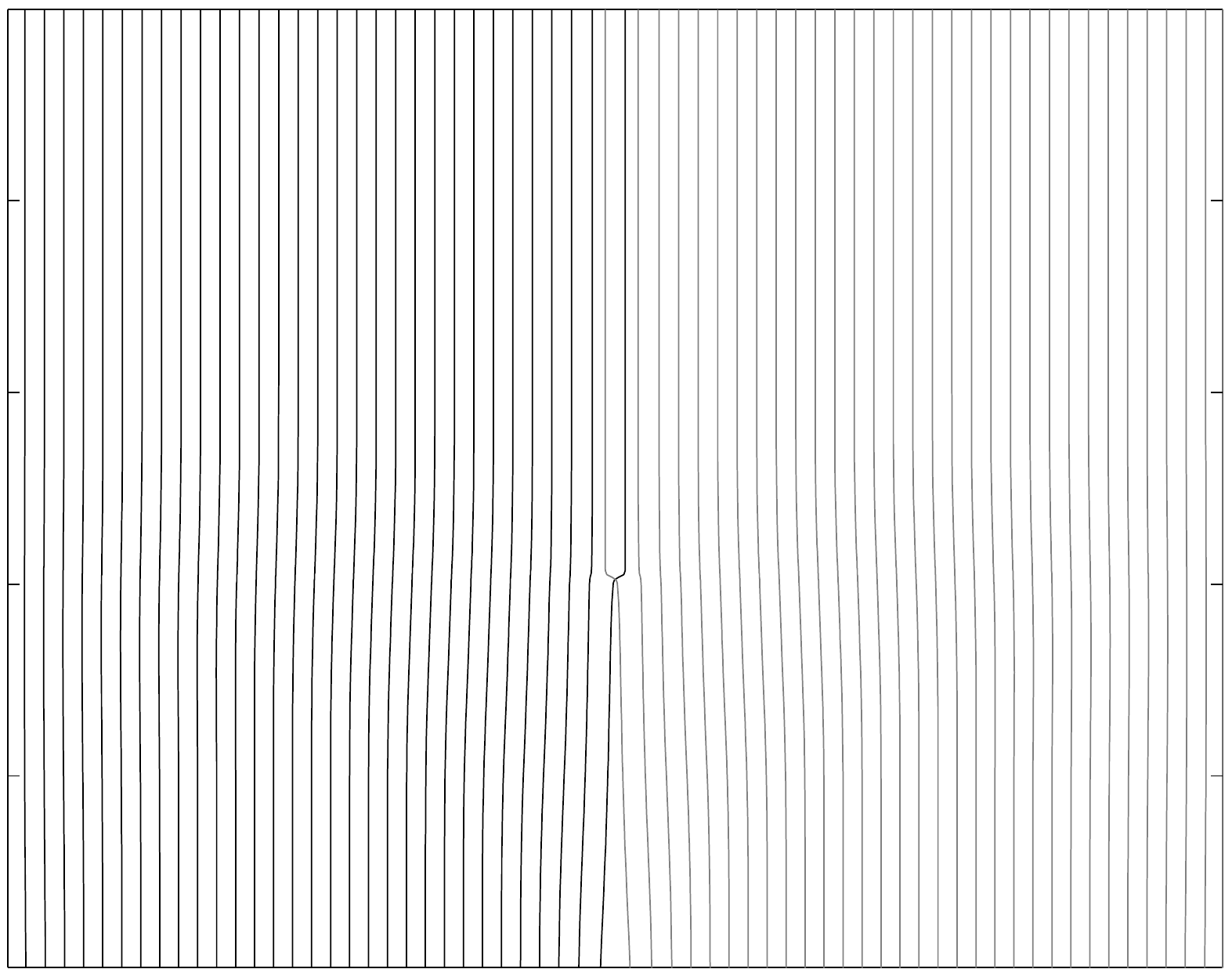}};
\draw (-2.5,-2) node[anchor = north] {$0$};
\draw (2.5,-2) node[anchor = north] {$1$};
\draw (-1.25,-2) node[anchor = north] {$x_i^+$};
\draw[gray] (1.25,-2) node[anchor = north] {$x_i^-$};
\draw (-2.5,-2) node[anchor = east] {$10^{-4}$};
%\draw (-2.5,-1.2) node[anchor = east] {$10^{-3}$};
\draw (-2.5,-0.4) node[anchor = east] {$10^{-2}$};
%\draw (-2.5, 0.4) node[anchor = east] {$10^{-1}$};
\draw (-2.5, 1.2) node[anchor = east] {$1$};
\draw (-2.5, 1.95) node[anchor = east] {$t$};
\draw (0, 2) node[anchor = south] {$\alpha_n = n$};
\end{scope}
\begin{scope}[shift={(6.5,-5)},scale=1]
  \node (label) at (0,0){\includegraphics[height=4.8cm]{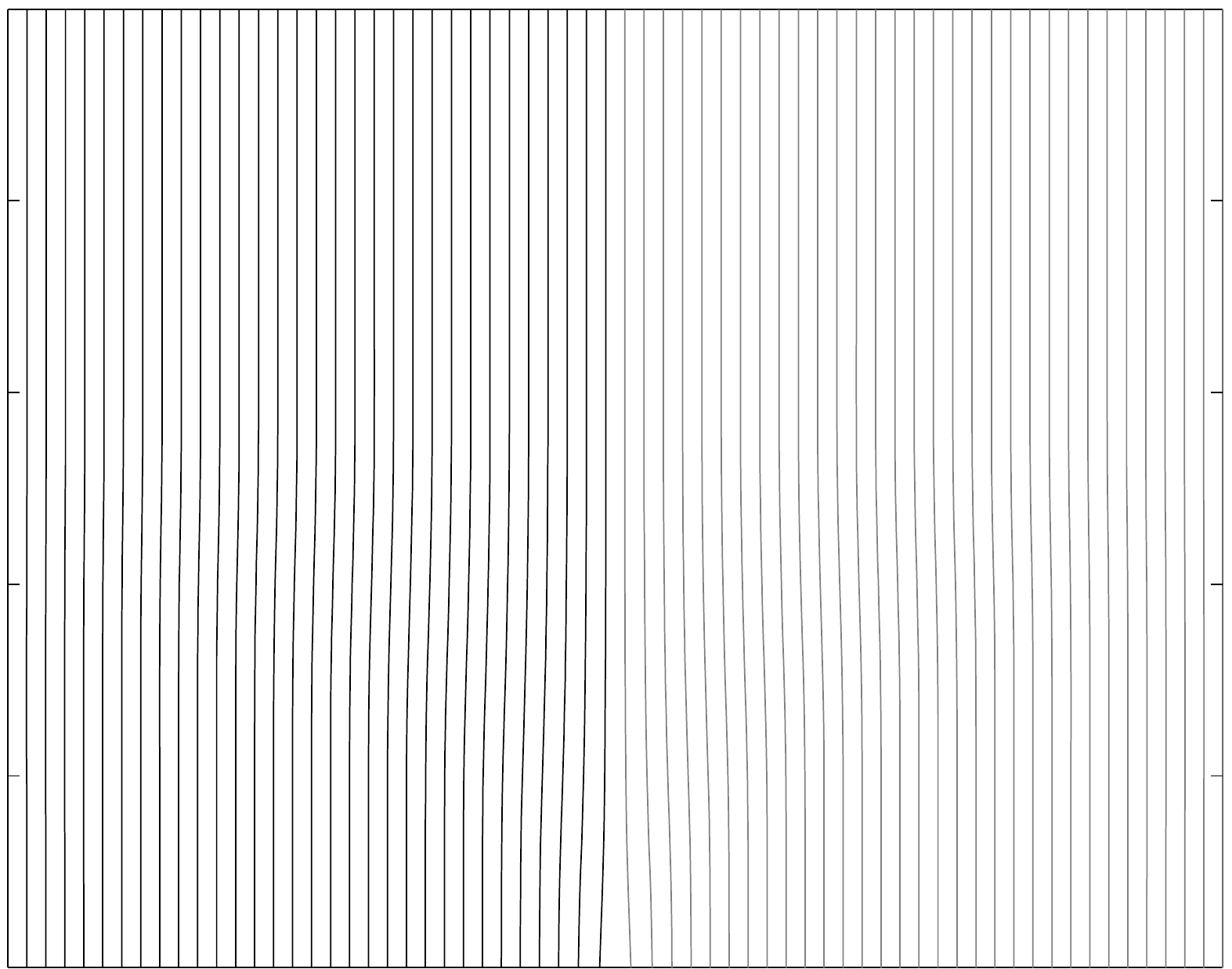}};
\draw (-2.5,-2) node[anchor = north] {$0$};
\draw (2.5,-2) node[anchor = north] {$1$};
\draw (-1.25,-2) node[anchor = north] {$x_i^+$};
\draw[gray] (1.25,-2) node[anchor = north] {$x_i^-$};
\draw (-2.5,-2) node[anchor = east] {$10^{-4}$};
%\draw (-2.5,-1.2) node[anchor = east] {$10^{-3}$};
\draw (-2.5,-0.4) node[anchor = east] {$10^{-2}$};
%\draw (-2.5, 0.4) node[anchor = east] {$10^{-1}$};
\draw (-2.5, 1.2) node[anchor = east] {$1$};
\draw (-2.5, 1.95) node[anchor = east] {$t$};
\draw (0, 2) node[anchor = south] {$\alpha_n = 2 n$};
\end{scope}
\end{tikzpicture}
\caption{Trajectories of the solutions $x_i^\pm (t)$ to the gradient flow \eqref{fd:GFn} with $n = 2^6$ and initial condition given by \eqref{fd:ICnum}. For increasing values of $\alpha_n$, less particles swap position as time passes. For $t > 10$ there is no visible time dependence on the trajectories.}
\label{fg:al}
\end{figure}

On the other end of the spectrum, the case $\alpha_n = \tfrac12 n$ exhibits complete mixing at $t = T$, i.e.,
\begin{equation} \label{f:compl:mix}
  0 = x_1^+ \leq x_1^- < x_2^+ < x_2^-  < x_3^+ < \ldots < x_{n^- - 1}^- < x_{n^+}^+ \leq x_{n^-}^- = 1.
\end{equation}
Moreover, the particles seem to spread out evenly, which corresponds to the continuum state
\begin{equation*} %\label{fd:brhomix}
  \brhomix = \big( \tfrac12 \mathcal L_{(0, 1)} , \tfrac12 \mathcal L_{(0, 1)} \big).
\end{equation*}

In the intermediate case $\alpha_n = n$, only the middle two particles swap position. The profile at $t = T$ still corresponds to the continuum state $\brhosep$. When $\alpha_n = \tfrac7{10} n$, many particles swap positions, but not all, and thus \eqref{f:compl:mix} is not satisfied at $t = T$. Indeed, the average number of positive particles near the left barrier exceeds the average number of negative particles, which corresponds to a different continuum state than $\brhomix$.

\medskip

Next we focus on the case $\alpha_n = 2n$ at $t = T$, and check numerically whether full separation \eqref{f:full:sep} occurs for several values of $n$. To this aim, we compute 
\begin{equation} \label{f:dnpm}
   d_n^{+-} := \alpha_n \big( x_{1}^- - x_{n^+}^+ \big)
   \quad \text{and} \quad
   \bW (\bmu_n, \brhosep).
\end{equation} 
Table \ref{tb:al2} shows that $d_n^{+-} > 0$. Since the logarithmic singularity of $V$ keeps particles of the same type ordered, we conclude full separation for all values of $n$ in Table \ref{tb:al2}. Moreover, to test whether $\bW (\bmu_n, \brhosep) \to 0$ as $n \to \infty$, we compute the decay rate $q$ in $\bW (\bmu_n, \brhosep) \sim C n^{-q}$ by
\begin{equation} \label{f:qn}
  q_n = \frac{ \log \big( \bW (\bmu_n, \brhosep) \big) - \log \big( \bW (\bmu_{2n}, \brhosep) \big) }{\log 2}.
\end{equation}
Table \ref{tb:al2} shows that $q_n \approx 1$. Consequently, we expect that $\bmu_n \weakto \brhosep$ as $n \to \infty$. We discuss the meaning of the values of $\overline{\mathsf d}_n^\pm$ after proving in \S \ref{ss:al:big} that for $\alpha_n = \alpha n$ with $\alpha$ large enough, $E_n$ attains a fully separated local minimiser for all $n$.

\begin{table}[h]            
\centering                   
\begin{tabular}{cccc}   
\toprule         
$n$ & $d_n^{+-}$ & $q_n$ & $\overline{\mathsf d}_n^\pm$ \\
\midrule              
$2^4$ & $2.102$ & $1.022$ & $1.069$ \\                        
$2^5$ & $2.026$ & $1.010$ & $1.033$ \\                        
$2^6$ & $1.989$ & $1.005$ & $1.017$ \\                        
$2^7$ & $1.971$ & $1.002$ & $1.008$ \\                        
$2^8$ & $1.962$ & $1.001$ & $1.004$ \\                        
$2^9$ & $1.959$ & $1.000$ & $1.002$ \\                        
$2^{10}$ & $1.955$ & $1.001$ & $1.001$ \\                       
$2^{11}$ & $1.954$ & $1.001$ & $1.001$ \\                       
$2^{12}$ & $1.953$ & $-$ & $1.000$ \\
\bottomrule                       
\end{tabular}                
\caption{Values computed from \eqref{f:dnpm}, \eqref{f:qn} and \eqref{fd:dns} for $x^n(T)$, which is the solution at $t = T$ of the gradient flow \eqref{fd:GFn} with $\alpha_n = 2 n$ and initial condition \eqref{fd:ICnum}. $d_n^{+-} > 1.9$ means that $x^n (T)$ is fully separated \eqref{f:full:sep}, $q_n \approx 1$ suggests that $\bW( \bmu_n (T), \brhosep ) \leq C \tfrac1n$, and $\overline{\mathsf d}_n^\pm \geq 1$ means that neighbouring particles of the same type are separated by a distance of at least $\tfrac1n$ if they are located in the interval $(\tfrac14, \tfrac34)$.}     
\label{tb:al2}   
\end{table}

We repeat similar simulations for $t = T$, $\alpha_n = \tfrac12 n$ and $n = 2^4, 2^5, \ldots, 2^9$ to inspect whether complete mixing \eqref{f:compl:mix} depends on $n$. The reason for the relatively small values of $n$ is that during a swapping event of two particles, the force acting on both particles is of the order of $n$, which requires a small time step to resolve. Moreover, from Figure \ref{fg:al} we expect $\mathcal O (n^2)$ such swapping events to occur. 

For all experiments in Table \ref{tb:al05} (including $n = 2^9$) we have verified that $x^n$ is completely mixed \eqref{f:compl:mix}. Moreover, similar to \eqref{f:qn}, we compute
\begin{equation} \label{f:qnt}
  \tilde q_n = \frac{ \log \big( \bW (\bmu_n, \brhomix) \big) - \log \big( \bW (\bmu_{2n}, \brhomix) \big) }{\log 2},
\end{equation}
and speculate from Table \ref{tb:al05} that $\bmu_n \to \brhomix$ as $n \to \infty$. However, the values for $n$ remain relatively small, and we have not found a theoretical lower bound on $\alpha$ such that complete mixing occurs for $\alpha_n = \alpha n$ for all $n$ large enough.

\begin{table}[h]         
\centering                
\begin{tabular}{cc}    
\toprule
$n$ & $\tilde q_n$ \\
\midrule                    
$2^4$ & $0.940$ \\                                  
$2^5$ & $0.944$ \\                                  
$2^6$ & $0.965$ \\                                  
$2^7$ & $0.981$ \\                                  
$2^8$ & $0.990$ \\              
\bottomrule                    
\end{tabular}             
\caption{Similar to Table \ref{tb:al2}, but here with $\alpha_n = \tfrac12 n$ and $\tilde q_n$ as in \eqref{f:qnt}. $\tilde q_n \approx 1$ suggests that $\bmu_n (T)$ converges to $\brhomix$ as $n \to \infty$.}  
\label{tb:al05}
\end{table} 

To get insight in the macroscopic dynamics leading to the completely mixed state, we illustrate in Figure \ref{fg:n28} a few time slices of the piecewise-constant discrete density $\rho_n^\pm(t, x)$ given by
\begin{equation*}
  \rho_n^\pm (t, x) := \frac1{n \big( x_{i+1}^\pm(t) - x_i^\pm (t) \big) },
  \quad \text{with $i$ such that } x_i^\pm(t) < x \leq x_{i+1}^\pm(t).
\end{equation*}
The plots in Figure \ref{fg:n28} are the linear interpolations of $\rho_n^\pm$ evaluated at the midpoints, i.e.,
\begin{equation} \label{f:rhon:LI}
  \big( m_i^\pm (t), 
  \, \rho_n^\pm (t, m_i^\pm (t) ) \big)_{i=1}^{n^\pm - 1},
  \quad \text{where }
  m_i^\pm (t) := \tfrac12 \big( x_{i+1}^\pm(t) + x_i^\pm (t) \big).
\end{equation}
We observe that $(\rho_n^+ + \rho_n^-)(t)$ is not constant as a function of $x$ during the evolution, while $(\rho_n^+ + \rho_n^-)(0)$ and $(\rho_n^+ + \rho_n^-)(T)$ appear to be constant in $x$. This is in line with a locally mixed state having lower energy than a locally separated state (because of $V > W$), which allows for a denser packing of particles (as $V$ and $W$ are decreasing on $(0, \infty)$). Moreover, the spatial change from local separation to mixing is characterised by a spatial jump-discontinuity in $\rho_n^\pm (t)$. Figure \ref{fg:al} suggests that the location of this 'shock' propagates in time to the boundary, which it meets at some $t \in (\tfrac1{10}, \tfrac13)$. We expect the wiggles in the profiles of $\rho_n^\pm (t)$ close to the shock to be caused by frustration due to the difference in the local density of the positive and negative particles. Finally, we observe small boundary-layer effects close to the barriers at $x \in \{0, 1\}$. We do not study these effects here, and refer to \cite{HallHudsonVanMeurs16ArXiv} for analysis and numerics of such boundary layers at equilibrium.

\begin{figure}[h]
\centering
\begin{tikzpicture}[scale=2]
\node (label) at (0,0){\includegraphics[height=8cm]{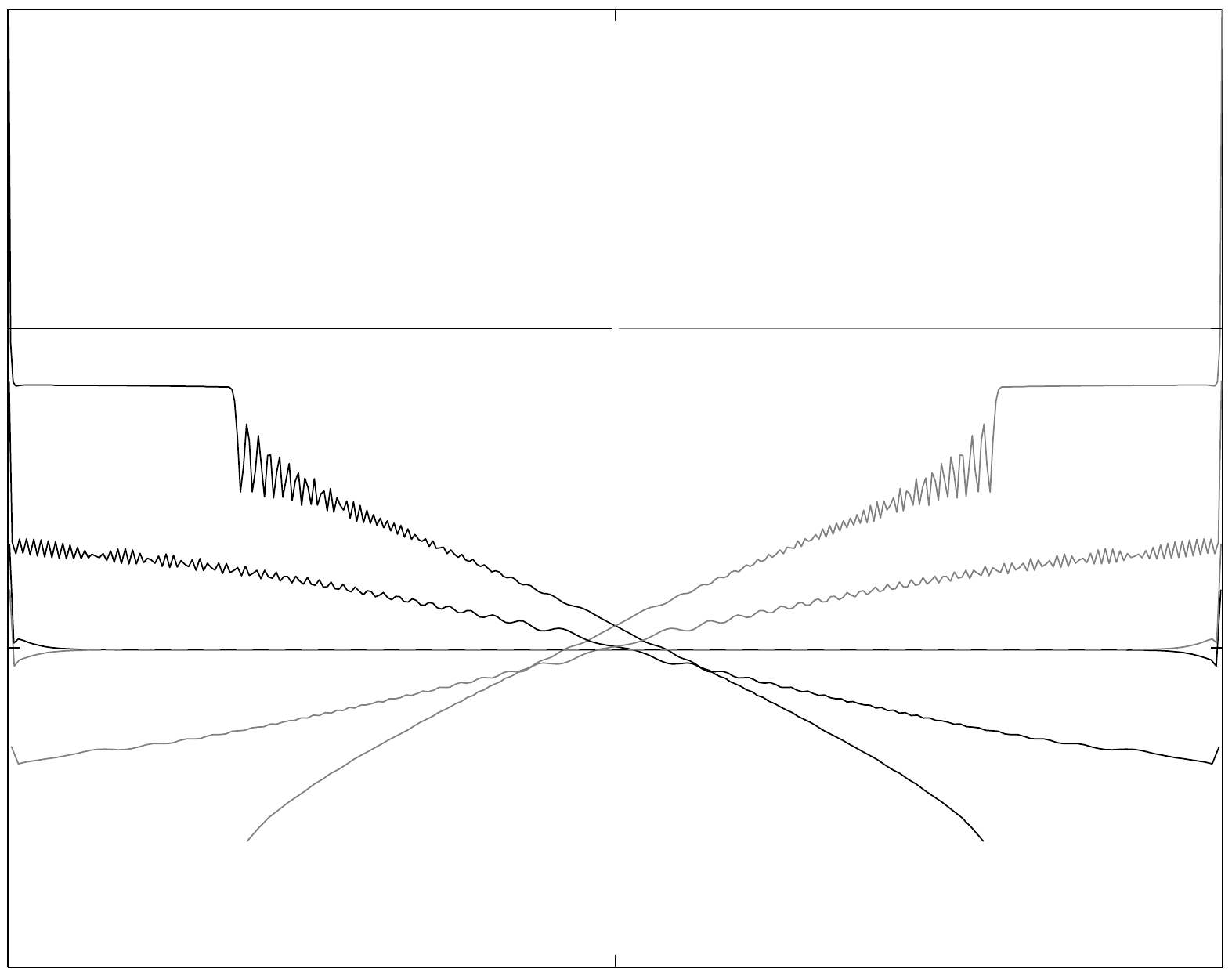}};
\draw (-2.5,-2) node[anchor = north] {$0$};
\draw (0,-2) node[anchor = north] {$\tfrac12$};
\draw (1.25,-2) node[anchor = north] {$x$};
\draw (2.5,-2) node[anchor = north] {$1$};
\draw (-2.5,-2) node[anchor = east] {$0$};
\draw (-2.5,-0.66) node[anchor = east] {$\tfrac12$};
\draw (-2.5, 0.66) node[anchor = east] {$1$};
\draw (-2.5, 1.33) node[anchor = east] {$\rho_n^\pm$};
\draw (-2.5, 1.95) node[anchor = east] {$\tfrac32$};
\draw (-1, 0.66) node[anchor = south] {$t = 0$};
\draw (-1, 0.2) node {$t = \tfrac1{10}$};
\draw (-2.5, -0.1) node[anchor = west] {$t = \tfrac13$};
\draw (-2.5, -0.66) node[anchor = south west] {$t = 10^{10}$};
\end{tikzpicture}
\caption{Several time slices of the discrete density $\rho_n^+$ (black) and $\rho_n^-$ (gray) (see \eqref{f:rhon:LI}) to the gradient flow \eqref{fd:GFn} with $n = 2^9$, $\alpha_n = \tfrac12 n$, and initial condition given by \eqref{fd:ICnum}. The profiles of $\rho_n^\pm(t)$ show typical stages of the evolution from $\brhosep$ to $\brhomix$ on the discrete level.}
\label{fg:n28}
\end{figure}

\subsection{Local minima fro $\alpha_n = \alpha n$ with $\alpha$ large}
\label{ss:al:big}

Proposition \ref{pp:sep} gives a quantitative upper bound on the asymptotic behaviour of $\alpha_n$ for which $E_n$ has a local minimum which is fully separated. For the sake of its proof and later use, we set 
\begin{equation} \label{fd:rast}
  r^* := \argmax_{(0,\infty)} | W'(r)|.
\end{equation}
We also rely on the following property of $V$, which we prove in Appendix \ref{app:VW}:
\begin{equation} \label{f:Vprop}
  \forall \, C > 0 \
  \exists \, \alpha_0 > 0 :
  \sup_{\alpha > \alpha_0} \big[ C \Veff (\alpha / C) - V(\alpha x) \big] > 0 
  \: \Longrightarrow \: x \geq \frac1{4C}.
\end{equation}

\begin{proof}[Proof of Proposition \ref{pp:sep}] %See a.9.133 below. It relies on a lower bound on $n(x_{i+1}^\pm - x_i^\pm)$, such that we can bound the force on the extremal particles.
For any $n^\pm \geq 2$ and $\alpha > r^*/n$, we define the open, convex set
\begin{equation*}
   \Omega_n^* := \big\{ (x^{n,+}, x^{n,-}) \in \Omega_n^+ \times \Omega_n^- :
   x_1^+ = 0, \: x_{n^-}^- = 1, \: d_n^{+-} > r^* \big\},
\end{equation*}
where $\Omega_n^\pm$, $d_n^{+-}$ and $r^*$ are defined in \eqref{for:defn:Omeganpm}, \eqref{f:dnpm} and \eqref{fd:rast} respectively. Since $\alpha > r^*/n$, $\Omega_n^*$ is not empty. We further note that all elements of $\Omega_n^*$ satisfy the full separation condition \eqref{f:full:sep}. We also observe that in the expression for $E_n(x^n)$ as in \eqref{for:defn:En}, the argument of $W$ remains smaller than $-r^*$ for all $x^n \in \Omega_n^*$. Since $W$ is strictly convex on $(- \infty, -r^*)$, and $V$ is strictly convex on $(0, \infty)$, we conclude that $E_n$ is strictly convex on $\Omega_n^*$. Hence, $E_n$ has a unique minimiser in $\overline \Omega_n^*$, which we set as $\overline x^n$. It remains to prove that $\overline x^n \in \Omega_n^*$, which by the strict convexity of $E_n$ on $\Omega_n^*$ implies that $\overline x^n$ is a local minimiser of $E_n$ on the full state space $\Omega_n^+ \times \Omega_n^-$.

We need several \textit{a priori} estimates on $\overline x^n$ to prove that $\overline x^n \in \Omega_n^*$. We start by showing that we can add an extra negative particle `inside' $\overline x^{n,-}$ without increasing the total energy by too much, i.e.,
\begin{multline} \label{fp:z}
  \exists \, C > 0, \, \alpha_0 > \tfrac12 r^* \
  \forall \, n^\pm \geq 2, \, \alpha > \alpha_0 \
  \exists \, z \in ( \overline x_1^-, \overline x_{n^-}^- ) : \\
  \frac \alpha n \bigg[ \sum_{ j = 1 }^{n^+} W \big( \alpha n (z - \overline x_j^+) \big) + \sum_{ j = 1 }^{n^-} V \big( \alpha n (z - \overline x_j^-) \big) \bigg] \leq \frac Cn \alpha \Veff \Big( \frac \alpha C \Big).
\end{multline}
The restriction to negative particles is not restrictive because of the symmetry of $E_n$ (see \eqref{f:En:sym}). To prove \eqref{fp:z}, we take $n^\pm \geq 2$ and $\alpha > \alpha_0$ arbitrary. We choose $z$ as one of the midpoints of $\overline x^{n, -}$, which we select by a similar argument as in Step 2 of the proof of Lemma \ref{lem:props:eita}.\eqref{lem:props:eita:cont:conv}. On the one hand, the argument is easier since we only add one particle, but on the other hand, we need uniformity of $C$ with respect to $\alpha$. The sum in \eqref{fp:z} corresponds to $\Sigma_2$ in \eqref{for:pf:lem:props:eita:47}. We construct the index sets $J_1^-$ and $J_2^-$ in a similar fashion. To establish estimates corresponding to those in \eqref{for:pf:lem:props:eita:49} and \eqref{for:pf:lem:props:eita:5}, we set $d_*^-$ as in \eqref{for:pf:lem:props:eita:48}. Since $\alpha_0 > \tfrac12 r^*$, we obtain $x_\circ^n \in \Omega_n^*$ (see \eqref{fd:ICnum}). We estimate 
\begin{equation} \label{fp:Enal:est}
  \frac\alpha5 V (\alpha n d_*^-)
  < E_n ( \overline x^n )
  \leq E_n ( x_\circ^n )
  < \frac \alpha n \sum_{i=1}^n \sum_{j=1}^{i-1} V \big( \alpha n ( x_{\circ, i} - x_{\circ, j} ) \big)
  < \alpha \sum_{i=1}^n V \big( \alpha i \big)
  < \alpha \Veff (\alpha).
\end{equation}
Applying \eqref{f:Vprop} with $C = 5$ and taking $\alpha_0$ accordingly, we conclude that
\begin{equation} \label{fp:and}
  n d_*^- \geq \tfrac1{20}, 
\end{equation}
and construct the index set $J_1^-$ as in \eqref{for:pf:lem:props:eita:7} with respect to this estimate. Regarding the index set $J_2^-$, we obtain from \eqref{fp:Enal:est} that the corresponding property of \eqref{for:pf:lem:props:eita:8} reads
\begin{multline*} %\label{fp:J2:num}
  \sum_{ j = 1 }^{n^+} W \big( \alpha n (\overline x_\ell^- - \overline x_j^+) \big) +
  \sum_{j = 1}^{\ell - 1} V \big( \alpha n (\overline x_\ell^- - \overline x_j^- ) \big)
  + \sum_{j = \ell + 2}^{\tilde n} V \big( \alpha n (\overline x_j^- - \overline x_{\ell+1}^- ) \big) \leq 5 \Veff (\alpha) \\
  \text{for all } \ell \in J_2^-.
\end{multline*}
Then, choosing any $i \in J_1^- \cap J_2^-$ and setting the midpoint $z := \tfrac12 (\overline x_{i+1}^- + \overline x_i^-)$, we estimate
\begin{align*}
  &\frac \alpha n \bigg[ \sum_{ j = 1 }^{n^+} W \big( \alpha n (z - \overline x_j^+) \big) + \sum_{ j = 1 }^{n^-} V \big( \alpha n (z - \overline x_j^-) \big) \bigg] \\
  &\leq \frac \alpha n \bigg[ \sum_{ j = 1 }^{n^+} W \big( \alpha n (\overline x_i^- - \overline x_j^+) \big) + \sum_{ j = 1 }^{i-1} V \big( \alpha n (\overline x_i^- - \overline x_j^-) \big) \\
  &\qquad \qquad + 2 V \big( \alpha n \tfrac12(\overline x_{i+1}^- - \overline x_i^-) \big) + \sum_{ j = i+2 }^{n^-} V \big( \alpha n (\overline x_j^- - \overline x_{i+1}^-) \big) \bigg] \\
  &\leq \frac \alpha n \big( 5 \Veff (\alpha) + 2 V ( \alpha n \tfrac12 d_*^- ) \big).
\end{align*}
Using \eqref{fp:and}, we continue the estimate by
\begin{equation*}
  \frac \alpha n \big( 5 \Veff (\alpha) + 2 V ( \alpha n \tfrac12 d_*^- ) \big)
  \leq \frac \alpha n \big( 5 \Veff (\alpha) + 2 V ( \tfrac1{40} \alpha ) \big)
  \leq \frac7n \alpha \Veff (\tfrac1{40} \alpha).
\end{equation*}
We conclude \eqref{fp:z}.

Using \eqref{fp:z}, we prove the following lower bound on the distance between neighbouring negative particles:
\begin{equation} \label{fp:dn}
  \exists \, c, \alpha_0 > 0 \
  \forall \, \alpha \geq \alpha_0, \, n^\pm \geq 2 :
  \overline d_n^- := \min_{1 \leq i \leq n^- -1} n ( \overline x_{i+1}^- - \overline x_i^-) \geq c.
\end{equation}
Given $\alpha \geq \alpha_0$ with $\alpha_0$ as in \eqref{fp:z} and $n^\pm \geq 2$, we derive this estimate by moving the particle $\overline x_i^-$ with any index $i$ such that  
\begin{equation*}
  \overline d_n^- = n ( \overline x_{i+1}^- - \overline x_{i}^-),
\end{equation*}
to the position $z$ provided by \eqref{fp:z}. This yields (with abuse of notation)
\begin{multline*}
  0
  \leq E_n \big( \overline x^{n, +}, ( \overline x_1^-, \ldots, \overline x_{i-1}^-, z, \overline x_{i + 1}^-, \ldots, \overline x_{n^-}^- )^T \big) - E_n (\overline x^n) \\
  = \frac \alpha n \sum_{ \substack{ j = 1 \\ j \neq i } }^n V_{i j} \big( \alpha n (z - \overline x_j) \big) - \frac \alpha n \sum_{ \substack{ j = 1 \\ j \neq i } }^n V_{i j} \big( \alpha n (\overline x_{i}^- - \overline x_j) \big)
  < \frac Cn \alpha \Veff \Big( \frac \alpha C \Big) - \frac\alpha n V ( \alpha \overline d_n^- ).
\end{multline*}
Then, choosing $\alpha_0$ large enough such that \eqref{f:Vprop} applies, we obtain $\overline d_n^- \geq 1/(4C)$. We conclude that \eqref{fp:dn} holds.

Finally, we show that $\overline x^n \in \Omega_n^*$. By the singularity of $V$, it is enough to show that $\overline d_n^{+-} > r^*$. We reason by contradiction, and suppose that $\overline d_n^{+-} = r^*$. Treating $y = \overline x_1^-$ as a variable, we compute
\begin{equation} \label{p:Fy}
  \frac d{dy} \Big|_{y = \overline x_1^-} E_n (\overline x^n) 
  = \frac{\alpha_n^2}{n^2} \sum_{i=2}^{n^-} -V' \big( \alpha_n (\overline x_i^- - \overline x_1^-) \big) 
    - \frac{\alpha_n^2}{n^2} \sum_{i=1}^{n^+} -W' \big( \alpha_n (\overline x_1^- - \overline x_i^+) \big).
\end{equation}
We show that the right-hand side of \eqref{p:Fy} is negative, which contradicts the minimality of $\overline x^n \in \overline \Omega_n^*$. Noting that $(-V'), (-W') > 0$ on $(0, \infty)$ and $(-V')$ is decreasing on $(0, \infty)$, we use \eqref{fp:dn} to estimate
\begin{multline} \label{p:Fy:est}
  \sum_{i=2}^{n^-} -V' \big( \alpha_n (\overline x_i^- - \overline x_1^-) \big) 
    - \sum_{i=1}^{n^+} -W' \big( \alpha_n (\overline x_1^- - \overline x_i^+) \big)
    < \bigg[ \sum_{k=1}^\infty -V' \big( \alpha \overline d_n^- k \big) \bigg]
    -  (-W') ( \overline d_n^{+-} ) \\
    < \bigg[ \sum_{k=1}^\infty -V' \big( \alpha c k \big) \bigg]
    - | W' ( r^* ) |.
\end{multline}
Since $-V' \in L^1(1, \infty)$ is decreasing, the right-hand side of \eqref{p:Fy:est} is negative for all $\alpha$ large enough. For any such $\alpha$, the right-hand side of \eqref{p:Fy} is negative too. The contradiction is reached, and we conclude that $\overline x^n \in \Omega_n^*$.
\end{proof}

We reflect back on the numerical results in Table \ref{tb:al2} with $\alpha = 2$. For the proof of Proposition \ref{pp:Esep} to apply to $\alpha = 2$, it is sufficient to derive a similar estimate as \eqref{p:Fy:est}. Table \ref{tb:al2} suggests that 
\begin{equation} \label{fd:dns}
  \overline{\mathsf d}_n^- := \min_{1 \leq i \leq n^-/2} n ( \overline x_{i+1}^- - \overline x_i^-) > 0.99
  \quad \text{for all } n^- \geq 8.
\end{equation}
We choose a slightly different definition of $\overline{\mathsf d}_n^-$ then in \eqref{fp:dn} to avoid boundary-layer effects at the barrier at $1$. Then, we redo the estimate in \eqref{p:Fy:est} to find
\begin{equation*} %\label{f:rightF}
  \sum_{k = 1}^\infty | V'(\alpha \overline{\mathsf d}_n^- k) | 
  \approx \sum_{k = 1}^\infty | V'(1.98 k) | 
  \approx 0.1576 
  < 0.4477 
  \approx \big| W' (r^*) \big|.
\end{equation*}
Hence, given that the estimate in \eqref{fd:dns} holds, the right-hand side in \eqref{p:Fy:est} is negative, and thus $E_n$ has a local minimiser which is fully separated.

Supplementary to Proposition \ref{pp:sep}, we expect from our simulations in \S \ref{ss:num} with $\alpha_n = \tfrac12 n$ that complete mixing \eqref{f:compl:mix} occurs when $\alpha > 0$ is small enough. However, due to dynamical effects and the lack of convexity, this statement is hard to prove.

\subsection{No separation in continuum energy}

Proposition \ref{pp:Esep} shows that the continuum analogue of Proposition \ref{pp:sep} does \emph{not} hold for \emph{any} $\alpha > 0$.

\begin{defn}[Metric slope] Let $(X,d)$ be a complete metric space, and $\phi : X \to (-\infty, \infty]$ with domain $D(\phi) := \{x \in X : \phi(x) < \infty\} \neq \emptyset$. The metric slope of $\phi$ (with respect to the metric $d$)  is
\begin{equation*}
  |\partial \phi|_d : D(\phi) \to [0, \infty],
  \quad |\partial \phi|_d (x) := \limsup_{y \to x} \frac{ [E(x) - E(y)]_+ }{ d(x,y) }.
\end{equation*}
\end{defn}

\begin{prop} [$E$ not critical at $\brhosep$] \label{pp:Esep}
In the scaling regime $\tfrac1n \alpha_n \to \alpha > 0$ it holds that $|\partial E|_\bW (\brhosep) = \infty$. Moreover, there is a finite speed curve in $(M_{1/2}([0,1]), \, \bW)$ connecting the separated state $\brhosep$ to the mixed state $\brhomix := (\tfrac12 \mathcal L_{(0, 1)}, \tfrac12 \mathcal L_{(0, 1)})$ along which $E$ is strictly decreasing.
\end{prop}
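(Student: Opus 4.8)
The plan is to read off both assertions from a single explicitly constructed curve, once we upgrade the soft inequality ``mixing does not increase the energy'' to a strict one at the relevant densities. Throughout we may take $\alpha=1$ (the general case is identical with $V,W$ replaced by $V_\alpha,W_\alpha$), and we use the standing assumptions of this section, under which $V>W$ on $\R$ (cf.\ Proposition \ref{prop:props:Wa}); in particular $2c_0:=\inf_{(0,3]}(V-W)>0$, since $V-W$ is lower semicontinuous, positive on $(0,3]$, and blows up at the origin.

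\emph{The curve.} For $t\in[0,\tfrac12]$ let $\brho(t)=(\rho_t^+,\rho_t^-)$ be separated outside the window $I_t:=(\tfrac12-t,\tfrac12+t)$ and completely mixed inside it:
\begin{equation*}
  \rho_t^+:=\indicatornoacc{(0,\,1/2-t)}+\tfrac12\,\indicatornoacc{I_t},\qquad
  \rho_t^-:=\indicatornoacc{(1/2+t,\,1)}+\tfrac12\,\indicatornoacc{I_t}.
\end{equation*}
A direct computation gives $\mu_t^\pm([0,1])=\tfrac12$ and $\rho_t^++\rho_t^-\equiv 1$, so $\brho(t)\in M_{1/2}([0,1])$ for every $t$, with $\brho(0)=\brhosep$ and $\brho(\tfrac12)=\brhomix$. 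Each $\brho(t)$ is absolutely continuous, so Theorem \ref{thm:Gconv:L4} together with the symmetry $\eita(\sigma^+,\sigma^-)=\eita(\sigma^-,\sigma^+)$ yields
\begin{equation*}
  E(\brho(t))=(1-2t)\,\eita(1,0)+2t\,\eita(\tfrac12,\tfrac12)=\eita(1,0)-2t\,\Delta,\qquad \Delta:=\eita(1,0)-\eita(\tfrac12,\tfrac12).
\end{equation*}

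\emph{The key step: $\Delta>0$.} Lemma \ref{lem:props:eita}.\eqref{lem:props:eita:UB} gives $\Delta\ge 0$; to make it strict I would open the cell problem and, via Lemma \ref{lem:props:eita}.\eqref{lem:props:eita:cont:conv}, exhibit for all large $m$ a competitor for $\emita(\sigma_m^+,\sigma_m^-)$, with $\sigma_m^\pm\to\tfrac12$, of energy at most $\emita(1,0)-c_0$. Take a minimiser $(y_i)_{i=1}^{\tilde n}$, $\tilde n=\lfloor m\rfloor$, of $\emita(1,0)$ (it exists, and its value is finite by \eqref{fp:UBlim}), and relabel the particles alternately $+,-,+,-,\dots$; choosing $\sigma_m^\pm$ so that $\lfloor\sigma_m^\pm m\rfloor$ equals the resulting number of particles of each sign (this forces $\sigma_m^\pm\to\tfrac12$) makes this an admissible configuration for $\emita(\sigma_m^+,\sigma_m^-)$. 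Since $\sum_i(y_{i+1}-y_i)\le 1$, a Markov-type estimate produces a set $\mathcal G$ of at least $\tfrac12(\tilde n-1)$ consecutive indices with $m(y_{i+1}-y_i)\le 3$; for each such $i$ the pair $\{i,i+1\}$ carries opposite signs, so replacing $V$ by $W$ on it saves $(V-W)(m(y_{i+1}-y_i))\ge 2c_0$. Hence
\begin{align*}
  \emita(\sigma_m^+,\sigma_m^-)
  &\le \frac1m\sum_{i>j}^{\tilde n}V_{ij}\big(m(y_i-y_j)\big)
   = \emita(1,0)-\frac1m\sum_{\substack{i>j\\ b_ib_j=-1}}(V-W)\big(m(y_i-y_j)\big)\\
  &\le \emita(1,0)-\frac{2c_0\,|\mathcal G|}{m}\ \le\ \emita(1,0)-c_0
\end{align*}
for $m$ large, and letting $m\to\infty$ with Lemma \ref{lem:props:eita}.\eqref{lem:props:eita:cont:conv} gives $\eita(\tfrac12,\tfrac12)\le\eita(1,0)-c_0$, i.e.\ $\Delta\ge c_0>0$. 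I expect this to be the main obstacle: $\eita$ is only accessible through the limiting cell problems, so the quantitative gain must be produced at finite $m$, and a little care with integer parts is needed to match the particle counts in $\emita(1,0)$ and $\emita(\sigma_m^+,\sigma_m^-)$.

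\emph{Conclusion.} Both claims then follow by computing $\bW$ along $\brho$. On $M_{1/2}([0,1])$ the distance is given by \eqref{fd:bW:si}, and for one-dimensional measures of equal mass the optimal transport cost equals the squared $L^2$-distance of the quantile functions; computing the quantile function of $\mu_t^\pm$ gives, for $0\le s\le t\le\tfrac12$,
\begin{equation*}
  \bW^2(\brho(s),\brho(t))=2\Big(\tfrac13(t-s)^3+s(t-s)^2\Big)\le\tfrac53(t-s)^2 .
\end{equation*}
Thus $\brho$ is a Lipschitz, hence finite-speed, curve in $(M_{1/2}([0,1]),\bW)$, and since $t\mapsto E(\brho(t))=\eita(1,0)-2t\Delta$ is strictly decreasing (by $\Delta>0$), it connects $\brhosep$ to $\brhomix$ with $E$ strictly decreasing, proving the second assertion. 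Taking $s=0$ gives $\bW(\brhosep,\brho(t))=\sqrt{2/3}\,t^{3/2}$ while $E(\brhosep)-E(\brho(t))=2t\Delta$, so
\begin{equation*}
  |\partial E|_{\bW}(\brhosep)\ \ge\ \limsup_{t\downarrow 0}\frac{E(\brhosep)-E(\brho(t))}{\bW(\brhosep,\brho(t))}
  =\limsup_{t\downarrow 0}\frac{\sqrt 6\,\Delta}{\sqrt t}=\infty .
\end{equation*}
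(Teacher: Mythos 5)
Your proposal is correct and follows essentially the same route as the paper's proof: the same family of partially mixed profiles (up to the reparametrisation $t\mapsto 2t$), the same quantile-function computation of $\bW$ along the curve (your formula agrees with the paper's $\bW^2(\brho_t,\brho_s)=\tfrac{t+2s}{12}(t-s)^2$ after rescaling), and the same comparison of the linear energy decrease against the $t^{3/2}$ growth of the distance to get the infinite slope. The only difference is that you prove the strict inequality $\eita(\tfrac12,\tfrac12)<\eita(1,0)$ quantitatively, by alternately relabelling a minimiser of the single-species cell problem and harvesting $(V-W)$ on the short consecutive gaps, whereas the paper simply asserts that strictness in Lemma \ref{lem:props:eita}.\eqref{lem:props:eita:UB} follows easily from $V>W$; your extra argument is sound (up to a harmless constant, since $2c_0|\mathcal G|/m$ only reaches $c_0$ in the limit $m\to\infty$) and fills in the one step the paper leaves to the reader.
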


\begin{proof}
We set
\begin{equation*}
  \rho_t := \mathcal L_{(0, \, \frac12 (1 - t))} + \tfrac12 \mathcal L_{(\frac12 (1 - t), \, \frac12 (1 + t) )},
  \quad 0 \leq t \leq 1,
\end{equation*}
and note that the curve
\begin{equation*}
  \brho_t := (\rho_t, \, 1 - \rho_t) \in M_{1/2}([0,1]),
  \quad \text{for all } 0 \leq t \leq 1
\end{equation*}
satisfies the end conditions $\brho_0 = \brhosep$ and $\brho_1 = \brhomix$. The energy along the curve $\brho_t$ is given by
\begin{equation*}
  E (\brho_t) 
  = (1-t) \eita (1,0) + t \eita (\tfrac12, \tfrac12).
\end{equation*}
Since $V > W$, it follows easily that the inequality in Lemma \ref{lem:props:eita}.\eqref{lem:props:eita:UB} is strict, and thus
\begin{equation*}
  \frac d{dt} E (\brho_t) 
  = \eita (\tfrac12, \tfrac12) - \eita (1,0)
  < 0.
\end{equation*}
We claim that 
\begin{equation} \label{p:bW:brhot}
  \bW (\brho_t, \brho_s)
  = \frac12 \sqrt{ \frac{t + 2s}3 } (t-s),
  \quad 0 \leq s \leq t \leq 1.
\end{equation} %a.9.134
This claim implies Proposition \ref{pp:Esep}, because it shows that $\brho_t$ is a finite speed curve in \\ $(M_{1/2}([0,1]), \, \bW)$ connecting $\brhosep$ to $\brhomix$, and
\begin{equation*}
  |\partial E|_\bW (\brhosep) 
  = \limsup_{\bmu \to \brhosep} \frac{ [E(\brhosep) - E(\bmu)]_+ }{ \bW( \brhosep, \bmu ) }
  \geq \limsup_{t \downarrow 0} \frac{ [E(\brho_0) - E(\brho_t)]_+ }{ \bW( \brho_0, \brho_t ) } = \infty.
\end{equation*}

It remains to prove the claim \eqref{p:bW:brhot}. By the symmetry in $\rho_t$ and $1 - \rho_t$, we obtain
\begin{equation*}
  \bW^2 (\brho_t, \brho_s)
  = \tfrac12 W^2 (2\rho_t,  2\rho_s) + \tfrac12 W^2 \big( 2(1-\rho_t),  2(1-\rho_s) \big) 
  = W^2 (2\rho_t,  2\rho_s).
\end{equation*}
We observe that
\begin{equation*}
  X_t (\xi) := \left\{ \begin{aligned}
    &\tfrac12 \xi
    &&0 < \xi < 1 - t \\
    &\xi - \tfrac12 (1-t)
    && 1-t < \xi < 1 
  \end{aligned} \right.
\end{equation*}
is the inverse of the cumulative distribution of $2\rho_t$, i.e., $(X_t^{-1})' = 2 \rho_t$. We use \cite[Thm.~2.18]{Villani03} to rewrite
\begin{equation*}
  W^2 (2\rho_t, \, 2\rho_s)
  = \int_0^1 |X_t (\xi) - X_s (\xi)|^2 \, d \xi.
\end{equation*}
We continue to compute $W^2 (2\rho_t, \, 2\rho_s)$ by
\begin{multline*}
  \int_0^1 |X_t (\xi) - X_s (\xi)|^2 \, d \xi
  = \int_{1-t}^{1-s} \frac14 \big( \xi - (1-t) \big)^2 \, d \xi + \int_{1-s}^1 \frac14  (t-s)^2 \, d \xi \\
  = \frac14\int_0^{t-s} \eta^2 \, d \eta + \frac s4  (t-s)^2
  = \frac{t + 2s}{12} (t-s)^2. \qedhere
\end{multline*}
\end{proof}

\section{Acknowledgements}

PvM is supported by the International Research Fellowship of the Japanese Society for the Promotion of Science, together with the JSPS KAKENHI grant 15F15019. PvM wishes to thank M.~A.~Peletier and R.~H.~J.~Peerlings for the modelling process which has led to $E_n$, and L.~Scardia for fruitful discussions and feedback on earlier versions of this paper.

\appendix

\section{Properties of $V$ \eqref{for:defn:V} and $W$ \eqref{for:defn:W}}
\label{app:VW}

In this section we prove Proposition \ref{prop:props:Wa} and \eqref{f:Vprop}.

\begin{proof}[Proof of Proposition \ref{prop:props:Wa}] We compute
\begin{gather*}
  W_a' (r) = -2r \frac{ 1 + a \cosh (2r) }{ ( \cosh (2r) + a )^2 },
  \quad W_a'' (r) = 4 r \sinh (2r) \frac{ a \cosh (2r) + 2 - a^2 }{ ( \cosh (2r) + a )^3 } - 2 \frac{ 1 + a \cosh (2r) }{ ( \cosh (2r) + a )^2 }.
\end{gather*}
In particular, for $V$ and $a \in \{0, 1\}$, we use the doubling formula's to simplify
\begin{align} \label{for:der:V:W0:W1}
  V'(r) &= - \frac r{\sinh^2 r},
  \qquad W_1' (r) = - \frac r{\cosh^2 r} = W_0' \Big( \frac r2 \Big), \\\notag
  V''(r) &=  \frac{2 r \coth r - 1}{\sinh^2 r}. %\qquad
  %V''(r) =  \frac{ r (\sinh r) }{ ( \cosh r - 1 )^2 } - \frac{ 1 }{ \cosh r - 1 }
\end{align}

Next we prove (i). Using that $\cosh (2r) + a \geq 1 + a > 0$, we can write $W_a$ as a composition of smooth functions. Hence, $W_a$ is smooth. Exponential decay of the tails of $W_a$ is shown in [thesis, Prop.~A.3.1]. The proof relies on basic Taylor expansions.

We continue with proving (ii). The ordering of the potentials $W_a$ in $a$ follows from 
  $$
    \frac d{da} W_a (r) 
    = \frac{ \tfrac12 }{ \cosh (2r) + a } + \frac{ r \sinh (2r) }{ ( \cosh (2r) + a )^2 }
    > 0.
  $$ 
  Since $W_a(r) \to 0$ as $r \to \infty$, we conclude $0 < W_0$ from \eqref{for:der:V:W0:W1} and $W_1 < V$ from $W_1' > V'$ \eqref{for:der:V:W0:W1}.

Next we prove (iii). Writing $\cosh(2r) = 2 \sinh^2 r + 1$, a straight-forward computation yields
\begin{align} \notag
  &\big( V''(r) - W_a''(r) \big) \big( \cosh(2r) + a \big)^3 \sinh^2 r \\\notag
  &= \big( \cosh(2 r) + a \big) \big( 4 (1-a) \sinh^4 r + (1+a)^2 \big) (2 r \coth r - 1) \\ \label{fp:VWapp}
  &\quad + 4 (1+a) ( 4 a r \coth r - 1 ) \sinh^4 r 
    + 2 (1+a)^2 ( 4 r \coth r - 1 ) \sinh^2 r.
\end{align} % this formula is checked numerically in Sage
Except for $( 4 a r \coth r - 1 )$, it follows from $\inf_{r > 0} r \coth r = 1$ that all three terms in the right-hand side of \eqref{fp:VWapp} are non-negative for all $r \in \R$ and all $0 \leq a \leq 1$. We transfer a part of the first term to the second term such that both terms are non-negative. To this aim, we estimate the first term from below by
\begin{align*}
  &\big( \cosh(2 r) + a \big) \big( 4 (1-a) \sinh^4 r + (1+a)^2 \big) (2 r \coth r - 1) \\
  &\geq \big( \cosh(2 r) + a \big) \big( 4 (1-a) \sinh^4 r + (1+a)^2 \big) (r \coth r - 1)
       + 4 (1+a) ( 1-a ) r \coth r \sinh^4 r.
\end{align*}
Using this estimate in \eqref{fp:VWapp}, we obtain
\begin{align*}
  &\big( V''(r) - W_a''(r) \big) \big( \cosh(2r) + a \big)^3 \sinh^2 r \\\notag
  &\geq \big( \cosh(2 r) + a \big) \big( 4 (1-a) \sinh^4 r + (1+a)^2 \big) (r \coth r - 1) \\ %\label{fp:VWapp:pos}
   &\quad + 4 (1+a) \big( (1 + 3a) r \coth r - 1 \big) \sinh^4 r 
    + 2 (1+a)^2 ( 4 r \coth r - 1 ) \sinh^2 r,
\end{align*}
and observe that all terms are positive for all $0 \leq a \leq 1$ and all $r > 0$. Hence, $(V - W_a)'' > 0$ on $(0, \infty)$ for all $0 \leq a \leq 1$.

%For small values of $r$ the proof is easy, because $W''_a$ is bounded while $V''(r) \to \infty$ as $r \to 0$. For large values of $r$, the following expansion holds:
%  \begin{equation*} % a.7.19
%    V''(r) - W_a''(r) = 4 (1-a) (2r-1) e^{-2r} + 16 a^2 (4r - 1) e^{-4r} + \mathcal O(r e^{-6r})
%  \end{equation*}
%  For all values in between, we can show a plot. \PATRICK{[The reason why it is hard to show that $V'' - W_a''$ is positive, is that we need to combine both terms in the explicit expression of $W_a''$].}

Finally, we prove (iv) by computing $\widehat{W_a}$ explicitly. First, we focus on $0 \leq a < 1$. We use several common Fourier calculus relations to decompose $\widehat{W_a}$ in several functions to which we can apply [Erd\'elyi I \S 1.9 (6)]:
  \begin{align*} %\label{for:cosha:Ftrf:pos}
    \mathcal F \Big( \frac1{ \cosh (2r) + a } \Big) (\omega)
    = \frac{\pi}{\sqrt{1 - a^2}} \frac{ \sinh ( \pi \arccos (a) \omega ) }{ \sinh ( \pi^2 \omega ) }
    > 0.
  \end{align*}
We compute
  \begin{multline} \label{pf:prop:props:Wa:1}
    \widehat{W_a (r)} 
    = \frac1{2 \pi^2 \omega} \bigg[ \mathcal F \Big( \frac{ W_a' (r) }{2r} \Big) \bigg]'
    = \frac{-1}{2 \pi^2 \omega} \bigg[ \mathcal F \Big( \frac a{ \cosh (2r) + a } + \frac{ 1 - a^2 }{ ( \cosh (2r) + a )^2 } \Big) \bigg]' \\
    = \frac{-1}{2\pi \omega} \frac a{\sqrt{1 - a^2}} \frac d{d \omega} \Big( \frac{ \sinh ( \alpha \omega ) }{ \sinh ( \beta \omega ) } \Big) - \frac1{2\omega} \frac d{d \omega} \Big( \frac{ \sinh ( \alpha \omega ) }{ \sinh ( \beta \omega ) } \Big) \ast \Big( \frac{ \sinh ( \alpha \omega ) }{ \sinh ( \beta \omega ) } \Big), 
  \end{multline} 
  where $\alpha := \pi \arccos (a) < \pi^2 =: \beta$. Since $f(\omega) := \sinh ( \alpha \omega ) / \sinh ( \beta \omega )$ is even, it is sufficient to show that both terms in the right-hand side of \eqref{pf:prop:props:Wa:1} are positive for $\omega > 0$. To this aim, we compute
  \begin{align*}
    f' (\omega) 
    &= \frac{ \sinh ( \beta \omega ) \, \alpha \cosh ( \alpha \omega ) - \sinh ( \alpha \omega ) \, \beta \cosh ( \beta \omega ) }{ \sinh^2 ( \beta \omega ) } \\
    &= \alpha \frac{ \cosh ( \alpha \omega ) }{ \sinh ( \beta \omega ) } \Big( 1 - \frac{ \tanh ( \alpha \omega ) / ( \alpha \omega ) }{ \tanh ( \beta \omega ) / ( \beta \omega ) } \Big),
  \end{align*}
  which is negative for $\omega > 0$, since $\alpha < \beta$ and $x \mapsto \tanh (x) / x$ is decreasing on $(0, \infty)$. %\PATRICK{[a.7.24.top]}
  Hence, the first term in the right-hand side of \eqref{pf:prop:props:Wa:1} yields a positive contribution. Since $f$ is moreover integrable, it holds that $f \ast f$ is decreasing on $(0,\infty)$, % \PATRICK{[a.7.24.1/3]}, 
  from which we infer positivity of the second term in the right-hand side of \eqref{pf:prop:props:Wa:1}.

The case $a = 1$ simply follows from \eqref{for:der:V:W0:W1}:
\begin{equation*}
  \widehat{W_1 (r)} = 2 \widehat{W_0 (\tfrac r2)}  = \widehat{W_0} (\tfrac\omega2) > 0. \qedhere
\end{equation*}
\end{proof}

\begin{proof}[Proof of \eqref{f:Vprop}] 
  We start by deriving a few estimates on $V$. \cite[Prop.~A.3.1]{VanMeurs15} states that
  \begin{equation*}
    V(r) = 2 r e^{-2r} + \mathcal O (r e^{-4 r}).
  \end{equation*}
  Hence, for all $r$ large enough, we have $V(r) > \exp(- \tfrac32 r)$, and thus
  \begin{equation} \label{fp:V:UB}
    \Veff (r)  \leq \sum_{k=1}^\infty \exp(- \tfrac32 r)^k = \frac{ \exp(- \tfrac32 r) }{ 1 - \exp(- \tfrac32 r) } \leq e^{-r}
    \quad \text{for all $r$ large enough.} 
  \end{equation}

We also prove the following lower bound:
\begin{equation} \label{fp:V:LB}
  V(r) - e^{-2r} > 0
  \quad \text{for all } r > 0.
\end{equation}
Since $V(r) - e^{-2r} \to 0$ as $r \to \infty$, it is enough to show that $\frac d{dr} (V(r) - e^{-2r}) < 0$ on $(0, \infty)$. Recalling \eqref{for:der:V:W0:W1}, we compute
\begin{multline*}
  \frac d{dr} (V(r) - e^{-2r})
  = - \frac r { \sinh^2 r } + 2 e^{-2r} 
  = \frac{ 2(1 - e^{-2r})^2 - 4r }{(e^r - e^{-r})^2} \\
  \leq \frac2{(e^r - e^{-r})^2} \left\{ \begin{array}{ll}
  (2r)^2 - 2r, & 0 < r < \tfrac12 \\
  (1 - \tfrac1e)^2 - 1, & r = \tfrac12 \\
  1 - 2r, & \tfrac12 < r \rule[-0.5ex]{0pt}{0pt} 
  \end{array} \right\}
  < 0.
\end{multline*}

Next we prove \eqref{f:Vprop}. We fix any $C > 0$, set $\alpha_0 \geq 2 C \log C$ such that \eqref{fp:V:UB} holds for all $r \geq \alpha_0 / C$, and take any $0 < x < 1/(4C)$. Then, by \eqref{fp:V:UB} and \eqref{fp:V:LB} we have
\begin{equation} \label{fp:VeffV}
  \sup_{\alpha > \alpha_0} \Big[ C \Veff \Big( \frac \alpha C \Big) - V (\alpha x) \Big]
  \leq \sup_{\alpha > \alpha_0} \Big[ \underbrace{ \exp \Big( \log C - \frac \alpha C \Big) - \exp (- 2 \alpha x) }_{ \varphi (\alpha, x) } \Big].
\end{equation}
Since $x < 1/(2C)$, the second exponential in $\varphi (\alpha, x)$ decays slower as $\alpha \to \infty$ than the first exponential. In particular, if $C \leq 1$, then $\sup_{\alpha > 0} \varphi (\alpha) = 0$, and thus it suffices to assume $C > 1$. For $C > 1$, we note that the equation $\varphi (\alpha, x) = 0$ has a unique solution $\alpha^x$, which is given by
\begin{equation*}
  \alpha^x := \frac{C \log C}{ 1 - 2Cx } < 2 C \log C \leq \alpha_0.
\end{equation*}
Moreover, $\sup_{\alpha > \alpha^x} \varphi(\alpha, x) = 0$. Hence, if we further assume $x < 1/(4C)$, then $\alpha^x \leq 2 C \log C \leq \alpha_0$, and thus
\begin{equation*}
  0
  = \sup_{\alpha > \alpha^x} \varphi(\alpha, x)
  \geq  \sup_{\alpha > \alpha_0} \varphi (\alpha, x).
\end{equation*}
Together with \eqref{fp:VeffV}, we conclude \eqref{f:Vprop}.
\end{proof}

\section{Proof of Lemma \ref{l:MbW}}
\label{app:pf:of:l}

\begin{proof}[Proof of Lemma \ref{l:MbW}]
The symmetry property stated in Lemma \ref{l:MbW}.\eqref{l:MbW:sym} follows directly from the definition. We continue by showing that $\bW$ as defined in \eqref{fd:bW:genl} is a metric. Symmetry, non-negativity and the property that $\bW (\bmu, \bnu) = 0$ implies $\bmu = \bnu$ are easily checked. To prove the triangle inequality, we consider arbitrary $\bmu_i \in M([0,1])$ for $i = 1,2,3$, and let $\bsigma_i := \bmu_i ([0,1])$ the corresponding masses. For convenience, we set
\begin{equation*}
  \bw_{ij} := \bW^2 (\bmu_i, \bmu_j), \quad
  w_{ij}^\pm := W^2 \bigg( \frac{\mu_i^\pm}{\sigma_i^\pm}, \frac{\mu_j^\pm}{\sigma_j^\pm} \bigg), \quad
  \sigma_{ij}^\pm := \sigma_i^\pm \wedge \sigma_j^\pm, \quad
  d_{ij}^\pm := \big| \sigma_i^\pm - \sigma_j^\pm \big|
\end{equation*}
for all $1 \leq i < j \leq 3$, and set $\sum_{p = \pm} \beta^p := \beta^+ + \beta^-$ for any symbol $\beta$. We prove the triangle inequality in squared form:
\begin{equation} \label{fp:ti:sqd}
   \bw_{12} + \bw_{23} + 2 \sqrt{ \bw_{12} \bw_{23} } - \bw_{13} \geq 0.
\end{equation}

We start with a few observations. \eqref{fd:bW:genl} reads
\begin{equation*}
  \bw_{ij} = \sum_{p = \pm} \big( \sigma_{ij}^p w_{ij}^p + d_{ij}^p \big).
\end{equation*}
Since $W$ satisfies the triangle inequality, we have
\begin{equation} \label{fp:ti:W}
  w_{13}^\pm \leq w_{12}^\pm + w_{23}^\pm + 2 \sqrt{ w_{12}^\pm w_{23}^\pm }.
\end{equation}
For the constants $\sigma_{ij}^\pm$ and $d_{ij}^\pm$, it holds that
\begin{equation} \label{fp:s13:UB}
  \sigma_{ij}^\pm + d_{ij}^\pm
  = \sigma_i^\pm \vee \sigma_j^\pm \geq \sigma_{13}^\pm
  \quad \text{for all } 1 \leq i < j \leq 3.
\end{equation} 
Finally, for $p \in \{+,-\}$,
\begin{equation} \label{fp:s1:s2:s3}
  \sigma_2^p \geq \sigma_{13}^p
  \quad \Longrightarrow \quad
  \sigma_{12}^p \wedge \sigma_{23}^p = \sigma_{13}^p 
  \text{ and } d_{12}^p + d_{23}^p \geq d_{13}^p,
\end{equation}
and, 
\begin{equation} \label{fp:s2:s1:s3}
  \sigma_2^p < \sigma_{13}^p
  \quad \Longrightarrow \quad
  \left\{ \begin{aligned}
    \sigma_2^p - \sigma_{13}^p + d_{12}^p = 0
    &\text{ and } d_{12}^p + d_{23}^p = 2 d_{12}^p + d_{13}^p,
    && \text{if }  \sigma_1^p \leq \sigma_3^p, \\
    \sigma_2^p - \sigma_{13}^p + d_{23}^p = 0
    &\text{ and } d_{12}^p + d_{23}^p = 2 d_{23}^p + d_{13}^p,
    && \text{if }  \sigma_3^p \leq \sigma_1^p.
  \end{aligned} \right.
\end{equation}

Next we prove \eqref{fp:ti:sqd}. Using \eqref{fp:ti:W}, we estimate
\begin{equation*}
  \bw_{13} 
  = \sum_{p = \pm} \big( \sigma_{13}^p w_{13}^p + d_{13}^p \big)
  \leq \sum_{p = \pm} \big( \sigma_{13}^p w_{12}^p + \sigma_{13}^p w_{23}^p + 2 \sigma_{13}^p \sqrt{ w_{12}^p w_{23}^p } + d_{13}^p \big),
\end{equation*}
and find
\begin{multline*}
  \bw_{12} + \bw_{23} + 2 \sqrt{ \bw_{12} \bw_{23} } - \bw_{13} 
  \geq \sum_{p = \pm} \Big[ \big( \sigma_{12}^p - \sigma_{13}^p \big) w_{12}^p + \big( \sigma_{23}^p - \sigma_{13}^p \big) w_{23}^p + d_{12}^p + d_{23}^p - d_{13}^p \Big] \\
  + 
  2 \bigg[ \sqrt{ \sum_{p = \pm} \sum_{q = \pm} \big( \sigma_{12}^p w_{12}^p + d_{12}^p \big) \big( \sigma_{23}^q w_{23}^q + d_{23}^q \big) } - \sum_{p = \pm} \sigma_{13}^p \sqrt{ w_{12}^p w_{23}^p } \bigg].
\end{multline*}
It is enough to prove that both terms in the right-hand side are non-negative. Non-negativity of the first term follow either from \eqref{fp:s1:s2:s3} or from \eqref{fp:s2:s1:s3} with $w_{ij}^p \leq 1$. To prove non-negativity of the second term, we show that the first term within brackets is larger or equal to the second term by using respectively $w_{ij}^p \leq 1$, \eqref{fp:s13:UB} and $a + b \geq 2 \sqrt{ab}$, i.e.,
\begin{align*}
  &\sum_{p = \pm} \sum_{q = \pm} \big( \sigma_{12}^p w_{12}^p + d_{12}^p \big) \big( \sigma_{23}^q w_{23}^q + d_{23}^q \big) 
  \geq \sum_{p = \pm} \sum_{q = \pm} \big( \sigma_{12}^p + d_{12}^p \big) w_{12}^p \big( \sigma_{23}^q + d_{23}^q \big) w_{23}^q \\
  &\geq \sum_{p = \pm} \sum_{q = \pm} \sigma_{13}^p w_{12}^p \sigma_{13}^q w_{23}^q 
  = \sum_{p = \pm} \sigma_{13}^p w_{12}^p \sigma_{13}^p w_{23}^p
    + \sum_{p = \pm} \sigma_{13}^p \sigma_{13}^{-p} w_{12}^p w_{23}^{-p} \\
  &\geq \sum_{p = \pm} \sigma_{13}^p \sigma_{13}^p w_{12}^p w_{23}^p
    + 2 \sigma_{13}^+ \sigma_{13}^- \sqrt{ w_{12}^+  w_{23}^- w_{12}^- w_{23}^+ }
  = \sum_{p = \pm} \sum_{q = \pm} \sigma_{13}^p \sigma_{13}^q \sqrt{ w_{12}^p  w_{23}^p w_{12}^q w_{23}^q } \\
  &= \bigg( \sum_{p = \pm} \sigma_{13}^p \sqrt{ w_{12}^p w_{23}^p } \bigg)^2.
\end{align*}
This completes the proof for $\bW$ satisfying the triangle inequality, which is the last step for proving that $\bW$ is a metric.

\medskip

Separability follows easily from $(P ([0,1]), W)$ being separable. We continue with proving completeness. Let $(\bmu_k)$ be a $\bW$-Cauchy sequence with masses $\bsigma_k := \bmu_k ([0,1])$. From the definition of $\bW$ in \eqref{fd:bW:genl} it follows that $\sigma^\pm_k$ are Cauchy sequences in $[0,1]$, which therefore converge to the limiting values $\sigma^\pm \in [0,1]$, which moreover satisfy $\sigma^+ + \sigma^- = 1$. 

Let us first consider the case in which $\sigma^+ \in (0,1)$. Then, for all $k$ large enough, $\sigma_k^\pm \geq \tfrac12 \sigma^\pm > 0$. It follows from the definition of $\bW$ that $\mu^\pm_k / \sigma_k^\pm$ are $W$-Cauchy sequences in $\mathcal P ([0,1])$, and hence $(\mu^\pm_k / \sigma_k^\pm)$ converges to some $\tilde \mu^\pm \in \mathcal P ([0,1])$ with respect to $W$. Setting $\mu^\pm := \sigma^\pm \tilde \mu^\pm$, we conclude that $\bW(\bmu_k, \bmu) \to 0$ as $k \to \infty$.

The final step of the proof for completeness is to treat the case $\sigma^+ \in \{0,1\}$. By symmetry between the positive and the negative parts, it is enough to consider $\sigma^+ = 0$. By the previous argument, we obtain that $\mu^-_k / \sigma_k^-$ converges to $\mu^- \in \mathcal P ([0,1])$ with respect to $W$. Consequently, we need to set $\mu^+ = 0$ in order for $\bmu \in M([0,1])$. we find from \eqref{fd:bW:genl:si0} that 
\begin{equation*}
  \bW^2 (\bmu_k, \bmu) 
  = \sigma_k^+ + \sigma_k^- W^2 \bigg( \frac{ \mu^-_k }{ \sigma_k^- }, \mu^- \bigg) + |\sigma_k^- - 1|
  \xto{k \to \infty} 0.
\end{equation*}

\medskip 

Next we prove the equivalence between the topology induced by $\bW$ and the narrow topology as in Lemma \ref{l:MbW}.\eqref{l:MbW:NT}. We show that this is a consequence of \cite[Prop.~7.1.5]{AmbrosioGigliSavare08}, which states that for $(\mu_k) \subset \mathcal P([0,1])$,
\begin{equation} \label{fp:equiv:ntW:P}
  \mu_k \weakto \mu 
  \quad \Longleftrightarrow \quad
  W(\mu_k, \mu) \to 0.
\end{equation}
We first prove that $\bmu_k \weakto \bmu$ implies $\bW (\bmu_k, \bmu) \to 0$. Choosing $\varphi^\pm \equiv 1$ in \eqref{for:defn:narrow:conv}, we find that the masses satisfy $\sigma^\pm_k \to \sigma^\pm$. By Lemma \ref{l:MbW}.\eqref{l:MbW:sym}, it is therefore enough to show that
\begin{equation} \label{fp:Wp:to:0}
  (\sigma_k^+ \wedge \sigma^+) W^2 \bigg( \frac{ \mu^+_k }{ \sigma^+_k }, \frac{ \mu^+ }{ \sigma^+ } \bigg) \to 0.
\end{equation}
This is trivial for $\sigma^+ = 0$, so let us assume $\sigma^+ > 0$. We take $k$ large enough such that $\sigma_k^+ > \sigma^+/2 > 0$. Then, $(\sigma_k^+)^{-1} \to (\sigma^+)^{-1}$, and thus 
\begin{equation} \label{fp:musi:conv}
  \frac{ \mu^+_k }{ \sigma^+_k } \weakto \frac{ \mu^+ }{ \sigma^+ }.
\end{equation}
We conclude from \eqref{fp:equiv:ntW:P} that \eqref{fp:Wp:to:0} holds.

Next we prove the opposite implication. Let $\bW (\bmu_k, \bmu) \to 0$. Again, $\sigma^\pm_k \to \sigma^\pm$, and by Lemma \ref{l:MbW}.\eqref{l:MbW:sym} it is enough to show $\mu_k^+ \weakto \mu^+$. If $\sigma^+ = 0$, then $\mu^+ = 0$ and $\sigma_k^+ \to 0$, and thus 
\begin{equation*}
  \bigg| \int \varphi \, d\mu_k^+ \bigg| 
  \leq \sigma_k^+ \|\varphi\|_\infty
  \xto{ k \to \infty } 0
  = \int \varphi \, d\mu^+
\end{equation*}
for any test function $\varphi \in C([0,1])$. If $\sigma^+ > 0$, then we take $k$ large enough such that $\sigma_k^+ > \sigma^+/2 > 0$. Then, $\bW (\bmu_k, \bmu) \to 0$ implies
\begin{equation*}
  W^2 \bigg( \frac{ \mu^+_k }{ \sigma^+_k }, \frac{ \mu^+ }{ \sigma^+ } \bigg) \to 0.
\end{equation*}
By \eqref{fp:equiv:ntW:P} we obtain \eqref{fp:musi:conv}, and we conclude $\mu_k^+ \weakto \mu^+$ from $(\sigma_k^+)^{-1} \to (\sigma^+)^{-1}$.

With Lemma \ref{l:MbW}.\eqref{l:MbW:NT} established, sequential compactness follows from Prokhorov, which states that any sequence $\bmu_k$ has a narrowly converging subsequence.

\medskip

Next we prove that $M_{\sigma^+} ([0,1])$ is non-positively curved (see \eqref{fd:npcurved}), i.e., for all $\bmu_0, \bmu_1, \bnu \in M_{\sigma^+} ([0,1])$,
\begin{equation} \label{fp:pcurved}
  \bW^2 (\bmu_t, \bnu) 
  \leq (1-t) \bW^2 (\bmu_0, \bnu) + t \bW^2 (\bmu_1, \bnu) - t (1-t) \bW^2 (\bmu_0, \bmu_1)
  \quad \text{for all } 0 \leq t \leq 1,
\end{equation}
where $\bmu_t$ is a geodesic connecting $\bmu_0$ and $\bmu_1$. Since the case $\sigma^+ \in \{0,1\}$ follows by a simplification of the argument below, we assume $\sigma^+ \in (0,1)$. To characterise $\bmu_t$, let $\tilde \mu_t^\pm$ be a $W$-geodesic in $\mathcal P([0,1])$ connecting $\mu_0^\pm / \sigma^\pm$ with $\mu_1^\pm / \sigma^\pm$. We observe that $\bmu_t := ( \sigma^+ \tilde \mu_t^+, \, \sigma^- \tilde \mu_t^- ) \in M_{\sigma^+} ([0,1])$ is a $\bW$-geodesic (see \eqref{fd:geods}) from
\begin{equation*}
  \bW^2 (\bmu_s, \bmu_t)
  = \sum_{p = \pm} \frac1{\sigma^p} W^2 \big( \tilde \mu_s^\pm, \tilde \mu_t^\pm \big)
  = \sum_{p = \pm} \frac1{\sigma^p} (t-s)^2 W^2 \big( \tilde \mu_0^\pm, \tilde \mu_1^\pm \big)
  = (t-s)^2 \bW^2 (\bmu_0, \bmu_1),
\end{equation*}
which holds for all $0 \leq s \leq t \leq 1$. Then, by the additive structure of $\bW$ as in \eqref{fd:bW:si} which separates the dependence on $\mu^+$ and $\mu^-$, \eqref{fp:pcurved} follows directly from $(\mathcal P([0,1]), W)$ being non-positively curved (which is a consequence of \cite[(7.2.8)]{AmbrosioGigliSavare08}).

Next we prove the characterisation in Lemma \ref{l:MbW}.\eqref{l:MbW:Rn} of $\bW^2 (\bmu_n, \bnu_n)$ for the empirical measures $\bmu_n$ and $\bnu_n$ corresponding to $x^n, y^n \in \Omega_n$. For any such measures, we compute
\begin{multline} \label{f:bW:ito:mun}
   \bW^2 (\bmu_n, \bnu_n)
   = \frac{n^+}n W^2 \bigg( \frac1{n^+} \sum_{i=1}^{n^+} \delta_{x_i^+} , \frac1{n^+} \sum_{i=1}^{n^+} \delta_{y_i^+} \bigg) + \frac{n^-}n W^2 \bigg( \frac1{n^-} \sum_{i=1}^{n^-} \delta_{x_i^-}, \frac1{n^-} \sum_{i=1}^{n^-} \delta_{y_i^-} \bigg) \\
   = \frac{n^+}n \frac1{n^+} \sum_{i=1}^{n^+} \big( x_i^+ - y_i^+ \big)^2 + \frac{n^-}n \frac1{n^-} \sum_{i=1}^{n^-} \big( x_i^- - y_i^- \big)^2
   = \frac1n |x^n - y^n|^2.
\end{multline} 

Finally, we prove the property of geodesics in $M([0,1])$ stated in Lemma \ref{l:MbW}.\eqref{l:MbW:geods}. We reason by contradiction. Let $t \mapsto \bmu_t$ be any $\bW$-geodesic for which there exists a $t \in (0, 1)$ such that $\bmu_t \notin M_{\sigma^+} ([0,1])$. Setting $\sigma_t^\pm := \mu_t^\pm ([0,1])$, we define $\nu^\pm := \sigma^\pm \mu_t^\pm / \sigma_t^\pm$,
and note that $\bnu \in M_\sigma ([0,1])$. We compute
\begin{equation*}
  \bW^2( \bmu_0, \bmu_t ) 
  = \bW^2 ( \bmu_0, \bnu ) + \bW^2( \bnu, \bmu_t )
  > \bW^2 ( \bmu_0, \bnu ),
\end{equation*}
and, similarly, $\bW( \bmu_1, \bmu_t ) > \bW ( \bmu_1, \bnu )$. Hence, $t \mapsto \bmu_t$ is not a geodesic. 
\end{proof}

\bibliographystyle{alpha}

\end{document}